\definecolor{MyDarkblue}{rgb}{0,0.08,0.50}
\definecolor{Brickred}{rgb}{0.65,0.08,0}
\newtheorem*{theorem*}{Theorem}
\newtheorem{theorem}{Theorem}[section]
\newtheorem{lemma}[theorem]{Lemma}
\newtheorem{proposition}[theorem]{Proposition}
\newtheorem{corollary}[theorem]{Corollary}
\newtheorem{definition}[theorem]{Definition}
\newtheorem{assumption}[theorem]{Assumption}
\newtheorem{remark}[theorem]{Remark}
\newtheorem{claim}[theorem]{Claim}
\renewcommand{\P}{\mathbb{P}}
\newcommand{\Pv}{\mathbb{P}}
\newcommand{\Ev}{\mathbb{E}}
\newcommand{\EE}{\mathcal{E}}
\newcommand{\CC}{\mathcal{C}}
\newcommand{\eps}{\varepsilon}
\newcommand{\cG}{\mathcal{G}}
\newcommand{\sss}{\scriptscriptstyle}
\newcommand{\CE}{{\mathcal{E}}}
\newcommand{\e}{{\mathrm e}}
\numberwithin{equation}{section}
\newcommand{\R}{\mathbb{R}}
\newcommand{\N}{\mathbb{N}}
\newcommand{\Z}{\mathbb{Z}}
\newcommand{\intersect}{\cap}
\newcommand{\Poi}{\mathrm{Poi}}
\renewcommand{\emptyset}{\varnothing}
\newcommand{\CA}{\mathcal {A}}
\newcommand{\CG}{\mathcal {G}}
\newcommand{\CI}{\mathcal {I}}
\newcommand{\CL}{\mathcal {L}}
\newcommand{\CT}{\mathcal {T}}
\newcommand{\CV}{\mathcal {V}}
\newcommand*{\wih}{\widehat}
\newcommand*{\wt}{\widetilde}
\newcommand*{\vr}{\varrho}
\newcommand*{\la}{\lambda}
\newcommand*{\de}{\delta}
\newcommand*{\ve}{\varepsilon}
\newcommand*{\al}{\alpha}
\newcommand*{\be}{\begin{equation}}
\newcommand*{\ee}{\end{equation}}
\newcommand*{\ba}{\begin{aligned}}
	\newcommand*{\ea}{\end{aligned}}
\newcommand*{\barr}{\begin{array}{c}}
	\newcommand*{\earr}{\end{array}}
\def \toindis  {\buildrel {d}\over{\longrightarrow}}
\def \toas     {\buildrel {a.s.}\over{\longrightarrow}}
\newcommand*{\wit}{\widetilde}
\newcommand*{\ind}{\mathbbm{1}}
\def\namedlabel#1#2{\begingroup
	#2%
	\def\@currentlabel{#2}%
	\phantomsection\label{#1}\endgroup
}
\newcommand{\Emin}{\mathrm{EGIRG}_{W,L}(1-\xi_n)}
\newcommand{\Eplus}{\mathrm{EGIRG}_{W,L}(1+\xi_n)}
\newcommand{\Elambda}{\mathrm{EGIRG}_{W,L}(\lambda)}
\newcommand{\Eupper}{\overline{\mathrm{EGIRG}}_{W,L}(\lambda)}
\newcommand{\Eone}{\mathrm{EGIRG}_{W,L}(1)}
\newcommand{\BGIRG}{\mathrm{BGIRG}_{W,L}(n)}
\newcommand{\Xdn}{\widetilde{\mathcal{X}}_d(n)}
\DeclareMathOperator*{\argmax}{arg\,max}
\DeclareMathOperator*{\arginf}{arg\,inf}
\begin{document}
	
	\title[Explosion in Geometric Inhomogeneous Random Graphs]{Explosion in weighted Hyperbolic Random Graphs and Geometric Inhomogeneous Random Graphs}
	
	\date{\today}
	\subjclass[2010]{Primary: 60C05, 05C80, 90B15.}
	\keywords{Spatial network models, hyperbolic random graphs, scale-free property, small world property, typical distances, first passage percolation}
	
	\author[Komj\'athy]{J\'ulia Komj\'athy}
	\author[Lodewijks]{Bas Lodewijks}
	\address{JK: Department of Mathematics and Computer Science, Eindhoven University of Technology, P.O.\ Box 513, 5600 MB Eindhoven, The Netherlands.}
	\address{BL: Department of Mathematical Sciences,
University of Bath,
Claverton Down,
Bath,
BA2 7AY,
United Kingdom.} 
	\email{j.komjathy@tue.nl, b.lodewijks@bath.ac.uk}

	\maketitle
 	
	\newtheorem{conjecture}[theorem]{Conjecture}

	\newcommand{\bes}{\begin{equation}}
	\newcommand{\ees}{\end{equation}}
	
	\renewcommand{\P}[1]{\mathbb{P}\left(#1\right)}
	\newcommand{\E}[1]{\mathbb{E}\left[#1\right]}
	\newcommand{\1}[1]{\mathbbm{1}\left(#1\right)}
	\renewcommand{\empty}{\varnothing}
	\newcommand{\M}{M_{d,\tau,\alpha}}
	\newcommand{\Mt}{\widetilde{M}_{d,\tau,\alpha}}
	\newcommand{\Ml}{\widetilde{M}_l}
	
	\newcommand{\np}{$N^P_y\left(x\right)$}
	\newcommand{\nb}{$N^B_y\left(x\right)$}
	\newcommand{\bki}{B_k^{\sss\left(i\right)}}
	\newcommand{\cki}{c_k^{\sss\left(i\right)}}
	\newcommand{\BL}[1]{\todo[inline]{Bas: #1}}

	\renewcommand{\wt}[1]{\widetilde{#1}}
	\newcommand{\GIRGu}{\text{GIRG}_{W,X,L}^{\text{upper}}}
	\newcommand{\xd}{\mathcal{X}_d}
	\numberwithin{equation}{section}
	
	\newcommand{\cb}{\color{black}}
	\newcommand{\crr}{\color{red}}
\newcommand{\cz}{\color{black}}
	
	\begin{abstract}
	In this paper we study weighted distances in scale-free spatial network models: hyperbolic random graphs,  geometric inhomogeneous random graphs and scale-free percolation. In hyperbolic random graphs, $n=\Theta(\e^{R/2})$ vertices are sampled independently from the hyperbolic disk with radius $R$ and two vertices are connected either when they are within hyperbolic distance $R$, or independently with a probability depending on the hyperbolic distance.  In geometric inhomogeneous random graphs, and in scale-free percolation,
	each vertex is given an independent weight and location from an underlying measured metric space and $\Z^d$, respectively, and two vertices are connected independently with a probability that is a function of their distance and their weights. 
	We assign independent and identically distributed (i.i.d.)\ weights to the edges of the obtained random graphs, and  
	investigate the weighted distance (the length of the shortest weighted path) between two uniformly chosen vertices, called \cb typical weighted distance\cz. In scale-free percolation, we study the weighted distance from the origin of vertex-sequences with norm tending to infinity.
	
	In particular, we study the case when the parameters are so that the degree distribution in the graph follows a power law with exponent $\tau\in(2,3)$ (infinite variance), and the edge-weight distribution is such that it produces an \emph{explosive} age-dependent branching process with power-law offspring distribution, that is, the branching process produces infinitely many individuals in finite time.  We show that in all three models, typical distances within the giant/infinite component \emph{converge in distribution}, that is, no re-scaling is necessary. This solves an open question in \cite{HofKom2017}. 
	
	The main tools of our proof are to develop elaborate couplings of the models to infinite versions, to follow the shortest paths to infinity and then to connect these paths by using weight-dependent percolation on the graphs, that is, we delete edges attached to vertices with higher weight with higher probability. We realise the percolation using the edge-weights: only very short edges connected to high weight vertices are allowed to stay, hence establishing arbitrarily short upper bounds for connections. 
\end{abstract}
	\section{Introduction}
	Many complex systems in our world can be described in an abstract way by networks and are studied in various fields. Examples are social networks, the Internet, the World Wide Web (WWW), biological networks like ecological networks, protein-protein interaction networks and the brain, infrastructure networks and many more \cite{KalKolGastBlas10,Newman03}. We often do not comprehend these networks at all in terms of their topology, mostly due to their enormous size and complexity. A better understanding of the global structure of the network of neurons in the brain could, for example, help understand the spread of the tau protein through the brain, causing Alzheimer's disease \cite{Alz3}. We do know that many real-world networks exhibit the following three properties:
	
	(1) The small-world property: distances within a network are logarithmic or double logarithmic (ultrasmall-world) \cite{DorMend}. Examples include social networks \cite{BackBolRosUgaVig12,NewmPark03,Milg67}, food webs \cite{MontSol02}, electric power grids and movie actor networks \cite{Watts99}.
	
	(2) The scale-free property: the number of neighbours or connections \cb of vertices in the network statistically follows \cz a power-law distribution, i.e. $\P{\text{degree}>k}\approx k^{-(\tau-1)}$, for some $\tau>1$ \cite{AlbBar02,AlbJeoBar99,Newman03}. Examples include the World Wide Web and the Internet, \cite{Faloutsos}, global cargo ship movements \cite{KalKolGastBlas10}, and food webs \cite{MontSol02}.
	
	(3) High clustering. Local clustering is the formation of locally concentrated clusters in a network and it is measured by the average clustering coefficient: the proportion of triangles present at a vertex versus the total number of possible triangles present at said vertex, averaged over all vertices \cite{WattsStrog98}. Many real-life networks show high clustering, e.g.\ biological networks such as food webs, \cb technological \cz networks as the World Wide Web, and social networks \cite{CouLel14,Man67, New09,NewmPark03,SerBog06}.

	The study of real-world networks could be further improved by the analysis of network models. In the last three decades, network science has become a significant field of study, both theoretically and experimentally. One of the first models that was proposed for studying real-world networks was the Watts-Strogatz model \cite{WattsStrog98}, that has the small-world property and shows clustering, but does not exhibit the just at that time discovered scale-free property, however. Barab\'asi and Albert proposed that power-law degree distributions are caused by a `preferential attachment' mechanism \cite{BarAlb99}. The Barab\'asi-Albert model, also known as the preferential attachment model, produces power-law degree distributions intrinsically by describing the growth mechanism of the network over time. Static models, on the contrary,  capture a `snapshot' of a network at a given time, and generally, their parameters can be tuned extrinsically.  The configuration model and variations of inhomogeneous random graph models such as the Norros-Reitu or Chung-Lu models \cite{Boll1980,BollJansRior07, ChungLu02.1,NorRei06}, serve as popular null-models for applied studies as well as accommodate rigorous mathematical analysis. The degree distribution and typical distances are extensively studied for these models  \cite{BriDeijMart06,ChungLu02.1,ChungLu02.2,HofHoogMiegh04,HofHoogZnam05,Janson08,NorRei06}.  Typical distances in these models show the same qualitative behaviour: the phase transition from small-world to ultrasmall-world takes place when the second moment of the degree (configuration model) or weight distribution (inhomogeneous random graphs) goes from being finite to being infinite. This also follows for the preferential attachment model \cite{DomHofHoog10} and it is believed that it holds for a very large class of models.
	
	The one property that most of these models do not exhibit, however, is clustering. As the graph size tends to infinity, most models have a vanishing average clustering coefficient. Therefore, among other methods to increase clustering such as the hierarchical configuration model \cite{HofLeeuwStege16}, spatial random graph models have been introduced, including the spatial preferential attachment model (SPA) \cite{AieBonCooJanss08}, power-law radius geometric random graphs \cite{Hirs17, Yuk06}, the hyperbolic random graph (HRG) \cite{BodFouMul15,BogPapaKriou10, KriPap10}, and the models that we study in this paper, scale-free percolation on $\Z^d$ (SFP) \cite{DeijHofHoog2013}, their continuum analogue on $\R^d$ \cite{DepWut13} and the geometric inhomogeneous random graph (GIRG) \cite{BriKeuLen15}, respectively. The SFP and GIRG can be seen as the spatial counterparts of the Norros-Reitu and the Chung-Lu graph, respectively. Most of these spatial models are equipped with a long-range parameter $\alpha>0$ that models how strongly edges are predicted in relation to their distance. 
	For the SPA, HRG and GIRG, the average clustering coefficient is bounded from below (for the SPA this is shown only when $\tau<3$) \cite{BriKeuLen17,CooFriePral12,GugPanaPeter12,JacMor15}, they possess power-law degrees \cite{AieBonCooJanss08,BriKeuLen17,DeijHofHoog2013,GugPanaPeter12}  and typical distances in the SFP and GIRG behave similar to other non-spatial models in the infinite variance degree regime \cite{BriKeuLen17, DeijHofHoog2013}. In the finite variance degree regime, geometry starts to play a more dominant role and depending on the long-rage parameter $\alpha$, distances interpolate from small-world to linear distances (partially known for the SFP) \cite{DeijHofHoog2013,DeHaWu15}.
	
	Understanding the network topology lays the foundation for studying random processes on these networks, such as random walks, or information or activity diffusion. These processes can be found in many real-world networks as well, e.g., virus spreading, optimal targeting of advertisements, page-rank and many more. Mathematical models for information diffusion include the contact process, bootstrap percolation and first passage percolation (FPP).
	Due to the novelty of spatial scale-free models, the mathematical understanding of processes on them is rather limited. Random walks on scale-free percolation was studied in \cite{HeyHulJor16},
	bootstrap percolation on hyperbolic random graphs and on GIRGs in \cite{CanFou16, KocLen16}, and FPP on SFP in some regimes in \cite{HofKom2017}.	 In this paper we intend to add to this knowledge by studying FPP on GIRG and also on SFP, solving an open problem in \cite{HofKom2017}. 
	
	First passage percolation stands for allocating an independent and identically distributed (i.i.d.) random edge weight to each edge from some underlying distribution and study how metric balls grow in the random metric induced by the edge-weights.  The edge-weights can be seen as transmission times of information, and hence weighted distances correspond to the time it takes for the information to reach one vertex when started from the other vertex. 
	FPP was originally introduced as a method to model the flow of a fluid through a porous medium in a random setting \cite{HammWels65}, and has become an important mathematical model in probability theory that has been studied extensively in grid-like graphs, see e.g.\ \cite{HammWels65,HowDoug04,SmyWier78}, and on non-spatial (random) graph models as well such as the complete graph \cite{HofHoogMiegh01, EckGoodHofNar12,EckGoodHofNar15.1,EckGoodHofNar15.2,Janson99}, the Erd{\H o}s-R\'enyi graph \cite{BhaHofHoog10}, the configuration model \cite{BhaHofHoog10.2,BhaHofHoog12,HofHoogZnam05} and the inhomogeneous random graph model \cite{KolKom15}. FPP in the `true' scale-free regime on the configuration model, (i.e. when the asymptotic variance of the degrees is infinite), was studied recently in  \cite{AdrKom17,BaroHofKom15,BaroHofKom16}, revealing a dichotomy of distances caused by the dichotomy of `explosiveness' vs `conservativeness' of the underlying branching processes (BP).	
	In particular, the local tree-like structure allows for the use of age-dependent branching processes when analysing FPP. Age-dependent BPs, introduced by Bellman and Harris in \cite{BellHar52}, are branching processes where each individual lives for an i.i.d.\ lifetime and upon death it gives birth to an i.i.d. number of offspring. When the mean offspring is infinite, for some lifetime distributions, it is possible that the BP creates infinitely many individuals in finite time. This phenomenon is called \emph{explosion}. For other lifetime distributions this is not the case, in which case the process is called \emph{conservative}. A necessary and sufficient criterion for explosion was given in a recent paper \cite{AmiDev13}.
	
	\emph{Our contribution} is that  we show that the weighted distances in GIRGs and SFP in the regime where the degrees have infinite variance, converge in distribution when the edge-weight distribution produces explosive BPs with infinite mean offspring distributions. We further identify the distributional limit. 
	The case when the edge-weight distribution is conservative has been studied in the SFP 
	model in \cite{HofKom2017}. Based on this, and the result in \cite{AdrKom17}, we expect that a similar result would hold for the GIRG as well. 
	We formulate our main result without the technical details in the following meta-theorem. \cb Let $F_L$ denote the probability distribution function of the non-negative random variable $L$, and equip every edge $e$ in the GIRG (resp.\ SFP) graph with an i.i.d.\ edge-weight $L_e$, a copy of $L$. The weight of a path in the network is defined as the sum of the edge-weights of the edges in the path. Let $d_L(u,v)$ denote \cb the weight of the least-weight path, called weighted distance, \cb between two vertices $u,v$ in the model \cb(see Def.~\ref{def:distances} below for a precise definition). \cz
\medskip

\begin{mdframed}
\begin{theorem}[Meta-Theorem]\label{meta-theorem}	Consider GIRG on $n$ vertices or SFP on $\Z^d$ with i.i.d.\ power-law vertex-weights, and i.i.d.\ edge-weights with \cb probability distribution function $F_L$\cz. Let the parameters of the model be so that the degree distribution follows a power-law with exponent $\tau\in(2,3)$, and $L$ as lifetime forms an \emph{explosive} age-dependent branching process with infinite mean power-law offspring distributions. \cb Let $u,v$ be uniformly chosen vertices in GIRG. Then $d_L(u,v)$  converges in distribution to an a.s.\ finite random variable, conditioned that $u,v$ are in the giant component of GIRG.\cz

\cb In the SFP, let  $u:=0$, the origin of $\Z^d$, and $v$ the closest vertex to $n\underline e$ for some arbitrary unit vector $\underline e$. Then $d_L(u,v)$  converges in distribution to an a.s.\ finite random variable, conditioned that $u,v$ are in the infinite component of SFP\cz. 
\end{theorem}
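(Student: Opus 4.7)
The plan is to realise, from each endpoint separately, an $L$-weighted neighbourhood exploration, to couple its first few generations with an explosive age-dependent branching process (BP) with lifetimes $L$ and power-law offspring, and then to splice the two near-explosion paths through a short bridge of ``hub'' vertices. First I would build, around a uniformly chosen $u$ in GIRG (respectively the origin in SFP), a breadth-first exploration in the $L$-metric, peeling off edges one at a time according to their edge-weight. By local weak convergence (the standard thinned-Poisson computations for GIRG/SFP in the infinite-variance regime), the first $K$ generations of this exploration couple, with probability $1-o(1)$, to the first $K$ generations of an age-dependent BP with lifetime $L$ and offspring tail decaying like $k^{-(\tau-1)}$, $\tau\in(2,3)$. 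The Amini--Devroye explosion criterion, combined with the hypothesis of the theorem, guarantees that the BP explodes at an a.s.\ finite time $Y_\infty$; by choosing $K=K(\varepsilon)$ large enough I arrange that within weighted time $Y_\infty^{(u)}+\varepsilon$ the exploration has reached a vertex $x_u$ whose underlying vertex-weight exceeds an arbitrarily large threshold $W_\star$. The same construction starting from $v$ yields an independent copy $(x_v,Y_\infty^{(v)})$, provided the two explorations stay disjoint, which holds with high probability while both remain small compared to $n$.

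The second and most delicate step is to bridge $x_u$ to $x_v$ by a path of arbitrarily small $L$-weight. Here I would deploy the weight-dependent percolation idea signalled in the abstract: call an edge ``$\eta$-short'' if its $L$-value lies below $\eta$, and observe that from a vertex of weight $W$ the number of $\eta$-short edges to even higher-weight vertices stochastically dominates a Poisson variable whose mean blows up as $W\to\infty$ (for GIRG this follows from the expected-degree structure, for SFP from the scale-free connection probability). I would iterate a weight-doubling argument: from $x_u$ pick an $\eta_1$-short edge to a vertex of weight at least $W_\star^{1+\delta}$, then an $\eta_2$-short edge to weight at least $W_\star^{(1+\delta)^2}$, and so on, with the thresholds $\eta_i$ chosen to decrease geometrically so that $\sum_i \eta_i<\varepsilon$. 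After boundedly many (at most $O(\log\log n)$) such hops one reaches ``super-hubs'' which are pairwise adjacent with high probability, and the connecting edge can again be chosen $\eta$-short because of the abundance of candidates.

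Combining the two sides gives $d_L(u,v)\le Y_\infty^{(u)}+Y_\infty^{(v)}+O(\varepsilon)$ on a high-probability event, providing the upper bound in distribution with limit $Y_\infty^{(u)}+Y_\infty^{(v)}$, two independent copies of the explosion time. The matching lower bound would come from reversing the local coupling: any path of weighted length strictly below $Y_\infty^{(u)}-\varepsilon$ from $u$ corresponds, under the BP coupling, to a sub-explosion path in the limiting BP, whose probability tends to $0$ as $\varepsilon\to 0$. The conditioning on $u,v$ belonging to the giant (resp.\ infinite) component is handled via the fact that this event has a positive limit probability and is asymptotically decoupled from the explosion structure past the first $K$ generations. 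For SFP with $v$ close to $n\underline{e}$, the crucial observation is that explosiveness of the BP allows hubs at Euclidean distance far beyond $n$ to be reached in bounded $L$-time, so the same hub-bridging scheme applies verbatim to the deterministic far-away target $v$.

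The hardest part will be the bridging step, because the BP coupling is no longer valid once one enters the super-hub regime, where vertex-weights depend on $n$ and edge correlations become significant. The plan is to replace the tree-like analysis there by direct first- and second-moment estimates on the number of $\eta_i$-short edges between consecutive weight-layers, exploiting the independence inherent in Poisson thinning of the edge-weights, while simultaneously ensuring that the two already-explored $L$-balls around $u$ and $v$ do not interfere with each other or with the bridge. Carrying this out uniformly in $n$ for GIRG, together with the extra geometric constraint that $v$ is a deterministic far-away vertex in SFP rather than a typical one, is where most of the technical work will concentrate.
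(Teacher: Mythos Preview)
Your overall architecture---explosion from each endpoint plus a short hub bridge---is the same as the paper's, and you correctly identify the bridging step as the crux. But two aspects of your execution diverge from the paper in ways worth noting.

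First, you couple the local exploration to an abstract age-dependent branching process and identify the limit as its explosion time. The paper instead couples $\mathrm{GIRG}_{W,L}(n)$ (after a blow-up) to the \emph{infinite spatial model} $\mathrm{EGIRG}_{W,L}(1)$ on all of $\mathbb R^d$, and the limiting random variable is the explosion time of the origin \emph{in that spatial graph}, not in a BP. This matters because the local limit of GIRG is not a Galton--Watson tree: vertices carry locations, edge probabilities depend on geometry, and there is no clean BP description of the neighbourhood. Your BP coupling would need a separate argument that the BP explosion time and the EGIRG explosion time coincide in distribution; the paper sidesteps this entirely by working in the spatial model from the start. For the lower bound the paper does use a branching process, but it is a \emph{branching random walk in random environment} (Section~4), employed solely to control the maximal spatial displacement of the graph-distance balls $B^G(v_n^q,k_n)$. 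This is how disjointness (hence independence) of the two sides is established rigorously---your ``small compared to $n$'' needs exactly this displacement control.

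Second, for the bridge the paper does not use weight-doubling through $\eta_i$-short edges and eventual super-hub adjacency. Instead it performs weight-dependent percolation with retention probability $\exp\{-(\log w_u)^\gamma-(\log w_v)^\gamma\}$, proves the key structural fact that the percolated graph is \emph{again a GIRG with unchanged $\tau$ and $\alpha$} (Claim~5.2), and then builds a deterministic doubly-exponential \emph{boxing system} (Lemma~5.3) in which centres of subboxes in consecutive annuli are connected within the percolated graph. Because only short edges survive percolation, the total length of the centre-to-centre path is controlled by the summable series in \eqref{lengthboxingpath}, giving $\varepsilon_K\to 0$. Your scheme of choosing one $\eta_i$-short edge per step to a higher-weight vertex is plausible but would need to confront the spatial constraint directly: in GIRG a high-weight vertex is \emph{not} automatically adjacent to higher-weight vertices far away, so you would have to track locations along your doubling chain, which is precisely what the boxing structure does systematically.
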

\end{mdframed}

\medskip
\cb The criterion on the edge-weight distribution $L$ seems somewhat vague, but it is explicit. 
In fact, the necessary and sufficient criterion for a BP to be explosive appeared first in \cite{AmiDev13} in great generality and for power-law offspring distributions it simplifies to an explicitly computable sum only involving the distribution function $F_L$, see \cite{KomCMJ} for a proof. Namely, $L$ forms an explosive age-dependent BP with power-law offspring distributions if and only if 
\be \sum_{k=1}^\infty F_L^{(-1)}(\mathrm e^{-\mathrm e^k})<\infty.\ee \cz
	In \cite[Theorem 7]{BriKeuLen15} it is shown that \emph{hyperbolic random graphs} (HRG) are a special case of GIRGs, i.e. for every set of parameters in a hyperbolic random graph, there is a set of parameters in GIRG that produce graphs that are equal in distribution. \cb This suggest that our result for GIRGs carries through for hyperbolic random graphs, however, it is not obvious that the parameter setting of HRG as a GIRG satisfies the conditions of Theorem \ref{meta-theorem}. We devote Section \ref{s:hyperbolic} to show that HRGs (when considered as GIRGs) satisfy all the (hidden) conditions in Theorem \ref{meta-theorem}, and hence we obtain the following corollary:\cz
	\begin{corollary} The statement of Theorem \ref{meta-theorem} for GIRG remains word-for-word valid for hyperbolic random graphs as well. \end{corollary}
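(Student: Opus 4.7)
The plan is to invoke the equivalence in distribution between hyperbolic random graphs and a specific sub-family of GIRGs established in \cite[Theorem 7]{BriKeuLen15}, and then to verify that the parameters of the GIRG arising from an HRG fall within the hypotheses of Theorem~\ref{meta-theorem}. Once this verification is complete, the corollary follows because the joint distribution of the underlying graph together with the i.i.d.\ edge-weights $(L_e)_e$ is the same in both models, hence so is the distribution of weighted distances $d_L(u,v)$, and Theorem~\ref{meta-theorem} already delivers the distributional limit in the GIRG setting.

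First I would recall the HRG construction: $n = \Theta(\e^{R/2})$ points are sampled in the hyperbolic disk of radius $R$ with a density that is (approximately) exponential in the radial coordinate, and two points are connected either deterministically when their hyperbolic distance is below $R$, or independently with a probability that decays as a power of $\e^{(d_H-R)/2}$ controlled by a temperature parameter. Following \cite[Theorem 7]{BriKeuLen15}, I would translate the radial coordinate $r_v$ of each vertex into a GIRG vertex-weight $w_v := C\,\e^{(R - r_v)/2}$ for an appropriate constant $C$, and the angular coordinate into a position on the circle $\mathbb{S}^1$ (a one-dimensional torus). Under this change of variables, the exponential density of $r_v$ transforms into a power-law density for $w_v$ with exponent $\tau = 2\alpha_{\mathrm{HRG}} + 1$, where $\alpha_{\mathrm{HRG}}$ is the curvature-type parameter of the HRG, and the connection probability takes exactly the GIRG form with a long-range parameter that depends on the HRG temperature.

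Next I would verify the three hidden hypotheses of Theorem~\ref{meta-theorem} one by one. The vertex-weight distribution is power-law with exponent $\tau = 2\alpha_{\mathrm{HRG}} + 1$, so the condition $\tau \in (2,3)$ translates precisely into the HRG regime $\alpha_{\mathrm{HRG}} \in (1/2, 1)$, which is the classical ultrasmall-world regime of the HRG. The condition on the edge-weight distribution $L$ is purely a condition on $F_L$ (the explicit sum criterion $\sum_{k\ge 1} F_L^{(-1)}(\e^{-\e^k})<\infty$ of \cite{AmiDev13,KomCMJ}), and is therefore preserved verbatim under the translation. The remaining GIRG-specific structural assumptions (such as the form of the connection kernel and the required regularity of the weight distribution) are checked directly from the explicit formulas of the HRG-to-GIRG map. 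I would carry out these checks in Section~\ref{s:hyperbolic} as the authors promise.

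The main obstacle, and hence the heart of the section, is the careful bookkeeping of constants and tail behaviour in the HRG-to-GIRG translation: one must confirm that the induced GIRG satisfies the \emph{precise} quantitative form of the assumptions used to prove Theorem~\ref{meta-theorem} (in particular, that the size-biased vertex-weight distribution generates the \emph{same} power-law offspring distribution used to set up the explosive age-dependent branching process approximation), and that the finite-volume torus of radius $\Theta(\e^{R/2})$ used in HRG matches the scaling of the GIRG box of volume $n$. A secondary subtlety is that the HRG has no explicit vertex-weight variables, so one must argue that the coupling in \cite[Theorem 7]{BriKeuLen15} is distributional and that the i.i.d.\ edge-weights $L_e$ are assigned after the graph is sampled; this makes the passage of $d_L(u,v)$ through the coupling automatic.
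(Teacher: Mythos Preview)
Your high-level approach matches the paper's: map HRG to GIRG via $W_v^{(n)}=\e^{(R_n-r_v)/2}$ and the angular-to-position map, identify $\tau=2\alpha_H+1$ and (for the parametrised model) $\alpha=1/T_H$, and then verify that the resulting GIRG satisfies the precise hypotheses of Theorem~\ref{Th:GIRGexplosive}. For the parametrised HRG this is exactly what the paper does in Section~\ref{s:hyperbolic}, including an explicit verification of Assumption~\ref{assu:extendable} with concrete $I_\Delta(n), I_w(n), \epsilon(n)$; your sketch is correct there, though you should be aware that this last check requires a genuine asymptotic expansion of $\cosh(d_H^{(n)})$ and is not merely ``bookkeeping of constants''.

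There is, however, a real gap in your proposal for the \emph{threshold} HRG (vertices connected iff $d_H\le R_n$, no temperature). Under your translation the connection probability becomes an indicator function, which formally corresponds to $\alpha=\infty$ and therefore does \emph{not} satisfy Assumption~\ref{assu:GIRGgen} (that assumption requires a finite $\alpha>1$ and power-law decay in the distance). Citing \cite[Theorem~7]{BriKeuLen15} does not rescue you here: the threshold HRG lies outside the assumption set under which Theorem~\ref{Th:GIRGexplosive} is proved. The paper handles this by introducing a separate pair of assumptions tailored to indicator kernels (Assumption~\ref{assu:GIRG-inf} with $\overline g_\infty,\underline g_\infty$ and a modified Assumption~\ref{assu:extendable-2} in which the error is measured by the Lebesgue measure of a small interval rather than a uniform relative bound), and then proving a standalone Proposition~\ref{Th:threshold} asserting that Theorem~\ref{Th:GIRGexplosive} remains valid under these alternate assumptions. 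That proposition requires re-examining the edge-coupling (Claim~\ref{claim:edge-containment}), the BRW kernel estimates, Claim~\ref{claim:percolation}, and Lemma~\ref{lemma:weightconnectioncenters} under the indicator kernel. Your proposal treats both HRG variants uniformly and does not flag that the threshold case falls outside the framework and needs this extra work.
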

	
	\subsection*{Notation}
	We write rhs and lhs for right-hand side and left-hand side, respectively, wrt for with respect to, rv for random variable, i.i.d.\ for independent and identically distributed, pdf for probability distribution function. Generally, we write $F_X$ for the probability distribution function of a rv $X$, and $F^{(-1)}_X$ for its generalised inverse function, defined as $F^{(-1)}_X(y):=\inf\{t\in\mathbb{R}:F_X(t)\geq y\}$, and we write $\mathcal{L}eb$ for the Lebesgue measure.
	An event $E$ happens almost surely (a.s.) when $\Pv(E)=1$ and and a sequence of events $(E_n)_{n\in\mathbb{N}}$ holds with high probability (whp) when $\lim_{n\rightarrow \infty}\Pv(E_n)=1$. For rvs $(X_n)_{n\in\mathbb{N}},X, Y$, we write $X_n\overset{d}{\rightarrow} X$ and $X\overset{d}{\geq} Y$ when the $X_n$ converges in distribution to $X$, and when $X$ stochastically dominates $Y$, \cb that is, for all $x \in \mathbb{R}: F_X(x)\leq F_Y(x)$\cz, respectively. The rvs $(X_n)_{n\in\N}$ are tight, if for every $\eps>0$ there exists a $K_\eps>0$ such that $\P{|X_n|> K_\eps}<\eps$ for all $n$. For $n\in\mathbb{N}$, let $[n]:=\{1,\ldots,n\}$, and for two vertices  $u,v$, let $u\leftrightarrow v$ denote the event that $u, v$ are connected by an edge $e=(u,v)$. $\|x-y\|$ denotes the Euclidean norm between $x$ and $y$. We denote by $\lfloor x\rfloor,\lceil x\rceil$ the lower and upper integer part of $x\in\mathbb{R}$, respectively, while, when $x\in\mathbb{R}^d$, $\lfloor x\rfloor,\lceil x\rceil$ denotes taking the upper/lower integer part element-wise.  We write $x\wedge y:=\min\{x,y\}$. We generally denote vertex/edge sets by $\mathcal V, \mathcal E$ while $|\cdot|$ denotes the size of the set. 
	
	\section{Model and results}\label{sec:Modelintroduction}

	We begin with introducing the Geometric Inhomogeneous Random Graph model (GIRG) from \cite{BriKeuLen15}.
	
	\begin{definition}[Geometric Inhomogeneous Random Graph]\label{def:GIRG} \cb Let $W^{(n)}\ge 1, L\ge 0$ be random variables. \cz
		Let $V:=\left[n\right]$, and consider a \cb measure space \cz $(\mathcal{X}, \nu)$ with $\nu(\mathcal{X})=1$. Assign to each vertex \cb$i\in [n]$ \cz an i.i.d.\ position vector $x_i\in \mathcal{X}$ sampled from the measure $\nu$, and a vertex-weight $W_i^{(n)}$, \cb an i.i.d.\ copy of  \cb $W^{(n)}$. Then, conditioned on $(x_i, W_i^{(n)})_{i\in [n]}$  edges are present  independently and for any $u,v\in [n]$,
		\be\label{eq:gn-intro}
		\mathbb{P}\big(u\leftrightarrow v \text{ in } \mathrm{GIRG}_{W,L}(n)\mid (x_i, W_i^{(n)})_{i\in [n]}\big):=g_n\big(x_u,x_v,(W_i^{(n)})_{i\in [n]}\big),
		\ee
		where $g_n: \mathcal{X} \times \mathcal{X}\times \mathbb{R}^n_+ \rightarrow [0,1]$ \cb measurable\cz. 
		Finally, assign to each present edge $e$ an edge-length $L_e$, an i.i.d.\ copy of $L$.\cz
		We denote the resulting graph by $\mathrm{GIRG}_{W,L}(n)$.
	\end{definition} 
	In \cite{BriKeuLen15}, when $\mathcal{X}=\mathcal X_d:=[0,1]^d$, with $d$ standing for the parameter of the dimension, the following bounds were assumed on $g_n$. 
	There is a parameter $\alpha>1$, and $0<c_1\leq C_1 \leq 1$, such that for all $n$ \cb and all $u,v\in[n]$\cz,
	\begin{equation}
	\label{GIRG}
	c_1\le\frac{g_n\big(x_u,x_v,(W_i^{(n)})_{i\in [n]}\big)}{1\wedge \|x_u-x_v\|^{-\alpha d}(W_u^{(n)} W_v^{(n)}/\sum_{i=1}^n W_i^{(n)})^\alpha} \le C_1.
	\end{equation}
	In this paper, we use a slightly different assumption that captures a larger class of $g_n$ when $\Ev[W^{(n)}]<\infty$.
	Let us introduce two functions, $\overline{g}, \underline{g}:\R^d\times \R_+\times \R_+\to [0,1]$, having parameters $\al, \gamma, \underline a_1,\overline a_1, a_2\in \R_+$:
	\be\label{eq:gunder}\ba
	\overline{g}(x,w_1,w_2)&:=1\wedge \overline a_1\|x\|^{-\alpha d}\left(w_1 w_2\right)^\alpha,\\
	\underline{g}(x,w_1,w_2)&:=\e^{-a_2((\log w_1)^\gamma+(\log w_2)^\gamma)}\wedge \underline a_1 \|x\|^{-\alpha d}\left(w_1 w_2\right)^\alpha,
	\ea
	\ee
	\begin{assumption}\label{assu:GIRGgen}
		There exist parameters $\al>1, \gamma<1$, $\overline a_1, \underline a_1$, $a_2 \in \R_+$ and $0<c_1\leq C_1 <\infty$, such that for all $n$ \cb and all $u,v\in[n]$\cz, $g_n$ in \eqref{eq:gn-intro} satisfies 
		\begin{equation}\label{GIRGgeneral}
		c_1 \cdot \underline{g}\big(n^{1/d}(x_u-x_v),W_u^{(n)},W_v^{(n)}\big) \le g_n\big(x_u,x_v,(W_i^{(n)})_{i\in [n]}\big)\le   C_1 \cdot \overline{g}\big(n^{1/d}(x_u-x_v),W_u^{(n)},W_v^{(n)}\big).
		\end{equation}
	\end{assumption}
	Comparing the upper and lower bounds in \eqref{GIRGgeneral} to those in \eqref{GIRG},  by multiplying the spatial difference $x_u-x_v$ by $n^{1/d}$ in $\overline g, \underline g$, we have replaced $\sum_{i=1}^n W_i^{(n)}$ in \eqref{GIRG} by a constant times $n$ in \eqref{GIRGgeneral}. For large $n$ this does not make a difference when $\Ev[W^{(n)}]<\infty$, since in this case the sum is asymptotically $n\Ev[W^{(n)}]$ by the Law of Large Numbers. A more important change is that we have altered the first argument of the minimum in the lower bound in \eqref{GIRG}. The reason for the extension of the lower bound in \eqref{GIRG} to \cb the weaker \cz form in \eqref{GIRGgeneral} is that models satisfying \eqref{GIRGgeneral} still show (asymptotically) the same qualitative behaviour as the ones satisfying \eqref{GIRG}, and when applying weight-dependent percolation, it becomes necessary to allow edge probabilities to satisfy only the lower bound in \eqref{GIRGgeneral} but not \eqref{GIRG}. We discuss this in more detail in Section \ref{sec:boxinginfinitecomponent}.
	\begin{figure}[h]
		\includegraphics[height=5.15cm]{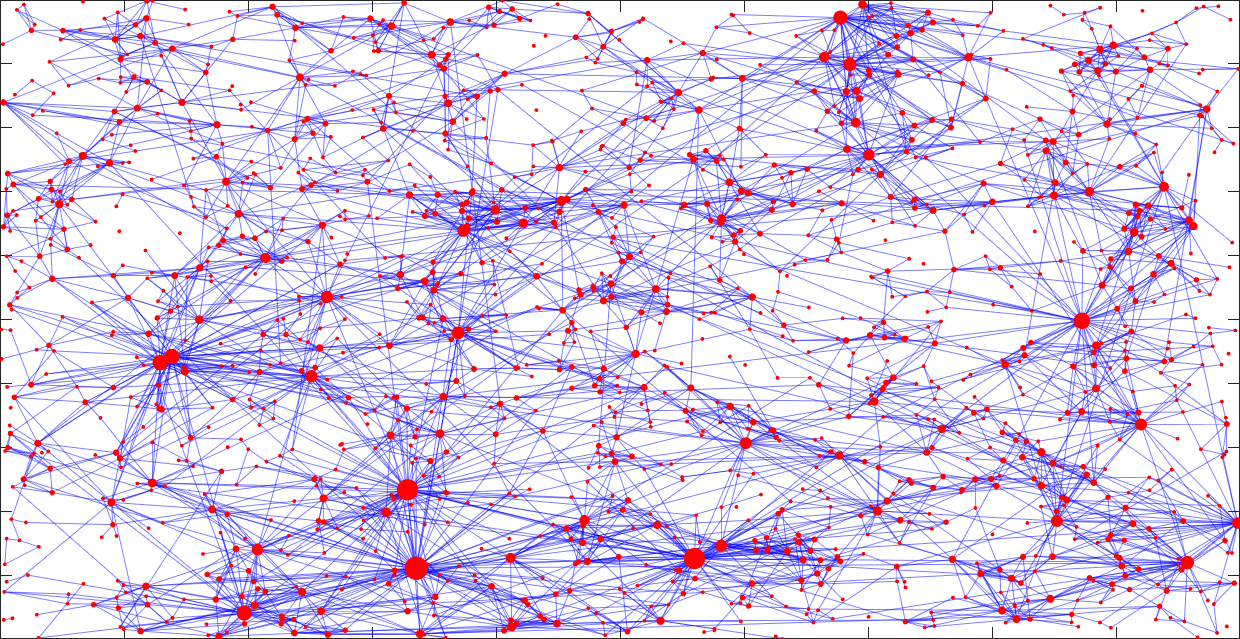}\caption{A realisation of the GIRG, for $\alpha=1.95,\tau=2.8525,d=2,n=1000$.}\label{Figure:GIRG}	
	\end{figure}
A similar model is scale-free percolation (SFP), introduced by Deijfen, van der Hofstad and Hooghiemstra in \cite{DeijHofHoog2013}.  In Section \ref{sec:Blowuppoissonization} \cb we discuss the relation \cz of GIRG and SFP in more detail. 	A realisation of (part of) the GIRG model with power-law weights can be seen in Figure \ref{Figure:GIRG}.
	
	\begin{definition}[Scale-free percolation]\label{def:SFP}Let $d\ge 1$ be an integer and $W\ge 1$ a random variable. We assign to each vertex $i\in\mathbb{Z}^d$ a random weight $W_i$, \cb an i.i.d.\ copy of $W$\cz. For a parameter $\widetilde{\alpha}>0$ and a percolation parameter $\lambda>0$, conditioned on $(W_i)_{i\in\Z^d}$, we  connect any two non-nearest neighbours $u,v\in\mathbb{Z}^d$ independently with probability 
		\be\label{SFP}
		\mathbb{P}\left(u\leftrightarrow v\mid \|u-v\|>1, \left(W_i\right)_{i\in\mathbb{Z}^d}\right)=1-\exp\left\{-\lambda\|u-v\|^{-\widetilde{\alpha}} W_u W_v\right\}.
		\ee
		\cb Vertices with distance at most one \cz are connected with probability $1$. Finally, we assign to each edge $e$ an edge-length $L_e$, \cb an i.i.d.\ copy of  $L$\cz. We denote the resulting random graph by $\mathrm{SFP}_{W,L}$.
	\end{definition}
	\subsubsection*{Vertex-weight distribution}
	To be able to produce power-law degree distributions, in both the GIRG  and SFP, it is generally interesting to study the case when the weight variables asymptotically follow a power-law distribution. \cb In the literature, (see e.g.\ Chung-Lu, or Norros Reitu model), generally the same vertex-weight distribution $W^{(n)}\equiv W$ is assumed for all values $n\ge 1$. For our results to carry through to hyperbolic random graphs it is necessary to allow \emph{$n$-dependent weight distributions} that converge to some limiting distribution.  \cb Hence we pose the following assumption on the weight distributions $W^{(n)}$\cz.  We say that a function varies slowly at infinity when $\lim_{x\rightarrow \infty} \ell\left(cx\right)/\ell\left(x\right)=1$ for all $c>0$.
 
	\begin{assumption}[Power-law limiting weights]\label{assu:weight} The vertex-weight distributions $(W^{(n)})_{n\ge 1}$ satisfy the following: 
	\begin{enumerate}
	\item
 $W^{(n)}\ge 1$ a.s. for all $n\ge 1$, 
 \item For all $n\ge 1$ there exists an $M_n\in \R_+$ with the property that $\Pv(W^{(n)}>M_n) =o(n^{-1})$ and that for all $x\in[1, M_n]$, 
		\begin{equation}\label{powerlaw-n}
		\Pv(W^{(n)}>x)=x^{-\left(\tau-1\right)}\ell^{(n)}(x) \quad \text{ and } \quad \underline \ell(x)\le \ell^{(n)}(x)\le \overline \ell(x),
		\end{equation}
		for some functions $\underline \ell(x), \overline \ell(x)$ that vary slowly at infinity. \item There exists a distribution $W$ and a function $\ell$ that varies slowly at infinity, such that
		\begin{equation}\label{powerlaw}
		\P{W>x}=\ell\left(x\right)x^{-\left(\tau-1\right)},
		\end{equation}
	and $W^{(n)}\ {\buildrel d \over \rightarrow}\ W$.
	\end{enumerate}
	\end{assumption}
	By possibly adjusting $g_n$, we have assumed that $\P{W^{(n)}\geq 1}=1$ to capture all $W^{(n)}$ with support separated from $0$. Note also that we do not require that $\ell^{(n)}(x)$ is slowly varying, it is enough if its limit $\ell(x)$ has this property. The reason for this rather weak assumption is that it allows for slightly truncated power-law distributions, and it is also necessary for HRG: there, $\ell^{(n)}(x)$ contains a term $(x/n)^\beta$ for some $\beta>0$, so $\ell^{(n)}(x) $ in itself is not slowly varying, only its limit, when this term vanishes as $n\to \infty$.
	
	\cb Recall that a joint distribution $(\widehat W^{(n)}, \widehat W)$ on $\R\times \R$ is a coupling of $W^{(n)}$ and $W$, if its first and second marginals have the law of $W^{(n)}$ and $W$, respectively. The coupling error is defined as $\Pv(\widehat W^{(n)}\neq\widehat W)$.
	 Recall that the total variation distance equals
	\be d_{\mathrm{TV}}(W^{(n)}, W) := \sup_{A \subset \R} | \Pv(W^{(n)}\in A) - \Pv(W \in A) |=\inf_{(\widehat W^{(n)}, \widehat W) \text{ coupling of }  W^{(n)}, W } \Pv(\widehat W^{(n)} \neq \widehat W).  \ee
	the total variation distance between $W^{(n)}$ and $W$.  By Assumption \ref{assu:weight}, $d_{\mathrm{TV}}(W^{(n)}, W) \to 0$ as $n\to \infty$. Further, whenever $d_{\mathrm{TV}}(W^{(n)}, W)\to 0$, (by definition of the infimum, see also \cite[Theorem 4.2]{Jans10})
 it is true that there exists a sequence of \emph{couplings} of the rvs $(W^{(n)})_{n\ge 1}, W$ such that the coupling error tends zero.
	Let us thus take such a sequence of couplings $(\widehat W^{(n)},\widehat W)$ and define 
	\be \label{eps-tv} \epsilon_{\mathrm TV}(n):= \Pv(\widehat W^{(n)}\neq\widehat W) \to 0.\ee \cz

	\subsection*{Results}
Theorems \ref{Th:GIRGexplosive} and \ref{Th:SFPexplosive} below, our main results,  
 are the precise versions of Theorem \ref{meta-theorem} above. Before formulating the results we introduce some extra assumptions, and models that help us state the distributional limits in our theorems.
	Let us express
	\be g_n(x_u, x_v, (W_i^{(n)})_{i\in[n]})=:\wit g_n(x_u, x_v, W_u^{(n)}, W_v^{(n)}, (W_{i}^{(n)})_{i\in[n]\setminus\{u,v\}}),\ee i.e. the notation $\wit g_n$ emphasises the weights of the vertices $u,v$ in $g_n$ in \eqref{eq:gn-intro}. Heuristically, the next assumption ensures that $g_n$, the edge-connection probability function, converges to some limiting function $h$ that only depends on the spatial distance between the two vertices and on their weights. 
 	\begin{assumption}[Limiting connection probabilities exists]\label{assu:extendable}\cb
		Set $(\mathcal{X}, \nu):=([-1/2, 1/2]^d, \mathcal{L}eb)$ for some integer $d\ge 1$. We assume that 
		\begin{enumerate}
		\item there exists an event $\CL_n$ measurable wrt to the $\sigma$-algebra generated by the weights $(W^{(n)}_i)_{i\in[n]}$ that satisfies $\lim_{n\to \infty}\Pv(\CL_n) =1$,
		\item there exist intervals $I_\Delta(n)\subseteq \R_+, I_w(n)\subseteq [1,\infty)$ and a sequence $\epsilon(n)\in \R_+$, such that $I_\Delta(n)\to (0, \infty), I_w(n)\to [1,\infty), \epsilon(n)\to 0$ as $n\to \infty$,
		\item there exists a function  $ h:  \R^d\times \R_+\times\R_+\to [0,1]$, 
		\end{enumerate}
		  such that whenever the (fixed) triplet $(\Delta,w_u,w_v)\in\R^d\times \R_+\times\R_+$ satisfies that $\|\Delta\|\in I_\Delta(n), w_u, w_v\in I_w(n)$, on the event $\CL_n$, $g_n$ in \eqref{eq:gn-intro} satisfies for $\nu$-almost every $x \in  \mathcal{X}$, \cb all $u,v\in[n]$  that 
		\be \label{eq:h-intro}
		\frac{|\wit g_n(x, x+\Delta/n^{1/d}, w_u, w_v, (W_i^{(n)})_{i\in [n]\setminus\{u,v\}})- h(\Delta, w_u, w_v)|}{h(\Delta, w_u, w_v)}
		\le  \epsilon(n).
		\ee \cz\end{assumption}
	\cb The function $h$ is the limit of the connection probability function $g_n$, when the two vertices involved are distance $\Delta/n^{1/d}$ away from each other. Since $\sum_{i=1}^nW_i^{(n)}=\Theta(n)$, the scale $\Theta(n^{1/d})$ is the scale for the distance between two vertices when $g_n$ is of constant order in \eqref{GIRG}. The function $h$ is important since it captures the limiting connection probabilities, that allows us to extend the model to the whole $\R^d$.  The arguments of $h$ represent the distance between two vertices $(\Delta/n^{1/d})$ and their weights, respectively.
 The need for the event $\CL_n$ is coming from the randomness of the $(W_i^{(n)})_{i\le n}$, a typical choice for $\CL_n$ is e.g.\ $\CL_n:=\{\sum_{i\le n} W_i^{(n)} - n \Ev[W^{(n)}]\in (-n^\beta, n^\beta) \}$ for some $\beta<1$. 	
 
Importantly, Assumption \ref{assu:extendable} implies that the connection probabilities converge whp to the function $h(\Delta, w_u, w_v)$, and this  limiting connection function only depends on the vertex-weights and the spatial distance of the vertices involved. Importantly, the limiting probability is \emph{translation invariant}.  The condition requires that in the range where the connection probabilities are not vanishing (i.e. the spatial difference between the two vertices is of order $n^{-1/d}$), the relative error between the connection probability and its limit is uniformly bounded by some function $\epsilon(n)$, on intervals that tend to the full possible range. This condition is satisfied when for instance $g_n$ equals either the upper or the lower bound in \eqref{GIRG} or in \eqref{GIRGgeneral} and the weights are i.i.d.\ with $\Ev[W^{(n)}]=\Ev[W]<\infty$. The bound \eqref{eq:h-intro} with $\epsilon(n)=n^{-\kappa(\tau)}, I_\Delta(n)=(0, \infty), I_w(n)=[1, \infty)$ for an appropriate $\kappa(\tau) >0$ can be shown using that the fluctuations of $\sum_{i\le n} W_i^{(n)}$ around $n\Ev[W^{(n)}]$ are of order $n^{1/2}$ whenever $\tau>3$ and are $n^{1/(\tau-1)}$ when $\tau\in(2,3)$, and that for any two positive numbers $a,b$, $|(1\wedge a)-(1\wedge b)\le 1\wedge |a-b|$.
Hyperbolic random graphs satisfy this assumption with the choices $\epsilon(n)=n^{-1}$ and $I_\Delta(n)=[n^{-\beta(\al)}, n^{\beta(\al)}), I_w(n)=[1,n^{\beta(\al)}]$ for some $\beta(\al)$, that we show in Section \ref{s:hyperbolic}.
	\begin{claim}\label{claim:hg}
		Suppose $g_n$ satisfies Assumption \ref{assu:GIRGgen}, with some $\al, \gamma$ and Assumption \ref{assu:extendable}. Then with the \emph{same} $\al, \gamma$ and for some $\underline c_1, \overline C_1$ with $0< \underline c_1\le c_1\le C_1\le \overline C_1<\infty$, the limit $h$ in \eqref{eq:h-intro} satisfies 
		\be  \underline c_1 \underline g(\Delta, w_u, w_v)\le h(\Delta, w_u, w_v)\le \overline C_1 \overline g(\Delta, w_u, w_v).  
		\ee
	\end{claim}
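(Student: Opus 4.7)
The plan is to combine the two hypotheses pointwise and pass to the limit $n\to\infty$. Because $h$ is deterministic and the bounds in \eqref{GIRGgeneral} are pointwise (hence hold for every realisation of the weights), while \eqref{eq:h-intro} holds on an event of probability tending to $1$, one can realise both simultaneously for large $n$ and take limits.

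Fix an arbitrary triplet $(\Delta, w_u, w_v)\in \R^d\times[1,\infty)\times[1,\infty)$ with $\|\Delta\|>0$. Since $I_\Delta(n)\to(0,\infty)$ and $I_w(n)\to[1,\infty)$ by Assumption \ref{assu:extendable}(2), there exists $n_0$ such that for every $n\ge n_0$ we have $\|\Delta\|\in I_\Delta(n)$ and $w_u,w_v\in I_w(n)$. Because $\Pv(\CL_n)\to 1$, for every $n\ge n_0$ (enlarging $n_0$ if needed) we can pick a realisation $\omega_n$ of the weights $(W_i^{(n)})_{i\in[n]\setminus\{u,v\}}$ such that $\omega_n\in\CL_n$. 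On this realisation, set $x_v:=x_u+\Delta/n^{1/d}$ (choosing any $x_u$ for which \eqref{eq:h-intro} applies, which is allowed since it holds for $\nu$-almost every $x$). Then \eqref{eq:h-intro} gives
\be\label{eq:sandwich-h}
(1-\epsilon(n))\,h(\Delta,w_u,w_v)\;\le\; g_n\bigl(x_u,x_v,(W_i^{(n)})_{i\in[n]}\bigr)\;\le\; (1+\epsilon(n))\,h(\Delta,w_u,w_v),
\ee
while Assumption \ref{assu:GIRGgen}, applied to the same realisation and using that $\overline g,\underline g$ depend on their first argument only through its Euclidean norm (so $\overline g(n^{1/d}(x_u-x_v),\cdot,\cdot)=\overline g(\Delta,\cdot,\cdot)$ and similarly for $\underline g$), yields
\be\label{eq:sandwich-g}
c_1\,\underline g(\Delta,w_u,w_v)\;\le\; g_n\bigl(x_u,x_v,(W_i^{(n)})_{i\in[n]}\bigr)\;\le\; C_1\,\overline g(\Delta,w_u,w_v).
\ee

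Combining the upper bound in \eqref{eq:sandwich-h} with the upper bound in \eqref{eq:sandwich-g} gives $(1-\epsilon(n))\,h(\Delta,w_u,w_v)\le C_1\,\overline g(\Delta,w_u,w_v)$, and combining the lower bound in \eqref{eq:sandwich-h} with the lower bound in \eqref{eq:sandwich-g} gives $c_1\,\underline g(\Delta,w_u,w_v)\le(1+\epsilon(n))\,h(\Delta,w_u,w_v)$. Since $\epsilon(n)\to 0$, letting $n\to\infty$ yields
\be
c_1\,\underline g(\Delta,w_u,w_v)\;\le\; h(\Delta,w_u,w_v)\;\le\; C_1\,\overline g(\Delta,w_u,w_v),
\ee
so the conclusion holds with the choices $\underline c_1:=c_1$ and $\overline C_1:=C_1$, and with the \emph{same} parameters $\alpha,\gamma$ used in $\overline g,\underline g$ of \eqref{eq:gunder}, as these parameters are inherited unchanged from \eqref{GIRGgeneral}.

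The argument is essentially a direct sandwich-and-limit, so there is no substantive obstacle. The only point to verify carefully is that for each fixed target triplet $(\Delta,w_u,w_v)$ one can simultaneously (i) satisfy the interval conditions $\|\Delta\|\in I_\Delta(n)$, $w_u,w_v\in I_w(n)$ guaranteed by Assumption \ref{assu:extendable}(2), and (ii) pick weights $(W_i^{(n)})_{i\in[n]\setminus\{u,v\}}$ that land in $\CL_n$. Both are immediate: the intervals exhaust $(0,\infty)$ and $[1,\infty)$, and $\CL_n$ has positive probability for large $n$. Note that $\epsilon(n)$ need not depend on the chosen realisation, because the bound \eqref{eq:h-intro} is uniform over $\omega\in\CL_n$.
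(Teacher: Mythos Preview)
Your proof is correct and follows essentially the same approach as the paper: rearrange the relative-error bound \eqref{eq:h-intro} to sandwich $g_n$ around $h$, combine with the pointwise bounds \eqref{GIRGgeneral}, and let $n\to\infty$. You are slightly more explicit about selecting a realisation in $\CL_n$ and about the limit yielding the sharp constants $\underline c_1=c_1$, $\overline C_1=C_1$, whereas the paper simply absorbs the factors $(1\pm\epsilon(n))^{-1}$ into adjusted constants; either version works.
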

	\begin{proof}
		Since $I
		_\Delta(n)\to (0, \infty), I_w(n)\to [1,\infty)$, using \eqref{GIRGgeneral}, rearranging \eqref{eq:h-intro} yields that
		\be \ba 
		h(\Delta,w_u,w_v)&\le (1-\epsilon(n))^{-1} g_n(x,x+\Delta/n^{1/d},(W_i^{(n)})_{i\in[n]}) 
		\leq (1-\epsilon(n))^{-1}C_1\overline{g}(\Delta,w_u,w_v),\\
		h(\Delta,w_u,w_v) &\geq (1+ \epsilon(n))^{-1}c_1\underline{g}(\Delta,w_u,w_v),
		\ea\ee
		for all \cb sufficiently \cz large $n$, thus the result follows by adjusting the constants $c_1, C_1$.
	\end{proof}
	To be able to define the distributional limit of weighted distances in GIRG, we need to define an infinite random graph, that we think of as the \emph{extension of $\mathrm{GIRG}_{W,L}(n)$ to $\R^d$}. This model is a generalisation of \cite{DepWut13}, and it can be shown that the continuum percolation model in \cite{FouMul18}, the (local weak) limit of hyperbolic random graphs, can be transformed to this model with specific parameters that we identify in Section \ref{s:hyperbolic} below.
	\begin{definition}[Extended GIRG]\label{def:EGIRG} Let $h:\R^d\times \R_+\times\R_+\to [0,1]$ be a function, $W\ge1, L\ge 0$ random variables and $\la>0$. We define the infinite random graph model $\Elambda$ as follows. 
		Let $\CV_\lambda$ be a homogeneous Poisson Point Process (PPP) on $\R^d$ with intensity $\la$, forming the positions of vertices. For each $x\in \CV_\lambda$ draw a random weight $W_x$, \cb an i.i.d.\ non-negative rv with pdf $F_W$. Then, conditioned on $(x, W_x)_{x\in \CV_\lambda}$,  edges are present  independently with probability 
		\be\label{eq:EGIRG-prob}  \Pv(y \leftrightarrow z \text{ in } \Elambda \mid (x,W_x)_{x\in \CV_\la}):= h( y-z, W_{y}, W_z).  \ee
		Finally, we assign to each present edge $e$ an edge-length $L_e$, an i.i.d.\ copy of $L$\cz.
		We write $(\CV_\la, \CE_\la)$ for the vertex and edge set of the resulting graph, that we denote by $\mathrm{EGIRG}_{W,L}(\lambda)$. \end{definition}
	Note that in the extension it is necessary to use the limiting function $h$ instead of $g_n$, since $g_n$ is not defined outside $\mathcal X_d$. We also use the limiting weight distribution $W$ instead of $W^{(n)}$ for the weights of vertices.
	
	To be able to connect $\mathrm{GIRG}_{W, L}(n)$ to the extended infinite model $\Elambda$, we `blow-up' $\mathrm{GIRG}_{W, L}(n)$ on $(\mathcal X, \nu)=([-1/2, 1/2]^d, \mathcal{L}eb)$, by a factor $n^{1/d}$ so that the average density of vertices becomes $1$. This is the next model we define.
	\begin{definition}[Blown-up $\mathrm{GIRG}_{W, L}(n)$]\label{def:BGIRG} Let $W^{(n)}\ge 1, L\ge 0$, and $g_n$ be 
	from Definition \ref{def:GIRG}.
		Let $(\wit{\mathcal{X}}_d(n), \nu_n):=([- n^{1/d}/2, n^{1/d}/2 ]^d, \mathcal {L}eb)$. 
		Assign to each vertex \cb$i\in [n]$ \cz an i.i.d.\ position vector $x_i\in \wit{\mathcal{X}}_d(n)$ sampled from the measure $\mathcal{L}eb$, and a vertex-weight $W_i^{(n)}$, \cb an i.i.d.\ copy of  \cb $W^{(n)}$. Then, conditioned on $(x_i, W_i^{(n)})_{i\in [n]}$  edges are present  independently and for any $u,v\in [n]$,
		\be\label{eq:gn-intro2}\ba
		\Pv(y \leftrightarrow z \text{ in } \BGIRG \mid (x_i, W_i^{(n)})_{i\in [n]})&:=g_n\big(x_u/n^{1/d},x_v/n^{1/d},(W_i^{(n)})_{i\in [n]}\big)\\
		&=:g_n^{\mathrm B}\big(x_u,x_v,(W_i^{(n)})_{i\in [n]}\big)\ea
		\ee
		Finally, assign to each present edge $e$ an edge-length $L_e$, an i.i.d.\ copy of $L$.\cz
		We write $\CV_{B}(n), \CE_{B}(n)$ for the vertex and edge-set of the resulting graphs, that we  denote by $\mathrm{BGIRG}_{W,L}(n)$. 	\end{definition}
		Naturally, $\mathrm{BGIRG}_{W,L}(n)\  {\buildrel d \over =}\  \mathrm{GIRG}_{W,L}(n)$ since it can be obtained as a deterministic transformation of the model defined in Definition \ref{def:GIRG}, namely, mapping every vertex $x_u\in [-1/2, 1/2]^d$ to $n^{1/d} x_u$ to obtain a vertex in $\BGIRG$. The advantage of working with the blown-up version is that it naturally yields a limiting infinite graph on $\R^d$, which is $\Eone$.
By \eqref{eq:gn-intro2} and \eqref{GIRGgeneral}, Assumption \ref{assu:GIRGgen} for $\BGIRG$ reduces to 
	\be\label{assu:BGIRG-prob} c_1 \underline g(\| x_u-x_v\|, W_u^{(n)}, W_v^{(n)}) \le g_n^{\mathrm{B}}(x_u, x_v, (W_i^{(n)})_{i\in [n]}) \le C_1 \overline g(\| x_u-x_v\|, W_u^{(n)}, W_v^{(n)}). \ee
	Next we define weighted and graph distance, and introduce notation for corresponding metric balls and their boundaries, and then define the \emph{explosion time} of the origin in $\mathrm{EGIRG}_{W,L}(\la)$.
	\begin{definition}[Distances and metric balls]\label{def:distances}
		For a path $\pi$, we define its length and $L$-length as $|\pi|=\sum_{e\in\pi}1$, 
		and $|\pi|_L:=\sum_{e\in\pi}L_e$, respectively. Let $\Omega_{u,v}:=\left\{\pi: \; \pi \text{ is a path from $u$ to $v$}\right\}$. For two sets $A, B\subseteq V$, we define the $L$-distance and the graph distance as
		\be\label{eq:distance-def}
		d_L\left(A, B\right):=\inf_{u\in A}\inf_{v\in B} \inf_{\pi \in \Omega_{u,v}}|\pi|_L, \qquad d_G\left(A,B\right):=\inf_{u\in A}\inf_{v\in B} \inf_{\pi \in \Omega_{u,v}}|\pi|.
		\ee
		with $d_q(A,B)=0$ when $A \cap B\neq \emptyset$ and $d_q(A,B)=\infty$ when there is no path from $A$ to $B$ for $q\in
		\{L, G\}$. For Euclidean distance, graph distance and $L$-distance, respectively, let
		\begin{equation}\label{metricballs}
		\begin{aligned}
		B^2\left(u,r\right):=&\left\{v\in V:\; \|x_u-x_v\|\leq r\right\},\\
		B^G\left(u,r\right):=&\left\{v\in V:\; d_G\left(u,v\right)\leq r\right\},\\
		B^L\left(u,r\right):=&\left\{v\in V:\; d_L\left(u,v\right)\leq r\right\},\\
		\end{aligned}
		\end{equation}
		denote the balls of radius r with respect to these distances, where $x_u,x_v$ denote the possibly random positions of vertices $u$ and $v$, respectively. For an integer $k\geq1$ we write \cb$\partial B^G\left(u,k\right):=B^G\left(u,k\right)\backslash B^G\left(u,k-1\right)$ \cz as the set of vertices that are at graph distance $k$ from $u$, and $ B^{L,G}\left(u,k\right)$ and \cb $\partial B^{L,G}\left(u,k\right)$ \cz for those vertices $v$ where the path realising $d_L(u,v)$ uses at most and precisely $k$ edges, respectively. We add a respective subscript $\la$ and $n$ to  these quantities when the underlying graph is $\Elambda$ and $\BGIRG$, defined in Definitions \ref{def:EGIRG}, \ref{def:BGIRG}, respectively.
	\end{definition} 
\begin{definition}[Explosion time]\label{def:explosiontime}
		Consider $\mathrm{EGIRG}_{W,L}(\la)$ as in Definition \ref{def:EGIRG}, and let $v\in \CV_\la$.  Let $\tau_\la^{\mathrm{E}}(v,k):=\inf\{t: |B^L(v,t)|=k \text{ in } \mathrm{EGIRG}_{W,L}(\la)\}$. Let the \emph{explosion time} of vertex $v$ be defined as the (possibly infinite) almost sure limit:
		\be Y_\la^{\mathrm{E}}(v):=\lim_{k\to \infty} \tau_\la^{\mathrm{E}}(v,k).\ee 
		The explosion time of the origin, $Y_\la^{\mathrm{E}}(0)$, is defined analogously when $\CV_\la$ is conditioned on having a vertex at $0\in \R^d$. We call $\Elambda$ \emph{explosive} if $\Pv(Y_\la^{\mathrm{E}}(0)< \infty)>0$. \cb We call any infinite path $\pi$ with $|\pi|_L<\infty$ \emph{an explosive path}\cz. 

	\end{definition}
\cb	 $B^L(v,t)$, defined in \eqref{metricballs}, stands for the vertices that are available within $L$-distance $t$. Hence, $\tau_\lambda^E(v,k)$ is the smallest radius $t$, in terms of the $L$-distance, such that the metric ball $B^L(v,t)$ around vertex $v\in\mathcal{V}_\lambda$ contains $k$ vertices\footnote{When considering $t$ as time, this is the time that $B^L(v, \cdot)$ ``swallows" the $k$th vertex as one increases $t$}. Then, $Y^E_\lambda(v)$ is the almost sure limit of $\tau_\lambda^E(v,k)$ as $k$ tends to infinity. In most common scenarios considering infinite graphs, this limit is infinite. $Y_\lambda^{E}(v)$ becomes finite when it is possible to reach infinitely many vertices within finite $L$-distance from $v$. In particular, if there is an infinite path of vertices $\pi=(\pi_0=v, \pi_1, \pi_2\dots)$ with total $L$-length $|\pi|_L<\infty$, then $Y_\lambda^{E}(v)$ is finite. In reverse, if the graph is locally finite, that is, all degrees are less than infinity, then one can show (e.g., by arguing by contradiction, as done below in \eqref{eq:intersect}) that $Y_\la^{\mathrm{E}}(v)<\infty$ implies the existence of a path $\pi$ emanating from $v$ with finite total $L$-length.
In particular, one can define
\be \label{eq:piopt-1} \cb\pi_{\mathrm{opt}}^1(v):=\arginf_{\pi: \pi_0=u, |\pi|=\infty}\{|\pi|_L\}\ee
 be the `shortest' explosive path to infinity from $u$ in $\mathrm{EGIRG}_{W,L}(1)$, that is, the path that realises the shortest $L$-length of all paths with infinitely many edges from $u$. 
 In case this path is not unique (which may happen if $L$ is not absolutely continuous) then any path realising the infimum can be chosen. It can generally be shown that
\be\label{eq:piopt} |\pi_{\mathrm{opt}}^1(u)|_L=Y_{1}^{\mathrm{E}}(u).\ee	 	 
	 This phenomenon is called explosion, and it was first observed in branching processes. See Harris \cite{Harris02} for one of the first conditions derived for a continuous-time branching processes to be explosive.\cz
	
	Our first result, analogous to \cite[Theorem 1.1]{HofKom2017} states that $\Elambda$  is not  explosive when $\Ev[W^2]<\infty$, and characterise the set of edge-length distributions $L$ that give explosivity when $\tau\in(2,3)$ in Assumption \ref{assu:weight}. We denote the distribution function of $L$ by $F_L$.
	\begin{theorem}\label{thm:exp-charact}
		Consider $\Elambda$ in Definition \ref{def:EGIRG}. Then, \cb $\Elambda$ is not explosive whenever $\Ev[W^2]<\infty$. That is, $Y_\lambda^{\mathrm{E}}(0)=\infty$ almost surely when $\Ev[W^2]<\infty$. \cz
		Let
		\be\label{Lassumption}
		\mathbf{I}(L):=\int_1^\infty F_L^{\left(-1\right)}\left(\exp\left\{-\mathrm{e}^y\right\}\right)\mathrm{d}y.
		\ee
		When  $W$ satisfies \eqref{powerlaw} with $\tau\in(2,3)$, then $\Elambda$ is explosive \emph{if and only if} $\mathbf{I}(L)<\infty$.
	\end{theorem}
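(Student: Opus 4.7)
The plan is to reduce the explosion question for $\Elambda$ to explosion of an age-dependent branching process (BP) that describes the $L$-distance exploration of the origin, and then to invoke the Amini--Devroye criterion \cite{AmiDev13}. I would build the BP breadth-first: the root has weight $W_\emptyset\sim F_W$, and an individual with weight $w$ has a Poisson$(\mu(w))$ number of children, where
\[ \mu(w):=\lambda\int_{\R^d}\int_1^\infty h(x,w,w')\,F_W(dw')\,dx, \]
each child having joint (displacement, weight) law proportional to $h(x,w,w')\,dx\,F_W(dw')$ together with an independent $L$-length on the parental edge. The Mecke equation for the PPP $\CV_\lambda$ provides a coupling under which the BP reveals the $L$-ball around $0$ in $\Elambda$ exactly up to the first ``spatial collision'' of a would-be BP-child with an already-revealed vertex; since such collisions can only merge vertices, they cannot create vertices at finite $L$-distance that the BP did not already contain, and so $Y_\lambda^{\mathrm{E}}(0)<\infty$ if and only if the BP is explosive.

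The offspring tails I would extract from Claim \ref{claim:hg}: since $\alpha>1$, a direct computation yields $\int_{\R^d}\bigl(1\wedge c\|x\|^{-\alpha d}(ww')^\alpha\bigr)\,dx=\Theta(ww')$, so $\mu(w)=\Theta(w\,\Ev[W])$. Consequently a uniformly chosen child has a size-biased weight $W^\star$ with $\Pv(W^\star>x)\propto\int_x^\infty w\,F_W(dw)/\Ev[W]$. Under Assumption \ref{assu:weight} with $\tau\in(2,3)$ this gives $\Pv(W^\star>x)=x^{-(\tau-2)}\widetilde\ell(x)$ with $\tau-2\in(0,1)$, so the per-generation offspring count (a mixed Poisson of rate $\Theta(W^\star)$) inherits a power-law tail of index $\tau-2$ and has infinite mean. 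When instead $\Ev[W^2]<\infty$, $\Ev[W^\star]<\infty$ and the offspring mean is finite.

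For non-explosion in the finite-variance case, the BP is a Crump--Mode--Jagers process with finite-mean offspring: its generation-$k$ population has a finite $k$-th moment recursion, and the total progeny born by any finite $L$-time $t$ is a.s.\ finite since each individual born by time $t$ has finite generation (the life-lengths are i.i.d.\ with positive essential infimum of any positive quantile, and each generation is finite). Hence the BP is non-explosive, and transferring through the coupling gives $Y_\lambda^{\mathrm{E}}(0)=\infty$ almost surely.

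For $\tau\in(2,3)$, I would invoke the Amini--Devroye criterion \cite{AmiDev13}: an age-dependent BP with offspring of regularly-varying tail index $\beta\in(0,1)$ and i.i.d.\ life-lengths $\sim F_L$ is explosive iff $\sum_{k\ge1}F_L^{(-1)}(e^{-e^{k/\beta}})<\infty$, which, by monotonicity of $F_L^{(-1)}$ and the integral-comparison test (as carried out in \cite{KomCMJ}), is equivalent for every $\beta\in(0,1)$ to $\mathbf{I}(L)<\infty$. The converse implication (non-explosion of $\Elambda$ when $\mathbf{I}(L)=\infty$) follows by dominating the exploration with a BP built from the upper kernel $\overline g$ of Claim \ref{claim:hg} and invoking non-explosion of that BP. The main technical obstacle is the forward implication, promoting BP-explosion to $\Elambda$-explosion, since an explosive ray of the BP could a priori collapse onto few spatial points of $\CV_\lambda$ under the coupling. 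I would handle this by a thinning argument: select an explosive ray inside the BP generated by the lower kernel $\underline g$ and place successive ray-individuals in nested annular shells of $\CV_\lambda$, using that along an explosive ray the weights grow doubly-exponentially so that the required spatial scales shrink rapidly and fresh shells host the needed vertices with overwhelming probability, producing an honest explosive path in $\Elambda$.
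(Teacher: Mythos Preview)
Your reduction to an age-dependent branching process captures the right intuition for the \emph{non-explosion} directions, and the paper proceeds similarly there: it couples the graph exploration to an upper-bounding Bernoulli BRW (Lemma \ref{lemma:ballsbrw}) and, for $\Ev[W^2]<\infty$, computes the multi-type expected reproduction kernel $\mu(w,\mathrm ds)\le c_\lambda\, ws\, F_W(\mathrm ds)$ to show that expected generation sizes grow at most exponentially, then invokes a Borel--Cantelli argument (Lemma \ref{non-explosion-exp}) to conclude that the $L$-distance to generation $k$ grows linearly. Your justification for this step is, however, incomplete: the sentence ``each individual born by time $t$ has finite generation (the life-lengths are i.i.d.\ with positive essential infimum of any positive quantile, and each generation is finite)'' is circular --- life-lengths need not be bounded away from zero, and finiteness of each generation does not by itself rule out an infinite ray reaching time $t$. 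You need either the Malthusian-parameter argument or the paper's explicit generation-size bound plus the minimum-edge-length estimate \eqref{eq:FL-convolution}.

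The more substantial divergence is in the \emph{explosive} direction. You correctly flag the obstacle in promoting BP-explosion to $\Elambda$-explosion (an explosive BP-ray might collapse onto finitely many spatial points under the Mecke coupling), but your proposed fix --- selecting an explosive ray, asserting its weights grow doubly exponentially, and placing successive ray-individuals in nested shells --- is not a proof: weights along an \emph{arbitrary} explosive ray need not grow at any prescribed rate, and the availability of fresh PPP points in the required shells with the required connectivity has to be argued. The paper sidesteps the coupling entirely and \emph{constructs the explosive path directly in the graph}: it builds a doubly-exponential boxing system (Lemma \ref{lemma:weightconnectioncenters}) around the origin, shows that the subbox centres in consecutive annuli have many mutual connections, and then follows a greedy path through centres choosing at each step the shortest available edge. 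Because the number of candidate edges at step $k$ is at least $\exp\{(D-1)C^k(\log\mu)/2\}/2$, the minimum of that many i.i.d.\ $L$-copies is summable precisely when $\mathbf I(L)<\infty$ (this is the equivalence in \cite[Lemma 2.5]{HofKom2017}). This direct construction is both cleaner and what actually makes the argument go through; your BP route would need a separate structural lemma about the spatial geometry of explosive rays, which amounts to reinventing the boxing method.
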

	We are ready to state the two main theorems. By Bringmann \emph{et al}.\ in \cite[Theorem 2.2]{BriKeuLen17}, whp there is a unique linear-sized giant component $\mathcal{C}_{\max}$ in $\mathrm{GIRG}_{W,L}(n)$ when $W$ satisfies \eqref{powerlaw} with $\tau\in(2,3)$.
	\begin{theorem}[Distances in GIRG with explosive edge-lengths]\label{Th:GIRGexplosive}
		Consider $\text{GIRG}_{W,L}(n)$, satisfying Assumptions \ref{assu:GIRGgen}, \ref{assu:weight}, \ref{assu:extendable}  with parameters $\tau\in\left(2,3\right), \al>1$, $d\geq 1$, \cb and assume that $\mathbf{I}(L)<\infty$. Let $v_n^1, v_n^2\in \mathcal{C}_{\max}$ be two vertices chosen uniformly at random\cz. Then, as $n$ tends to infinity,
		\be\label{convergenceGIRG}
		d_L\left(v_n^1, v_n^2\right)\xrightarrow{d} Y^{(1)}+Y^{(2)},
		\ee
		where $Y^{(1)},Y^{(2)}$ are i.i.d.\ copies of  $Y_1^{\mathrm{E}}(0)$ from Definition \ref{def:explosiontime} conditioned to be finite. 
	\end{theorem}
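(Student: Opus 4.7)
\medskip\noindent
\textbf{Proof plan.} I would establish matching upper and lower bounds. The target $Y^{(1)}+Y^{(2)}$ appears because a shortest-weight path between $v_n^1$ and $v_n^2$ should asymptotically decompose into two ``almost-explosive'' initial segments emanating from the two endpoints, joined by a connecting segment of negligible $L$-length. Since $\mathbf{I}(L)<\infty$, Theorem~\ref{thm:exp-charact} guarantees that $\Eone$ is explosive with positive probability, and the conditioning $v_n^i\in\mathcal{C}_{\max}$ should in the limit be absorbed into conditioning the explosion time $Y_1^{\mathrm{E}}(0)$ to be finite, producing the i.i.d.\ variables $Y^{(1)},Y^{(2)}$.

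\medskip\noindent
\textbf{Upper bound.} Around each endpoint $v_n^i$ I would couple a finite-graph-distance neighbourhood in $\BGIRG$ to the corresponding neighbourhood of the origin in $\Eone$, which is possible because blowing-up by $n^{1/d}$ converts positions near a fixed vertex to a PPP of intensity $\approx 1$, and the connection probabilities converge to $h$ by Assumption~\ref{assu:extendable}; the weight coupling from \eqref{eps-tv} controls the vertex-mark component. Fix $\varepsilon>0$ and choose $k=k(\varepsilon)$ large enough that the prefix of $\pi_{\mathrm{opt}}^1(v_n^i)$ of graph length $k$ already achieves $L$-length within $\varepsilon/3$ of $Y_1^{\mathrm{E}}(v_n^i)$ on the event of explosion. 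The endpoint of this prefix will typically be a ``hub'' of very large weight. To close the gap between the two hubs, I would apply the weight-dependent percolation strategy highlighted in the abstract: retain only edges both of whose endpoints have weight at least some large $W_\star$ \emph{and} whose i.i.d.\ mark $L_e$ is at most a tiny threshold $\delta$. The upshot will be that the residual backbone on heavy vertices still connects the two hub endpoints whp within $O(\delta)$ in $L$-length, yielding $d_L(v_n^1,v_n^2)\le Y^{(1)}+Y^{(2)}+\varepsilon$ whp after choosing $\delta\ll\varepsilon$.

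\medskip\noindent
\textbf{Lower bound.} Using the same local coupling, I would argue by contradiction. Suppose that $\mathbb{P}(d_L(v_n^1,v_n^2)<Y^{(1)}+Y^{(2)}-\varepsilon)$ does not vanish for some $\varepsilon>0$. Split the optimal path at the point where its cumulative $L$-length first exceeds half of $d_L(v_n^1,v_n^2)$: at least one of the two halves emanating from some $v_n^i$ then has $L$-length strictly less than $Y^{(i)}-\varepsilon/2$. By coupling the neighbourhood of $v_n^i$ to a neighbourhood in $\Eone$ and letting the coupling radius grow slowly with $n$, such a half-path would have a positive-probability counterpart in $\Eone$, producing an infinite path from the origin with $L$-length strictly smaller than $Y_1^{\mathrm{E}}(0)$ on the explosive event, contradicting \eqref{eq:piopt}. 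Here one must ensure that the half-path does not exit the coupled neighbourhood before becoming explosive; this follows since, on the event that $v_n^i$ is locally tree-like and explosive, $\pi_{\mathrm{opt}}^1(v_n^i)$ lies in a bounded graph-distance window with high probability.

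\medskip\noindent
\textbf{Main obstacle.} The hardest ingredient is the hub-to-hub connection in the upper bound. Quantitatively, one must show that after keeping only edges between vertices of weight $\ge W_\star$ whose $L_e$ is at most $\delta$, the induced subgraph still has bounded diameter and contains both truncation endpoints in its giant component, whp as $n\to\infty$ and then $W_\star\to\infty$, $\delta\to0$. The scale-free structure helps: the upper bound $\underline{g}$ in \eqref{eq:gunder} says that two vertices of weights $w_1,w_2$ are joined at all spatial scales up to $(w_1 w_2)^{1/d}$, and the edge-mark filter contributes an extra factor $F_L(\delta)$, so the expected number of short heavy edges from a hub of weight $w$ grows polynomially in $w$ for fixed $\delta$. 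Turning this into a uniform backbone-connectivity statement, simultaneously for the two endpoints and compatibly with the weight-dependent percolation, is where the bulk of the technical work lies; it is precisely the interplay between the lower envelope $\underline{g}$ from Assumption~\ref{assu:GIRGgen} and the small-$L$ behaviour of $F_L$ forced by $\mathbf{I}(L)<\infty$ that makes the $L$-length of the connection arbitrarily small.
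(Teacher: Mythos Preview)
Your overall architecture---couple locally to $\Eone$, follow explosive rays, bridge them via percolation---matches the paper, but both bounds as you sketch them have real gaps.

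\textbf{Lower bound.} The midpoint-splitting contradiction does not close. If the half-path from $v_n^i$ has $L$-length $<Y^{(i)}-\varepsilon/2$ and stays in the coupled region, it is a \emph{finite} path in $\Eone$ ending at some vertex; it does not ``produce an infinite path'' and hence contradicts nothing about $Y_1^{\mathrm E}(0)$, which only constrains \emph{infinite} paths. Short finite paths from $v_n^i$ exist trivially. Nor is there any reason the $L$-geodesic in $\BGIRG$ should stay in a bounded graph-distance window around $v_n^i$: the path you are splitting is not $\pi_{\mathrm{opt}}^1(v_n^i)$. The paper instead uses a BRW domination (Proposition~\ref{Prop:BerBRW}) to find $k_n\to\infty$ so that the graph-distance balls $B^G(v_n^q,k_n)$ lie in disjoint boxes whp; any connecting path must cross both boundaries, giving $d_L(v_n^1,v_n^2)\ge\sum_q d_L(v_n^q,\partial B^G(v_n^q,k_n))$, and on the coupling event these summands coincide with the corresponding quantities in $\Eone$, which are then shown (via \eqref{eq:xkn-toexp}) to converge to $Y_1^{\mathrm E}(0)$.

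\textbf{Upper bound.} The claim that a fixed-threshold percolation connects the two hubs ``within $O(\delta)$ in $L$-length'' fails: the graph distance between the hubs is $\Theta(\log\log n)$, so any connecting path has $\Theta(\log\log n)$ edges, each of $L$-length up to $\delta$, totalling $\Theta(\delta\log\log n)\to\infty$. The paper's fix is a \emph{weight-dependent} threshold $\mathrm{thr}(w_u,w_v)=F_L^{(-1)}\big(\exp\{-c(\log w_u)^{\tilde\gamma}-c(\log w_v)^{\tilde\gamma}\}\big)$ with $\tilde\gamma\in(0,1)$: keep $e=(u,v)$ iff $L_e\le\mathrm{thr}(W_u,W_v)$. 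The resulting retention probability has exactly the sub-polynomial form of the first term in $\underline g$ of \eqref{eq:gunder}, so the percolated graph is again a GIRG with the \emph{same} $(\alpha,\tau)$ (Claim~\ref{claim:percolation}); this is why Assumption~\ref{assu:GIRGgen} was stated with that prefactor rather than a constant. One then runs the boxing construction (Lemma~\ref{lemma:weightconnectioncenters}) inside the percolated graph: centres in consecutive annuli around $\tilde v_n^q(K)$ have doubly-exponentially growing weights, hence $\mathrm{thr}$ along the connecting path decays doubly-exponentially, and the total $L$-length is bounded by $\sum_k F_L^{(-1)}(\exp\{-\Theta(C^{\tilde\gamma k})\})$, finite precisely by $\mathbf I(L)<\infty$ and tending to $0$ as $K\to\infty$.
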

	\begin{corollary}\label{corr:tight}If all assumptions in Theorem \ref{Th:GIRGexplosive} hold except Assumption \ref{assu:extendable},  then $(d_L(v_n^1,v_n^2))_{n\ge 1}$ forms a tight sequence of random variables. 
	\end{corollary}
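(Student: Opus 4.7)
The plan is to derive tightness by applying Theorem \ref{Th:GIRGexplosive} itself to a stochastic lower-bound model that \emph{does} satisfy Assumption \ref{assu:extendable}, and then to transfer the resulting bound back to the original graph through a monotone coupling. The central observation is that the explicit lower bound $\underline g$ from Assumption \ref{assu:GIRGgen} depends only on the pairwise spatial distance and the endpoint weights, so a model whose edges are drawn directly from $\underline g$ is automatically its own extension, with no approximation needed in the remaining weights.

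Concretely, first set $h^-(\Delta,w_1,w_2):=c_1\,\underline g(\Delta,w_1,w_2)$ and define a modified graph $\text{GIRG}^-(n)$ on the same vertex set, positions $(x_i)_{i\in[n]}$, weights $(W_i^{(n)})_{i\in[n]}$ and edge-weights $(L_e)$ as $\text{GIRG}_{W,L}(n)$, in which each potential edge $(u,v)$ is present independently with probability $h^-(n^{1/d}(x_u-x_v),W_u^{(n)},W_v^{(n)})$. A standard uniform-random-variable coupling across edges realises $\text{GIRG}^-(n)$ as a subgraph of $\text{GIRG}_{W,L}(n)$, so that $d_L(v_n^1,v_n^2)\le d_L^-(v_n^1,v_n^2)$ whenever both endpoints share a $\text{GIRG}^-(n)$-component. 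Next, verify that $\text{GIRG}^-(n)$ satisfies all hypotheses of Theorem \ref{Th:GIRGexplosive}: Assumptions \ref{assu:GIRGgen} and \ref{assu:weight} are inherited immediately; and Assumption \ref{assu:extendable} holds with $h=h^-$, $\CL_n$ the certain event, $\epsilon(n)\equiv 0$ and $I_\Delta(n)=(0,\infty),\ I_w(n)=[1,\infty)$, because by construction each edge probability equals $h^-$ evaluated exactly on the endpoint weights and rescaled distance, with no dependence on the remaining $(W_i^{(n)})_{i\ne u,v}$ to approximate. Invoking Theorem \ref{Th:GIRGexplosive} applied to $\text{GIRG}^-(n)$ then yields that $d_L^-(v_n^1,v_n^2)$ \emph{converges in distribution} --- and therefore is tight --- conditional on both endpoints lying in the giant $\mathcal{C}_{\max}^-$ of $\text{GIRG}^-(n)$.

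The main obstacle is the concluding step: transferring tightness from the conditioning $\{v_n^i\in \mathcal{C}_{\max}^-\}$ back to the prescribed $\{v_n^i\in\mathcal{C}_{\max}\}$ of the original graph. Both giants have linear size by \cite[Theorem 2.2]{BriKeuLen17} applied respectively to $\text{GIRG}_{W,L}(n)$ and to $\text{GIRG}^-(n)$, and the coupling together with uniqueness of the linear-sized component yields $\mathcal{C}_{\max}^-\subseteq \mathcal{C}_{\max}$ whp. The discrepancy $\mathcal{C}_{\max}\setminus \mathcal{C}_{\max}^-$ may still carry linear mass, so I would handle it by a \emph{bridging argument}: show that with high probability any vertex $v\in \mathcal{C}_{\max}$ is within a bounded number of $\text{GIRG}_{W,L}(n)$-edges of some vertex in $\mathcal{C}_{\max}^-$, so that the $L$-length overhead required to bridge from $v$ into the lower-bound giant is itself a tight random variable. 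The bridging relies on the positive density of $\mathcal{C}_{\max}^-$ and on local exploration estimates of the kind already used in the proof of Theorem \ref{Th:GIRGexplosive}. Combining this bridging bound with the tightness conditional on $\mathcal{C}_{\max}^-$ delivers the corollary.
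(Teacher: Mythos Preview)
Your core approach---constructing a lower-bound model $\text{GIRG}^-(n)$ with connection probability $c_1\underline g$, observing that this model satisfies Assumption \ref{assu:extendable} trivially since $c_1\underline g$ depends only on the rescaled distance and the two endpoint weights and is therefore its own limit $h$, coupling so that $\text{GIRG}^-(n)\subseteq \text{GIRG}_{W,L}(n)$, and then invoking Theorem \ref{Th:GIRGexplosive}---is precisely what the paper does. The paper's proof is a three-line argument ending with ``$d_L(v_n^1,v_n^2)\le \underline d_L(v_n^1,v_n^2)$ a.s.\ under the coupling''.

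You have, however, put your finger on a point the paper glosses over: the conditioning mismatch between $\{v_n^q\in\mathcal{C}_{\max}\}$ and $\{v_n^q\in\mathcal{C}_{\max}^-\}$. Theorem \ref{Th:GIRGexplosive} applied to $\text{GIRG}^-(n)$ delivers tightness of $\underline d_L$ only for endpoints uniform in the giant of the \emph{lower-bound} graph, while the corollary conditions on the giant of the \emph{original} graph; the almost-sure inequality $d_L\le \underline d_L$ is vacuous on the event $\{v_n^q\in\mathcal{C}_{\max}\setminus\mathcal{C}_{\max}^-\}$, which can have probability bounded away from zero. Your bridging proposal is the right fix and is essentially a finite-$n$ analogue of Claim \ref{claim:bg-to-eone}: by \cite[Lemma 5.5]{BriKeuLen17}, a uniform $v\in\mathcal{C}_{\max}$ reaches within $w^{\tau+\eta_1}$ many $\text{GIRG}_{W,L}(n)$-edges (hence tight $L$-length, $w$ being a fixed constant) some vertex $v_\ge$ of weight at least $w$; from $v_\ge$ the boxing construction of Lemma \ref{lemma:weightconnectioncenters}---which uses only the lower bound $\underline g$ on connection probabilities and is therefore carried out entirely inside $\text{GIRG}^-(n)$---places $v_\ge$ in $\mathcal{C}_{\max}^-$ with error probability vanishing as $w\to\infty$. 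So the paper's proof is terse to the point of leaving a gap, and your bridging argument closes it with machinery already developed elsewhere in the paper.
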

	\begin{proof}[Proof of Corollary \ref{corr:tight} subject to Theorem \ref{Th:GIRGexplosive}]
		Consider a model $\underline{\mathrm{GIRG}}_{W, L}(n)$ where $c_1\underline g(\cdot)$ is used instead of $g_n(\cdot)$ on the same vertex positions and vertex-weights $(x_i, W_i^{(n)})_{i\in [n]}$. Under Assumption \ref{assu:GIRGgen}, a coupling can be constructed where the edge sets  $\mathcal{E}(\underline{\mathrm{GIRG}}_{W, L}(n)) \subseteq \mathcal{E}(\mathrm{GIRG}_{W, L}(n))$ a.s.\ under the coupling measure. 
		Note that $c_1\underline g(\cdot)$ satisfies Assumption \ref{assu:extendable}, and so \eqref{convergenceGIRG} is valid for $\underline{\mathrm{GIRG}}_{W, L}(n)$ and thus Theorem \ref{Th:GIRGexplosive} holds for $\underline d_L(v_n^1, v_n^2)$, the $L$-distance  in  $\underline{\mathrm{GIRG}}_{W, L}(n)$. The proof of tightness is finished by noting that $d_L(v_n^1, v_n^2) \le\underline d_L(v_n^1, v_n^2)$ a.s.\ under the coupling.
	\end{proof}
	A similar theorem holds for the SFP model. One direction of this theorem has already appeared in \cite[Theorem 1.7]{HofKom2017}, where it was conjectured that its other direction also holds. For a vertex $x\in \Z^d$ define $\tau^{\mathrm{S}}(x,k):=\inf\{t: |B^L(x,t)|=k \text{ in } \mathrm{SFP}_{W,L}\}$ and $Y^{\mathrm{S}}(x):=\lim_{k\to \infty} \tau^{\mathrm{S}}(x,k)$ as the explosion time of vertex $x$ in SFP.
	
	\begin{theorem}[Distances in SFP with explosive edge-lengths]\label{Th:SFPexplosive}
		Consider $\text{SFP}_{W,L}$, with $W$ satisfying \eqref{powerlaw} with parameter $\wit \tau>1$, $\wit \alpha>d$ and $\gamma=\wit \alpha\left(\wit \tau-1\right)/d\in\left(1,2\right)$ and edge-length \cb distribution $F_L$ \cz with $\mathbf{I}(L)<\infty$. Fix a unit vector $\underline{e}$. Then, as $m\to \infty$,
		\be\label{eq:dist-111}
		d_L\left(0,\lfloor m\underline{e}\rfloor\right) -\left(Y^{\mathrm{S}}(0)+Y^{\mathrm{S}}(\lfloor m\underline{e}\rfloor)\right)\xrightarrow{\ \mathbb{P}\ }0,
		\ee
		and
		\be\label{eq:dist-indep-111}
		\left(Y^{\mathrm{S}}(0),Y^{\mathrm{S}}(\lfloor m\underline{e}\rfloor)\right)\xrightarrow{d}\left(Y_{(1)}^{\mathrm{S}},Y^{\mathrm{S}}_{(2)}\right),
		\ee
		where $Y_{(1)}^{\mathrm{S}},Y^{\mathrm{S}}_{(2)}$ are two \emph{independent} copies of the explosion time of the origin $Y^{\mathrm{S}}(0)$ in $\mathrm{SFP}_{W,L}$.
	\end{theorem}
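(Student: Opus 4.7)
The proof follows the same three-part programme as Theorem~\ref{Th:GIRGexplosive}, adapted to the fact that $\mathrm{SFP}_{W,L}$ lives on the unbounded lattice $\Z^d$ and that the two endpoints are at deterministic Euclidean separation $m$. I would split the statement into (i) an upper bound on $d_L(0,\lfloor m\underline e\rfloor)$ built out of the two shortest explosive paths from $0$ and from $\lfloor m\underline e\rfloor$; (ii) a matching lower bound coming from a weight-threshold decomposition of any competing path; and (iii) asymptotic independence of the two explosion times $Y^{\mathrm S}(0)$ and $Y^{\mathrm S}(\lfloor m\underline e\rfloor)$. The lower bound \eqref{eq:dist-111} is the easier direction and is essentially the content of \cite[Theorem~1.7]{HofKom2017}; the core of the new work lies in the upper bound, which also drives the independence statement.

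\textbf{Upper bound.} Under Assumption \ref{assu:weight} with $\widetilde\tau\in(1,2)$ and $\mathbf I(L)<\infty$, the analogue of Theorem~\ref{thm:exp-charact} for $\mathrm{SFP}_{W,L}$ gives $Y^{\mathrm S}(v)<\infty$ almost surely conditionally on $v$ lying in the infinite cluster. Let $\pi_{\mathrm{opt}}(0)$ and $\pi_{\mathrm{opt}}(\lfloor m\underline e\rfloor)$ be the shortest explosive paths emanating from the two endpoints, with $L$-lengths equal to the respective explosion times in the sense of \eqref{eq:piopt}. The goal is to show that the \emph{tips} of these two paths (the infinite sub-paths past the first vertex whose weight exceeds some large threshold $w_0$) can be spliced together at total $L$-cost $\op(1)$ as $w_0\to\infty$. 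Between any two vertices of weight $\ge w_0$ at spatial distance $r$, \eqref{SFP} gives a connection probability $1-\exp\{-\la w_0^2 r^{-\widetilde\alpha}\}$, which is $\Theta(w_0^2 r^{-\widetilde\alpha})$ for large $r$; combining this with $\mathbf I(L)<\infty$ and the edge-weight percolation, one shows that the subgraph spanned by heavy vertices and by edges of $L$-length below $\epsilon$ is connected on any finite window with probability tending to $1$ as $w_0\to\infty$ and $\epsilon\to 0$. Splicing via such a bridge yields $d_L(0,\lfloor m\underline e\rfloor)\le Y^{\mathrm S}(0)+Y^{\mathrm S}(\lfloor m\underline e\rfloor)+\op(1)$.

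\textbf{Lower bound and independence.} For \eqref{eq:dist-111} I would split any path $\pi$ realising $d_L(0,\lfloor m\underline e\rfloor)$ at the first vertex $v_1$ and last vertex $v_2$ of weight at least $w_0$; the two surviving sub-paths then live entirely in the truncated graph $\mathrm{SFP}_{W,L}^{\le w_0}$ attached to $0$, respectively $\lfloor m\underline e\rfloor$, so their $L$-lengths are bounded below by the $L$-distances from the two endpoints to the level set $\{W\ge w_0\}$. A standard monotonicity argument shows that these distances converge in probability to $Y^{\mathrm S}(0)$ and $Y^{\mathrm S}(\lfloor m\underline e\rfloor)$ as $w_0\to\infty$, delivering the matching lower bound. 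For \eqref{eq:dist-indep-111}, translation invariance of $\mathrm{SFP}_{W,L}$ gives $Y^{\mathrm S}(\lfloor m\underline e\rfloor)\equalsd Y^{\mathrm S}(0)$, and for any fixed $L$-time $T<Y^{\mathrm S}$ only finitely many vertices are swallowed by the $L$-ball $B^L(\cdot,T)$; tail estimates on the edge lengths in $\mathrm{SFP}_{W,L}$ show that these finitely many vertices are contained in a Euclidean ball of bounded (random) radius with high probability, so for $m$ large enough the witnesses of $Y^{\mathrm S}(0)$ and $Y^{\mathrm S}(\lfloor m\underline e\rfloor)$ are supported on disjoint regions of $\Z^d$ and are therefore independent. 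Passing to the limit $T\uparrow Y^{\mathrm S}$ gives \eqref{eq:dist-indep-111}.

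\textbf{Main obstacle.} I expect the hardest step to be the bridging argument in the upper bound: one must quantitatively control the rate at which vertex weights along a shortest explosive path grow, and then show that the resulting heavy tips sit close enough in Euclidean distance to be joined by $L$-short edges. On $\Z^d$ (unlike in $\BGIRG$ where the ambient Poisson structure meshes cleanly with the Lebesgue measure) the local density of heavy vertices is dictated directly by the tail of $W$, so one must argue inside a slowly growing spatial window around each tip via second-moment computations, while keeping the $L$-cost of the bridge below a prescribed $\epsilon$ uniformly in $m$. Once this bridging estimate is established, the remaining steps are careful variants of the programme developed for Theorem~\ref{Th:GIRGexplosive} and of the conservative-case analysis in \cite{HofKom2017}.
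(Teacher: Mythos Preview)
Your high-level programme (follow the two shortest explosive paths to their first high-weight vertices, bridge those vertices cheaply, match with a lower bound, and deduce asymptotic independence) is exactly what the paper does, and your independence argument via disjoint spatial supports of the pre-explosion $L$-balls is essentially the paper's. Two divergences deserve comment.

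\textbf{Lower bound.} The paper does not use your weight-threshold decomposition; it simply invokes \cite[Theorem~1.7]{HofKom2017}, whose proof couples the neighbourhood growth to a branching random walk and bounds the maximal displacement of graph-distance balls (the SFP analogue of Propositions~\ref{Prop:BerBRW} and~\ref{Prop:lowerbound}). Your alternative---splitting any competing path at its first and last vertex in $\{W\ge w_0\}$---is a legitimate route, and the monotone convergence $d_L(0,\{W\ge w_0\})\uparrow Y^{\mathrm S}(0)$ does follow from Corollary~\ref{lem:expgen}. But you also need to dispose of paths that never enter $\{W\ge w_0\}$, which requires that the $L$-distance in the truncated graph $\{W<w_0\}$ between $0$ and $\lfloor m\underline e\rfloor$ tends to infinity with $m$; non-explosion of the truncated graph gives this only pointwise, not at a rate, so some extra work is needed there.

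\textbf{Bridging (the real gap).} Your description---``the subgraph spanned by heavy vertices and by edges of $L$-length below $\epsilon$ is connected on any finite window with probability tending to $1$ as $w_0\to\infty$''---is where the plan is underspecified to the point of being wrong as stated. Vertices of weight $\ge w_0$ become \emph{sparser} as $w_0\to\infty$, and the two tips sit at Euclidean distance of order $m$, not inside any bounded window; a single second-moment argument on a fixed-$\epsilon$ percolated heavy subgraph will not connect them, and even if it did, a fixed threshold $\epsilon$ yields edge-lengths that do not sum. The paper's proof (Section~\ref{sec:extensionSFP}, porting Proposition~\ref{Prop:upperbound}) uses instead a \emph{hierarchical boxing system}: around each tip one builds doubly-exponentially growing annuli $\Gamma_k$ tiled by subboxes of volume $\mu^{C^k}$, whose centres have weight $\approx\mu^{C^k/(\tau-1)}$ and are connected to many centres in $\Gamma_{k+1}$ (Lemma~\ref{lemma:weightconnectioncenters}). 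The two systems are then merged at the scale where their boxes first overlap. The decisive device you are missing is that this connectivity is established \emph{after} weight-dependent percolation retaining only edges with $L_e\le F_L^{(-1)}\big(\exp\{-c(\log w_u)^{\widetilde\gamma}-c(\log w_v)^{\widetilde\gamma}\}\big)$ (Claim~\ref{claim:percolation}): the retained graph is again an SFP satisfying \eqref{eq:connection-new}, so the boxing lemma still applies, and every edge on the centre-to-centre path now carries a \emph{deterministic} length bound depending on the endpoint weights. Summing these bounds along the doubly-exponential weight sequence gives exactly the series in \eqref{lengthboxingpath}, whose finiteness is equivalent to $\mathbf I(L)<\infty$. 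Without this hierarchy and the weight-dependent (not fixed-$\epsilon$) threshold, there is no mechanism by which your bridge has total $L$-length $\le\eps_K\to 0$ uniformly in $m$.
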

	
	\subsection{Discussion and open problems}\label{sec:discussion}
	\cb Understanding the behaviour of distances and weighted distances on spatial network models is a problem that is still widely open, when the graph has a power-law degree distribution\cz. 
	Graph distances are somewhat better understood, at least in the \emph{infinite variance degree} regime, where a giant/infinite component exists whp and typical distances are doubly-logarithmic. For hyperbolic random graphs (HRG), typical distances and the diameter were recently studied  in \cite{AbdBodFou16} and \cite{MulSta17}, respectively, and in GIRGs in \cite{BriKeuLen17}. 
		For scale-free percolation and its continuum-space analogue, typical distances were studied in \cite{DeijHofHoog2013, HofKom2017} and \cite{DeHaWu15}. In the first two papers, doubly-logarithmic distances were established in the infinite variance degree regime, while lower bounds on graph distances were given in other regimes.

Typical distances in the \emph{finite variance regime} remain open - even the existence of a giant component in this case is open for GIRGs, and was studied for HRG in \cite{BodFouMul15, FouMul18}. We conjecture that this result would carry through for GIRGs as well, i.e. a unique linear sized giant component exists when $\tau>3$ and the edge-density is sufficiently high. An indication for this is that the limiting continuum percolation model identified in \cite{FouMul18} is a special case of the limit of GIRGs, as we show in Section \ref{s:hyperbolic}. 
For scale-free percolation (SFP), the phase transition for the existence of a unique infinite component for sufficiently high edge-densities was shown in \cite{DeijHofHoog2013} and for the continuum analogue in \cite{DepWut13}. In  \cite{DeHaWu15}, it is shown that distances grow linearly with the norm of vertices involved, when $\wit \alpha>2d$ and degrees have finite variance. The $\wit \al\in(d,2d)$ case remains open, and they are believed to grow poly-logarithmically with an exponent depending on $\wit\al, d, \tau$ based on the analogous results about \emph{long-range percolation} \cite{Bisk04, BisLin17}.

Scale-free percolation is the only scale-free spatial model for which results on \emph{first passage percolation} are known \cite{HofKom2017}. In this paper we extend this knowledge by studying first passage percolation in the infinite variance regime on GIRG, HRG,  and SFP, when the edge-length distribution produces explosive branching processes. The non-explosive case was treated in \cite{HofKom2017} for SFP, and we expect that the results there would carry over GIRG and HRG as well, in an $n$-dependent form that is e.g.\ stated for the configuration model in \cite{AdrKom17}.

\cb Thus, we see the following \cz as main \emph{open questions} regarding (weighted) distances in spatial scale-free network models:  Is there universality in these models \emph{beyond} the infinite variance degree regime? More specifically, in the finite variance degree regime: Can we give a description of the interpolation between log-log and linear distances in SFP? Is it similar to long-range percolation? Can we observe the same phenomenon in GIRG and HRG, in terms of the long-rage parameter $\al$ and $T_H$,  respectively? For instance, do distances become linear when the long-range parameter is sufficiently high? Does the Spatial Preferred Attachment model behave similarly to these models?

		\subsubsection*{Comparison of GIRG and SFP}
	The GIRG on $(\mathcal X_d, \nu)=([0,1]^d, \mathcal{L}eb)$ uses $n$ vertices distributed uniformly at random. When blown up to $[-n^{1/d}/2, n^{1/d}/2]^d$, the density of the points becomes unit, and, in case the connection probabilities behave somewhat regularly, the model can be extended to $\R^d$ as we did by introducing the $\Elambda$ model. \cb The main difference between GIRG and SFP is thus that their vertex set differs\cz.  In the SFP the vertex set is $\mathbb{Z}^d$, whereas it is a PPP on $\R^d$ in $\Elambda$. We conjecture that the local weak limit of $\mathrm{GIRG}_{W,L}(n)$ satisfying Assumption \ref{assu:extendable} exists and equals $\Eone$. 
	Beyond this, assuming that GIRG satisfies Assumption \ref{assu:GIRGgen}, the two models are differently parametrised. It can be easily shown that the connection probability of the SFP model in \eqref{SFP} satisfies the bounds in \eqref{GIRG}, when setting $\alpha=\wt{\alpha}/d$, $\tau-1=(\wt{\tau}-1)\alpha$ and implying that $\gamma=\wt{\alpha}(\wt{\tau}-1)/d=\tau-1$. It follows that where $\gamma$ is the relevant parameter in the SFP model, $\tau-1$ is the relevant parameter in the GIRG model. The relation $\alpha=\wt{\alpha}/d$ directly shows why we work with $\alpha>1$ for GIRGs: For the SFP $\wt{\alpha}>d$ ensures that  degrees are finite a.s., while $\wit \alpha<d$ implies a.s.\ infinite degrees \cite[Theorem 2.1]{DeijHofHoog2013}. This translates to $\alpha>1$ for the GIRG model.	
	
	\subsection{Overview of the proof}\label{sec:proofstructure}	We devote this section to discussing the strategy of the proof of Theorem \ref{Th:GIRGexplosive}.
	We start by mapping the underlying space, $\mathcal{X}_d:=[-1/2,1/2]^d$, and the vertex locations to $\wit{\mathcal{X}}_d:=[-n^{1/d}/2,n^{1/d}/2]^d$ using the blow-up method described in Definition \ref{def:BGIRG}. So, instead of  $\mathrm{GIRG}_{W, L}(n)$, we work with the equivalent $\mathrm{BGIRG}_{W, L}(n)$ that has the advantage that the vertex-density stays constant as $n$ increases. Under Assumptions \ref{assu:extendable}, we expect that this model is close to $\mathrm{EGIRG}_{W, L}(1)$ restricted to $\wit{\mathcal{X}}_d(n)$, and thus the shortest paths leaving a large neighbourhood of $u$ and $v$ have length that are distributed as the explosion time of those vertices in  $\mathrm{EGIRG}_{W, L}(1)$. Since $u,v$ are typically `far away', these explosion times become asymptotically independent. We show that this heuristics is indeed valid and also that these explosive rays can be \emph{connected to each other} within $\wit{\mathcal{X}}_d(n)$.
	
	The details are quite \cb tedious\cz.  We choose a parameter $\xi_n\downarrow 0$ such that $\mathrm{EGIRG}_{W, L}(1-\xi_n)$, (resp. $\mathrm{EGIRG}_{W, L}(1+\xi_n)$) has, for all large $n$, less (resp. more) vertices than $n$ in $\Xdn$, \cb  as proved in Claim \ref{claim:Poissonization}\cz. In Claim \ref{cl:vertex-containment} below we construct a coupling such that  the vertex sets within $\Xdn$ are subsets of each other, i.e. for all $n$ large enough,
	\be\label{eq:vertex-containment} \mathcal V_{1-\xi_n}\cap \Xdn \subseteq \mathcal V_B(n) \subseteq \mathcal V_{1+\xi_n}\cap \Xdn,\ee
	and for the edge-sets, $\CE_{\la_1} \subseteq \CE_{\la_2}$ whenever $\la_1\le \la_2$.
	By Def.\ \ref{def:EGIRG}, we use the \emph{limiting} edge probabilities $h$ and weights $W$ when determining the edges in $\mathrm{EGIRG}_{W, L}(\la)$, while  in $\mathrm{BGIRG}_{W, L}(n)$ we use $g_n$ and $W^{(n)}$. As a result, we cannot hope that a relation similar to \eqref{eq:vertex-containment} holds for the edge set of the three graphs as well. Nevertheless, we construct a coupling in Claim \ref{claim:edge-containment} below, using the error bound in \eqref{eq:h-intro} (that hold on the good event $\CL_n$ such that for any set of $k$ edges $E=(e_1, \dots, e_k)$ where $e_i$ connects vertices  in $\mathcal V_B(n)$  
	\be\label{eq:path-bound-1} \Pv\big(\cb \big(E \cap \mathcal E_B(n)\big) \ominus \big(E \cap \mathcal E_{1+\xi_n}\big) \neq \emptyset\cz \big)\to 0, \ee
	\cb where $\ominus$ denotes symmetric difference\cz, as long as the number of vertices involved is not too large.
	So, any constructed edge-set in one model is present in the other model as well as long as the corresponding \cb lhs \cz in \eqref{eq:path-bound-1} stays small. \cb Using the same edge-length $L_e$ on edges that are present in both models, we can relate the  $L$-distances in one model to the other.
	
	\cb\emph{Lower bound on $d_L(v_n^1,v_n^2)$}\cz. 
	Recall from Def.~\ref{def:distances} that we add subscript $\la$ and $n$ to the metric balls and their boundaries when the underlying model is $\Elambda$ and $\BGIRG$, respectively, and the coupling of the vertex sets in \eqref{eq:vertex-containment}.
	Suppose we can find \cb an increasing sequence \cz $k_n\to \infty$ such that the graph-distance balls $B^G_{1+\xi_n}\left(v_n^1,k_n\right) \cap B^G_{1+\xi_n}\left(v_n^2, {k_n}\right)=\emptyset$ and that both $B^G_{1+\xi_n}\left(v_n^1, {k_n}\right), B^G_{1+\xi_n}\left(v_n^2, {k_n}\right)\subset \wit{\mathcal X}_d(n).$ Then, any path connecting $v_n^1,v_n^2$  in $\Eplus$ must intersect \cb$\partial B^G_{1+\xi_n}\left(v_n^1, {k_n}\right)$ \cz and \cb$\partial B^G_{1+\xi_n}\left(v_n^2, {k_n}\right)$\cz, hence,\cb	\be\label{overviewlowerbound1}
	d_L^{1+\xi_n}\left(v_n^1,v_n^2\right)\geq  d_L^{1+\xi_n}\left(v_n^1,\partial B^G_{1+\xi_n}\left(v_n^1, {k_n}\right)\right)+d_L^{1+\xi_n}\left(v_n^2,\partial B^G_{1+\xi_n}\left(v_n^2, {k_n}\right)\right),
	\ee \cz
	Let $\cb A_{k}$ be the event that $\cb B^G_n(v_n^1, k)$ and $\cb B^G_{1+\xi_n}(v_n^1, k)$ as well as $\cb B^G_{1}(v_n^1, k)$ (the graph distance balls within graph distance $k$) coincide as graphs with vertex-weights (i.e. they have identical vertex and edge-sets and vertex-weights) \cb and that $B^G_{1+\xi_n}(v_n^1, k)$ and $B^G_{1+\xi_n}(v_n^2, k)$ are disjoint, that is,
	\be\ba\label{eq:ak}
	A_{k}:=\bigcap_{q\in\{1,2\}}\Big\{&\CV(B^G_n(v_n^q, k))=\CV(B^G_{1+\xi_n}(v_n^q, k))=\CV(B^G_{1}(v_n^q, k)),\\
	&\CE(B^G_n(v_n^q, k))=\CE(B^G_{1+\xi_n}(v_n^q, k))=\CE(B^G_{1}(v_n^q, k)),\\
	&\mathcal{W}(B^G_n(v_n^q, k))=\mathcal{W}(B^G_{1+\xi_n}(v_n^q, k))=\mathcal{W}(B^G_{1}(v_n^q, k)) \Big\}\\
	&\bigcap\big\{B^G_{1+\xi_n}(v_n^1, k)\intersect B^G_{1+\xi_n}(v_n^2, k)=\emptyset\big\},
	\ea\ee 
	where $\mathcal{W}(A)$ denotes the vertex-weights of vertices within a set $A$.	We show  in Proposition \ref{Prop:lowerbound} below \cz that on $A_{k_n}$, the shortest path connecting $v_n^q$ to \cb$\partial B^G_{1+\xi_n}\left(v_n^q, {k_n}\right)$ \cz is also the shortest in $\BGIRG$ and in $\Eone$ as well.
	Thus, on $A_{k_n}$, the lhs of \eqref{overviewlowerbound1} can be switched to  $d_L(v_n^1, v_n^2)$ ($L$-distance in $\BGIRG$), and $1+\xi_n$ can be changed to $1$ on the rhs. Then, it holds that, \cb
	\be\label{overviewlowerbound2}
	d_L\left(v_n^1,v_n^2\right)\geq  \ind_{A_{k_n}}\left(d_L^{1}\left(v_n^1,\partial B^G_1\left(v_n^1, {k_n}\right)\right)+d_L^{1}\left(v_n^2,\partial B^G_1\left(v_n^2, {k_n}\right)\right)\right),
	\ee\cz
	\cb To find $k_n$, for $q \in \{1,2\}$, we bound the maximum displacement within  $B^G_{1+\xi_n}\left(v_n^q,k\right)$ in Proposition \ref{Prop:BerBRW}\cz. Namely, we couple the exploration of the neighbourhoods of $v_n^1,v_n^2$ to two  dominating branching random walks (BRW) on the vertices of $\mathrm{EGIRG}_{W, L}(1+\xi_n)$. That is, the vertex set of $B^G_{1+\xi_n}\left(v_n^q, {k_n}\right)$ is a subset of the location of individuals in generation at most $k_n$ in the BRWs.
	\cb Then, we analyse the spatial growth of the BRW. \cb This leads us to show in Proposition \ref{Prop:lowerbound}, that there are two spatially disjoint boxes $\mathrm{Box}(v_n^q), q\in \{1,2 \}$, and an increasing sequence $k_n\to \infty$ such that\cz
	\be\label{BRW-bound-1} B^G_{1+\xi_n}\left(v_n^q, {k_n}\right)\subset \cup_{i\le k_n} \mathcal G_{i}(v_n^q)\subset \mathrm{Box}(v_n^q), \ee
	where $\mathcal G_{i}(v_n^q)$ denotes the set of generation $i$ individuals in the BRW started from $v_n^q$, and the first containment follows by  the coupling between the BRWs and the exploration of the neighbourhoods, see also Figure \ref{Figure:lowerbound}.
	
	\begin{figure}
		\centering
		\includegraphics[width=8cm]{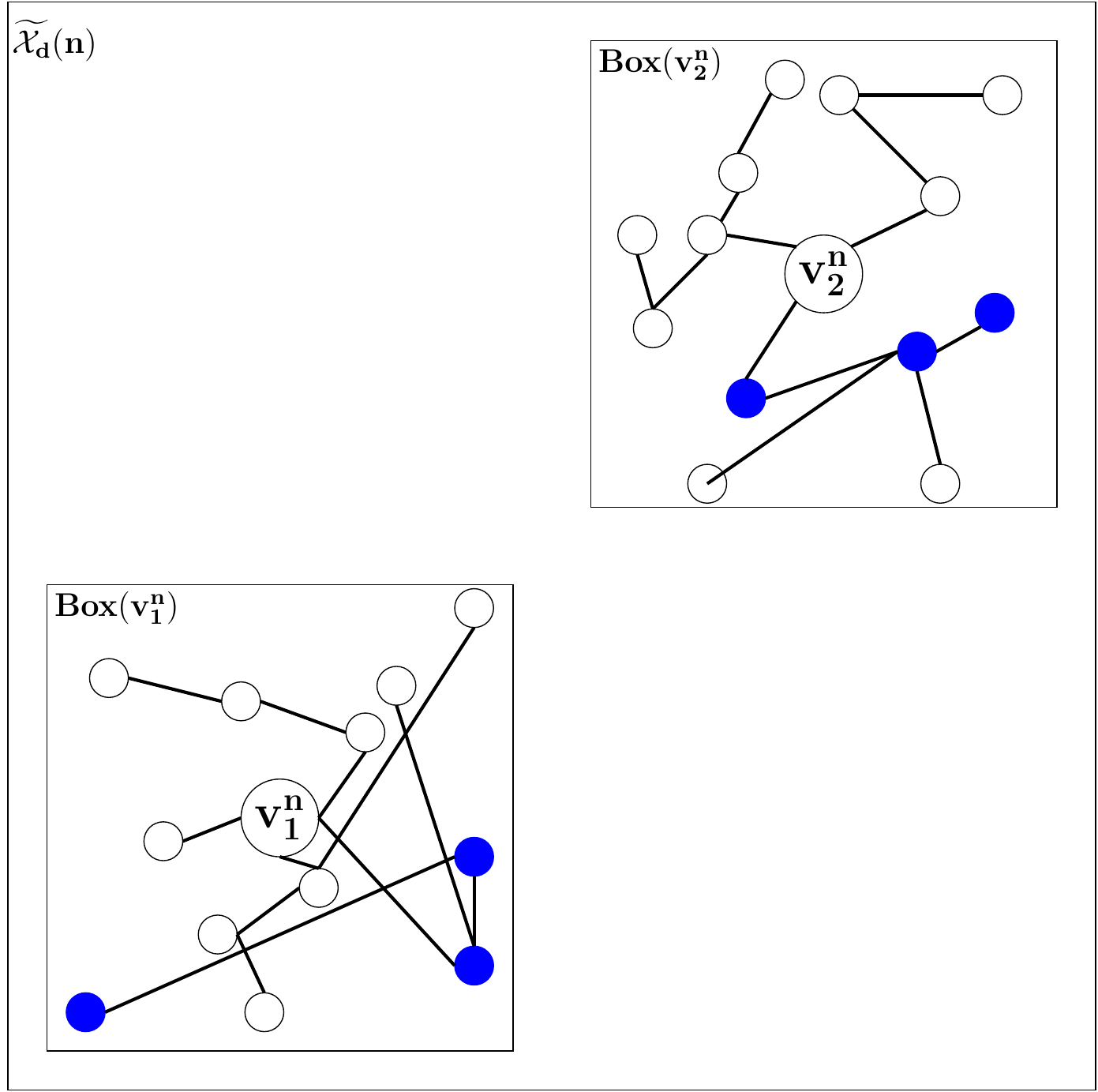}
		\caption{The exploration of the neighbourhood of $v_n^1,v_n^2$ up to graph distance $3$, contained in the boxes Box$(v_n^1)$, Box$(v_n^2)$, with the shortest paths in blue. The idea of the lower bound is: As long as the two boxes are disjoint, and each vertex within graph distance $k$ from $v_n^q$ stays within these boxes, the shortest path connecting $v_n^1, v_n^2$ is longer than the total length of the two shortest paths leading to vertices at graph distance $k$. We establish the bound by being able to choose $k=k_n$ that tends to infinity.}\label{Figure:lowerbound}
	\end{figure}
	
	This guarantees that the required conditions are satisfied \cb in \eqref{overviewlowerbound1}\cz. Since the $B^G_{1+\xi_n}\left(v_n^q, {k_n}\right)$ are defined within disjoint boxes, they are independent and thus the two terms on the rhs of \eqref{overviewlowerbound1} are independent. Further, \cb by possibly taking a new sequence $k_n$ that increases at a slower rate than the previous one\cz, \eqref{eq:path-bound-1} ensures that the event $A_{k_n}$ holds whp in \eqref{overviewlowerbound2}. 
	We finish the proof by showing that the variables on the rhs of \eqref{overviewlowerbound2} tend to two i.i.d. copies of the explosion time of the origin in $\mathrm{EGIRG}_{W,L}(1)$, a non-trivial part in itself.
	
	\emph{Upper bound on $d_L(v_n^1,v_n^2)$.}
	For the upper bound, we use the model $\Emin$. 
	Under the assumption that both $v_n^1, v_n^2$ are in the giant component of $\mathrm{BGIRG}_{W,L}(n)$, \cb we show in Claim \ref{claim:bg-to-eone} \cz that they are whp in the infinite component in the $\mathrm{EGIRG}_{W,L}(1)$. \cb Recall from Definition \ref{def:explosiontime} that a path $\pi$ is called explosive if $|\pi_L|<\infty$, and that on the event that the explosion time $Y^{\mathrm{E}}_1(v)<\infty$, there are explosive paths in the model.
Then, recall $ \cb\pi_{\mathrm{opt}}^1(v)$ from \eqref{eq:piopt-1}, the `shortest' explosive path to infinity from $v$ in $\mathrm{EGIRG}_{W,L}(1)$. \cb In case this path is not unique (which may happen if $L$ is not absolutely continuous) then any path realising the infimum can be chosen. The advantage of this path is that  $|\pi_{\mathrm{opt}}^1(v)|_L=Y_{1}^{\mathrm{E}}(v)$ (see \eqref{eq:piopt})
i.e. this path realises the explosion time of $v$. Thus, to obtain an upper bound on the $L$-distance between $v_n^1$ and $v_n^2$, our goal is to show that the initial segments of the paths  $\pi_{\mathrm{opt}}^1(v_n^1), \pi_{\mathrm{opt}}^1(v_n^2)$ are part of $\BGIRG$ and that we can  \emph{connect} these segments within $\BGIRG$ with a connecting path of length at most $\ve$, say, for arbitrarily small $\ve>0$. 
The first part, namely, that the initial segments of these paths are part of $\BGIRG$, holds whp since it is a consequence of the event $A_{k_n}$: any path in $\Eone$ that leaves $v_n^q$ and has at most $k_n$ edges is part of $\BGIRG$ as well on $A_{k_n}$. To be able to connect these segments, we need to find a vertex on $\pi_{\mathrm{opt}}^1(v_n^1)$ and $\pi_{\mathrm{opt}}^1(v_n^2)$, respectively, that has high enough weight. 

 \cz
  For a constant $K$, define now $\wt v(K)$ as the first vertex on $\cb\pi_{\mathrm{opt}}^1(v)$ with weight larger than $K$ and $T_K(v):=d_L^1(v, \wt v(K))$, where $d_L^1$ denotes the $L$-distance in
	$\Eone$. \cb Observe that $T_K(v)$ is the length of the segment between $v, \wt v(K)$ on the path $\pi_{\mathrm{opt}}^1(v)$, otherwise the optimality of $\pi_{\mathrm{opt}}^1(v)$ would be violated.
	\cb We show in Corollary \ref{lem:expgen} \cz that $\wt v(K)$ and $T_K(v)$ \emph{exist}: explosion is impossible in the subgraph of $\mathrm{EGIRG}_{W,L}(\la)$ restricted to vertices with weight $\le K$, irrespective of the value of $\lambda$.	
	Thus, any infinite path with finite total length must leave this subgraph.
	
We need to make sure that $\wit v_n^1(K)$ and $\wit v_n^2(K)$ are part of $\BGIRG$. For this we argue as follows:
By translation invariance, for any fixed $K<\infty$ the spatial distance $\| u-\wt u(K)\|$ is a copy of the random variable $\| 0-\wt 0(K)\|$, i.e. a proper random variable, \cb where $0$ denotes the origin of $\R^d$\cz, \cb and $\wit 0(K)$ denotes the first vertex with weight at least $K$ on the shortest path from $0$ to infinity in $\mathrm{EGIRG}_{W,L}(1)$, conditioned that $0$ is in the infinite component\cz. Thus, for fixed $K$, for $n$ sufficiently large, $ \wt v_n^q(K)\in \wit{\mathcal X}_d(n)$ will be satisfied with probability tending to $1$. 
	Further, \cb we show in Lemma \ref{lem:ank} \cz that for any fixed $K$, the event $\wit A_{n,K}$ that 
	the vertices and edges of $\cb\pi^1_{\text{opt}}(v_n^q)$ on  the section leading to $\wit v_n^q(K)$ are present in $\mathrm{BGIRG}_{W, L}(n)$ as well as in $\Eone$ happens whp. 
	By \eqref{eq:piopt}, \emph{jointly}, fixing the vertices $v_n^1, v_n^2$ and letting $K\to \infty$, 
	\be\label{eq:explosion-conv} \big(T_K(v_n^1), T_K(v_n^2)\big) \toas (Y^\mathrm{E}_1({v_n^1}),Y^\mathrm{E}_1({v_n^2})),  \ee
	the joint distribution of the explosion times of $v_n^1, v_n^2$ in $\mathrm{EGIRG}_{W,L}(1)$. Observe that $(T_K(v_n^1), T_K(v_n^2))$ is monotone increasing in $K$ that will be relevant later on.
	
	\cb Next, we show in Proposition \ref{Prop:upperbound} \cz the whp existence of a path connecting  $\wt{v}_n^1(K),\wt v_n^2(K)$ within $\wit{\mathcal X}_d(n)$ that has L-distance at most $\ve_K$ within $\BGIRG$. We do this using a subgraph created by \emph{weight-dependent percolation}. That is, we keep edges $e=(u,v)\in\CE_B(n)$ if and only if their edge-length $L_e \le \cb \mathrm{thr}(W_u^{(n)}, W_v^{(n)})$, \cb for a function $\mathrm{thr}$ of $W_u^{(n)}, W_v^{(n)}$, that satisfies $\mathrm{thr}(x,y)=F_L^{(-1)}(p(x,y))$, where we set $p(x,y)\geq\exp\{-(\log x)^\gamma-(\log y)^\gamma\},\gamma\in(0,1)$ It follows that we keep an edge $e=\{u,v\}$ with probability at least $\exp\{-(\log W_u^{(n)})^\gamma-(\log W_v^{(n)})^\gamma\}$\cz. Let us denote the remaining subgraph by $\cb G^{\mathrm{p}}$. \cb The advantage of choosing the percolation function like this is, as we \cb show in Claim \ref{claim:percolation}, \cz that $\cb G^{\mathrm{p}}$, in distribution, is again a BGIRG with a new weight distribution $W_{\mathrm{p}}^{(n)}$ and new edge probabilities $\cb g_n^{\mathrm{p}}$, that again satisfy Assumption \ref{assu:GIRGgen}, \ref{assu:weight} and \ref{assu:extendable}, and the new parameters $\cb \al^{\mathrm{p}}, \tau^{\mathrm{p}}$ are unchanged, i.e.  $\cb \al^{\mathrm{p}}=\al, \tau^{\mathrm{p}}=\tau$.
	
	Using this distributional identity, \cb we describe a (doubly exponentially growing) boxing structure around $\wt{v}_n^1(K),\wt v_n^2(K)$ in Lemma \ref{lemma:weightconnectioncenters} \cz that uses vertices and edges within $\cb G^{\mathrm{p}}$. Using this boxing structure, \cb we construct a connecting path \cz and control the weights of vertices along the connecting path between $\wt{v}_n^1(K),\wt v_n^2(K)$, as shown in Figures \ref{Fig:boxing} and \ref{Figure:overviewupperbound}.
	\begin{figure}[h]
		\centering
		\includegraphics[width=8cm]{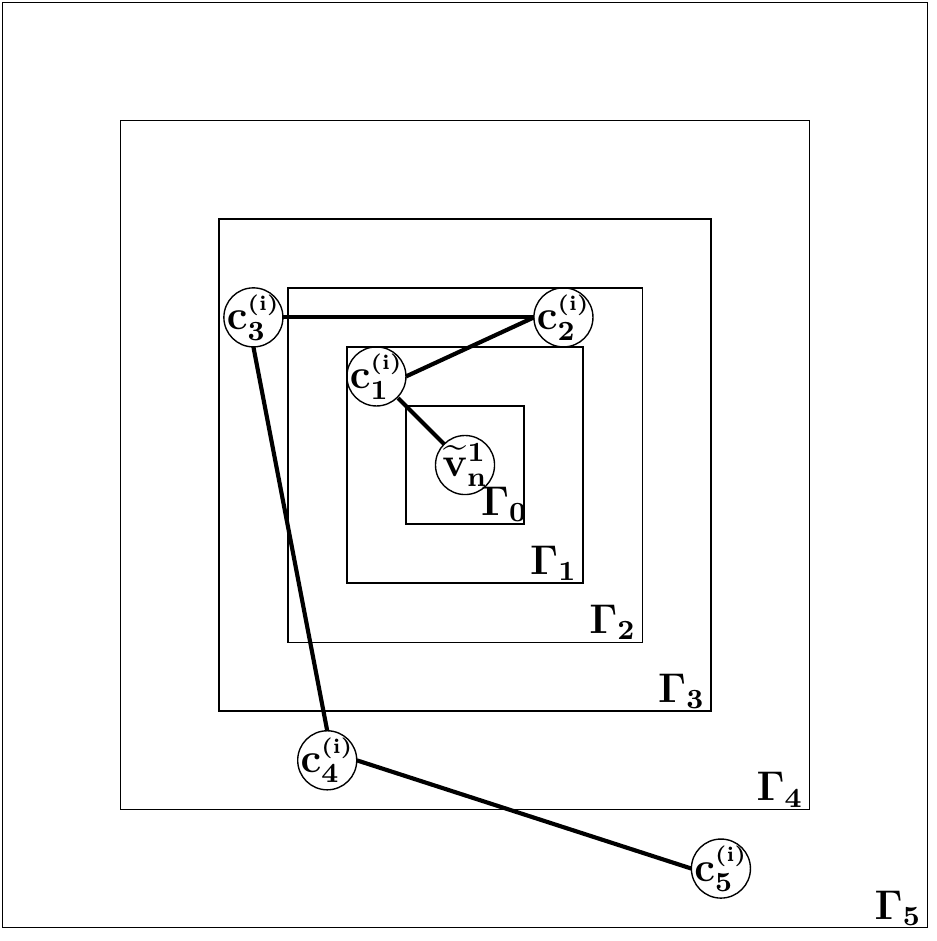}
		\caption{A schematic overview of the boxing structure around $\wt v^n_1$. The radius of the annuli increases doubly-exponentially with their index. We drew a path connecting centres of subboxes within consecutive annuli, but we did not draw the subboxes. In the extended model, this construction can be continued indefinitely, yielding an infinite path. This is the idea of proving the existence of an infinite component.}\label{Fig:boxing}
	\end{figure}

	\begin{figure}[h]
		\centering
		\includegraphics[width=8cm]{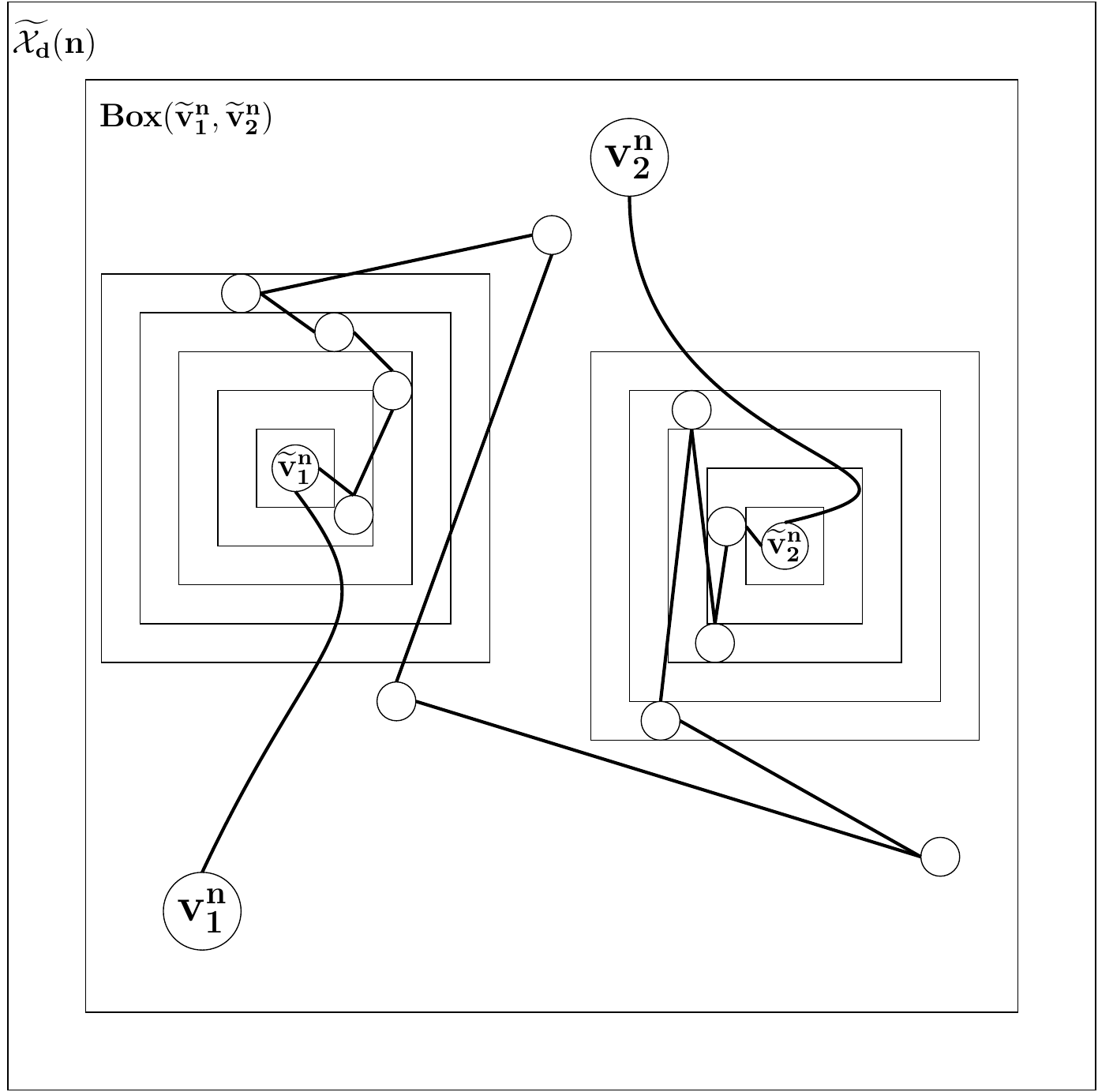}
		\caption{The idea of the proof of the upper bound. We connect $v_n^1$ with $v_n^2$ by following the two shortest explosive rays to two vertices with degree at least $K$, denoted by $\wt{v}_n^1(K),\wt{v}_n^2(K)$. Then we  connect these two vertices using a boxing structure, and following two paths via centres of subboxes. We control the total weight on the edges of the connecting segment between $\wt{v}_n^1(K),\wt{v}_n^2(K)$ by doing the boxing structure in a percolated graph, where only edges with small edge-weight are kept.}\label{Figure:overviewupperbound}
	\end{figure}
	
	 Since the percolation provides a deterministic  upper bound on the edge-lengths of \cb retained edges\cz, once we have bounds on the weights of the vertices, we have a deterministic bound $\ve_K$ on the length of the connecting path, where $\ve_K$ tends to zero with $K$. \cb The first part of the path, constructed by following the explosive path in $\Eone$ is part of $\BGIRG$  with probability that tends to $1$ as $n\to \infty$, while the connecting path using the percolation and the boxing method exists with probability that tends to $1$ as $K\to \infty$, as in Figure \ref{Figure:overviewupperbound}\cz. \cb Thus we arrive at the bound\cz 	\be\label{overviewupperbound1} d_L(v_n^1,v_n^2) \le  T_K(v_n^1)+ T_K(v_n^2) + \ve_K \le Y^\mathrm{E}_1({v_n^1})+Y^\mathrm{E}_1({v_n^2})+ \ve_K,\ee 
	 where we have  used the monotonicity of the limit in \eqref{eq:explosion-conv}.	From here we finish the proof by controlling the error probabilities by first choosing $K$ large enough, then $n$ large enough and showing that 
	\be\label{explosiontoiid} (Y^\mathrm{E}_1({v_n^1}),Y^\mathrm{E}_1({v_n^2}))\toindis (Y^{(1)},Y^{(2)}),\ee two i.i.d.\ copies of $Y^\mathrm{E}_{1}(0)$, a part that is non-trivial itself, similar to the method in the lower bound.
	
	\subsection{Structure of the paper}
	In Section \ref{sec:Blowuppoissonization}, we describe a coupling between $\BGIRG$ and the extended model $\Elambda$ and state two propositions capturing the upper and lower bound of Theorem \ref{Th:GIRGexplosive}. We provide the proof of Theorem \ref{Th:GIRGexplosive} in this section subject to these propositions. In Section \ref{sec:BRW}, \cb we introduce a branching random walk in a random environment\cz, and describe its behaviour when $\Ev[W^2]<\infty$ or $\Ev[W^2]=\infty$. We also prove Theorem \ref{thm:exp-charact} in this section. The BRW allows us to bound the growth of neighbourhoods in $\BGIRG$, \cb which we utilise in Section \ref{sec:maintheoremproof} \cz by proving the lower bound in Theorem \ref{Th:GIRGexplosive} (proof of Proposition \ref{Prop:lowerbound} below). We discuss weight-dependent percolation and a boxing method  in Section \ref{sec:boxinginfinitecomponent}, both preliminaries for the upper bound in Theorem \ref{Th:GIRGexplosive}. Section \ref{sec:bestexplosive} is devoted to the optimal explosive path in $\Eone$ and its presence in $\BGIRG$, and contains the proof of the  upper bound in Theorem \ref{Th:GIRGexplosive} (proof of Proposition \ref{Prop:upperbound} below)). We then use Section \ref{sec:extensionSFP} to describe the necessary adjustments  for our results to hold for scale-free percolation (proof of Theorem  \ref{Th:SFPexplosive}). 
	Finally, in Section \ref{s:hyperbolic}, we describe the connection between hyperbolic random graphs to GIRGs and show that Theorem \ref{Th:GIRGexplosive} is valid for hyperbolic random graphs as well. 	
	
	\section{Coupling of GIRGs and their extension to $\mathbb{R}^d$}\label{sec:Blowuppoissonization}
	In this section we state two main propositions for Theorem \ref{Th:GIRGexplosive} and describe a \emph{multiple-process coupling}.	
	To be able to show the convergence of distances in \eqref{convergenceGIRG} to the explosion time of $\Eone$ (Definition \ref{def:explosiontime}),
	we need to relate $\mathrm{GIRG}_{W,L}(n)$ to $\Eone$. Therefore, we first blew the model up to $\Xdn$, obtaining a unit vertex-density model $\BGIRG$, and then extended this model to $\R^d$, obtaining $\Elambda$, using PPPs with intensity $\la$ as vertex sets, allowing for vertex-densities other than $1$. 
	While in $\mathrm{BGIRG}_{W,L}(n)$ there are $n$ vertices, the number of vertices in $\mathrm{EGIRG}_{W,L}(\lambda)\cap \wt{\mathcal X}_d(n)$ is $\mathrm{Poi}(\lambda n)$. Thus the two models cannot be compared directly. To circumvent this issue, we slightly increase/decrease the vertex-densities and even the edge-densities to obtain models with resp.\ more/less vertices in $\wt{\mathcal X}_d(n)=[-n^{1/d}/2, n^{1/d}/2]$ than $n$. Recall Definitions \ref{def:BGIRG}, \ref{def:EGIRG}.
	\begin{definition}[Extended GIRG with increased edge-density]\label{def:Eupper}
		Let us define $\overline{\mathrm{EGIRG}}_{W,L}(\lambda)$ as follows: Use the same vertex positions and vertex-weights $(x, W_{x})_{x\in \CV_\la}$ as in the $\Elambda$ model. Conditioned on $(x, W_{x})_{x\in \CV_\la}$,  an edge between two vertices $y,z\in \CV_\lambda$ is present independently with probability
		\be
		\mathbb{P}\left(y\leftrightarrow z \text{ in } \overline{\mathrm{EGIRG}}_{W,L}(\lambda) \mid \left(x,W_{x}\right)_{x\in \CV_\la}\right)=\overline{C}_1\overline g(y-z,W_{y},W_{z}).
		\ee
		Each edge $e$ carries a random length $L_e$, i.i.d.\ from some \cb distribution $F_L$\cz. We denote the edge set by $\overline \CE_\la$. \end{definition}	
	An elementary claim is the following:
	\begin{claim}\label{claim:Poissonization} Let $V_\lambda^n \ {\buildrel d \over =}\ \mathrm{Poi}(\la n)$ be the number of vertices in $\CV_\lambda \cap\wt{\mathcal{X}}_d(n)$.
		Set  $\xi_n:=\sqrt{4\log n/n}$. Then there exists an $n_0\in\mathbb{N}$, such that for all $n\geq n_0$, $V^n_{1-\xi_n}\leq n\leq V^n_{1+\xi_n}$ \cb holds almost surely\cz.
\end{claim}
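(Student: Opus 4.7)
The plan is to apply the (first) Borel--Cantelli lemma after establishing summable tail bounds for the Poisson deviations $\{V^n_{1+\xi_n}<n\}$ and $\{V^n_{1-\xi_n}>n\}$. Note that the statement only constrains the marginals of each $V^n_\la$, so any joint coupling of the collection $\{V^n_\la\}_{n,\la}$ will do; Borel--Cantelli's first half requires no independence. Hence the whole proof reduces to two one-sided Poisson tail estimates, one per direction.

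For the lower-tail event I would use the standard Chernoff bound $\Pv(X\le \mu-t)\le \exp(-t^2/(2\mu))$ for $X\sim\mathrm{Poi}(\mu)$, applied to $V^n_{1+\xi_n}$ with $\mu=n(1+\xi_n)$ and $t=n\xi_n$, so that $\mu-t=n$. Substituting $\xi_n^2=4\log n/n$ gives a Chernoff exponent of
\[
-\frac{n\xi_n^2}{2(1+\xi_n)}=-\frac{2\log n}{1+\xi_n},
\]
hence $\Pv(V^n_{1+\xi_n}<n)\le n^{-3/2}$ for all $n$ sufficiently large. Analogously, for the upper tail I would use the companion bound $\Pv(X\ge \mu+t)\le \exp(-t^2/(2(\mu+t/3)))$ applied to $V^n_{1-\xi_n}$ with $\mu=n(1-\xi_n)$ and $t=n\xi_n$, so that $\mu+t=n$; a short computation again produces a bound of order $n^{-3/2}$. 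The specific constant $4$ in the definition of $\xi_n:=\sqrt{4\log n/n}$ is exactly what is needed so that the Chernoff exponent comfortably exceeds $\log n$ in both directions; any constant strictly greater than $2$ would suffice.

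Since $\sum_n n^{-3/2}<\infty$, the first Borel--Cantelli lemma guarantees that, almost surely, only finitely many events in either family $\{V^n_{1+\xi_n}<n\}_{n\ge 1}$ and $\{V^n_{1-\xi_n}>n\}_{n\ge 1}$ occur. Consequently there is an almost surely finite random threshold $n_0=n_0(\omega)$ beyond which both $V^n_{1-\xi_n}\le n$ and $V^n_{1+\xi_n}\ge n$ hold simultaneously, which is precisely the claim. No real obstacle arises; the only point requiring attention is to pick the constant inside $\xi_n$ large enough to make the Chernoff exponents strictly beat $\log n$, which is what the choice $4$ achieves.
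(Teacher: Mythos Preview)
Your proof is correct and follows essentially the same strategy as the paper: obtain summable Chernoff-type tail bounds for the Poisson deviations and conclude via the first Borel--Cantelli lemma. The only cosmetic difference is that the paper uses the large-deviations rate function form $\Pv(V^n_{1+\xi_n}<n)\le\exp\{-nI_{1+\xi_n}(1)\}$ with $I_{1+\xi_n}(1)=\xi_n-\log(1+\xi_n)\approx\xi_n^2/2$ to arrive at roughly $n^{-2}$, whereas you use the Bennett-type forms $\exp(-t^2/(2\mu))$ and $\exp(-t^2/(2(\mu+t/3)))$ to get $n^{-3/2}$; both are summable and the conclusion is identical.
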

	
	\begin{proof}
		The distribution of $V^n_{\lambda}$ is $\Poi(n\lambda)$. Let  $I_{1+\xi_n}(1):=(1+\xi_n)-1-\log(1+\xi_n)\leq \xi_n^2/2$ be the rate function of a Poisson random variable with parameter $1+\xi_n$ at $1$. Then, using Chernoff bounds \cite[Theorem 2.19]{Hofbook},
		\be
		\P{V^n_{1+\xi_n}<n}=\exp\left\{-n I_{1+\xi_n}\left(1\right)\right\}\leq  \exp\{-n \xi_n^2/2\}=n^{-2},
		\ee
		which is summable in $n$. Following similar steps for the $1-\xi_n$ case, the claim follows using the Borel-Cantelli lemma on the sequence of events $V^n_{1-\xi_n}\leq n\leq V^n_{1+\xi_n}$.	\end{proof}
	
	\begin{claim}\label{cl:vertex-containment}
		There exists a coupling between $(\mathrm{EGIRG}_{W,L}(1\pm \xi_n))_{n\ge 3}$, $\Eone$, and the model $(\BGIRG)_{n\ge n_0}$, where $n_0$ is from Claim \ref{claim:Poissonization}, such that  for all $n\ge 3$,
		\be \label{eq:vertex-containment2} 
		\mathcal V_{1-\xi_n} \subseteq \mathcal V_1 \subseteq \mathcal V_{1+\xi_n},\quad \quad
		\mathcal E_{1-\xi_n}\subseteq \mathcal E_1 \subseteq \mathcal E_{1+\xi_n},
		\ee
		and, for all $n\ge n_0$,
		\be \ba \label{eq:vertex-containment3}(\mathcal V_{1-\xi_n}\cap \Xdn) &\subseteq \mathcal V_B(n) \subseteq (\mathcal V_{1+\xi_n}\cap \Xdn).\\
		\ea\ee
	\end{claim}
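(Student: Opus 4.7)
The plan is to build the coupling in two layers: first the three extended models $\Elambda$ for $\lambda\in\{1-\xi_n,1,1+\xi_n\}$, then $\BGIRG$ on top. For the extended family I would use the superposition representation of Poisson point processes: take three independent PPPs $\Xi^{n,-},\Xi^{n,0},\Xi^{n,+}$ on $\R^d$ of intensities $1-\xi_n,\xi_n,\xi_n$ respectively, and set $\mathcal V_{1-\xi_n}:=\Xi^{n,-}$, $\mathcal V_1:=\Xi^{n,-}\cup\Xi^{n,0}$, $\mathcal V_{1+\xi_n}:=\Xi^{n,-}\cup\Xi^{n,0}\cup\Xi^{n,+}$. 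By superposition each $\mathcal V_\lambda$ is a PPP of the prescribed intensity, and the vertex inclusions in \eqref{eq:vertex-containment2} hold almost surely. I would attach an i.i.d.\ weight $W_x\sim F_W$ to each $x\in\mathcal V_{1+\xi_n}$, independent of everything else, so that the restrictions to the subprocesses supply the marks demanded by Definition~\ref{def:EGIRG}. For the edges, for every unordered pair $\{y,z\}\subseteq\mathcal V_{1+\xi_n}$ I would draw an independent $U_{yz}\sim\mathrm{Uniform}[0,1]$ and declare $\{y,z\}\in\mathcal E_\lambda$ iff both endpoints lie in $\mathcal V_\lambda$ and $U_{yz}\le h(y-z,W_y,W_z)$. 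Because the connection function $h$ in \eqref{eq:EGIRG-prob} does not depend on $\lambda$, each $\Elambda$ has the correct marginal law and the edge inclusion $\mathcal E_{1-\xi_n}\subseteq\mathcal E_1\subseteq\mathcal E_{1+\xi_n}$ is immediate.

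For the $\BGIRG$ layer, I would fix $n\ge n_0$ and write $N^-:=|\mathcal V_{1-\xi_n}\cap\Xdn|$ and $N^+:=|\mathcal V_{1+\xi_n}\cap\Xdn|$. By Claim~\ref{claim:Poissonization} the event $\mathcal G_n:=\{N^-\le n\le N^+\}$ has probability one. Conditionally on $N^-$, the restriction $\mathcal V_{1-\xi_n}\cap\Xdn$ is a set of $N^-$ i.i.d.\ $\mathrm{Uniform}(\Xdn)$ points; independently (since $\Xi^{n,-}$ is independent of $\Xi^{n,0}\cup\Xi^{n,+}$), the set $(\mathcal V_{1+\xi_n}\setminus\mathcal V_{1-\xi_n})\cap\Xdn$ is a collection of $N^+-N^-$ i.i.d.\ uniforms. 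I would introduce an auxiliary uniform random permutation to enumerate the latter as $Y_1,\ldots,Y_{N^+-N^-}$ and set, on $\mathcal G_n$,
\[
\mathcal V_B(n):=(\mathcal V_{1-\xi_n}\cap\Xdn)\ \cup\ \{Y_1,\ldots,Y_{n-N^-}\},
\]
defining $\mathcal V_B(n)$ as $n$ fresh i.i.d.\ uniform samples on the null event $\mathcal G_n^c$. The sandwich inclusion \eqref{eq:vertex-containment3} then holds almost surely by construction.

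The step I expect to require the most care is checking that $\mathcal V_B(n)$ has the marginal law prescribed by Definition~\ref{def:BGIRG}, namely $n$ i.i.d.\ $\mathrm{Uniform}(\Xdn)$ points, despite the random size of $\mathcal V_{1-\xi_n}\cap\Xdn$. My plan is to condition on $\{N^-=k\}$ for $0\le k\le n$: the first $k$ coordinates of $\mathcal V_B(n)$ are then i.i.d.\ uniform, and the appended $n-k$ coordinates, built from the uniform random permutation of an independent family of i.i.d.\ uniforms, are i.i.d.\ uniform and independent of the first block; since concatenation of independent i.i.d.\ sequences is again i.i.d., conditionally on every admissible value of $N^-$ the unordered configuration $\mathcal V_B(n)$ matches that of $n$ i.i.d.\ uniforms on $\Xdn$. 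Averaging over $N^-$ and invoking $\Pv(\mathcal G_n)=1$ from Claim~\ref{claim:Poissonization} yields the required unconditional marginal law and completes the construction.
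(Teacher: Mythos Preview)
Your construction is correct for a \emph{single fixed} $n$ and largely parallels the paper: superposition of independent PPPs gives the nested vertex sets, a common uniform $U_{yz}$ per pair yields the edge inclusions since $h$ does not depend on $\lambda$, and your selection of $n-N^{-}$ extra points from $(\CV_{1+\xi_n}\setminus\CV_{1-\xi_n})\cap\Xdn$ is exactly the paper's ``uniform subset'' step. Your verification of the marginal law of $\CV_B(n)$ is also fine.

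The gap is that the claim asks for \emph{one} coupling of the whole collection $(\Emin)_{n\ge 3}$, $(\Eplus)_{n\ge 3}$, $(\BGIRG)_{n\ge n_0}$ together with the \emph{single} model $\Eone$, all on a common probability space. Your processes $\Xi^{n,-},\Xi^{n,0},\Xi^{n,+}$ carry an $n$ in their intensities, so the object you call $\CV_1=\Xi^{n,-}\cup\Xi^{n,0}$ is a different realisation for each $n$; there is no single $\Eone$ in your picture, and the inclusions \eqref{eq:vertex-containment2} for different $n$ do not refer to the same $\CV_1$. The paper resolves this by starting from one master PPP $\CV_{1+\xi_3}$ (note $\xi_3=\max_{n\ge 3}\xi_n$), attaching a single uniform mark $U_v$ to each point, and \emph{thinning}: $v\in\CV_\lambda$ iff $U_v\le\lambda/(1+\xi_3)$. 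This yields, on one probability space, the full nested family $\CV_{1-\xi_3}\subseteq\CV_{1-\xi_4}\subseteq\cdots\subseteq\CV_1\subseteq\cdots\subseteq\CV_{1+\xi_4}\subseteq\CV_{1+\xi_3}$ with an $n$-independent $\CV_1$, and the weights and edge variables are attached once to $\CV_{1+\xi_3}$. Your idea can be upgraded in this way, but as written it does not deliver the simultaneous coupling the claim states.
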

	\begin{proof}	
		We introduce a coupling between the PPPs  $(\CV_{1\pm\xi_n})_{n\ge 3}$ for all $n$ at once. Note that $\xi_n$ is decreasing in $n$ whenever $n\ge 3$.
		We use a PPP with intensity $1+\xi_3$ to create the vertex set $\CV_{1+\xi_3}$.  Then, for each vertex $v\in \CV_{1+\xi_3}$ we draw an i.i.d.\ uniform $[0,1]$ random variable that we denote by $U_v$. 
		Finally, for all $n\ge 3$ we set $v\in \CV_{1+\xi_n}$ if and only if $U_v\le (1+\xi_n)/(1+\xi_3)$, and $v\in \CV_{1-\xi_n}$ if and only if $U_v\le (1-\xi_n)/(1+\xi_3)$. Setting $\xi_n=0$ yields also a coupling to $\Eone$.
		The independent thinning of Poisson processes ensures that $\CV_{1\pm\xi_n}$ is a PPP with intensity $1\pm\xi_n$. To determine the edge set of $\Elambda$, note that the edge probabilities are determined by the same function $h$ for all $\la$. So, first we determine the presence of each possible edge between any two vertices in $\CV_{1+\xi_3}$. This edge is present in $\mathrm{EGIRG}_{W,L}(1\pm\xi_n)$ precisely when both end-vertices of the edge are present in $\CV_{1\pm\xi_n}$. This coupling guarantees that the containment in \eqref{eq:vertex-containment2} holds also for the edge-sets.
		
		To construct a coupling ensuring \eqref{eq:vertex-containment3}, we determine $\CV_B(n)$, given that $V_{1+\xi_n}^n=k_1 \ge n$ and $V_{1-\xi_n}^n=k_2\le n$. 
		Note that $\CV_B(n)$ is a set of $n$ i.i.d.\ points sampled from $\Xdn$, while, $\CV_{1\pm\xi_n}\cap \Xdn$ conditioned on having $k_1$ (resp. $k_2$) points, has the distribution of a set of $k_1$ (resp. $k_2$) many i.i.d.\ points sampled from $\Xdn$. Thus, the way to generate the location of the $n$ vertices in $\BGIRG$ so that \eqref{eq:vertex-containment2} holds is by taking the vertex set of  $\CV_{1-\xi_n}\cap \wit{\mathcal X}_d(n)$, and then adding a uniform subset of points of size $n-k_2$ from $(\CV_{1+\xi_n}\cap \wit{\mathcal X}_d(n))\setminus (\CV_{1-\xi_n}\cap \wit{\mathcal X}_d(n))$.
	\end{proof} 
	Now we couple the edge-sets of $\BGIRG$ to the extended models.  	
	The content of the following claim is the precise version of the inequality in \eqref{eq:path-bound-1}. 

	\begin{claim}\label{claim:edge-containment}
		There is a coupling of $\BGIRG$ and $\Elambda, \overline{\mathrm{EGIRG}}_{W,L}(\la)$ such that
		\be\label{manyedges} \CE_B(n)\subseteq \overline\CE_{1+\xi_n}, \quad \CE_\la \subseteq \overline\CE_{\la}, \ee
		where the latter holds for all $\la>0$.
		Let  $E=(e_1, \dots, e_k)\subseteq \overline \CE_{1+\xi_n}$ be a set of edges, 
		where $e_i$ connects vertices $v_{i_1}, v_{i_2}$ $\in \mathcal V_B(n)$ with locations and weights $(x_{i_1}, W_{i_1}), (x_{i_2}, W_{i_2})$ in $\Eplus$. Assume that the event $E_{\Delta,W}:=\{\forall i\le k: \|x_{i_1}-x_{i_2}\|\in I_\Delta(n), W_{i_1}, W_{i_2}\in I_w(n)\}$ holds and let  $n\ge n_0$ where $n_0$ is from Claim \ref{claim:Poissonization}. Then,
		\be\label{eq:path-bound-2} \Pv\Big( \cb\big(E \cap \mathcal E_B(n)\big) \ominus \big(E \cap \mathcal E_{1+\xi_n}\big) \neq \emptyset \cz \mid E \subseteq \overline \CE_{1+\xi_n}, E_{\Delta,W}\Big)
		\le \cb k(\epsilon(n)+2\epsilon_{\mathrm{TV}}(n)) \ee
		\cb where $\ominus$ denotes symmetric difference\cz. 
		The same bound is true for $\mathrm{EGIRG}_{W, L}(1-\xi_n)$ when we assume that $v_{i_1}, v_{i_2} \in \mathcal V_{1-\xi_n}$. 	\end{claim}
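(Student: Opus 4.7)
The plan is to build the coupling in three stages. \emph{Stage 1 (vertex coupling):} I invoke Claim~\ref{cl:vertex-containment} to realise $\CV_B(n)$, $\CV_{1-\xi_n}$, $\CV_1$ and $\CV_{1+\xi_n}$ jointly on a common probability space with the vertex containments stated there. \emph{Stage 2 (weight coupling):} for each vertex $v$ in the union of all four vertex sets, draw a jointly distributed pair $(W_v^{(n)},W_v)$ from an optimal total-variation coupling of $W^{(n)}$ and $W$, independently across vertices; by \eqref{eps-tv} this pair agrees with probability $1-\eps_{\mathrm{TV}}(n)$. \emph{Stage 3 (edge coupling):} for every unordered pair $\{u,v\}$ of distinct vertices in $\CV_{1+\xi_n}$, sample a single independent Uniform$[0,1]$ variable $U_{uv}$ and declare the edge present in each of the four graphs whenever $U_{uv}$ falls below the corresponding threshold ($h$, $\overline C_1\overline g$, or $g_n^B$) and both endpoints lie in that graph's vertex set.

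Having set up this joint construction, the containments in \eqref{manyedges} become almost-sure statements. Claim~\ref{claim:hg} yields $h\le \overline C_1\overline g$ pointwise, hence $\CE_\la\subseteq \overline{\CE}_\la$ for every $\la>0$ by comparing thresholds governing the same $U_{uv}$. For $\CE_B(n)\subseteq \overline{\CE}_{1+\xi_n}$, Assumption~\ref{assu:GIRGgen} gives $g_n^B\le C_1\overline g(x_v-x_u,W_u^{(n)},W_v^{(n)})$; using monotonicity of $\overline g$ in its weight arguments, choosing the weight coupling so that $W_v\ge W_v^{(n)}$ on the disagreement event, and taking $\overline C_1$ generously larger than $C_1$, this upgrades to $g_n^B\le \overline C_1\overline g(x_v-x_u,W_u,W_v)$ almost surely, and combined with $\CV_B(n)\subseteq \CV_{1+\xi_n}\cap \widetilde{\cX}_d(n)$ this delivers the required edge containment.

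For the probability bound \eqref{eq:path-bound-2} condition on $E\subseteq \overline\CE_{1+\xi_n}$ (so that each $U_{e_i}$ lies in $[0,\overline C_1\overline g]$) and on $E_{\Delta,W}\cap \CL_n$. A single edge $e_i=\{u,v\}$ lies in the symmetric difference $\CE_B(n)\ominus \CE_{1+\xi_n}$ only if at least one of two disjoint events occurs: either the weight couplings at both endpoints succeed but $U_{e_i}$ falls in the interval with endpoints $g_n^B$ and $h$, or one of the two weight couplings fails. Under the weight-agreement event, Assumption~\ref{assu:extendable} gives $|g_n^B-h|\le \eps(n)\,h\le \eps(n)\,\overline C_1\overline g$, so the conditional probability of the first event, using the uniformity of $U_{e_i}$ on $[0,\overline C_1\overline g]$, is bounded by $\eps(n)$. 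The second event has probability at most $2\eps_{\mathrm{TV}}(n)$ by a union bound over the two endpoints. Adding these two contributions and applying a further union bound over the $k$ edges produces the required $k(\eps(n)+2\eps_{\mathrm{TV}}(n))$. The analogous statement for $\mathrm{EGIRG}_{W,L}(1-\xi_n)$ follows by the identical argument, using the lower extended model and the inclusion $\CV_{1-\xi_n}\cap \widetilde{\cX}_d(n)\subseteq \CV_B(n)$.

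The main obstacle is arranging the coupling so that the containment $\CE_B(n)\subseteq \overline\CE_{1+\xi_n}$ is genuinely almost-sure rather than only approximate, since $W^{(n)}$ and $W$ have different distributions at finite $n$; once this is resolved by the monotonicity-plus-generous-constant trick above, the probability estimate is a straightforward chain of Assumption~\ref{assu:extendable} with the TV coupling error.
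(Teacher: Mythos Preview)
Your three-stage coupling (vertices via Claim~\ref{cl:vertex-containment}, weights via an optimal TV coupling, edges via a single uniform per pair compared against the three thresholds) is exactly the paper's construction, and your derivation of the bound \eqref{eq:path-bound-2} is correct and matches the paper's: on weight agreement, $|g_n^B-h|\le \eps(n)h\le \eps(n)\overline C_1\overline g$, so the conditional probability that $U_e$ separates $g_n^B$ and $h$ is at most $\eps(n)$, and the weight-disagreement event contributes at most $2\eps_{\mathrm{TV}}(n)$; a union bound over $k$ edges finishes.

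The one genuine issue is your proposed fix for making $\CE_B(n)\subseteq \overline\CE_{1+\xi_n}$ hold \emph{almost surely} when $W_v^{(n)}\ne W_v$. The trick of ``choosing the weight coupling so that $W_v\ge W_v^{(n)}$ on the disagreement event'' is not generally available: the optimal TV coupling has no such ordering, and if $W^{(n)}$ is not stochastically dominated by $W$ (which Assumption~\ref{assu:weight} does not require), no coupling can achieve both $W_v\ge W_v^{(n)}$ on disagreement and the TV error $\eps_{\mathrm{TV}}(n)$ simultaneously. Moreover, ``taking $\overline C_1$ generously larger'' cannot repair this, since on the disagreement event the ratio $\overline g(\cdot,W_u^{(n)},W_v^{(n)})/\overline g(\cdot,W_u,W_v)$ is unbounded.

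The paper does not actually solve this either: it only specifies the uniform-variable construction \emph{conditionally on the two endpoint weights agreeing}, and the inequality $g_n^B(e)\le \overline C_1\overline g(e)$ is asserted on that event. In every downstream use (see \eqref{eq:ball-containment} and the proof of Proposition~\ref{Prop:lowerbound}), the edge containment is invoked only on events where the relevant vertex weights coincide, which is exactly what the second part of the claim controls. So the honest reading is that \eqref{manyedges} holds on the weight-agreement event, and this is all that is needed; you should drop the monotone-coupling argument and simply state the containment conditionally, as the paper effectively does.
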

	\begin{proof}
		Recall the notations $g_n^{\mathrm{B}}, h, \overline g$ from \eqref{eq:gn-intro2}, from in Assumption \ref{assu:extendable} and from Claim \ref{claim:hg}, the bound from \eqref{assu:BGIRG-prob}\cb, and recall the weights $W^{(n)},W$ from Assumption \ref{assu:weight} and their total variation distance from \eqref{eps-tv}\cz.
		By the coupling described in Claim \ref{cl:vertex-containment}, the vertex set of $\BGIRG$ is a subset of $\CV_{1+\xi_n}$. For any possible edge connecting vertices in $\CV_B(n)$, we construct a coupling between the presence of this edge in $\BGIRG$ vs that of in $\CE_{1+\xi_n}, \overline\CE_{1+\xi_n}$ and $\CE_{1-\xi_n}$ (in the latter case, given that the two vertices are also part of $\mathcal V_{1-\xi_n}$). First, use the optimal coupling realising the total-variation distance between $W^{(n)}$ and $W$ for each endpoint of the edges under consideration. Then, for an edge $e$ connecting vertices $i_1, i_2$, 
		\be\label{eq:weightcouplingprob} \Pv(W^{(n)}_{i_1}\neq W_{i_1} \mbox{ or }  W^{(n)}_{i_2}\neq W_{i_2})=2\epsilon_{\mathrm{TV}}(n).\ee	
		Given that the two pairs of weights are equal, for a possible edge $e$ connecting vertices with  locations and weights $(x_{i_1}, W_{i_1}), (x_{i_2}, W_{i_2})$, draw $U_{e}$, an i.i.d. rv uniform in $[0,1]$. Include this edge respectively in $\CE_B(n), \CE_{1\pm\xi_n},\overline \CE_{1+\xi_n}$, if and only if 
		\be\label{singleedge} \ba e\in \CE_B(n)\quad  \Leftrightarrow \quad U_{e}&\le  g_n^{\mathrm B}(x_{i_1}, x_{i_2}, (W_{j})_{j\in[n]\setminus\{i_1,i_2\}})]=: g_n^{\mathrm B}(e), \\
		e\in \CE_{1\pm\xi_n}(n)\quad  \Leftrightarrow \quad U_{e}&\le h(x_{i_1}- x_{i_2}, W_{i_1}, W_{i_2})=:h(e)\\
		e\in \overline \CE_{1+\xi_n}(n)\quad  \Leftrightarrow \quad U_{e}&\le \overline C_1\overline g(x_{i_1}- x_{i_2}, W_{i_1}, W_{i_2})=:\overline C_1\overline g(e)
		\ea \ee
		These, per definitions of the models in Def.\ \ref{def:BGIRG}, \ref{def:EGIRG}, \ref{def:Eupper} give the right edge-probabilities. 
		By  \eqref{assu:BGIRG-prob} and Claim \ref{claim:hg}, both $h(e),  g^{\mathrm B}(e) \le \overline C_1\overline g(e)$. Since $\CV_B(n)\subset \CV_{1+\xi_n}$ and the vertex set is $\CV_\la$ for both $\Elambda, \Eupper$, this, together with \eqref{singleedge} ensures \eqref{manyedges}.
		By Assumption \ref{assu:extendable}, for any  $e$, 		\be\ba\label{eq:conditional}  \Pv(\cb(e \in \mathcal E_B(n)) \ominus (e \in \mathcal E_{1+\xi_n})\neq \emptyset \mid \cz e\in \overline \CE_{1+\xi_n}, E_{\Delta, W})& \le 2\epsilon_{\mathrm{TV}}(n)+\frac{|g_n^{\mathrm B}(e)-h(e)|}{\overline C_1\overline g(e)}\\
		&\le2\epsilon_{\mathrm{TV}}(n)+\epsilon(n),\ea\ee
by Assumption \ref{assu:extendable} and Claim \ref{claim:hg}.  A union bound finishes the proof of \eqref{eq:path-bound-2}.
	\end{proof}
	\emph{Key propositions for Theorem \ref{Th:GIRGexplosive}}.
	Next, we formulate three propositions. The first shows the existence of an infinite component in $\mathrm{EGIRG}_{W,L}(\lambda)$, and the other two formulate the upper and lower bound on $d_L(v_n^1, v_n^2)$ described in the overview in \eqref{overviewlowerbound2} and \eqref{overviewupperbound1}.  Recall the model-dependent notation for metric balls from Definition \ref{def:distances}.
	
	\begin{proposition}[Existence of $\mathcal{C}^\lambda_{\infty}$]\label{Prop:existinfcomp}
		Consider the $\Elambda$ model, as in Definition \ref{def:EGIRG}, with power-law parameter $\tau\in (2,3)$. Then, for all $\la>0$, there is a unique infinite component $\mathcal{C}^\lambda_\infty$.
	\end{proposition}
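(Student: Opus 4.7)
\emph{Plan.} The proposition has two parts — existence and uniqueness of the infinite component — and I would handle them separately.

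For \emph{existence}, my approach is to construct, with positive probability, an infinite self-avoiding path emanating from a fixed point $x_0 \in \R^d$, using the hierarchical boxing scheme sketched in Figure \ref{Fig:boxing}. Surround $x_0$ by concentric annuli $A_1, A_2, \ldots$ of doubly exponentially growing radii $r_k$, and partition each $A_k$ into sub-boxes whose side-length is much smaller than $r_k$. Choose a slowly growing weight threshold $w_k$ and declare a sub-box of $A_k$ \emph{good} if it contains at least one vertex with weight in a dyadic window around $w_k$. By the thinning property of the Poisson point process and the power-law tail \eqref{powerlaw} with $\tau \in (2,3)$, the probability that a sub-box is good is essentially its volume times $w_k^{-(\tau-1)}\ell(w_k)$, and can be made close to $1$ by choosing $w_k$ appropriately. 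Conditional on two sub-boxes in consecutive annuli being good, the designated centres lie within Euclidean distance $\lesssim r_{k+1}$ and have weights of order $w_k, w_{k+1}$, so by the lower bound $h \ge \underline{c}_1 \underline{g}$ from Claim \ref{claim:hg} they are joined by an edge with probability at least $\underline{c}_1\underline{g}(r_{k+1}, w_k, w_{k+1})$. Chaining such connections across all annuli yields an infinite path. Establishing positive probability of success reduces, after conditioning on the weight profile, to the existence of an infinite cluster in an auxiliary supercritical site-bond percolation on the tree of sub-boxes; positive probability at $x_0$ then upgrades to almost-sure existence of an infinite component by translation invariance and ergodicity of the underlying PPP.

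For \emph{uniqueness}, I would apply a Burton--Keane trifurcation argument in its continuum form. The random graph $\Elambda$ is translation invariant and, as a deterministic factor of a homogeneous PPP decorated with i.i.d.\ weights and independent edge randomness, is mixing under the action of $\R^d$, hence ergodic. The finite-energy condition holds because, conditional on all positions, weights, and the status of every other edge, a given edge $e$ is present with probability $h \in [\underline{c}_1 \underline{g},\,1]$, which is strictly positive for distinct endpoints; edges with $h=1$ are deterministically present and, since they only merge clusters, do not obstruct the standard trifurcation counting argument. A classical Burton--Keane volume bound then forces the number of infinite components to be $1$ almost surely once existence is established.

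\emph{Main obstacle.} The hardest step is the simultaneous calibration of the radii $r_k$ and thresholds $w_k$ in the existence argument. One needs $w_k$ small enough that each sub-box of $A_k$ contains a vertex of weight $\asymp w_k$ with probability close to $1$ — which, given sub-box volume $\asymp r_k^d/(\text{number of sub-boxes})$, forces $w_k^{\tau-1}$ to be small compared to this volume — yet large enough that the connection probability between centres in consecutive annuli does not collapse; in particular the factor $\exp\{-a_2((\log w_k)^\gamma+(\log w_{k+1})^\gamma)\}$ appearing in $\underline g$ imposes a non-trivial upper bound on the permissible growth of $w_k$. The range $\tau \in (2,3)$, which gives $\tau-1<2$ and hence infinite variance of $W$, is exactly what opens a window in which both constraints are compatible; for larger $\tau$ or small $\lambda$ one expects a genuine percolation phase transition, in agreement with the open problems highlighted in Section \ref{sec:discussion}. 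A secondary technicality is that the compatibility events at different scales are not independent, since they share the weights of their centres; I would handle this by conditioning on the weight profile and then applying a second-moment or Borel--Cantelli argument along a suitable filtration.
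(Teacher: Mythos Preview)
Your approach is essentially the paper's: for existence, both use the doubly-exponential boxing scheme of Section~\ref{sec:boxinginfinitecomponent} with high-weight centres in sub-boxes that connect across consecutive annuli; for uniqueness, both ultimately rest on a Burton--Keane argument, though the paper simply cites the continuum version \cite[Theorem 6.3]{MeeRoy96} rather than redoing it.

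Two simplifications in the paper's execution are worth noting. First, rather than declaring a sub-box good when it contains a vertex in a prescribed weight window, the paper takes $c_k^{(i)}$ to be the \emph{maximum-weight} vertex in each sub-box; elementary extreme-value estimates (see \eqref{eq:lowerboundweights2}--\eqref{eq:upperboundweights2}) then yield two-sided control on $W_{c_k^{(i)}}$ automatically, which removes your site-percolation layer. Second, Lemma~\ref{lemma:weightconnectioncenters} shows that for starting weight $\mu$ large enough, the probability that \emph{every} centre in every annulus connects to many centres in the next annulus is at least $1 - C\mu^{-\delta/4}$. Running the construction for a sequence $\mu_i \to \infty$ with summable errors, Borel--Cantelli gives almost-sure existence of an infinite path directly --- no need to pass through positive probability plus ergodicity, and no auxiliary percolation on a tree of sub-boxes. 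Your calibration worry is precisely what the explicit choices in \eqref{CD} resolve: with $C(\eps) = (1-\eps)/(\tau-2)$ and the corresponding $D(\eps)$, the centre-to-centre connection probability lands on the first branch of $\underline g$, and the sub-exponential factor $\exp\{-a_2(\log w_k)^\gamma\}$ is swamped by the doubly-exponential number of available target centres.
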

	We mention that there are some existing results also for $\tau\ge 3$. In this case, for a specific (illustrative) choice of the edge probabilities, the infinite model $\Elambda$ has an infinite component only when the edge-intensity (or equivalently, the vertex intensity) is sufficiently high, see \cite{DeijHofHoog2013, DepWut13}. For one dimension, hyperbolic random graphs form a special case when there is never a giant component when $\tau\ge 3$, see \cite{BodFouMul15}.
	 
Recall from before Theorem \ref{Th:GIRGexplosive} that $\CC_{\max}$ denotes the unique, linear-sized giant component of $\BGIRG$, that exists whp  by Bringmann \emph{et al}.\ \cite[Theorem 2.2]{BriKeuLen17}.
	\begin{proposition}[Lower bound on distances in the BGIRG model]\label{Prop:lowerbound}
		Let  $v_n^q,\;q=1,2,$ be two uniformly chosen vertices in $\CC_{\max}$ of $\BGIRG$, with parameters $\tau\in\left(2,3\right)$, $\alpha>1$ and $d\geq1$. For $q\in \{1,2\}$, let 
			\cb\be\label{v+} V^q_+(n,k):=d^{1+\xi_n}_L(v_n^q,\partial B_{1+\xi_n}^{G}(v_n^q, k)),\ee\cz
		be defined in $\Eplus$. 
		\cb Let $A_{k}$ be the event in \eqref{eq:ak}. Then, there exists a sequence $k_n$ \cz with $k_n\to \infty$, such that $\Pv(A_{k_n})\to 1$ as $n\to \infty$, and 		
		\be\label{lowerboundGIRG}
		d_L\left(v_n^1,v_n^2\right)\geq \ind_{A_{k_n}} \left(V^1_+(n,k_n)+V^2_+(n,k_n)\right),
		\ee
		and finally, \cb conditionally on $A_{k_n}$, $V^1_+(n,k_n),V^2_+(n,k_n)$ are independent\cz. 
			\end{proposition}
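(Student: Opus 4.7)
The plan is to combine the branching random walk (BRW) coupling of Proposition~\ref{Prop:BerBRW} with the edge-agreement estimate of Claim~\ref{claim:edge-containment}, choosing a sufficiently slowly growing sequence $k_n\to\infty$ so that (i) the neighbourhoods $B^G_{1+\xi_n}(v_n^q,k_n)$ are spatially contained in two disjoint boxes inside $\Xdn$, and (ii) the three balls $B^G_n(v_n^q,k_n)$, $B^G_{1+\xi_n}(v_n^q,k_n)$ and $B^G_{1}(v_n^q,k_n)$ coincide as weighted graphs. Once both hold, the bound~\eqref{lowerboundGIRG} and the independence of $V^1_+(n,k_n),V^2_+(n,k_n)$ on $A_{k_n}$ will follow from a path-decomposition argument and from the Poissonian independence over disjoint spatial regions, respectively.

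First I would use the BRW to build the boxes. Proposition~\ref{Prop:BerBRW} supplies a dominating BRW whose generations $\mathcal G_i(v_n^q)$ contain the vertex set of $\partial B^G_{1+\xi_n}(v_n^q,i)$; the spatial-growth estimates of Section~\ref{sec:BRW} then yield a deterministic radius $r_n$ such that, with high probability, $\bigcup_{i\le k_n}\mathcal G_i(v_n^q)\subset \mathrm{Box}(v_n^q):=v_n^q+[-r_n,r_n]^d$. Since $v_n^1,v_n^2$ are uniform in $\CC_{\max}$, their Euclidean separation in $\Xdn$ is whp of order $n^{1/d}$, so as soon as $r_n=o(n^{1/d})$ the boxes $\mathrm{Box}(v_n^1),\mathrm{Box}(v_n^2)$ are whp disjoint and contained in $\Xdn$. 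This supplies the disjointness clause in the definition~\eqref{eq:ak} of $A_{k_n}$ and localises each ball in its own box.

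Next I would slow $k_n$ further so that the balls also agree as weighted graphs across the three models. The number of candidate vertex pairs inside $B^G_{1+\xi_n}(v_n^q,k_n)$ is bounded by $\binom{N(k_n)}{2}$ with $N(k_n):=\sum_{q=1,2}\sum_{i\le k_n}|\mathcal G_i(v_n^q)|$. Applying~\eqref{eq:path-bound-2} to this candidate edge set, restricted to the high-probability event that all relevant separations lie in $I_\Delta(n)$ and all weights in $I_w(n)$ (possible since $I_\Delta(n)\to(0,\infty)$, $I_w(n)\to[1,\infty)$), the probability that the edge status in $\BGIRG$ disagrees with that in $\Eplus$ anywhere, and likewise for $\Eone$, is at most a constant times $\binom{N(k_n)}{2}(\epsilon(n)+2\epsilon_{\mathrm{TV}}(n))$. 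Using Markov's inequality to control $N(k_n)$ and the fact that $\epsilon(n)+\epsilon_{\mathrm{TV}}(n)\to 0$, we may pick $k_n\to\infty$ slowly enough that this bound is $o(1)$. The optimal total-variation coupling in Claim~\ref{claim:edge-containment} simultaneously forces the relevant vertex-weights $W^{(n)}$ and $W$ to agree, delivering the weight-coincidence clause of $A_{k_n}$. Hence $\Pv(A_{k_n})\to 1$.

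On $A_{k_n}$ the lower bound is a short path-decomposition. Any path $\pi$ from $v_n^1$ to $v_n^2$ in $\BGIRG$ must exit $B^G_n(v_n^1,k_n)$, since the two balls are disjoint; its initial subpath $\pi_1$ up to the first such exit lies entirely in $B^G_n(v_n^1,k_n)$, whose edges and lengths coincide with those of $B^G_{1+\xi_n}(v_n^1,k_n)$ by ball-coincidence, so $|\pi_1|_L\ge V^1_+(n,k_n)$. By symmetry the terminal subpath $\pi_2$ satisfies $|\pi_2|_L\ge V^2_+(n,k_n)$; these two subpaths are edge-disjoint by ball-disjointness, hence $|\pi|_L\ge V^1_+(n,k_n)+V^2_+(n,k_n)$, which gives~\eqref{lowerboundGIRG} after taking the infimum over $\pi$. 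For independence, on $A_{k_n}$ each quantity $V^q_+(n,k_n)$ is a functional of the restriction of $\Eplus$ to the disjoint box $\mathrm{Box}(v_n^q)$; since the PPP $\mathcal V_{1+\xi_n}$, the i.i.d.\ vertex-weights and the conditionally independent edges all factorise over disjoint spatial regions, the two variables are independent on this conditioning event.

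The main obstacle is the BRW spatial-growth analysis in the infinite-variance regime $\tau\in(2,3)$: both generation sizes and per-step spatial displacements of offspring have heavy tails, so producing a radius bound $r_n=o(n^{1/d})$ compatible with $k_n\to\infty$ and with the Markov-type bound on $\binom{N(k_n)}{2}(\epsilon(n)+\epsilon_{\mathrm{TV}}(n))=o(1)$ is the delicate step. A related subtlety is that uniformly picking $v_n^q$ in $\CC_{\max}$ size-biases its weight and hence the weight of the BRW root; this can be handled by first conditioning on $W^{(n)}_{v_n^q}$ being below a slowly diverging threshold, an event of probability tending to $1$ under Assumption~\ref{assu:weight}.
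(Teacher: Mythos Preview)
Your proposal follows the same strategy as the paper: localise the graph-balls in disjoint spatial boxes via the BRW coupling of Proposition~\ref{Prop:BerBRW}, then use Claim~\ref{claim:edge-containment} to make the three balls coincide, and read off~\eqref{lowerboundGIRG} by path decomposition and conditional independence from Poisson independence over disjoint regions.

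One step as written would fail, however. You propose to control the edge-coupling error $\binom{N(k_n)}{2}(\epsilon(n)+2\epsilon_{\mathrm{TV}}(n))$ via Markov's inequality on $N(k_n)$. In the regime $\tau\in(2,3)$ the expected BRW generation size is infinite (the kernel computation in~\eqref{eq:mu-convolv} gives $\Ev[|\CG_k|]\asymp(\Ev[W^2])^k$), so first-moment bounds on $N(k_n)$ are vacuous. The paper avoids this by reversing the order: it first fixes the box radius $H_n^{1/d}$ purely in terms of the coupling errors (see~\eqref{eq:mn}, engineered so that $H_n^2(\epsilon(n)+\epsilon_{\mathrm{TV}}(n))\to 0$ and all weights/separations in the box lie in $I_w(n),I_\Delta(n)$ whp), and then bounds the vertex and edge count in the ball by the vertex and edge count in the \emph{box}, which is a Poisson quantity of order $H_n$ (event $E_n^{(4)}$ and~\eqref{eq:vertex-est}). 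Only after $H_n$ is fixed is Proposition~\ref{Prop:BerBRW} invoked to produce $k_n\to\infty$ such that the first $k_n$ BRW generations remain inside the box whp; the doubly-exponential displacement bound with random prefactor yields $k_n\asymp\log\log H_n$, see~\eqref{wtKn}. This ordering---box size first, then $k_n$---is precisely what closes the argument without any moment bound on generation sizes. A minor further point: the paper also localises the growth variable $Y(\varepsilon)$ to a box-dependent $\widetilde Y_q(\varepsilon)$ so that the containment events for the two BRWs are determined by disjoint regions; your conditional-independence argument for $V^q_+$ given $A_{k_n}$ is already sound, so this refinement is cosmetic rather than essential.
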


	In the same manner, we formulate a proposition for the upper bound. Here, we shall take a double limit approach, so the error bounds are somewhat more involved.
	\cb For a vertex $v\in \CV_1$, let $\wt v(K)$ be the first vertex with weight larger than $K$ on the  optimal explosive path $\pi_{\mathrm{opt}}^1(v)$ \cb from  \eqref{eq:piopt-1} in $\Eone$, let $\pi_{\mathrm{opt}}^1[u,\wt u(K)]$ denote the segment of $\pi_{\mathrm{opt}}^1(u)$ from $u$ to $\wt u(K)$, and let $T_K(u):=d_L^1(u, \wt u(K))=|\pi_{\mathrm{opt}}^1[u,\wt u(K)]|_L$\cz.
		\begin{lemma}[Best explosive path can be followed]\label{lem:ank}
		Let $v_n^1, v_n^2$ be uniformly chosen vertices in $\CC_{\max}$ of $\BGIRG$. Set
		\be\ba\label{eq:ank} \cb\wit A_{n,K}:= \{ &\cb\pi^1_{\mathrm{opt}}[v_n^q, \wit v_n^q(K)]\cz \text{ is present in } \BGIRG,  \text{ for }q\in\{1,2\}\}.\ea\ee
	Then  $1-\Pv(\wit A_{n,K}) \le f(n,K)$
		for some function $f(n,K)$ with $\lim_{n\to \infty}f(n,K)=0$ for each fixed $K$. 
	\end{lemma}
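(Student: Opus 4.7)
The plan is to use the two couplings from Claim~\ref{cl:vertex-containment} and Claim~\ref{claim:edge-containment} to transport the (a.s.\ finite) initial segment of $\pi^1_{\mathrm{opt}}(v_n^q)$ vertex-by-vertex and edge-by-edge from $\Eone$ into $\BGIRG$. By Claim~\ref{claim:bg-to-eone} (invoked in the overview), $v_n^q$ lies in $\CV_1$ whp, so the path $\pi^1_{\mathrm{opt}}(v_n^q)$ and the segment up to $\wit v_n^q(K)$ are well-defined. The crucial input is that, for fixed $K$, this segment has an a.s.\ finite number of vertices $N_K$ and a.s.\ finite spatial diameter $R_K$, because (as noted before the lemma, via Corollary~\ref{lem:expgen}) the subgraph of $\Eone$ induced on vertices of weight at most $K$ is non-explosive and locally finite; moreover, by translation invariance of the PPP under its Palm distribution, the joint law of $(N_K, R_K)$ is independent of the location of $v_n^q$.

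Fix $\delta>0$ and choose deterministic thresholds $M_K, S_K$ with $\Pv(N_K>M_K)+\Pv(R_K>S_K)\le \delta/3$, and work on the event that $N_K\le M_K$, $R_K\le S_K$ and $\CL_n$ holds. I would then verify three things, each with error $o_n(1)$ for fixed $K$. First, since $v_n^q$ is uniformly chosen in $\CC_{\max}\subseteq \Xdn=[-n^{1/d}/2,n^{1/d}/2]^d$, the probability that $v_n^q$ lies within Euclidean distance $S_K$ of $\partial \Xdn$ is $O(S_K/n^{1/d})\to 0$, so all vertices of the segment lie inside $\Xdn$. Second, in the thinning coupling of Claim~\ref{cl:vertex-containment}, each vertex of $\CV_1$ is independently in $\CV_{1-\xi_n}$ with probability $1-\xi_n$, so a union bound over the at most $M_K$ vertices of the segment gives a failure probability of $O(M_K\xi_n)=O(M_K\sqrt{\log n/n})\to 0$; combined with $\CV_{1-\xi_n}\cap\Xdn\subseteq \CV_B(n)$ this places all segment vertices into $\CV_B(n)$.

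Third, for the edges, I note that the weights on the segment are all in $[1,K]$ except for the final vertex $\wit v_n^q(K)$, whose weight is an a.s.\ finite random variable that can be truncated at a large constant with error at most $\delta/9$; the spatial displacements along the segment are a.s.\ strictly positive and bounded by $R_K\le S_K$. Hence for $n$ large the event $E_{\Delta,W}$ of Claim~\ref{claim:edge-containment} is satisfied (since $I_\Delta(n)\to(0,\infty)$ and $I_w(n)\to[1,\infty)$). The edges of the segment lie in $\CE_1\subseteq \CE_{1+\xi_n}\subseteq \overline{\CE}_{1+\xi_n}$ by \eqref{eq:vertex-containment2} and \eqref{manyedges}, so \eqref{eq:path-bound-2} applies with $k=N_K-1\le M_K$ and bounds the probability that some edge is missing from $\CE_B(n)$ by $M_K(\epsilon(n)+2\epsilon_{\mathrm{TV}}(n))\to 0$. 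Taking $\delta\to 0$ slowly with $n$ yields a function $f(n,K)$ with $\lim_{n\to\infty}f(n,K)=0$ for each fixed $K$, as required.

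The main obstacle is step three: applying Claim~\ref{claim:edge-containment} requires that the set of edges $E$ and the event $E_{\Delta,W}$ be chosen \emph{before} peeking at the Bernoulli indicators $\mathbf{1}\{U_e\le h(e)\}$ and at the coupling weights $W^{(n)}$ used to build $\CE_B(n)$. However, the optimal path $\pi^1_{\mathrm{opt}}(v_n^q)$ is a measurable function of the entire PPP, the weights $(W_x)_{x\in\CV_1}$ and the edge-lengths $(L_e)$ in $\Eone$, all of which are independent of the extra randomness used to construct $\mathcal V_B(n)\setminus \CV_{1-\xi_n}$ and the coupling $(W^{(n)},W)$. A careful bookkeeping of which source of randomness determines which quantity will let me condition on the segment and then apply Claim~\ref{claim:edge-containment} to the resulting (now deterministic) edge list $E$; this is the technical heart of the argument.
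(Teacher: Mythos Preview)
Your approach is correct in spirit but takes a genuinely different route from the paper. The paper does \emph{not} transfer the segment $\pi^1_{\mathrm{opt}}[v_n^q,\wit v_n^q(K)]$ vertex-by-vertex and edge-by-edge. Instead it reuses the event $A_{k_n}$ from Proposition~\ref{Prop:lowerbound} (the graph-distance balls $B^G_n(v_n^q,k_n)$, $B^G_1(v_n^q,k_n)$ coincide as weighted graphs), and then only needs the single observation that $G_K(v_n^q)$, the number of edges on the segment, is a tight random variable (by translation invariance it has the law of $G_K(0)$ conditioned on $0\in\CC_\infty^1$), so $\Pv(G_K(v_n^q)\ge k_n)\to 0$ since $k_n\to\infty$. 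On $\{G_K<k_n\}\cap A_{k_n}$ the segment lies inside the coupled $k_n$-ball and is therefore automatically in $\BGIRG$.

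The difference matters precisely for the obstacle you flag. In the paper's route, the set of edges to which Claim~\ref{claim:edge-containment} is applied (inside the proof of $A_{k_n}$) is the entire edge-set within $\mathrm{Box}(v_n^q)$, whose size is controlled by the dominating BRW; this set is determined \emph{before} one looks at the optimal path, so no measurability issue arises. Your direct approach selects the edges via $\pi^1_{\mathrm{opt}}$, which is a function of the same uniforms $U_e$ that the edge coupling in Claim~\ref{claim:edge-containment} uses, so the conditioning is not innocent: the optimal path being what it is can in principle reveal information about $U_e$ for edges on it beyond mere presence (through competing paths). Your proposed fix---arguing that optimality is decided by the independent lengths $L_e$ once presence is given---is plausible and can likely be made rigorous, but it requires care you have not yet supplied; the paper's reuse of $A_{k_n}$ avoids this entirely. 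A secondary point: you also need to truncate the \emph{minimum} spatial displacement along the segment to ensure it eventually lies in $I_\Delta(n)=[\Delta_{\min}(n),\Delta_{\max}(n)]$, which you did not mention.
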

	Note that $\wit A_{n,K}$ implies the event that $ v_n^1, v_n^2$ are in $\CC_\infty^1$ of $\Eone$, otherwise $\cb\pi^1_{\mathrm{opt}}(v_n^q)$ is not defined.
	With the event $\wit A_{n,K}$ at hand, we are ready to state the key proposition for the upper bound.
	\begin{proposition}[Upper bound on distances in the GIRG model in the explosive case]\label{Prop:upperbound}
		Let $v_n^1, v_n^2$ be uniformly chosen vertices in $\CC_{\max}$ of $\BGIRG$.
		On $\wit A_{n,K}$ from \eqref{eq:ank}, for some $\eps_K$ that tends to $0$ as $K\to \infty$, the bound
		\be\label{upperboundGIRG}
		d_L\left(v_n^1,v_n^2\right)\leq T_K(v_n^1)+T_K(v_n^2)+\eps_K
		\ee
		holds with probability $1-\eta(K)$, for some function  $\eta(K)\to 0$ as $K\to \infty$.
	\end{proposition}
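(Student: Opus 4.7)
The plan is to construct, on the event $\wit A_{n,K}$, an explicit path in $\BGIRG$ from $v_n^1$ to $v_n^2$ whose total $L$-length is at most $T_K(v_n^1)+T_K(v_n^2)+\eps_K$. The path has three pieces: the two initial segments $\pi_{\mathrm{opt}}^1[v_n^q,\wit v_n^q(K)]$ for $q\in\{1,2\}$, and a connecting segment between $\wit v_n^1(K)$ and $\wit v_n^2(K)$. On $\wit A_{n,K}$ the first two segments are present in $\BGIRG$ by definition and contribute exactly $T_K(v_n^1)+T_K(v_n^2)$; the entire remaining task is to produce a $\BGIRG$-path of $L$-length at most $\eps_K$ connecting the two heavy-weight endpoints, succeeding with probability at least $1-\eta(K)$.

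The connecting path is built inside the weight-dependent percolation subgraph $G^{\mathrm p}$ of $\BGIRG$ defined in Claim \ref{claim:percolation}: only edges with $L_e\le F_L^{(-1)}(p(W_u^{(n)},W_v^{(n)}))$ are kept, where $p(x,y)=\exp\{-(\log x)^\gamma-(\log y)^\gamma\}$. Two features are crucial: (i) $G^{\mathrm p}\subseteq \BGIRG$, so any path found in $G^{\mathrm p}$ is automatically a path in $\BGIRG$ with the same $L$-weights; and (ii) $G^{\mathrm p}$ is distributionally a $\BGIRG$ with the same exponents $\al,\tau$ (but a modified weight distribution), so structural results about heavy-weight vertices and connection probabilities apply verbatim. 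Around each $\wit v_n^q(K)$ invoke the boxing of Lemma \ref{lemma:weightconnectioncenters}: a nested sequence of annuli of doubly-exponentially growing radii, each subdivided into sub-boxes, with a prescribed doubly-exponentially increasing weight threshold $W_j$ in annulus $j$. Using the power-law vertex-weights of Assumption \ref{assu:weight} and the lower bound $\underline g$ on connection probabilities from \eqref{eq:gunder}, whp every sub-box contains a vertex of weight $\ge W_j$, and a chosen vertex in a sub-box of annulus $j$ is joined in $G^{\mathrm p}$ to a chosen heavy-weight vertex in an adjacent sub-box of annulus $j+1$. After $\mathcal O(\log\log n)$ annuli the two growing structures cover a macroscopic region of $\Xdn$ and one final hop joins them into a single connecting path.

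Crucially, each retained edge $e$ of $G^{\mathrm p}$ satisfies the deterministic length bound
\[
L_e \le F_L^{(-1)}\bigl(\exp\{-(\log W_u^{(n)})^\gamma-(\log W_v^{(n)})^\gamma\}\bigr).
\]
For an edge between vertices of weight $\asymp W_j$ with $W_j \sim \e^{\e^{c j}}$, this yields $L_e\lesssim F_L^{(-1)}(\exp\{-\e^{c\gamma j}\})$, so the total $L$-length of the connecting path, starting from annuli of index $\gtrsim \log\log K$, is bounded by a tail of $\sum_j F_L^{(-1)}(\e^{-\e^{c\gamma j}})$. Up to constants this is precisely the tail of $\mathbf I(L)$ in \eqref{Lassumption}, which is finite by hypothesis; denote this deterministic tail by $\eps_K$, with $\eps_K\to 0$ as $K\to\infty$. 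A union bound over the $\mathcal O(\log\log n)$ levels and the fixed number of sub-boxes per level bounds the failure probability of the construction by some $\eta(K)\to 0$ as $K\to\infty$, since per-level failure probabilities become summable once $W_j$ (equivalently, $K$, and hence $j$) is large enough. Assembling the three segments via $G^{\mathrm p}\subseteq\BGIRG$ yields \eqref{upperboundGIRG}.

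The principal obstacle is the calibration of three interlocking growth rates: the spatial radii of the annuli, the weight thresholds $W_j$ inside them, and the resulting deterministic edge-length bound coming from the percolation. These must simultaneously force short enough edges in $G^{\mathrm p}$, guarantee whp existence of the prescribed heavy-weight vertices in every sub-box (so that $W_j$ is not too large given the power-law tail of $W$), and deliver a connecting path whose total $L$-length is a summable series whose tail beyond level $\log\log K$ vanishes with $K$. The doubly-exponential scaling is essentially forced by this tradeoff, and the hypothesis $\mathbf I(L)<\infty$ is exactly the condition under which the resulting deterministic tail $\eps_K$ tends to zero.
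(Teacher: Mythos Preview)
Your outline is the paper's approach: the three-piece path, weight-dependent percolation yielding deterministic edge-length thresholds, the doubly-exponential boxing of Lemma~\ref{lemma:weightconnectioncenters} applied inside the percolated graph, and the identification of $\eps_K$ with a tail of the series equivalent to $\mathbf I(L)$.

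Two places need more care than you give them. First, the number of sub-boxes per annulus is not ``fixed'' but grows like $\mu^{(D-1)C^k}$ (see \eqref{boundbk}); the point is that Lemma~\ref{lemma:weightconnectioncenters} already performs the union bound over all sub-boxes and all levels, producing the error \eqref{boundsweight-error} that depends only on the starting parameter $\mu\ge m(K)$ and not on $n$ or on how many of the $\mathcal O(\log\log n)$ levels are actually used---this is precisely what allows $\eta(K)$ to be a function of $K$ alone. Second, ``one final hop joins them'' hides a genuine technical step: the two boxing systems have different base parameters $\mu^{(1)},\mu^{(2)}$, so the paper re-indexes System-$1$ from a carefully chosen level $r$ (see \eqref{modify-point}) so that its annuli beyond $r$ match the scale of System-$2$, identifies the last disjoint level $k^\star$ in \eqref{eq:kstar}, and then builds a single common annulus $\Gamma_{k^\star+1}^{(1,2)}$ centred at the origin in which the two centre-paths are joined via an intermediate centre (at most two extra edges, \eqref{eq:piconn}). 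A separate check is needed that the weight and connection estimates of Lemma~\ref{lemma:weightconnectioncenters} survive this re-scaling and merge, so that the total failure probability remains of the form \eqref{boundsweight-error} with $\mu$ replaced by $m(K)$.
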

	We note that the short connection between the optimal explosive paths is the key part that is missing from  \cite[Conjecture 1.11]{HofKom2017}.
	We prove Proposition \ref{Prop:existinfcomp} in Section \ref{sec:boxinginfinitecomponent},
	Proposition \ref{Prop:lowerbound} in Section  \ref{sec:maintheoremproof} and Lemma \ref{lem:ank} and Proposition \ref{Prop:upperbound} in Section \ref{sec:bestexplosive}.
	\cb From Lemma \ref{lem:ank}\cz, Propositions \ref{Prop:lowerbound} and \ref{Prop:upperbound}, the proof of Theorem \ref{Th:GIRGexplosive} follows
	
	\begin{proof}[Proof of Theorem \ref{Th:GIRGexplosive} subject to Propositions \ref{Prop:lowerbound} and \ref{Prop:upperbound}.]
		Let $x$ be a continuity point of the distribution function of $Y^{(1)}+Y^{(2)}$. We start by bounding $\Pv(d_L(v_n^1, v_n^2) \le x)$ from above by using  Proposition \ref{Prop:lowerbound}.
		Let $V_1^q(n,k):=d^1_L(v_n^q,\cb\partial B_1^{G}(v_n^q, k)\cz)$, similar to \eqref{v+}.
		Due to the coupling developed in Claim \ref{cl:vertex-containment}, given that $v_n^q\in \CV_1$,
		$\CE_1 \subseteq \CE_{1+\xi_n}$ holds. Thus $V^q_+(n,k_n)\le  V^q_1(n,k_n)$.  Let  $\cb\pi_\star^q(k)$ be the shortest path connecting $v_n^q$ to \cb$\partial B_{1+\xi_n}^{G}(v_n^q, k)$\cz. Since on $A_{k_n}$, $\cb\pi_\star^q(k_n)\subseteq B_1^{G}(v_n^q, k_n)$, this means that \cb conditioned on $A_{k_n}$\cz,
		\be\label{eq:v1=v+} (V_1^1(n,k_n),V^2_1(n,k_n)) = (V^1_+(n,k_n),V^2_+(n,k_n))\ee
		Further, $\cb\pi_\star^q(k)\subseteq \BGIRG$ and $B_n^{G}(v_n^1, k)\cap B_n^{G}(v_n^2, k)=\emptyset$ implies that the shortest path between $v_n^1, v_n^2$ in $\BGIRG$ is at least as long as the total length of $\cb\pi_\star^1(k)$ and $\cb\pi_\star^2(k)$. Thus, \cb conditioned on $A_{k_n}$\cz,
		\be\label{eq:lower-distr} \ba \Pv(d_L(v_n^1, v_n^2) \le x) &\le \Pv\left(\ind_{A_{k_n}} \left(V^1_+(n,k_n)+V^2_+(n,k_n) \right)\le x\right)\\
		&=\Pv\left(\ind_{A_{k_n}} \left(V_1^1(n,k_n)+V^2_1(n,k_n)\right) \le x\right)\\
		&\le 1-\Pv(A_{k_n}^c) + \Pv(V_1^1(n,k_n)+V^2_1(n,k_n) \le x), 
		\ea \ee 
		where $1-\Pv(A_{k_n}^c)\to 0$ by Proposition \ref{Prop:lowerbound}. Next we show that the joint distribution of these two variables tend to $(Y^{(1)}, Y^{(2)})$ as $n\to \infty$.
		The model $\Eone$ is translation invariant, thus, marginally,
		\be\label{eq:xkn} V_q^1(n,k_n)\ {\buildrel d \over =}\ d_L^1(0, \cb\partial B_1^{G}(0, k_n)\cz)=:X_{k_n},\ee
		for $q\in \{1,2\}$, where we condition on $0\in \CV_1$.
		By \eqref{eq:v1=v+} and the independence of $V_1^+(n, k_n)$ and $V_2^+(n, k_n)$ that both hold on $A_{k_n}$, $V_1^1(n, k_n)$ and $V_2^1(n, \cb k_n)$ are also (conditionally) independent of each other.  Thus, the distributional identity
		\be\label{eq:v1xkn} \big(V_1^1(n,k_n), V^2_1(n,k_n)\big)\ {\buildrel d \over =} \ \big(X_{k_n}^{(1)}, X_{k_n}^{(2)}\big)\ee
		holds on $A_{k_n}$, where  $X_{k_n}^{(q)}$ are i.i.d.\ copies of $X_{k_n}$.
		Finally, we show that 
		\be\label{eq:xkn-toexp} X_{k_n}=d_L^1(0, \cb\partial B_1^{G}(0, k_n)\cz)\toas Y_1^E(0),\ee the (possibly infinite) explosion time\footnote{The definition $\lim_{k\to \infty}X_{k}$ is not the standard definition of the explosion time given in Definition \ref{def:explosiontime} so we have to show that the two limits are the same.} of $0$ in $\Eone$.  This is actually a general statement that holds in any locally finite graph: the weighted distance to the boundary of the graph-distance ball of radius $k$ tends to the explosion time, as the radius tends to infinity.

		Recall that $Y_1^E(0)=\lim_{k\to \infty}\tau_1^E(0,k)$ from Definition \ref{def:explosiontime}, the distance to the $k$-th closest vertex. Let  $\tau(k):=\tau_1^E(0,k)$. To show that $Y_1^E(0)=\lim_{k\to \infty} d_L^1(0, \cb\partial B_1^{G}(0, k))$, we show that for any $t>0$, the events $\{Y_1^E(0)\le t\}=\{\lim_{k\to \infty} \tau(k)\le t \}$ and $\{\lim_{k\to \infty} d_L^1(0, \cb\partial B_1^{G}(0, k)) \le t \}=\{\lim_{k\to \infty} X_k\le t\}$ coincide. 
		\cb  To show this, we first observe that
		 \[ \{\lim_{k\to \infty} \tau(k) \le t\}=\{|B^L_1(0,t)|=\infty\}.\] Indeed, when $|B^L_1(0,t)|=\infty$, then $\tau(k)$, the $L$-distance of  $k$th closest vertex to $0$ is also closer than $t$, hence $\tau(k)\le t$ holds. Reversely, assuming $|B^L_1(0,t)|=:K<\infty$ implies that $\tau(K+1)> t$.   
		 		
		Next we show that $\{|B^L_1(0,t)|=\infty\}\subseteq \{\lim_{k\to \infty} d_L^1(0, \partial B_1^{G}(0, k)) \le t \}$.
		Since all the degrees are finite, if there are infinitely many vertices within $L$-distance $t$, then there must be also a vertex $v$ at graph distance $k$ that is within $L$-distance $t$, (simply since one cannot squeeze in infinitely many vertices within graph distance $k-1$). Hence, the distance between $0$ and $\partial B^{L,G}_1(0,k)$ is at most $t$. This holds for all $k\ge 0$,  hence,\cz
				\be\label{eq:intersect} \{|B^L_1(0,t)|=\infty\}\subseteq\bigcap_{k\in \N} \{ \exists\, v\in \cb\partial B^{G}_1(0,k)\cz, d_L(0,v)\le t\}=\bigcap_{k\in \N}\{ X_k\le t\}=\{\lim_{k\to \infty} X_k\le t\},\ee
		where the \cb last equality \cz holds since $X_k$ is non-decreasing in $k$. \cb For the reverse direction, 
		observe that $|B^L_1(0,t)|=\sum_{k=1}^\infty |\cb\partial B^{G}_1(0,k) \cap B^L_1(0,t)|$, and if $|B^L_1(0,t)|=:Z<\infty$, then only finitely many terms can be non-zero. If a term with index $k$ is non-zero then so is every smaller index $i\le k$, since, the term being non-zero implies that there is a vertex $v$ in  $\partial B^{G}_1(0,k)$ with $d_L(0,v)\le t$. But then any vertex $w$ on the shortest path from $v$ to $0$ has $d_L(0,w)\le t$. 
 For any term $k$ that is $0$, $|\cb\partial B^{G}_1(0,k)\cz \cap B^L_1(0,t)| =0$ implies that every vertex at graph distance $k$ is at weighted distance larger than $t$, hence  $d_L^{1}(0,\partial B^{G}_1(0,k) )=X_k>t$. Hence the limit of $X_k$ is also $>t$ in this case. \cz
This finishes the argument that $\lim_{k\to \infty}X_{k_n}=Y_1^E(v)$.

Returning to \eqref{eq:lower-distr}, combined with \eqref{eq:v1xkn} and this convergence result finishes the proof that $\Pv(d_L(v_n^1, v_n^2) \le x)\le  \Pv(Y_1^{(1)}+Y_1^{(2)}\le x)+\ve$ for all $\ve>0$ and sufficiently large $n$, where $Y_1^{(1)}, Y_1^{(2)}$ are i.i.d. copies of $Y_1^E(0)$.

		To estimate $\Pv(d_L(v_n^1, v_n^2)\le x)$ from below, we use Proposition \ref{Prop:upperbound}. Recall the event $\wit A_{n,K}$ from \eqref{eq:ank}.
		Let us write 
		\[ \cb A^\star_{n,K}:= \wit A_{n,K}\cap\{d_L\left(v_n^1,v_n^2\right)\leq T_K(v_n^1)+T_K(v_n^2)+\eps_K\}.\] 		By the statement of Proposition \ref{Prop:upperbound} and Lemma \ref{lem:ank}, $\Pv(A^\star_{n,K})\ge 1-f(n,K)-\eta(K)$. Further, since $T_K(v_n^q)$ denotes the $L$-length of a section on the optimal path to infinity, (see the definition before Lemma \ref{lem:ank}) $T_K(v_n^q)\le Y^E_1(v_n^q)$, the explosion time of $v_n^q$ in $\Eone$. Thus 		\be\label{eq:dl-1} \ba \Pv(d_L(v_n^1, v_n^2)\le x) &\ge (1-f(n,K)-\eta(K))\Pv( Y^E_1(v_n^1)+Y^E_1(v_n^2)+\eps_K \le x ).\ea
		\ee
		We will show below that for two i.i.d.\ copies $Y^{(1)}, Y^{(2)}$ of the explosion time of $0$ in $\Eone$,
		\be\label{eq:dl-2} (Y^E_1(v_n^1),Y^E_1(v_n^2)) \toindis (Y^{(1)}, Y^{(2)}).\ee
		Given this, fix $\delta, x$ first and \cb then choose \cz $K_1$ such that for all $K\ge K_1$, $\eta(K)\le \delta/4$. Let  us assume that $x$ is a continuity point of the distribution of $Y^{(1)}+ Y^{(2)}$,  and $K_2\ge K_1$ satisfies that for all $K\ge K_2$,
		\be\label{eq:dl-3}  | \Pv\left(Y^{(1)}+ Y^{(2)} \le x-\eps_{K} \right) - \Pv\big( Y^{(1)}+ Y^{(2)} \le x\big)| \le \delta/4.\ee
		If a function is continuous at $x$, then it is also continuous on some interval $(x-\zeta,x]$, \cb for some small $\zeta>0$\cz, so let $K_3\ge K_2$ satisfy further that $\eps_{K_3}\le \zeta$.  Thus, $x-\ve_{K_3}$ is also a continuity point of the distribution of $Y^{(1)}+ Y^{(2)}$. Let $n_1$ now satisfy that for all $n\ge n_1$,
		\be\label{eq:dl-4}  | \Pv\left(Y^E_1(v_n^1)+Y^E_1(v_n^2) \le x-\eps_{K_3} \right) - \Pv\big( Y^{(1)}+ Y^{(2)} \le x-\ve_{K_3}\big)| \le \delta/4.\ee
		Note that $f(n,K)$ for fixed $K$ \cb tends to zero with $n$ \cz (see Lemma  \ref{lem:ank}).
		 So, set $n_2:=n_2(K_3)\ge n_1$ such that for all $n\ge n_2$, $f(n,K_3)\le \delta/4$. 
		Combining \eqref{eq:dl-1}-\eqref{eq:dl-4} yields that for all $n\ge n_2$,
		\be\label{eq:dl-5} \ba \Pv(d_L(v_n^1, v_n^2)\le x) &\ge (1-f(n_2,K_3)-\eta(K_3))\left( \Pv( Y^{(1)}+ Y^{(2)} \le x-\ve_{K_3})\pm \delta/2\right)\\
		&\ge \Pv( Y^{(1)}+ Y^{(2)} \le x) -\delta.\ea
		\ee
		This finishes the proof of the lower bound, given \eqref{eq:dl-2}. Next we show \eqref{eq:dl-2}. Recall from the proof of the upper bound that on the event $A_{k_n}$, \cb as in \eqref{eq:ak}\cz, the variables $V_q^1(n, k_n)=d^1_L(v_n^q,\cb\partial B_1^{G,L}(v_n^q, k))$ are  independent variables for $q\in\{1,2\}$, and they satisfy the distributional identity \eqref{eq:v1xkn} on $A_{k_n}$ and they approximate, as $k_n\to \infty$, two i.i.d. copies of $Y_1^{\mathrm{E}}(0)$. \cb Let both $z_1$ and $z_2$ be continuity points of the distribution of $Y_1^E(0)$\cz.
		Thus, estimating the joint distribution functions of \eqref{eq:dl-2} by the triangle inequality, we write
		\be\label{eq:indep-111}\ba  |\Pv(Y^E_1(v_n^1)\le z_1,& Y^E_1(v_n^2) \le z_2) - \Pv(Y^{(1)}\le z_1, Y^{(2)} \le z_2)|   \\
		\le{}&|\Pv(Y^E_1(v_n^1)\le z_1, Y^E_1(v_n^2) \le z_2) - \Pv(V_1^1(n, k_n)\le z_1, V_2^1(n,k_n) \le z_2)|\\
		&+|\Pv(V_1^1(n, k_n)\le z_1, V_2^1(n,k_n) \le z_2) - \Pv(X_{k_n}^{(1)}\le z_1, X_{k_n}^{(2)} \le z_2)|\\
		&+|\Pv(X_{k_n}^{(1)}\le z_1, X_{k_n}^{(2)} \le z_2 ) - \Pv(Y^{(1)}\le z_1, Y^{(2)} \le z_2 )|\\
		\cb=:{}&\cb T_1+T_2+T_3,\ea \ee
		\cb where we denote the terms on the rhs in the second, third and fourth row by $T_1, T_2$ and $T_3$, respectively\cz. Then, due to the distributional identity \eqref{eq:v1xkn} on $A_{k_n}$, $T_2$ is bounded from above by  $1-\Pv(A_{k_n})$, and
		$T_3$ tends to $0$ by the argument after \eqref{eq:intersect} as $k_n\to \infty$ (that is, $n\to \infty$). To estimate $T_1$, note that the four variables involved are all defined on the same probability space (\cb namely, in terms of the realisation $\Eone$\cz). Further, fixing $n$, the variable $V_q^1(n,k)=d_L^1(v_n^q, \cb\partial B_1^{G,L}(v_n^q,k))$ is increasing in $k$ and tends to $Y_1^E(v_n^1)$, the explosion time of $v_n^q$ in a realisation of $\Eone$ as $k\to \infty$. In particular, for every $k\ge 0$, almost surely, $Y_1^E(v_n^1)-V_q^1(n,k)\ge 0$ holds.	Hence
			\be \ba \cb T_1&=|\Pv(Y^E_1(v_n^1)\le z_1, Y^E_1(v_n^2) \le z_2) - \Pv(V_1^1(n, k_n)\le z_1, V_2^1(n,k_n) \le z_2)|\cz\\
		&=\Pv(V_1^1(n, k_n)\le z_1, V_2^1(n,k_n) \le z_2)-\Pv(Y^E_1(v_n^1)\le z_1, Y^E_1(v_n^2) \le z_2) \ea\ee
		 Next we observe that $Y_1^E(v_n^1)-V_q^1(n,k)\ge 0$ tends to zero as we increase $k$, so the difference is unlikely to be less then $\delta>0$ for $k$ large enough. Thus, we can add for any $\delta>0$ the extra term in the middle:
\be\ba
		T_1&\le \Pv\big(V_1^1(n, k_n)\le z_1, V_2^1(n,k_n) \le z_2) - \Pv(V_1^1(n, k_n)\le z_1-\delta, V_2^1(n,k_n) \le z_2-\delta\big)\\
		&\quad+ \Pv(V_1^1(n, k_n)\le z_1-\delta, V_2^1(n,k_n) \le z_2-\delta\big) - \Pv(Y^E_1(v_n^1)\le z_1, Y^E_1(v_n^2) \le z_2).\ea\ee
		Note that the first row can be bounded from above by \cb the probability of the event that at least one of the variables $V^1_q(n, k_n)$ falls in a small interval of length  $\delta$\, while 
		the second row equals the probability below\cz:
		\be \label{eq:t1-123} \ba T_1&\le  \Pv(V_1^1(n, k_n )\in (z_1-\delta, z_1])+\Pv(V_1^1(n, k_n )\in (z_2-\delta, z_2])\\
		&\quad +\Pv\big( \{  V_1^1(n, k_n)\le z_1-\delta,  V_2^1(n, k_n)\le z_2-\delta \} \setminus \{Y^E_1(v_n^1)\le z_1,  Y^E_1(v_n^2) \le z_2\} \big)\\
		\ea \ee
		The event in the second row is satisfied only when $Y^E_q(v_n^1)-V^1_q(n, k_n)\ge \delta$ for one of the $q=1,2$'s.
Due to the translation invariance of the model, we can use that \cz \emph{marginally} $(V_q^1(n,k_n), Y_1^E(v_n^q))$ has the same distribution as $(X_{k_n}, Y_1^E(0))$ from \eqref{eq:xkn}, and hence, by a union bound, the second row in \eqref{eq:t1-123} is at most 
	\be\label{eq:end-indep} 2 \Pv(Y_1^E(0)-X_{k_n} \ge \delta ),\ee
		which tends to $0$ for every fixed $\delta>0$ as $k_n\to \infty$ by the argument after \eqref{eq:v1xkn}.
		To bound the first row on the rhs of \eqref{eq:t1-123}, due to the translation invariance again, marginally, $V^1_q(n, k_n)=X_{k_n}^{(q)}\ {\buildrel d \over =}\ X_{k_n}$ and $X_{k_n}\toindis Y_1^E(0)$, and $z_1, z_2$ are continuity points of the distribution of $Y_1^E(0)$.	Hence, for any $\ve>0$, one can choose 
		first $\delta>0$ small enough and then $k_n$ large enough so that this probability is less than $\ve$.
	\end{proof}
	\section{Branching Random Walks in random environment and GIRGs}\label{sec:BRW}
	In this section, we provide the framework for the lower bound in Theorem \ref{Th:GIRGexplosive}. As a preparation for Proposition \ref{Prop:upperbound}, we show that every path with finite total $L$-length to infinity must leave the subgraph of $\Elambda$ restricted to vertices with bounded weight. 
	Both proofs rely on a coupling to a process, a \emph{branching random walk} (BRW), \cb that has a faster growing neighbourhood around a vertex $v$ than the infinite component of $\Elambda$\cz. Recall $\Eupper$ from Def.~\ref{def:Eupper}.
	\cb This BRW has a random environment\cz. The environment is formed by $(x, W_{x})_{x\in \CV_\la}$, where $\CV_\la$ is a PPP of intensity $\lambda$ on $\R^d$ and $(W_{x})_{x\in \CV_\la} $ are i.i.d.\ copies of $W$. 
	We name the individuals in the \emph{skeleton} of the BRW (a random branching process tree without any spatial embedding), via the Harris-Ulam manner. That is, children of a vertex are sorted arbitrarily and we call the root $\empty$, its children $1,2,\ldots$, their children $11,12,\ldots,21,22,\ldots$ and so on. Generally, individual $\underline i:=i_1 i_2\ldots i_k$ is the $i_k^{\text{th}}$ child of the $i_{k-1}^{\text{th}}$ child $\ldots$ of the $i_1^{\text{st}}$ child of the root. This coding for all individuals is referred to as the \emph{name} of an individual. \cb We write $p\left(\underline i\right)$ for parent of individual $\underline i$\cz, set $p\left(\empty\right)=\empty$. \cb Abusing notation somewhat\cz, we write $x_{\underline i}, W_{\underline i}$ for the location and weight of the individual $\underline i$.
	
	We locate the root $\empty$ at the initial vertex $v\in \CV_\la$, thus we set $x_{\empty}:=v$. 	
	Conditioned on the environment, let the number of children at location $x\in \CV_\lambda\backslash\{x_{\underline i}\}$ of every individual  $\underline i$ be \emph{independent}, and denoted and distributed as
	\be\label{berbrw-edge-prob} N^B_x\left(\underline i \right)\ {\buildrel d \over =}\  \text{Ber}\left(\overline C_1\min\left\{1,\overline a_1\|x_{\underline i}-x\|^{-\alpha d}\left(W_{\underline i} W_x\right)^\alpha\right\}\right)=  \text{Ber}\left(\overline C_1 \overline g(x_{\underline i}-x, W_{\underline i}, W_x)\right).\ee 
	Then, the location  and weight of the offspring of $\underline i$ in the Bernoulli BRW is described as 
	\be\label{BerBRW}
	\mathcal{N}^{B}\left(\underline i\right):=\bigcup_{x\in \CV_\lambda\backslash\{x_{\underline i}\}:N^B_x(\underline i)=1}\left\{(x, W_x)\right\}.
	\ee	
	Note that individuals at the same location reproduce conditionally independently. We denote the resulting BRW by $\mathrm{BerBRW}_\la(v)$ and its generation sets by $\cG^{\text{Ber}}_\lambda(v,k),k\in\mathbb{N}$ and $v$ the root.
	Note that the Bernoulli BRW is created so that the probability that a particle at a location $y\in \CV_\la$ has a child at location $z\in \CV_\la\backslash\{y\}$ equals the probability of the presence of the edge $(y,z)$ in $\Eupper$, namely
	\begin{equation}\label{upperboundingBRW}
	\begin{aligned}
	\mathbb{P}&\left(\text{an individual at }y \text{ has a child at }z  \text{ in } \mathrm{BerBRW}_\la(v)\mid\left(x ,W_{x}\right)_{x \in \CV_\lambda}\right)\\
	&\qquad \qquad\qquad\qquad\qquad \qquad\qquad\qquad\qquad= \P{(y,z) \in \overline\CE_\la\mid\left(x, W_{x}\right)_{x\in \CV_\lambda}}.
	\end{aligned}
	\end{equation}
	After $\mathrm{BerBRW}_\la(v)$ is generated, we assign i.i.d.\ edge-lengths from \cb distribution $F_L$ \cz to all existing parent-child relationships $(p(\underline i), \underline i).$ We denote the resulting edge-weighted BRW by $\mathrm{BerBRW}_{\la,L}(v)$. Let us denote by $ \overline B_\la^L(v,t), \overline B_\la^G(v,t)$ the set of vertices within $L$-distance $t$ and graph distance $t$ of $v$ in $\Eupper$, and $ B_{\mathrm{Ber}}^L(v,t), B_{\mathrm{Ber}}^G(v,t)$ the corresponding quantities in $\mathrm{BerBRW}_{\la, L}(v)$, that is, the set of individuals available from $\emptyset=v$ on paths of $L$-length at most $t$ and  the set of individuals with name-length ($=$ generation number) at most $t$, respectively.	
	The exploration process on a similar BRW (defined in terms of the SFP model), coupled to the exploration on the SFP model is introduced in \cite[Section 4.2]{HofKom2017}. The exploration process on $\mathrm{BerBRW}_{\la,L}(v)$ (as defined above) coupled to the exploration on $\Eupper$ is analogous. The exploration algorithm runs on $\mathrm{BerBRW}_{\la,L}(v)$ and $\Eupper$ at the same time, exploring  $B_{\mathrm{Ber}}^L(v,t)$ as $t$ increases, and thinning parent-child relationships in $\mathrm{BerBRW}_{\la,L}(v)$ that are between already explored locations, yielding $\overline  B_{\la}^L,(v,t)$.
	A consequence of this thinning procedure is the following lemma, which is an adaptation and simplified statement of \cite[Proposition 4.1]{HofKom2017}:
	\begin{lemma}\label{lemma:ballsbrw}
		There is a coupling of the exploration of $(B^{L}_{\mathrm{Ber}}\left(v,t\right))_{t\ge 0}$ on $\mathrm{BerBRW}_{\la, L}(v)$  to the exploration of $(\overline B_\la(v,t))_{t\ge 0}$ on $\Eupper$ so that for all $t\geq 0$, under the coupling,
		\be\label{ballsubset}
		\overline B^{L}_\la\left(v,t\right)\subseteq B^{L}_{\mathrm{Ber}}\left(v,t\right).
		\ee
\cb Further, there is a coupling of $\mathrm{BerBRW}_{\la}(v)$ and $\Eupper$ such that under the  coupling,
\be \partial \overline B^{G}_\la\left(v,t\right)\subseteq \partial B^{G}_{\mathrm{Ber}}\left(v,t\right),  \ee
i.e. the vertices at graph distance $k$ from $v$ in $\Eupper$ form a subset of the individuals in generation $k$ of $\mathrm{BerBRW}_{\la}(v)$.\cz
	\end{lemma}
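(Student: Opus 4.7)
My plan is to construct both processes on a common probability space via a joint BFS exploration, following the strategy of \cite[Section~4.2]{HofKom2017}. After sampling the shared environment $(\CV_\la,(W_x)_{x\in\CV_\la})$ once, I would process individuals of $\mathrm{BerBRW}_{\la,L}(v)$ in order of tree $L$-distance from $\emptyset=v$. Whenever an individual $\underline i$ at location $y=x_{\underline i}$ is processed, I check whether any previously processed individual shares the same location. If $\underline i$ is the first-time visitor of $y$, then for each $z\in\CV_\la\setminus\{y\}$ the Bernoulli $N^B_z(\underline i)$ is used simultaneously as the BRW offspring indicator of $\underline i$ at $z$ and, provided the pair $\{y,z\}$ has not already been revealed from the $z$-side, as the edge-indicator $\mathbf{1}\{\{y,z\}\in\overline\CE_\la\}$; the associated i.i.d.\ length $L_e\sim F_L$ is shared identically. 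For every subsequent visit to $y$, and for pairs $\{y,z\}$ that were already revealed earlier, the BRW offspring variables of $\underline i$ and the corresponding edge-lengths are drawn from a fresh, independent collection. In this way each unordered pair $\{y,z\}$ receives exactly one Bernoulli with the correct success probability $\overline C_1\overline g(y-z,W_y,W_z)$, while the BRW retains its prescribed product offspring law because every reused Bernoulli is consumed by at most one first-time visitor.

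Given this coupling, the inclusion $\overline B^L_\la(v,t)\subseteq B^L_{\mathrm{Ber}}(v,t)$ follows by induction along any shortest $\Eupper$-path $v=z_0,z_1,\dots,z_k=z$ of total $L$-length at most $t$. The base case is that $\emptyset$ lies at $z_0=v$ with tree $L$-distance zero. For the inductive step, suppose the BRW contains a descendant of $\emptyset$ at $z_{l-1}$ whose tree $L$-distance is at most $\sum_{i\le l-1}L_{\{z_{i-1},z_i\}}$. Let $\underline j^\ast$ be the first-time visitor at $z_{l-1}$, whose tree $L$-distance is no larger. Since $\{z_{l-1},z_l\}\in\overline\CE_\la$, the shared Bernoulli associated to this pair equals $1$, and so $\underline j^\ast$ has a child at $z_l$ whose tree $L$-distance equals that of $\underline j^\ast$ plus $L_{\{z_{l-1},z_l\}}$; this completes the step. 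Iterating $k$ times yields an individual at $z$ with tree $L$-distance at most $t$, proving $z\in B^L_{\mathrm{Ber}}(v,t)$.

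The graph-distance version $\partial\overline B^G_\la(v,t)\subseteq\partial B^G_{\mathrm{Ber}}(v,t)$ follows by running the same coupling with BFS ordered by generation instead of by $L$-distance. Every shortest graph-distance $\Eupper$-path then lifts edge-by-edge through first-time visitors to a tree-path in the BRW of matching length, giving the desired containment at the level of boundaries. The main obstacle I anticipate is the careful bookkeeping required to guarantee that the shared-Bernoulli construction genuinely preserves the independence structure of the BRW offspring variables while simultaneously producing the correct marginal law of $\Eupper$: the resolution is that each shared Bernoulli is consumed by the unique first visitor of its anchor location, so conditional on the environment, independence across distinct parents and across distinct unordered pairs $\{y,z\}$ is untouched, and each pair contributes exactly one Bernoulli with the required parameter.
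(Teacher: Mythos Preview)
Your approach matches the paper's, which does not give a self-contained proof but refers to \cite[Proposition~4.1]{HofKom2017} and notes that the only change is replacing $\Z^d$ by the PPP $\CV_\la$; the exploration-and-thinning mechanism you describe is precisely the one alluded to in the text preceding the lemma.

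There is, however, a gap in your inductive step. You assert that since $\{z_{l-1},z_l\}\in\overline\CE_\la$, the shared Bernoulli equals $1$ \emph{and so $\underline j^\ast$ has a child at $z_l$}. But by your own construction, the shared Bernoulli for the pair $\{z_{l-1},z_l\}$ coincides with $N^B_{z_l}(\underline j^\ast)$ only if the pair has not already been revealed from the $z_l$-side. If the first-time visitor of $z_l$ was processed \emph{before} $\underline j^\ast$, then $\underline j^\ast$'s offspring indicator at $z_l$ is a fresh independent Bernoulli and may well be $0$. The fix is short: in that alternative case, the first-time visitor of $z_l$ has tree $L$-distance at most that of $\underline j^\ast$ (processing is in $L$-order), which is at most $\sum_{i\le l-1}L_{\{z_{i-1},z_i\}}\le\sum_{i\le l}L_{\{z_{i-1},z_i\}}$, so the inductive hypothesis at stage $l+1$ is already met without using the edge at all. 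You should add this second case explicitly; the same remark applies verbatim to the BFS-by-generation argument for the graph-distance inclusion.
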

	\begin{proof}
		The proof of Lemma \ref{lemma:ballsbrw} is a direct analogue of the proof of \cite[Proposition 4.1]{HofKom2017} and we refer the interested reader to their proof. The only difference is that in their proof the vertex set is $\Z^d$ while here it is a PPP $\CV_\la$. Heuristically, the coupling follows by \eqref{upperboundingBRW}, but it is non-trivial since one needs to assign the edge-lengths to parent-child relationships in $\mathrm{BerBRW}_\la(v)$ as well as the edge-lengths $L_e$ in $\Eupper$ also in a coupled way.
	\end{proof} 
	A combination of the couplings in Claim \ref{claim:edge-containment} and Lemma \ref{lemma:ballsbrw} yields that for all $t$, under the couplings, \cb for all $\la\ge 1+\xi_n$,\cz
	\be\label{eq:ball-containment}\ba 
	B^{L}_\la\left(v,t\right)\subseteq \overline B^{L}_\la\left(v,t\right)\subseteq B^{L}_{\mathrm{Ber}}\left(v,t\right) \quad \text{and} \quad  B^{G}_\la\left(v,t\right)\subseteq \overline B^{G}_\la\left(v,t\right)\subseteq B^{G}_{\mathrm{Ber}}\left(v,t\right),\\
	B^{L}_n\left(v,t\right)\subseteq \overline B^{L}_\la\left(v,t\right)\subseteq B^{L}_{\mathrm{Ber}}\left(v,t\right) \quad \text{and} \quad  B^{G}_n\left(v,t\right)\subseteq \overline B^{G}_\la\left(v,t\right)\subseteq B^{G}_{\mathrm{Ber}}\left(v,t\right),
	\ea\ee 
	where the last row holds when we further assume that $W_u^{(n)}=W_u$ for all $u \in B^{L}_n\left(v,t\right)$ and $u\in B^{G}_n\left(v,t\right)$, respectively, and $B^L_n, B^{G}_n$ are the quantities in $\BGIRG$.
	
	\subsection{Non-explosiveness with finite second moment weights.}\label{sec:brw-nonexplosion}
	The aim of this section is to prove Theorem \ref{thm:exp-charact} and its following corollary.
	\begin{corollary}[Corollary to Theorem \ref{thm:exp-charact}]\label{lem:expgen}
		Let $\CV_{\lambda, \le K}:=\{x\in \CV_\lambda, W_x\le K\}$ and 
		consider an infinite path $\cb\pi:=(\pi_0=v, \pi_1, \dots )\subset \mathcal E_\la$. If the total $L$-length of the path $\cb|\pi|_L=\sum_{i\ge 0} L_{(\pi_i, \pi_{i+1})}<\infty$, then 
		$\cb\pi \not \subseteq\CV_{\lambda, \le K}$. In words, any infinite path with finite total $L$-length has vertices with arbitrarily large weights. \end{corollary}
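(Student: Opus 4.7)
The plan is to deduce the corollary from the non-explosiveness part of Theorem \ref{thm:exp-charact} applied to a weight-truncated subgraph, combined with a Palm/Slivnyak--Mecke transfer from the origin to every vertex. Equivalently, I will prove the contrapositive: almost surely, for every fixed $K$, there is no infinite path contained in $\CV_{\lambda,\le K}$ with finite total $L$-length.

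First, I would identify the induced subgraph of $\Elambda$ on $\CV_{\lambda,\le K}$ with another extended GIRG. Since the marks $(W_x)_{x\in\CV_\lambda}$ are i.i.d.\ and independent of $\CV_\lambda$, independent thinning yields that $\CV_{\lambda,\le K}$ is a homogeneous PPP of intensity $\lambda':=\lambda\,\Pv(W\le K)$ with i.i.d.\ marks $W'\stackrel{d}{=}(W\mid W\le K)$, and the edge-probability function $h$ in Def.~\ref{def:EGIRG} is unchanged on the restricted vertex set. Thus the induced subgraph has the law of $\mathrm{EGIRG}_{W',L}(\lambda')$, and crucially $W'$ is bounded by $K$, so $\Ev[(W')^2]\le K^2<\infty$.

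Next, I would invoke the first assertion of Theorem \ref{thm:exp-charact} applied to $\mathrm{EGIRG}_{W',L}(\lambda')$: this yields $Y^{\mathrm E}_{\lambda'}(0)=\infty$ almost surely under the Palm law (with $0$ added to the PPP, as in Def.~\ref{def:explosiontime}). Via the equivalence worked out in the paragraph around \eqref{eq:intersect}--\eqref{eq:piopt} — that $Y^{\mathrm E}(v)<\infty$ holds if and only if there is an infinite path emanating from $v$ of finite total $L$-length — this says that almost surely no such path from the Palm origin exists inside the truncated model.

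Finally, the transfer from the Palm origin to every vertex of $\CV_{\lambda,\le K}$ will be done via Campbell's formula. Let $E_v$ denote the event that some infinite path from $v$, contained in $\CV_{\lambda,\le K}$, has finite total $L$-length; translation invariance of the thinned PPP gives $\Pv^v(E_v)=\Pv^0(E_0)=0$, where $\Pv^v$ is the Palm measure at $v$. Then Slivnyak--Mecke yields
\[
\Ev\big[\#\{v\in\CV_{\lambda,\le K}:E_v\text{ holds}\}\big]=\lambda'\int_{\R^d}\Pv^v(E_v)\,dv=0,
\]
so almost surely no vertex of $\CV_{\lambda,\le K}$ starts such a path, which is exactly the contrapositive of the corollary. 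The only delicate point I anticipate is the clean framing of $E_v$ as a measurable event on the marked-PPP space so that Slivnyak--Mecke applies after the independent weight-thinning; this is standard but requires a small amount of bookkeeping about measurability of the ``infinite path of finite $L$-length'' functional.
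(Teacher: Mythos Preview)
Your proposal is correct and follows essentially the same approach as the paper: identify the subgraph on $\CV_{\lambda,\le K}$ as an instance of $\mathrm{EGIRG}_{W',L}(\lambda')$ via independent thinning, note that the truncated weight $W'$ has finite second moment, and invoke the non-explosive part of Theorem~\ref{thm:exp-charact}. The paper leaves the transfer from the Palm origin to every vertex implicit (``the statement is equivalent to showing that explosion is impossible within $G_{\le K}$''), whereas you make it explicit via Slivnyak--Mecke, which is a welcome bit of additional rigor but not a different argument.
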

	\begin{proof}[Proof of Corollary \ref{lem:expgen} subject to Theorem \ref{thm:exp-charact}]
		We prove the statement for $\Eupper$ as well. 
		Let us denote shortly by $G_{\le K}$, (resp., $\overline G_{\le K}$) the subgraph of $\Elambda$ (resp.\ $\Eupper$) spanned by vertices in $\CV_{\lambda, \le K}$. Note that the statement is equivalent to showing that explosion is impossible within $G_{\le K}$ (resp.\ within $\overline G_{\le K}$). 
		Since the vertex-weights $(W_x)_{x\in\CV_\la }$ are i.i.d., every $x\in \CV_\la$ belongs to the vertex set  $\CV_{\la,\le K}$  independently with probability $\Pv(W\le K)$. An independent thinning of a Poisson point process (PPP) is again a PPP, thus $\CV_{\la,\le K}$ is a PPP with intensity $\la_K:=\la \Pv(W \le K)$.  Further, every location $x\in \CV_{\la,\le K}$ receives an i.i.d.\ weight distributed as $W_K\  {\buildrel d \over {:=}}\ (W \mid W\le K)$. Finally, edges are present conditionally independently, and the edge between $y,z\in \CV_{\la\le K}$ with given weights $w_y,w_z$  is present with probability $h(y-z, w_y, w_z)$ in $\CE_\lambda$ (respectively, $\overline C_1 \overline g(y-z, w_y, w_z)$ in $\overline\CE_\la$). 
		Thus, $G_{\le K}$ and $\overline G_{\le K}$ can be considered as an instance of $\mathrm{EGIRG}_{W_K, L}(\la_K)$ and $\overline{\mathrm{EGIRG}}_{W_K, L}(\la_K)$, respectively,  see Definitions~\ref{def:EGIRG} and \ref{def:Eupper}.
		Theorem \ref{thm:exp-charact} now applies since $\Ev[W_K^2]<\infty$.
	\end{proof}
	To prove the non-explosive part of Theorem \ref{thm:exp-charact}, we need a general lemma. We introduce some notation related to trees first. Consider a rooted (possibly random) infinite tree $\CT$. Let $\CG_k$, \emph{generation} $k$, be  the vertices at graph distance $k$ from the root. 
	\begin{lemma}\label{non-explosion-exp}  Consider a rooted (possibly random) tree $\CT$.  Add i.i.d.\ lengths from \cb distribution $F_L$ \cz to each edge with $\Pv(L=0)=0$. Let $d_L(x, \emptyset):=\sum_{e\in P_{\emptyset,x}} L_e$ be the $L$-distance of  $\emptyset, x\in \CT$, where $P_{\emptyset,x}$ is  the unique path  from vertex $x$ to the root $\emptyset$.  
		Suppose that the tree has at most exponentially growing generation sizes, that is,  
		\be\label{eq:gen-sizes-nu} \exists \nu \in (1, \infty): \Pv\Big( \bigcup_{k_0\in \N} \bigcap_{k\ge k_0} \{|\CG_k| \le \nu^k\} \Big) =1. \ee
		Then explosion is impossible on $\CT$:	
			\[ \Pv( \lim_{k\to \infty} d_{L}( \emptyset, \CG_k) < \infty) = 0. \]
	\end{lemma}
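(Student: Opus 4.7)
Write $M_k:=d_L(\emptyset,\CG_k)=\min_{x\in\CG_k}\sum_{e\in P_{\emptyset,x}}L_e$. My first observation is that $M_k$ is nondecreasing in $k$: every $y\in\CG_{k+1}$ has a parent $p(y)\in\CG_k$, and $d_L(\emptyset,y)=d_L(\emptyset,p(y))+L_{(p(y),y)}\ge d_L(\emptyset,p(y))\ge M_k$. Hence $\lim_{k\to\infty}M_k$ exists in $[0,\infty]$ a.s., and it suffices to show that for every fixed $t>0$ we have $M_k>t$ eventually, almost surely. Taking a union over $t\in\N$ then gives $\lim_k M_k=\infty$ a.s.

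The plan is Borel--Cantelli combined with a Chernoff bound. Condition on $\CT$. For a fixed $x\in\CG_k$, $d_L(\emptyset,x)$ is a sum of exactly $k$ i.i.d.\ copies of $L$ (one per edge on the unique ancestral path); call such a generic sum $S_k$. A union bound gives
\be\label{eq:plan-union}
\Pv(M_k\le t\mid\CT)\;\le\;|\CG_k|\,\Pv(S_k\le t).
\ee
By assumption \eqref{eq:gen-sizes-nu}, on an event of full probability there exists a (random) $k_0<\infty$ such that $|\CG_k|\le\nu^k$ for every $k\ge k_0$, so the right-hand side of \eqref{eq:plan-union} is bounded by $\nu^k\,\Pv(S_k\le t)$ eventually, and the remaining task is to prove that $\Pv(S_k\le t)$ decays fast enough in $k$ to make the series summable.

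To estimate $\Pv(S_k\le t)$, I exploit that $\Pv(L=0)=0$, which implies $\Pv(L\le\delta)\to 0$ as $\delta\downarrow 0$. Fix $\delta>0$ (to be chosen) and set $q=q(\delta):=\Pv(L\le\delta)$. If $S_k\le t$, then at most $t/\delta$ of the summands $L_1,\dots,L_k$ can exceed $\delta$, so with $Y_k:=\#\{i\le k:L_i\le\delta\}\sim\mathrm{Bin}(k,q)$,
\be
\Pv(S_k\le t)\;\le\;\Pv\bigl(Y_k\ge k-t/\delta\bigr)\;\le\;\Pv\bigl(Y_k\ge 3k/4\bigr)\qquad\text{for }k\ge 4t/\delta.
\ee
A standard Chernoff bound for the binomial gives $\Pv(Y_k\ge 3k/4)\le\exp(-k\,I(q))$ with $I(q)=\tfrac34\log\tfrac{3}{4q}+\tfrac14\log\tfrac{1}{4(1-q)}$, which tends to $+\infty$ as $q\to 0$. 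Hence I can fix $\delta>0$ small enough that $I(q)>\log\nu+1$, say. Then, for all $k\ge 4t/\delta$,
\be
\nu^k\,\Pv(S_k\le t)\;\le\;\exp\bigl(-k\bigl(I(q)-\log\nu\bigr)\bigr),
\ee
which is summable in $k$.

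Combining with \eqref{eq:plan-union}, I get $\sum_{k\ge k_0}\Pv(M_k\le t\mid\CT)<\infty$ on the full-probability event from \eqref{eq:gen-sizes-nu}, so the conditional Borel--Cantelli lemma yields $\Pv(M_k\le t\text{ i.o.}\mid\CT)=0$. Because $M_k$ is nondecreasing, this forces $\lim_k M_k\ge t$ a.s.; intersecting over $t\in\N$ gives $\lim_k M_k=\infty$ a.s., which is the claim. The only mildly delicate point is the calibration of $\delta$ against $\nu$; everything else reduces to standard large-deviation estimates, and the key structural inputs are precisely (i) monotonicity of $M_k$, (ii) the a.s.\ upper bound on $|\CG_k|$ from \eqref{eq:gen-sizes-nu}, and (iii) the atomlessness of $L$ at $0$.
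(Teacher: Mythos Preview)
Your proof is correct and follows essentially the same approach as the paper: a union bound over the (at most $\nu^k$) vertices in $\CG_k$, a combinatorial tail bound on $\Pv(L_1+\dots+L_k\text{ small})$ via the count of summands below a calibrated threshold, and Borel--Cantelli. The paper's version is marginally more direct in that it proves the linear lower bound $M_k\ge kt_0/2$ eventually (with $t_0$ chosen so that $F_L(t_0)<1/(4\nu^2)$ and the cruder estimate $\binom{k}{k/2}\le 2^k$ in place of your Chernoff bound), whereas you fix $t$ and show $M_k>t$ eventually; both are equivalent for non-explosion, and the underlying mechanism is identical.
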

	\cb\begin{proof}\cz
	By \eqref{eq:xkn-toexp}, the limit  $\lim_{k\to \infty} d_{L}( \emptyset, \CG_k)$ equals the explosion time of the root $\emptyset$. Our goal is to show that there exists a constant such that $ d_{L}( \emptyset, \CG_k)\ge c_L k$ for all sufficiently large $k$. Hence the limit is a.s. infinite.
		Let us fix a small enough $\ve>0$ and set $t_0$ so small that $F_L(t_0)<1/(4\nu^2)-\ve$. Then, let us define the event
		\[ E_k:=\{ \exists y \in \CG_k: d_L(y,\emptyset) < k t_0/2\}.\]
		Then, since every vertex in $\CG_k$ is graph distance $k$ from the root,  by a union bound, 
		\be\label{eq:en1} \Pv(E_k \mid |\CG_k|) \le |\CG_k| F_L^{\star,k}(k t_0/2), \ee
		where we write $F_L^{\star,k}$ for the $k$-fold convolution of $L$ with itself\footnote{The distribution of $L_1+L_2+\dots +
			L_k$, with $L_i$ i.i.d.\ from $L$.}. Now, we bound  
		$F_L^{\star,k}(k t_0/2)$. Note that if more than $k/2$ variables in the sum $L_1+\dots L_k$ have length at least $t_0$, then the sum exceeds $k t_0 /2$. As a result, we must have at least $k/2$ variables with value at most $t_0$.
		Thus, 
		\be\label{eq:FL-convolution} F_L^{\star, k}(k t_0 /2) \le {k \choose k/2}(F_L(t_0))^{k/2} \le 2^k \left( (4\nu^2)^{-1}-\ve\right)^{k/2}.  \ee
		Using this bound in \eqref{eq:en1}  and that $|\CG_k\le \nu^k$ for all $k\ge k_0$ for some $k_0$, we arrive to the upper bound
		\[\Pv(E_k) \le \nu^k 2^k \left( (4\nu^2)^{-1}-\ve\right)^{k/2}=\left(1- 4 \nu^2 \ve\right)^{k/2}. \]
		Since the rhs is summable, the Borel-Cantelli Lemma implies that a.s.\ only finitely many $E_k$ occur. That is, for all large enough $k$, all vertices in generation $k$ have $L$-distance at least $k t_0/2$. Thus, $d_L(x,  \CG_k)$ tends to infinity and the result follows. 
	\end{proof}
	\cz
	\begin{proof}[Proof of Theorem \ref{thm:exp-charact}, finite second moment case]
		First we show non-explosion when $\Ev[W^2]<\infty$. We prove the statement for $\Elambda$ and $\Eupper$ as well. By translation invariance, it is enough to show that the explosion time $Y_E^\la(0)$ (see Def.\ \ref{def:explosiontime}) of the origin is a.s.\ infinite, given $0\in\CV_\la$. \cb Recall the convergence $X_{k}=d_L^\la(0, \cb\partial B_\la^{G}(0, k)\cz)\toas Y_1^E(0)$ as $k\to \infty$ from \eqref{eq:xkn-toexp} where it is stated for $\la=1$. Similar to the proof of Lemma \ref{non-explosion-exp}, we show that for some constant $c_L$ that depends on $L$,  $X_k\ge c_L k$ for all $k$ sufficiently large, hence the limit cannot be finite.\cz
		
		Consider the corresponding upper bounding $\mathrm{BerBRW}_{\la}(0)$, with environment $(x, W_x)_{x\in \CV_{\la}}$. By the coupling in Lemma \ref{lemma:ballsbrw}, and \eqref{eq:ball-containment},  $\partial B_\la^{G}(0, k)$ is contained in generation $k$ (denoted by $\CG_k^{\la}$) of $\mathrm{BerBRW}_{\la}(0)$, hence it is enough to show that 
		\be\label{eq:xkber} X_k^{\mathrm{Ber}}=d_L^\la(0, \cb\CG_k^{\la}\cz) \ge c_L k\ee
		for all sufficiently large $k$, since $ X_k\ge X_k^{\mathrm{Ber}}$. The inequality \eqref{eq:xkber} follows from Lemma \ref{lem:expgen}, when we  show that $|\CG_k|$ grows at most exponentially in $k$ when $\Ev[W^2]<\infty$. Thus, we aim to show that when $\Ev[W^2]<\infty$, for some $m<\infty$ and constant $C$,
		\be\label{eq:gensizes-111} \Ev[|\CG_k^{\la}|] \le C m^k.\ee
		Indeed, setting $A_k:=\{ |\CG^\la_k| \ge (2m)^k \}$, by Markov's inequality, $\Pv(A_k)\le C2^{-k}$, summable in $k$. By the Borel-Cantelli Lemma, there is a $k_0$ such that $A_k^c$ holds for all $k\ge k_0$, thus \eqref{eq:gen-sizes-nu} holds with $\nu:=2m$.
		At last we show \eqref{eq:gensizes-111}. Recall the definition of the displacement probabilities in $\mathrm{BerBRW}$ from \eqref{berbrw-edge-prob}. For a set $\CA\subset \R_+$, $N_{w}^{\mathrm{B}}(\CA)$, called the \emph{reproduction kernel}, denotes the number of children with vertex-weight in the set $\CA$ of an individual with vertex-weight $w$ located at the origin. \cb Let us fix $s\ge 1$ an arbitrary number, and think of  $\mathrm ds$ as infinitesimal, and slightly abusing notation let us write $N_{w}^{\mathrm{B}}([s,s+\mathrm ds]):=N_{w}^{\mathrm{B}}(\mathrm ds)$\cz. In case of the $\mathrm{BerBRW}_{\la}(v)$, the distributional identity holds: 
		
		\begin{equation}\label{eq:kernel-1}
		N_{w}^{\mathrm{B}}(\mathrm d s) \ {\buildrel d \over =}   \sum_{y\in \CV_{\la}} \ind_{\{W_y\in (s, s+\mathrm ds)\}}  \mathrm{Ber}\big( \overline C_1\big(1\wedge\overline a_1\|y\|^{-\alpha d}(w s)^\alpha\big)\big).
		\end{equation}

		The \emph{expected reproduction kernel} (cf.\ \cite[Section 5]{Jagers89}) is defined as $\mu(w, \mathrm ds):=  \Ev[N_w^{\mathrm{B}}(\mathrm ds)]$, that can be bounded from above as
		\be\label{eq:mu-ber} \mu(w,\mathrm ds )
		\le F_{W}(\mathrm ds) \overline C_1 \Ev\Bigg[\cb\bigg(\sum_{
			y \in \CV_{\la}: \|y\|\le (a_1^{1/\al}ws)^{1/d}}\hspace{-25pt} 1 \hspace{25pt}\bigg)\cz +  \overline a_1(ws)^\al \sum_{y\in \CV_{\la}: \|y\|\ge a_1^{1/(\al d)}(ws)^{1/d}} \|y\|^{-\al d}\Bigg],\ee
			\cb where\footnote{The $\mathrm ds$ refers to the Lebesgue-Stieltjes integration wrt to the measure generated by $F_L$, e.g. when $L$ has a density $f_L$ then $F_L(ds)=f_L(s) ds$.} $F_L(\mathrm ds):=F_L(s+\mathrm ds)-F_L(s)$\cz.	The expectation of the first sum equals $\la \mathrm{Vol}_d  \cdot ws$, with $\mathrm{Vol}_d$ denoting the volume of the unit ball in $\R^d$. The expectation of the second sum equals, for some constants $c_d, \wit c_d$,
		\be\label{eq:sumterm-1} (ws)^{\al}\la\overline a_1 \int_{(ws)^{1/d}}^{\infty} c_d t^{d-1-\al d} \mathrm dt =(ws)^{\al} \la \wt c_d (ws)^{1-\al} = \la \wt c_d ws.  \ee
		With $c_\la:=(\mathrm{Vol}_d + \wt c_d)$, defining 
		\be\label{eq:muber-2}\mu_+(w, \mathrm ds):= ws F_{W}(\mathrm ds) c_{\la},\ee we obtain an upper bound 
		$  \mu(w,\mathrm ds) \le \mu_+(w, \mathrm ds)$ \emph{uniform in} $s,w$. 
		
		By the translation invariance of the displacement probabilities in BerBRW,  the \emph{expected} number of individuals with given weight in generation $k$ can be bounded from above by applying  the composition operator $\ast$ acting on the `vertex-weight' space (type-space), that is, define 
		\[ \mu^{\ast n}(w, \CA):=\int_{\R_+} \mu(v, \CA) \mu^{\ast (n-1)}(w, \mathrm \mathrm ds), \qquad  \mu^{\ast1}:=\mu. \]
		For instance, $\mu^{\ast2}$ counts the expected number of individuals with weight in $\CA$ in the second generation, averaged over the environment.  
		The rank-1 nature of the kernel $\mu_+$ implies that its composition powers factorise. An elementary calculation using \eqref{eq:muber-2} shows that 
		\be\label{eq:mu-convolv}\mu_+^{\ast k}(w, \mathrm ds) =wsF_{W}(\mathrm ds) \cdot (c_{\la})^k \Ev[W^{2}]^{k-1}. \ee
		Since the weight of the root is distributed as $W$,
		\be \Ev[|\CG^\la_k|]=\Ev[\mu^{\ast k}(W, \R_+) ]\le \Ev[\mu^{\ast k}_+(W, \R_+)]
		=\big(c_{\la}\Ev[W^{2}]\big)^k \cdot \Ev[W] / \Ev[W^2].  \ee  
		This finishes the proof with $m:=c_{\la}\Ev[W^{2}]<\infty$.
			\end{proof}
	\subsection{Growth of the Bernoulli BRW when $\tau\in(2,3)$}\label{sec:BRWanalysis}
	To prove the lower bound on the $L$-distance  in Proposition \ref{Prop:lowerbound}, we analyse the properties of $\mathrm{BerBRW}_{\la}(v)$. We investigate the doubly-exponential growth rate of the generation sizes and the maximal displacement. Note that this analysis does not include the edge-lengths. The main result is formulated in the following proposition, that was proved for the SFP model in \cite{HofKom2017} that we extend to $\Elambda$:
	
	\begin{proposition}[Doubly-exponential growth and maximal displacement of BerBRW]\label{Prop:BerBRW} 
		Consider the $\mathrm{BerBRW}_{\la}(v)$ in $\R^d, d\ge 1$, with weight distribution $W$ satisfying \eqref{powerlaw} with $\tau\in\left(2,3\right)$, \cb and reproduction distribution \cz given by \eqref{berbrw-edge-prob} with $\alpha>1$. Let $\mathcal{G}^{\mathrm{Ber}}_\lambda(v,k)$, $Z^{\mathrm{Ber}}_\lambda(v,k)$ denote the set and size of generation $k$ of  $\mathrm{BerBRW}_{\la}(v)$. We set
		\be\label{doubleexp}
		\ba
		c_k\left(\eps,i\right):= & 2\exp\Big\{i\left(1+\eps\right)\left(\frac{1+\eps}{\tau-2}\right)^k\Big\}, \qquad
		S_k\left(\eps,i\right):= & \exp\Big\{i\zeta\left(\frac{1+\eps}{\tau-2}\right)^k\Big\},
		\ea
		\ee
		where $\zeta\geq \max\{2(\eps/(1+\eps)+(\tau-1)/(\tau-2))/d,(2\alpha+\eps(\tau-2)/(1+\eps))/((\alpha-1)d)\}$ is a constant. \cb Recall the definition of $B^2(v,r)$ in \eqref{metricballs}\cz. Then, for every $\eps>0$, there exists an a.s. finite random variable $Y\left(\eps\right)<\infty$, such that the event
		\be\label{BerBRWpropstatement}
		\forall k\geq 0:\ Z^\mathrm{Ber}_\la(v,k)\leq c_k(\eps,Y(\ve)) \text{ and } \mathcal{G}^\mathrm{Ber}_\la(v,k)\cap \cb\left( B^2(v,S_k\left(\eps,Y(\ve)\right))\right)^c\cz=\empty
		\ee
		holds. Furthermore, the $Y\left(\eps\right)$ have exponentially decaying tails with
		\be\label{Ytail}
		\ba
		\P{Y\left(\eps\right)\geq K}& \leq C_\eps\exp\left\{-\eps K/4\right\},
		\ea
		\ee
		where the constant $C_\eps$ depends on $\eps,\tau,\alpha,\la,d$ and the slowly-varying function $\ell$ only, not on $K$.
	\end{proposition}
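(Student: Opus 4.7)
My plan is to adapt the analysis of the analogous statement for scale-free percolation in \cite{HofKom2017} to the present setting where the vertex set is a PPP of intensity $\la$ on $\R^d$ rather than $\Z^d$. The entire doubly-exponential growth is driven by the recursion $M_{k+1}\approx M_k^{1/(\tau-2)}$ for the maximum vertex-weight in generation $k$, which comes from the infinite second moment of $W$. Once this recursion is controlled, the generation sizes and maximal displacements both inherit doubly-exponential bounds with rate $1/(\tau-2)>1$, because (i) the expected number of offspring of an individual of weight $w$ is linear in $w$ (from \eqref{eq:muber-2}) and (ii) the effective reach of a weight-$w$ vertex is polynomial in $w$.

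First I would work with the expected reproduction kernel bound \eqref{eq:muber-2}, $\mu(w,\dd s)\le c_\la w s F_W(\dd s)$, to estimate for $s\ge 1$
\[
\mu_+(w,[s,\infty))\ \le\ c_\la w \int_s^\infty u\,\dd F_W(u)\ \le\ C\,w\,s^{-(\tau-2)}\,\overline\ell(s),
\]
which, by \eqref{powerlaw}, is $O(1)$ exactly at $s\asymp(w\,\overline\ell(s))^{1/(\tau-2)}$. Setting $M_k:=\max_{\underline i\in\cG^{\mathrm{Ber}}_\la(v,k)}W_{\underline i}$, a Markov/Chernoff argument conditional on generation $k$ shows that for any $\eps>0$ there is a constant $c(\eps)>0$ such that
\[
\Pv\!\left(M_{k+1}>M_k^{(1+\eps)/(\tau-2)}\ \Big|\ \cG^{\mathrm{Ber}}_\la(v,k)\right)\ \le\ Z^{\mathrm{Ber}}_\la(v,k)\cdot\exp\{-c(\eps)M_k^{\,\eps/(2(\tau-2))}\},
\]
exploiting the Poisson law of the underlying vertex locations together with i.i.d.\ power-law weights. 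Iterating this bound from the root gives $\log M_k\le Y(\eps)\bigl(\tfrac{1+\eps}{\tau-2}\bigr)^k$ for all $k$, where $Y(\eps)$ absorbs the (possibly large) initial transient until the recursion ``locks in''.

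Next, for generation sizes, observe that $\Ev\bigl[N^{\mathrm B}_w(\R_+)\bigr]\le c_\la w\,\Ev[W]$ by \eqref{eq:muber-2}, which is \emph{finite} because $\tau>2$. Conditional on generation $k$ and on the bound $M_k\le S_k(\eps,Y(\eps))^{\,d/\zeta}$, a one-step Chernoff bound gives
\[
Z^{\mathrm{Ber}}_\la(v,k+1)\ \le\ c_\la\,\Ev[W]\,Z^{\mathrm{Ber}}_\la(v,k)\,M_k\cdot(1+\eps/2)
\]
with probability at least $1-\exp\{-c'M_k^{\,\eps/2}\}$, and iterating yields $Z^{\mathrm{Ber}}_\la(v,k)\le c_k(\eps,Y(\eps))$. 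For the maximal displacement, note that an individual of weight $w$ at position $x$ has expected number of children at distance $\ge r$ bounded by
\[
\la\,\overline C_1\,\overline a_1\,(wM_k)^\al\int_{\|y\|\ge r}\|y\|^{-\al d}\,\dd y\ \le\ C'(wM_k)^\al\,r^{d(1-\al)},
\]
which drops below $\eps$ for $r\ge C''(wM_k)^{\al/((\al-1)d)}$; a Poisson-probability bound then says no child is placed farther. Summing geometrically along an ancestral chain, the maximal spatial displacement in generation $k$ from the root is dominated by a constant multiple of $M_k^{\,\al/((\al-1)d)}$, which is precisely $S_k(\eps,Y(\eps))$ for the value of $\zeta$ given in the proposition statement.

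The random variable $Y(\eps)$ arises by Borel--Cantelli: the failure probability at step $k$ in each of the three recursions (weight, size, displacement) is at most $\exp\{-c''' \exp\{Y\,\eps\,(\tfrac{1+\eps}{\tau-2})^{k-1}/4\}\}$, which is summable in $k$ once $Y$ exceeds an absolute constant. Defining $Y(\eps)$ as the smallest integer above $\max\{W_\empty,\log Z^{\mathrm{Ber}}_\la(v,0)\}$ together with the first threshold past which all three cascading failure events fail to occur simultaneously, the exponential tail \eqref{Ytail} comes directly from the power-law tail of $W_\empty$ combined with the geometric-sum of failure probabilities — since $\Pv(W>\mathrm e^K)=\mathrm e^{-K(\tau-1)}\ell(\mathrm e^K)$ dominates the other contributions. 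The main obstacle, as in \cite[Proposition 4.2]{HofKom2017}, is the careful simultaneous propagation of the three recursions across generations while keeping the deviation probabilities summable; the PPP vertex set introduces only mild (and favorable) Poisson fluctuations relative to the lattice case and does not change the scaling, so the SFP argument transfers essentially line-by-line after replacing lattice sums by Lebesgue integrals against $\la\,\dd y$.
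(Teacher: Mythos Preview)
Your broad strategy---adapt the scale-free percolation argument of \cite{HofKom2017}, replacing lattice sums by PPP integrals---is exactly what the paper does, and your observation that this transfer is essentially cosmetic is correct. However, the recursive mechanism you propose to drive the proof has a genuine gap.

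The claimed bound
\[
\Pv\!\left(M_{k+1}>M_k^{(1+\eps)/(\tau-2)}\ \Big|\ \cG^{\mathrm{Ber}}_\la(v,k)\right)\ \le\ Z^{\mathrm{Ber}}_\la(v,k)\cdot\exp\bigl\{-c(\eps)M_k^{\,\eps/(2(\tau-2))}\bigr\}
\]
is not correct. From the kernel bound in Claim~\ref{claim:Nw}, an individual of weight $w$ has expected number of children with weight $\ge t$ at most $C\,w\,t^{-(\tau-2)}\ell(t)$; even exploiting the Poisson structure of the environment, the probability of having \emph{any} such child is bounded by this mean, not by anything stretched-exponentially small. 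A union bound over generation $k$ therefore gives only
\[
\Pv\bigl(M_{k+1}>M_k^{(1+\eps)/(\tau-2)}\mid\cG_k\bigr)\ \le\ C\,\ell(\cdot)\sum_{\underline i\in\cG_k}W_{\underline i}\cdot M_k^{-(1+\eps)}\ \le\ C\,Z_k\,M_k^{-\eps}\,\ell(\cdot),
\]
which is polynomial in $M_k$. With this bound, your three interlocking recursions (weight, size, displacement) no longer close: you need $Z_k$ already doubly-exponentially bounded to control $M_{k+1}$, and vice versa, and the failure probabilities are not summable without further input. The same issue affects your Chernoff step for $Z_{k+1}$: conditionally on the shared random environment the offspring counts of different individuals are independent, but the conditional mean of $Z_{k+1}$ is itself random and governed by $\sum_{\underline i}W_{\underline i}$, not by $M_k$ alone.

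The paper avoids this circularity by \emph{not} tracking $M_k$. Instead (see Remark~\ref{rem:Ydef}), for each fixed integer $i$ it defines the event $H_i=\bigcap_{k\ge0}\wt E_k(\eps,i)$ that \emph{all} generation sizes and displacements stay below $c_k(\eps,i)$ and $S_k(\eps,i)$, and uses Claim~\ref{claim:Nw} together with Markov's inequality---applied to the \emph{iterated} expected kernel, computed against the fixed targets $c_k(\eps,i)$---to show $\sum_{i\ge1}\Pv(H_i^c)<\infty$ with tail $\sum_{i\ge K}\Pv(H_i^c)\le C_\eps\e^{-\eps K/4}$. Then $Y(\eps)$ is simply the first $i$ for which $H_i$ holds, and \eqref{Ytail} is exactly this tail sum. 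In particular the exponential rate $\eps/4$ does not come from the power-law tail of $W_\emptyset$ (which would give the much faster rate $\tau-1$) but from Potter-bound losses in the Markov estimates; your attribution of the tail to the root weight is therefore misleading.
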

	So, both the generation size and the maximal displacement of individuals in generation $k$ grow at most doubly exponentially with rate $\left(1+\eps\right)/\left(\tau-2\right)$ and random prefactor $Y(\ve)$. We believe based on the branching process analogue \cite{Dav78} that the true growth rate is $1/(\tau-2)$ with a random prefactor, but for our purposes, this weaker bound also suffices. 
	It will be useful to know the definition of $Y(\ve)$ later.
	\begin{remark}\label{rem:Ydef}\normalfont
		Let us define
		\be\ba \label{def:yeps}
		\wt{E}_k(\eps,i)&:=\left\{Z^{\mathrm{Ber}}_\la(v,k)\leq c_k(\eps,i)\right\}\cap \left\{\mathcal{G}^{\mathrm{Ber}}_\la(v,k)\cap \cb (B^2(v,S_k(\eps,i)))^c\cz=\empty \right\},\\
		H_i&:=\bigcap_{k=0}^\infty \wt{E}_k(\eps,i),
		\ea\ee
		that is, $H_i$ describes the event that the generation sizes and maximal displacement of $\mathrm{BerBRW}_\la(v)$ grow (for all generations) doubly exponentially with prefactor at most $i$ in the exponent, as in \eqref{doubleexp}.
		It can be shown that $\sum_{i=1}^\infty \P{H_i^c}<\infty$. It follows from the Borel-Cantelli lemma that only finitely many $H_i^c$ occur. Since $c_k(\ve, i), S_k(\ve,i)$ are increasing in $i$, $H_1\subset H_{2} \dots$ is a nested sequence. Thus $Y(\ve)$ is the first $i$ such that $H_i$ holds. The rhs of \eqref{Ytail} equals $\sum_{i\ge K} \Pv(H_i^c)$.
	\end{remark}
	The key ingredient to proving Proposition \ref{Prop:BerBRW} is Claim \ref{claim:Nw} below. Let us write $N_w\left(\mathrm ds,\geq S\right)$ for the number of offspring with weight in $\left(s,s+\mathrm ds\right)$ and displacement at least $S$ of an individual with weight $w$, and then let $N_w\left(\geq t,\geq S\right)$ for the number of offspring with weight at least $t$ and displacement at least $S$  of an individual with weight $w$. 
	\begin{claim}[Bounds on the number of offspring of an individual of weight $w$]\label{claim:Nw}
		Consider $\mathrm{BerBRW}_{\lambda}(v)$ from Section \ref{sec:BRWanalysis}, with $\tau>2$, $d\geq 1$ and $\alpha>1$. Then,
		\be\label{boundsonNw}
		\ba
		\E{N_w\left(\geq t,\geq 0\right)} \leq {}&  M_{d,\tau,\alpha}\lambda \,w\,t^{-\left(\tau-2\right)}\ell\left(u\right),\\
		\E{N_w\left(\geq 1,\geq S\right)} \leq{}&  M_{d,\tau,\alpha}\lambda\,  w^{\tau-1}S^{-\left(\tau-2\right)d}\,\wit \ell\left(S^d/w\right)
		+ \mathbbm{1}_{\left\{\alpha< \tau-1 \right\}} M_1 \lambda w^\alpha S^{-\left(\alpha-1\right)d},
		\ea
		\ee
		where constants $\M$ and $M_1$ depend on $d,\tau,\alpha$ and the slowly-varying function $\ell$ only, and $\wit \ell(x):=\ind_{\{\al>\tau-1\}}\ell(x) + \ind_{\{\al=\tau-1\}}\int_1^x \ell(t)/t\  \mathrm dt$ is slowly varying at infinity.
	\end{claim}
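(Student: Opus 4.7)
My starting point is to express both quantities for an individual at the origin with weight $w$ via the Mecke/Campbell formula for the PPP $\mathcal V_\lambda$, using the i.i.d.\ weights $(W_x)$ and the reproduction probabilities in \eqref{berbrw-edge-prob}. This gives
\[
\Ev[N_w(\ge t, \ge S)] \;=\; \lambda\overline C_1 \int_t^\infty \int_{\|x\|\ge S} \bigl(1 \wedge \overline a_1 \|x\|^{-\alpha d}(ws)^\alpha\bigr)\, \mathrm dx\, F_W(\mathrm ds).
\]
I evaluate the inner spatial integral by splitting at the natural radius $R_s := \overline a_1^{1/(\alpha d)}(ws)^{1/d}$ where the $1\wedge$ saturates. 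The ball $\{\|x\|\le R_s\}$ contributes $\mathrm{Vol}_d R_s^d$, while the exterior $\{\|x\|\ge R_s\}$ integrates, using $\alpha>1$ to ensure $r \mapsto r^{(d-1)-\alpha d}$ is integrable at infinity, to a comparable amount; both terms are of order $ws$, so $\int_{\R^d}\bigl(1\wedge \overline a_1\|x\|^{-\alpha d}(ws)^\alpha\bigr)\,\mathrm dx = C_{\alpha,d}\, ws$ with an explicit dimensional constant.

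For the first bound ($S=0$) this gives $\Ev[N_w(\ge t,\ge 0)] \le C\lambda w \cdot \Ev[W\ind_{W\ge t}]$, and because $\Pv(W\ge s) = \ell(s) s^{-(\tau-1)}$ with $\tau>2$, Karamata's theorem yields $\Ev[W\ind_{W\ge t}] \asymp \ell(t) t^{-(\tau-2)}$, as required. For the displacement bound I split the $s$-integral at the threshold $s_0 := \overline a_1^{-1/\alpha} S^d/w$ at which $R_s=S$. In the near regime $s\ge s_0$ (so $R_s\ge S$) the spatial integral over $\{\|x\|\ge S\}$ is still $O(ws)$, and a second application of Karamata produces the main contribution $O\!\bigl(\lambda w\,\Ev[W\ind_{W\ge s_0}]\bigr) = O\!\bigl(\lambda w^{\tau-1}S^{-(\tau-2)d}\ell(S^d/w)\bigr)$. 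In the far regime $1\le s\le s_0$ only the power-law tail is present, contributing a spatial factor of order $\overline a_1(ws)^\alpha S^{-(\alpha-1)d}$ and thus a term of size $\lambda w^\alpha S^{-(\alpha-1)d}\,\Ev[W^\alpha\ind_{W\le s_0}]$.

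The main obstacle is the case analysis for the truncated moment $\Ev[W^\alpha\ind_{W\le s_0}]$, whose behaviour depends on the relative size of $\alpha$ and $\tau-1$. When $\alpha<\tau-1$, $\Ev[W^\alpha]<\infty$ and this moment is uniformly bounded, yielding exactly the extra term $\ind_{\{\alpha<\tau-1\}}\cdot M_1\lambda w^\alpha S^{-(\alpha-1)d}$ in the claim. When $\alpha>\tau-1$, partial integration plus Karamata give $\Ev[W^\alpha\ind_{W\le s_0}] \asymp s_0^{\alpha-\tau+1}\ell(s_0)$; substituting $s_0 = \overline a_1^{-1/\alpha}S^d/w$ and using the exponent identity $-(\alpha-1)d + d(\alpha-\tau+1) = -(\tau-2)d$, this regime is absorbed into the main $w^{\tau-1}S^{-(\tau-2)d}\ell(S^d/w)$ term with no extra summand. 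The borderline $\alpha=\tau-1$ yields, again by partial integration, $\Ev[W^\alpha\ind_{W\le s_0}] = \alpha\int_1^{s_0}\ell(s)/s\,\mathrm ds = \alpha\,\wit\ell(s_0)$, which both explains the replacement of $\ell$ by $\wit\ell$ in the claim and, by standard Karamata theory for the primitive of $\ell(s)/s$, ensures the slow variation of $\wit\ell$. Collecting all cases, absorbing the slow variation of $\ell$ under $s_0 \asymp S^d/w$, and relabelling constants yields the stated bounds.
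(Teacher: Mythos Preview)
Your proof is correct and follows essentially the same approach as the paper: both express the expected offspring via Campbell's formula for the PPP, split the spatial integral at the saturation radius $R_s\asymp(ws)^{1/d}$ and the weight integral at $s_0\asymp S^d/w$, and then apply Karamata's theorem with the same case analysis on $\alpha$ versus $\tau-1$ for the truncated moment $\Ev[W^\alpha\ind_{W\le s_0}]$. The only cosmetic difference is that the paper writes the decomposition as three terms $T_1,T_2,T_3$ (splitting simultaneously in $s$ and $\|x\|$), whereas you first integrate out the spatial variable and then split in $s$; the resulting bounds and constants are the same.
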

	This claim, combined with translation invariance of the model, allows one to estimate the expected number of individuals with weights in generation-dependent intervals, which in turn also allows for estimates on the expected number of individuals with displacement at least $S_k(\ve,i)$ in generation $k$. These together with Markov's inequality give a summable bound on $\Pv(H_i^c)$ (from \eqref{def:yeps}). As explained in Remark \ref{rem:Ydef}, a Borel-Cantelli type argument directly shows Proposition \ref{Prop:BerBRW}.  This method is carried out in \cite[Lemma 6.3]{HofKom2017} for the scale-free percolation model, \cb based on \cite[Claim 6.3]{HofKom2017}\cz, that is the same as Claim \ref{claim:Nw} above, modulo that $\gamma-1$ there is replaced by $\tau-2$ here. 
	The proof of Proposition \ref{Prop:BerBRW} subject to Claim \ref{claim:Nw} follows in the exact same way, by replacing $\gamma-1$ in \cite{HofKom2017} by $\tau-2$ here. The proof of Claim \ref{claim:Nw} is similar to \cb the proof in \cite[Claim 6.3]{HofKom2017}\cz, with some differences. First, the vertex set here forms a PPP while there it is $\Z^d$, and second,  the exponents need to be adjusted, due to the relation between the parameters used in the SFP and GIRG models, as explained in Section \ref{sec:discussion}. 
	\begin{proof}[Proof of Claim \ref{claim:Nw}]
		We start bounding $\mathbb{E}\left[N_w\left(\geq 1,\geq S\right)\right] $. Due to translation invariance, we assume that the individual is located at the origin. Recall the environment is $(x, W_x)_{x\in \CV_\la}$, and the displacement distribution from \eqref{berbrw-edge-prob}.  Using the notation $F_W\left(\mathrm ds\right)=\P{W\in\left(s,s+\mathrm ds\right)}$, for the Stieltjes integral similarly to \eqref{eq:mu-ber},
		\be\label{NwSupperbound}
		\ba
		\mathbb{E}\left[N_w\left(\geq 1,\geq S\right)\right]  &\leq \overline C_1 \int_{1\leq s\leq S^d/w}\mathbb{E}\Bigg[\sum_{x\in \mathcal{V}_\lambda:  \|x\|\geq S}\overline a_1\|x\|^{-\alpha d}\left(ws\right)^\alpha\Bigg]F_W\left(\mathrm ds\right)\\
		&\  +  \overline C_1 \int_{s\geq S^d/w}\mathbb{E}\Bigg[\sum_{x\in \mathcal{V}_\lambda:  \|x\|\leq\left(\overline a_1^{1/\al}ws\right)^{1/d}}1\Bigg]F_W\left(\mathrm ds\right)\\
		&\  +  \overline C_1 \int_{s\geq S^d/w}\mathbb{E}\Bigg[\sum_{x\in \mathcal{V}_\lambda:  \|x\|\geq\left(\overline a_1^{1/\al}ws\right)^{1/d}}\overline a_1\|x\|^{-\alpha d}\left(ws\right)^\alpha\Bigg]F_W\left(\mathrm ds\right)\\
		&\cb=:T_1+T_2+T_3.
		\ea
		\ee
		where we mean the Lebesgue-Stieltjes integral wrt $F_L$.
		Bounding the expectation inside $T_1$ and $T_3$,  for some constant $c_d$, similar to \eqref{eq:sumterm-1}, for any $R>0$
		\be\label{eq:sumterm-2} \mathbb{E}\Bigg[\sum_{x\in \mathcal{V}_\lambda:  \|x\|\geq R}\|x\|^{-\alpha d}\left(ws\right)^\alpha\Bigg]=(ws)^{\al} \la \int_{R}^{\infty} c_d t^{d-1-\al d} \mathrm dt =(ws)^{\al} \la \wt c_d R^{-d(\al-1)}.  \ee
		Using this bound for $R=S$ and $R=(\overline a_1^{1/\al}ws)^{1/d}$, for some constant $\wit c_d$,
		\be\label{eq:t1t2} \ba T_1 &= \overline C_1\la\wit  c_d w^\al S^{-d(\al-1)}\,  \mathbb{E}\left[W^\alpha \mathbbm{1}_{\left\{W\leq S^d/w\right\}}\right],\\
		T_3 &= \overline C_1 \la \wit c_d w \, \Ev[W \mathbbm{1}_{\left\{W\ge S^d/w\right\}}] . \ea\ee	
		The following Karamata-type theorem \cb\cite[Propositions 1.5.8, 1.5.10]{BinGolTeu89} \cz
yields that for any $\beta>0$ and $a>0$, and function $\ell$ that varies slowly at infinity,
		\be\label{Karamata}
		\lim_{u\rightarrow\infty}\frac{1}{u^{-\beta}\ell\left(u\right)}\int_u^\infty\ell\left(x\right)x^{-\beta-1}\mathrm{d}x=\lim_{u\rightarrow\infty}\frac{1}{u^\beta\ell\left(u\right)}\int_a^u\ell\left(x\right)x^{\beta-1}\mathrm{d}x=\frac{1}{\beta}.
		\ee
		while for $\beta=0$, the integral equals $\ell^\star(x):=\int_1^x \ell(t)/t \,\mathrm dt$, which is slowly varying at infinity itself and $\ell^\star(t)/\ell(t)\to \infty$ as $t\to \infty$.  $W$ satisfies \eqref{powerlaw}, so we can bound the expectations in \eqref{eq:t1t2} using \eqref{Karamata}:
		\be\label{Karamatabound}
		\mathbb{E}\left[W^\alpha \mathbbm{1}_{\left\{W\leq S^d/w\right\}}\right] \leq  \int_1^{S^d/w}\alpha t^{\alpha-1}\ell\left(t\right)t^{-\left(\tau-1\right)}\mathrm{d}t\leq C_{\alpha,\tau}\left(S^d/w\right)^{\alpha-\tau+1}\ell\left(S^d/w\right),
		\ee	
		when $\alpha>\tau-1$, and $\E{W^\alpha \mathbbm{1}_{\left\{W\leq S^d/w\right\}}}\leq \E{W^\alpha}=:M_\alpha<\infty$ when $\alpha< \tau-1$. When $\al=\tau-1$, the integral in \eqref{Karamatabound} itself is slowly varying, thus, with $\ell^\star(x):=\int_1^x \ell(t)/t \,\mathrm dt$,  the bound in \eqref{Karamatabound} remains true with $\ell$ replaced by $\ell^\star$.	Further,
		\be\label{Nwmomentbound}
		\mathbb{E}\left[W\mathbbm{1}_{\left\{W\geq S^d/w\right\}}\right] \leq \int_{S^d/w}^\infty \ell\left(t\right)t^{-\left(\tau-1\right)}\mathrm{d}t \leq C_\tau\left(S^d/w\right)^{-\left(\tau-2\right)}\ell\left(S^d/w\right).
		\ee		
		Returning to \eqref{eq:t1t2}, with $C_{\al,\tau},C_\tau$ from \eqref{Karamatabound} and \eqref{Nwmomentbound}, with $\wit \ell(\cdot):=\ind_{\{\al<\tau-1\}}\ell(\cdot) + \ind_{\{\al=\tau-1\}}\ell^\star(\cdot)$,	\be\label{eq:t1t2-2}  T_1 +T_3\le \overline C_1\la\wit  c_d (C_{\al,\tau}+C_\tau) \cdot w^{\tau-1} S^{-d(\tau-2)} \wit \ell(S^d/w) +\ind_{\{\alpha<\tau-1\}} \overline C_1\la\wit  c_d M_\alpha w^\alpha S^{-\left(\alpha-1\right)d}.	\ee	
		Finally, we bound $T_2$ in \eqref{NwSupperbound}. With $\mathrm{Vol}_d$ the volume of the unit ball in $\R^d$, as in \eqref{eq:mu-ber}, the expectation within $T_2$ equals $\mathrm{Vol}_d \la \overline a_1^{1/\al} ws$, thus, by \eqref{Nwmomentbound} once more, 
		\be\label{Nwbound2}
		T_2\le \overline C_1 \la \overline a_1^{1/\al} w\Ev\big[W \mathbbm{1}_{\left\{W\ge S^d/w\right\}}\big]\leq \overline C_1 \la  \mathrm{Vol}_d  C_\tau \cdot w^{\tau-1 } S^{-d(\tau-2)} \ell(S^d/w).
		\ee
		Combining \eqref{eq:t1t2-2} and \eqref{Nwbound2} yields the desired bound in \eqref{boundsonNw} with $M_{d, \al, \tau}:=\overline C_1 (\wit c_d C_{\al,\tau}+ \wit c_d C_\tau+  \mathrm{Vol}_d C_\tau)$ and $M_1:=\overline C_1 \wit c_d M_\al$. The bound on $\cb\mathbb{E}\left[N_w\left(\geq t,\geq 0\right)\right]$ follows from the calculations already done in \eqref{eq:kernel-1}-\eqref{eq:sumterm-1},  that yield, when integrating over $s\ge t$, 		\be
		\mathbb{E}\left[N_w\left(\geq t,\geq 0\right)\right]\leq \overline C_1 \la (\mathrm{Vol_d}+\wit c_d) w \int_t^\infty F_W\left(\mathrm ds\right) s \mathrm ds
		\leq M_{d,\tau,\alpha}\lambda w t^{-\left(\tau-2\right)}\ell\left(t\right).
		\ee
		where we have applied the bound in \eqref{Nwmomentbound} by replacing $S^d/w$ by $t$.
	\end{proof}
	\section{Lower bound on the weighted distance: Proof of Proposition \ref{Prop:lowerbound}}\label{sec:maintheoremproof}
	In this section we prove  Proposition \ref{Prop:lowerbound}.
	We start with a preliminary statement about the minimal distance between vertices in a given sized box:
	\begin{claim}\label{cl:min-dis} Consider a box $B(H_n):=[x\pm H _n^{1/d}]^{d}$ in $\Xdn$. Then, in $\Elambda$, and in $\BGIRG$, for some $c_2>0$,
	\be\label{eq:minmax} \Pv\Big(\big\{\min_{v_1, v_2\in B(H_n)} \|v_1-v_2 \| \le 1/H_n^{2/d}\big\} \bigcup \big\{\max_{v\in B(H_n)}W_v \ge H_n^{2/(\tau-1)}\big\}\Big)\le c_2 H_n^{-3/4}. \ee 
	\end{claim}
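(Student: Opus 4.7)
The plan is to perform a three-step analysis: first control the number of vertices inside $B(H_n)$ via a concentration inequality, then apply a union bound over pairs to control the minimum pairwise distance, and finally apply a union bound over individual vertices to control the maximum weight. The headline observation is that in a box of volume $H_n$, both in $\Elambda$ (with intensity $\lambda = \Theta(1)$) and in $\BGIRG$ (where vertices are uniform in $\Xdn$ of volume $n$ with total count $n$), the number of vertices $N$ in $B(H_n)$ is concentrated around $\Theta(H_n)$, so the target bound $H_n^{-3/4}$ is comfortably larger than the natural $H_n^{-1}$-type rates that arise.

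First I would set $N := |\CV \cap B(H_n)|$, where $\CV$ is the relevant vertex set. In $\Elambda$, $N \sim \Poi(\lambda H_n)$; in $\BGIRG$, $N \sim \mathrm{Bin}(n, H_n/n)$. In both cases, standard Chernoff bounds give $\Pv(N > 2\lambda H_n) \le \mathrm{e}^{-c H_n}$ for some $c > 0$, which is much smaller than $H_n^{-3/4}$. So I may condition on the event $\{N \le 2\lambda H_n\}$ and pay only an exponentially small additive error.

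Next, conditional on $N = m \le 2\lambda H_n$, the $m$ positions are i.i.d.\ uniform on $B(H_n)$ (by the conditional distribution of a PPP, and by construction in $\BGIRG$). For any fixed pair $(v_1,v_2)$,
\be
\Pv\bigl(\|v_1 - v_2\| \le H_n^{-2/d}\bigr) \le \mathrm{Vol}_d\, H_n^{-2}/H_n = \mathrm{Vol}_d\, H_n^{-3}.
\ee
A union bound over $\binom{m}{2} \le 2\lambda^2 H_n^2$ pairs gives a bound $O(H_n^{-1})$ on the first event in \eqref{eq:minmax}. For the weight event, I use Assumption \ref{assu:weight}: for $H_n^{2/(\tau-1)} \in [1, M_n]$,
\be
\Pv\bigl(W^{(n)} \ge H_n^{2/(\tau-1)}\bigr) \le \overline{\ell}\bigl(H_n^{2/(\tau-1)}\bigr) H_n^{-2},
\ee
and similarly for $W$ via \eqref{powerlaw}. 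A union bound over the $m \le 2\lambda H_n$ vertices yields a bound of order $\overline{\ell}(H_n^{2/(\tau-1)}) H_n^{-1}$.

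Summing the three contributions, the total is at most $C\bigl(H_n^{-1} + \overline{\ell}(H_n^{2/(\tau-1)}) H_n^{-1} + \mathrm{e}^{-c H_n}\bigr)$. Since $\overline{\ell}$ varies slowly at infinity, $\overline{\ell}(H_n^{2/(\tau-1)})/H_n^{1/4} \to 0$ as $H_n \to \infty$, so the whole bound is dominated by $c_2 H_n^{-3/4}$ for some $c_2 > 0$ and all sufficiently large $H_n$; for small $H_n$ the bound is trivial by adjusting $c_2$. The main (and only mild) obstacle is absorbing the slowly varying factor into the polynomial rate $H_n^{-3/4}$, and that is precisely why the exponent $3/4$ is chosen strictly less than $1$.
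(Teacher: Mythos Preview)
Your proof is correct and follows essentially the same approach as the paper: a union bound over pairs for the minimum-distance event and a union bound over vertices for the maximum-weight event, both yielding $O(\ell\cdot H_n^{-1})$-type bounds which are absorbed into $H_n^{-3/4}$ via the slow variation of $\ell$ (Potter's bound). The only cosmetic difference is that you first control $N$ by a Chernoff bound and then apply the union bounds on the event $\{N\le 2\lambda H_n\}$, whereas the paper works directly with $\Ev[N^2]$ and $\Ev[N]$ without any preliminary concentration step; your extra step is harmless but unnecessary.
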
 
\begin{proof}
 To bound the minimal distance between two vertices in the box, one can use standard extreme value theory or simply a first moment method: given the number of vertices, the vertices are distributed uniformly within $H_n$, and hence the probability that there is a vertex within norm $z>0$ from a fixed vertex $v\in \CV_\la$, conditioned on $|\CV_\lambda \cap B(H_n)|$, is at most $|\CV_\lambda \cap B(H_n)| z^d \mathrm{Vol_d} /(2^dH_n)$, and so by Markov's inequality,
			\be \ba \P{\exists v_1,v_2: \|v_1-v_2\|\leq z} &\leq \Ev\big[\P{\exists v_1,v_2: \|v_1-v_2\|\leq z\mid |\CV_\lambda \cap B(H_n)| }\big]\\
			&\le \Ev\big[|\CV_\lambda \cap B(H_n)|^2 \cdot \mathrm{Vol}_d z^d/ (2^dH_n)\big] \\
			&\le  (\lambda^2 H_n^2+\la H_n) \mathrm{Vol}_d z^d/(2^dH_n),
		\ea\ee 
that is at most $\Theta( H_n^{-3/4})$ for $z=1/H_n^{2/d}$. For $\BGIRG$, the same argument works with $\la=1$.
\cb Bounding the maximal weight, we  use that $|B(H_n)\cap \mathcal{V}_\lambda|$ has distribution  $\mathrm{Poi}(\la 2^d H_n)$, and that each vertex has an i.i.d.\ weight with power-law distribution from \eqref{powerlaw}. So,  by the law of total probability, for any $x\le 0$:
		\be\label{upperboundweights}
		\mathbb{P}\Big(\max_{v\in B(H_n) \cap \CV_\la} W_v > x\Big)\leq \Ev\Big[\sum_{v\in B(H_n) \cap \CV_\la} \ind_{\{W_v >
			x\}}\Big]=\ell\left(x\right)x^{-\left(\tau-1\right)}2^d H_n\lambda.
		\ee
		Inserting  $x=H_n^{2/(\tau-1)}$ yields
		\be
		\mathbb{P}\Big(\max_{v\in B(M_n)\cap \CV_\la} W_v > \cb H_n^{2/(\tau-1)}\cz\Big) \leq \la \ell\left(H_n^{2/(\tau-1}\cz\right)H_n^{-2} 2^d H_n\le c H_n^{-3/4}
		\ee
		where we use that $\ell(H_n^{2/(\tau-1)}\cz)H_n^{-1/4}<1$ by  \eqref{Potter} for large $H_n$. Combining the two bounds we arrive at \eqref{eq:minmax}.
\end{proof}
	\begin{proof}[Proof of Proposition \ref{Prop:lowerbound}]
		Let $q\in\{1,2\}$ throughout the proof. We slightly abuse notation and write $v_n^q$ for the vertices as well as their location in $\R^d$. Recall the event $A_{k}$ \cb from \eqref{eq:ak}\cz. First we show that the inequality \eqref{lowerboundGIRG} holds on $A_{k_n}$: 
		Observe that on the event $\{B^G_{1+\xi_n}(v_n^1, k_n)\cap B^G_{1+\xi_n}(v_n^2, k_n)=\emptyset\}\subseteq A_{k_n}$,  any path connecting $v_n^1, v_n^2$ in $\Eplus$ must intersect the boundaries of these sets, thus the bound 
		\be\label{eq:1plus} d_L^{1+\xi_n}(v_n^1, v_n^2) \ge d_L^{1+\xi_n}(v_n^1, \cb\partial B^G_{1+\xi_n}(v_n^1, k_n)\cz) + d_L^{1+\xi_n}(v_n^1, \cb\partial B^G_{1+\xi_n}(v_n^2, k_n)\cz) \ee
		holds\footnote{The distance is per definition infinite when there is no connecting path. In that case the bound is trivial.}. Let us denote the shortest path connecting $v_n^q$ to $\cb\partial B^G_{1+\xi_n}(v_n^q, k_n)\cz$ in $\Eplus$ by $\cb\pi^q_\star(k_n)$. In general it can happen that the rhs of \eqref{eq:1plus} is not a lower bound\footnote{For instance, in $\BGIRG$ there might be an `extra-edge' from some vertex in $B^G_{1+\xi_n}(v_n^1, k_n)$ leading to $B^G_{1+\xi_n}(v_n^1, k_n)^c$ and the shortest path connecting $v_n^1, v_n^2$ in $\BGIRG$ uses this particular edge, avoiding $\cb\pi_\star^q(k_n)$. Or, there is an extra edge in $\BGIRG$ not present in $\CE_{1+\xi_n}$ that `shortcuts' $\cb\pi^q_\star(k_n)$ and makes an even shorter path present in $\BGIRG$.  In these cases, $d_L(v_n^1, v_n^2)$ might be shorter than the rhs in \eqref{eq:1plus}.} for $d_L(v_n^1, v_n^2)$ in $\BGIRG$.  However, on the event $A_{k_n}$, this cannot happen, since on $A_{k_n}$, the three graphs $B^G_{1+\xi_n}(v_n^q, k_n), B^G_{n}(v_n^1, k_n)$  $B^G_{1}(v_n^q, k_n)$ coincide. Thus, $\cb\pi_\star^q(k_n)$ is \emph{present} and also \emph{shortest} in $\BGIRG$ and $\Eone$ as well. 
		As a result, we arrive to \eqref{lowerboundGIRG}. Next we bound $\Pv(A_{k_n}^c)$ from above.		
		We recall that, by the coupling in Claim \ref{cl:vertex-containment}, $v_n^q\in \mathcal{V}_{1+\xi_n}$ for all $n$ sufficiently large. Recall $ \epsilon_{\mathrm{TV}}(n)$ from Assumption \ref{assu:weight} and $\epsilon(n),I_\Delta(n), I_w(n)$ from Assumption \ref{assu:extendable}.  \cb The last two expressions are intervals for the distance of vertices as well as for vertex-weights, such that the convergence in Assumption \ref{assu:extendable} holds when vertex distances and vertex-weights are in these intervals. Let us thus introduce the notation for the endpoints of these intervals as follows: 
	\be I_\Delta(n)=:[\Delta_{\min}(n), \Delta_{\max}(n)], \qquad I_w(n)=:[1,w_{\max}(n)]\ee and set: 
			\be\label{eq:mn} H_n:=\min\big\{ \Delta_{\min}(n)^{-d/2},  \Delta_{\max}(n)/2\sqrt{d}, w_{\max}(n)^{(\tau-1)/2}, \epsilon(n)^{-1/4}, \epsilon_{\mathrm{TV}}(n)^{-1/4}, n^{1/(4d)}   \big\}.\ee
					Note that, by the requirements  in Assumption \ref{assu:extendable}, $I_\Delta(n)\to \infty, I_w(n)\to [1, \infty]$ implies that $\Delta_{\min}(n)\to 0, \Delta_{\max}(n)\to \infty, w_{\max}(n)\to \infty$, and also note that $\epsilon(n), \epsilon_{\mathrm{TV}}(n) \to 0$. Hence, $H_n\to \infty$ and it will take the role of $H_n$ in Claim \ref{cl:min-dis} above. Let us now define the two boxes in $\Xdn$:
		\be\label{boxesexploration}\ba 
		\text{Box}\left(v_n^q\right)&:=\big[v_n^q-H_n^{1/d}, v_n^q +H_n^{1/d}\big]^d.\\
		\ea
		\ee

		\cb We divide the proof of bounding the probability of $A_{k_n}^c$ into the following steps: we first define two events, $E_n^{(1)},E_n^{(2)}$, which denote that $\mathrm{Box}(v_n^1)$ and $\mathrm{Box}(v^2_n)$ are disjoint and within $\widetilde{\mathcal{X}}_d(n)$, and that the vertices and edges within these boxes have vertex-weights and spatial distances between vertices within $I_w(n)$ and $I_\Delta(n)$, respectively. We show that these events hold whp. Then, we estimate the largest graph distance $k_n$ from $v_n^q$, such that $B^G_{1+\xi_n}(v^q_n,k_n)$ is still within $\text{Box}\left(v_n^q\right)$. By Lemma \ref{lemma:ballsbrw} and \eqref{eq:ball-containment}, generation $k$ in $\mathrm{BerBRW}_{1+\xi_n}(v^q_n)$ is a superset of vertices at graph distance $k$ from $v_n^q$. Hence, 
		 we determine the largest generation number $k_n$ such that generation $k_n$ in $\mathrm{BerBRW}_{1+\xi_n}(v^q_n),q\in\{1,2\}$ is still within $\mathrm{Box}(v^q_n)$.  
		 Proposition \ref{Prop:BerBRW}, bounds the maximal displacement of the generations of $\mathrm{BerBRW}_{1+\xi_n}(v^q_n)$ for \emph{all} $k\geq0$, which yields the definition of $k_n$.
		 
		  There is a technical problem. We intend to prove our results for \emph{two} branching random walks, within the boxes $\mathrm{Box}(v_n^q),q\in\{1,2\}$, whereas the growth estimate of one BerBRW in Proposition \ref{Prop:BerBRW} depends on the whole of $\R^d$, yielding a random prefactor $Y(\eps)$.
		   Hence, we redefine the random prefactor $Y(\eps)$, as used in Proposition \ref{Prop:BerBRW}. We cannot use this limiting $Y(\ve)$ for \emph{two} branching random walks, one started at $v_n^1$ and one at $v_n^2$, since they would not be independent.  To maintain independence, we slightly modify the growth variable $Y(\ve)$ in Proposition \ref{Prop:BerBRW} to be determined by the growth of the process within the box only. This way the two copies stay independent. 
		   		   Using these adjusted growth variables for the two BerBRWs, we find the sequence $k_n$, such that the first $k_n$ generations  of both BerBRWs ( started at $v_n^1,v_n^2$) are contained within $\mathrm{Box}(v_n^1),\mathrm{Box}(v_n^2)$, respectively. 
				   
				  Finally, having $k_n$, we show that whp the balls $B^G_n(v_n^q,k_n),B^G_{1+\xi_n}(v_n^q,k_n), B^G_1(v_n^q,k_n)$ have identical vertex sets, edge sets and vertex-weights. Combining all of the above, we show that $A_{k_n}$ holds whp. We now work out these steps in detail.\cz
			
		Let us define two events: one that these boxes are disjoint and are not intersecting the boundary of $\Xdn$, and two, that   all vertex-vertex distances and vertex-weights  within $\BGIRG$ and $\Eplus$ are within $I_\Delta(n), I_w(n)$, respectively. Recall that $\CV_B(n)\subseteq \CV_{1+\xi_n}$ from Claim \ref{cl:vertex-containment}. Then,
	\be\ba \label{en1}	E_{n}^{(1)}&:=\big\{ \text{Box}(v_n^1)\cap \text{Box}(v_n^2)=\emptyset, \text{Box}\left(v_n^q\right)\subseteq \Xdn, q\in\{1,2\}  \big\},\\
	E_{n}^{(2)}&:=\big\{\forall v\in\CV_{1+\xi_n} \cap (\text{Box}(v_n^1)\cup \text{Box}(v_n^2)): W_v^{(n)}\in I_w(n), W_v\in I_w(n),\big.\\
	&\qquad\quad\big. \forall u,v\in \CV_{1+\xi_n} \cap (\text{Box}(v_n^1)\cup \text{Box}(v_n^2)): \|u-v\|\in I_\Delta(n)  \big\}.\ea\ee
		Recall that the position of the vertices in $\BGIRG$ is uniform in $\Xdn$ and that $\CC_{\max}$ has linear size. As a result, when $v_n^1, v_n^2$ are uniformly chosen in $\CC_{\max}$, their location is uniform over a linear subset of $n$ i.i.d.\ position-vectors within $\Xdn$. Hence, $\Pv(E_{n}^{(1)}) =1-e_1(n)$ for some function $e_1(n)\downarrow 0$ as $n\to \infty$.  
		\cb Further, $E_n^{(2)}$ captures the event that no two vertices are too close or too far away from each other, and that their weight is not too large for the bound $\epsilon(n)$ in \eqref{eq:h-intro} in Assumption \ref{assu:extendable} to hold.  By Claim \ref{cl:min-dis}, compared to the the definition of $H_n$ in \eqref{eq:mn}, we see that with probability at least $1-c_2 H_n^{-3/4}$:
		\begin{enumerate}
		\item[1)] the minimal distance between vertices within $\text{Box}(v_n^q)$ is at most $H_n^{-2/d}\ge \Delta_{\min}(n)$, 
		\item[2)] the maximal distance is at most $\sqrt{d}2H_n\le \Delta_{\max}$ (on the diagonal),  
		\item[3)] the maximal weight of vertices is at most $H_n^{2/(\tau-2)}\le w_{\max}(n)$.
		\end{enumerate}
		These are precisely the requirements for $E_n^{(2)}$ to hold\cz, 
		hence $\Pv(E_n^{(2)})\ge 1-e_2(n)$ for $e_2(n):=c_2 H_n^{-3/4}\downarrow 0$ by Claim \ref{cl:min-dis}. 	
					
		\cb Next we turn to taking care of the independence issue and determining $k_n$\cz. On $E_n^{(1)}\cap E_n^{(2)}$, we investigate $\text{BerBRW}_{1+\xi_n}(v_n^q), q\in\{1,2\}$. The two BRWs are independent as long as they are contained in the disjoint boxes $\text{Box}\left(v_n^q\right)$, since the location of points in two disjoint sets in a PPP are independent, and the vertex-weights of different vertices are i.i.d.\ copies of $W$, and edges are present conditionally independently. 
		Recall that $\CG^{\text{Ber}}_{1+\xi_n}(v_n^q,k)$ stands for the set of vertices in generation $k$ of $\mathrm{BerBRW}_{1+\xi_n}(v_n^q)$. Our goal is to find $k_n$ so that the following event holds:
		\be\label{containinbox}
		\cb E_{n, k_n}:=\cz\Big\{\cb \bigcap_{k\leq k_n}\{\CG^{\text{Ber}}_{1+\xi_n}(v_n^q,k) \subseteq \text{Box}\left(v_n^q\right)\}\cz, \quad q\in \{1,2\}\Big\}.
		\ee
		On $E_{n,k_n}$, Lemma \ref{lemma:ballsbrw} can be extended to hold for both BRWs until \cb generation $k_n$\cz, with independent environments. A combination of Claim \ref{claim:edge-containment} and Lemma \ref{lemma:ballsbrw},  as in \eqref{eq:ball-containment},  yields that \cb on $E_{n,k_n}$\cz, 
		\be\label{event:inbox}  B^G_{1+\xi_n}(v_n^q,k_n) \subseteq \text{Box}\left(v_n^q\right), \text{ and } \cb B^G_{1+\xi_n}(v_n^q,k_n)\subseteq \Xdn,\quad q\in\{1,2\}, \ee
		see Figure \ref{Figure:lowerbound}, thus these sets with index $1,2$ are disjoint on $E_n^{(1)}$. 
		Next we determine $\cb k_n$ such that $E_{n,k_n}$ holds whp. We would like to apply Proposition \ref{Prop:BerBRW} to say that the maximal displacement in the two BRWs grow at most doubly exponentially, as in \eqref{BerBRWpropstatement}. The random variable prefactor $Y(\ve)$, (the double exponential growth rate of the generations) in Proposition \ref{Prop:BerBRW} bounds the growth of a BRW for \emph{all} generations $k\ge 0$. The problem with this variable is that it depends on the entire environment $\CV_\la$.  Now that we have two BRWs that only stay independent as long as they are contained in their respective boxes $\text{Box}(v_n^q)$, we would like to have a similar definition of doubly-exponential growth based on only observing the generations within $\text{Box}(v_n^q)$ so that we can maintain independence.  For this, we modify the definition of $Y(\eps)$, as in Remark \ref{rem:Ydef}, (the double exponential growth rate of the generations), to reflect this. Recall $c_k(\ve,i), S_k(\ve,i)$ from \eqref{doubleexp}. We define the event 
		\be\label{eq:growth-ei}
		\wt{E}_k^{\left(q\right)}(\ve,i):=\Big\{Z^{\mathrm{Ber}}_{1+\xi_n}\left(v_n^q,k\right)\leq c_k\left(\eps,i\right)\Big\}\cap \left\{\mathcal{G}^{\text{Ber}}_{1+\xi_n}\left(v_n^q,k\right)\cap \big(B^2_{S_k\left(\eps,i\right)}\left(v_n^q\right)\big)^c=\empty \right\}.
		\ee
		Let $K(\ve, i)$ be the largest integer $k$ such that $S_k(\eps,i)\leq H_n^{1/d}$, then, by \eqref{doubleexp},
		\be\label{boundk}
		K\left(\eps,i\right)=\Bigg\lfloor \frac{\log\log H_n -\log i-\log \left(d \zeta\right)}{\log\left(\left(1+\eps\right)/\left(\tau-2\right)\right)}\Bigg\rfloor.
		\ee
		Thus,  the event that the generations sizes and displacement of $\mathrm{BerBRW}_\la (v_n^q)$ are not growing faster than doubly exponentially with prefactor $i$ for the first $K(\ve,i)$ generations, that is, as long as the process stays within $\text{Box}(v_n^q)$, see \eqref{boxesexploration}, can be expressed as follows:
		\be
		\wt{H}^{\left(q\right)}_i:=\bigcap_{k=0}^{K\left(\eps,i\right)}\wt{E}^{\left(q\right)}_k\left(\eps,i\right).
		\ee
 Clearly, $H_i \subseteq \wit H_i^{(q)}$ from \eqref{def:yeps}, so, by Remark \ref{rem:Ydef}, 
		\be
		\sum_{i=1}^\infty \Pv\big((\wt{H}^{\left(q\right)}_i)^c\big)\leq \sum_{i=1}^\infty \P{H_i^c}<\infty.
		\ee
		By the Borel-Cantelli lemma, we define $\wt{Y}_q(\eps)$ as the first index $i$ from which point on no $(\wt{H}^{(q)}_i)^c$ occur any more. Then, $\wt{Y}_1(\eps),\wt{Y}_2(\eps)$ are independent, since they are determined on disjoint sets of vertices.
		By Remark \ref{rem:Ydef}, the rhs of the tail estimate \eqref{Ytail} equals $\sum_{i\ge K} \Pv(H_i^c)$, so, again by  $H_i \subseteq \wit H_i^{(q)}$,  \eqref{Ytail} is valid for $\Pv(\wt{Y}_q(\eps) \ge K)$ as well.
		 Let $\omega(n)\to \infty$ be defined as
		\be\label{eq:omegan}	\omega(n):=(\log H_n )^{1/2}.
		\ee
		Then, the event
		\be
		E_{n}^{(3)}:=\big\{ \wt{Y}_q(\eps)\leq \omega(n)(\tau-2)/((1+\ve)(d\zeta)), q\in\{1,2\} \big\}\ee
		holds with probability at least $1-e_3(n)$ with $e_3(n)\downarrow 0$.
		An elementary calculation using \eqref{boundk} and the upper bound on $\wit Y_q(\ve)$ in $E_n^{(3)}$ yields that on $E_{n}^{(3)}$, 
		\be\label{wtKn}
		K(\eps,\wt{Y}_q\left(\eps\right))\geq \frac{\log\log H_n}{2\log\left(\left(1+\eps\right)/\left(\tau-2\right)\right)}=:k_n.
		\ee
		Combining this with the definition of  $\wt{Y}_q\left(\eps\right)$ and $\wt H_i^{(q)}$, \eqref{eq:growth-ei} holds for the first $\cb k_n$ generations. Recall \eqref{boxesexploration} and \eqref{containinbox}. Thus, on $E_{n}^{(1)}\cap E_{n}^{(3)}$, both $\cb E_{n, k_n}$  and \eqref{event:inbox} hold with $\cb k_n$ as in \eqref{wtKn}. 		
		To move towards the event $A_{k_n}$ \cb as in \eqref{eq:ak}\cz, 
		we need to compare the vertex and edge-sets of $B_n^G(v_n^q, k_n), B_{1+\xi_n}^G(v_n^q, k_n), B_1^G(v_n^q, k_n)$ and show that they are whp identical.
		Let
		\be E_n^{(4)}:=\{ |\CV_{1+\xi_n}\cap \Xdn| \le (1+2\xi_n) n,  |\CV_{1+\xi_n}\cap \text{Box}(v_n^q)| \le (1+2\xi_n) 2^d H_n, q\in\{1,2\}\},\ee
		\cb be the event that there are not too many vertices in the boxes $\text{Box}(v_n^q)$ and in the whole $\Xdn$ that the Poisson density $1+\xi_n$ allows. Then, by standard calculation on Poisson variables\cz, $\Pv(E_n^{(4)}) = 1-e_4(n)$ with $e_4(n)\downarrow 0$. 
		  By the \emph{proof} of Claim \ref{cl:vertex-containment}, each vertex in $\CV_{1+\xi_n}$ is not in $\CV_B(n)$ with probability $1-n/|\CV_{1+\xi_n}|$, which is at most $2\xi_n$ on $E_n^{(4)}$, and each vertex in $\CV_{1+\xi_n}$ is not in $\CV_1$ with probability $\xi_n/(1+\xi_n)<\xi_n$. Further, on the second event in $E_n^{(4)}$, the number of vertices and possible edges in $\Eupper$ within $\text{Box}(v_n^q)$ is at most $(1+2\xi_n)2^d H_n$ and $(1+2\xi_n)^24^d H_n^2$, respectively. Since on $E_n^{(1)}\cap E_n^{(3)}$, \eqref{event:inbox} holds, on $E_n^{(1)}\cap E_n^{(3)}\cap E_n^{(4)}$, it is an upper bound to say that in $\cb \overline B_{1+\xi_n}^G(v_n^q, k_n)$, the number of vertices and edges until graph distance $k_n$ is at most
		\be\label{eq:vertex-est} |\CV(\overline B_{1+\xi_n}^G(v_n^q, \cb k_n\cz))| \le  (1+2\xi_n)2^d H_n, \qquad
		|\CE(\overline B_{1+\xi_n}^G(v_n^q,\cb k_n\cz))| \le  (1+2\xi_n)^24^d H_n^2.\ee 
		Let $\mathcal W(A)$ denote the weights of vertices within a set $A$, and let
		\be\label{eq:e4} \ba E^{(5)}_{n,k}:=\bigcup_{q\in\{1,2\}}\big\{& \CV(B_n^G(v_n^q, k))=\CV(B_{1+\xi_n}^G(v_n^q, k))=\CV( B_1^G(v_n^q, k)), \big.\\
		&\CE(B_n^G(v_n^q, k))=\CE(B_{1+\xi_n}^G(v_n^q, k))=\CE( B_1^G(v_n^q, k)),
		\\
		& \mathcal W(B_n^G(v_n^q, k))=\mathcal W(B_{1+\xi_n}^G(v_n^q, k))=\mathcal W( B_1^G(v_n^q, k)) \big\}.\ea \ee
By the coupling in Claim \ref{claim:edge-containment}, an edge in $\CE_{1+\xi_n}$ is in $\CE_1$ whenever both end vertices are in $\CV_1$. Further, on $E_n^{(2)}$ all weights and vertex-vertex distances are within $I_\Delta(n), I_w(n)$, and hence, conditioned that $e$ is present in $\overline\CE_{1+\xi_n}$, the probability that the presence of an edge in $\CE_{1+\xi_n}, \CE_1$ differs from that in $\CE_B(n)$ is at most $\epsilon(n)+2\epsilon_{\mathrm{TV}}(n)$. This coupling also contains that the vertex-weights are identical.
Finally,  by \eqref{eq:ball-containment},  the vertex sets within graph distance $k$ are contained in the set of individuals within generation $k$ of the corresponding BRW. Using the bound in \eqref{eq:vertex-est} on the number of vertices and potential edges, and \eqref{eq:mn},	and $\xi_n=\sqrt{\log n/n}$, 	\be \ba \Pv\big((E^{(5)}_{n,k_n})^c \mid E_n^{(4)}\cap E_n^{(3)}\cap E_n^{(2)} \cap E_n^{(1)}\big) &\le 6 \xi_n \cdot (1+2\xi_n)2^d H_n\\& \quad+  (\epsilon(n)+2\epsilon_{\mathrm{TV}}(n)) \cdot (1+2\xi_n)^24^d H_n^2 =:e_5(n)\downarrow 0, \ea\ee
\cb where the convergence of $e_5(n)$ to $0$ with $n$ is implied by the definition of $H_n$ in \eqref{eq:mn}\cz. Finally, with $k_n$ as in \eqref{wtKn}, and $A_{k_n}$ \cb as in \eqref{eq:ak}\cz,
$E_n^{(1)}\cap E_n^{(2)}\cap E_n^{(3)}\cap E_n^{(4)}\cap E_{n,k_n}^{(5)}\subseteq A_{k_n}$ and hence
		\be\label{eq:an-bound1} \Pv(A_{k_n}) \ge 1-(e_1(n)+e_2(n)+e_3(n)+e_4(n)+e_5(n))\to 1.\ee This finishes the proof.	\end{proof}
	
	\section{Percolation and boxing method on GIRGs}\label{sec:boxinginfinitecomponent}
	In this section, we develop the necessary tools to prove  Propositions \ref{Prop:existinfcomp} and \ref{Prop:upperbound}.
	First, we define  \emph{weight-dependent percolation} on GIRGs and relate percolated GIRG graphs to GIRGs with modified parameters, a useful tool in  bounding the length of the connecting path  in Proposition \ref{Prop:upperbound}. Then, we define a \emph{boxing method} that enables to build this connecting path as well the path to infinity in Proposition \ref{Prop:existinfcomp}.  
	\cb We start by defining percolation on GIRGs\cz.  
	\begin{definition}[Weight-dependent percolation] \label{def:percolationprob} Consider $\BGIRG$ or $\Elambda$, and a function \cb $p:\left[1,\infty\right)^2\rightarrow\left(0,1\right]$\cz. Conditioned on the weights of vertices, keep every existing edge $e=\left(u,v\right)$ connecting vertices with weights $w_u, w_v$ independently with probability $\cb p\left(w_u,w_v\right)$. Let us denote the percolated graph (the graph formed by the kept edges) by $G^p$\cz. 
	\end{definition}
	Recall that on $\BGIRG$, each edge carries an i.i.d.\ edge-lengths $L_e$. Given any threshold function $\mathrm{thr}:\left[1,\infty\right)^2\rightarrow\left(0,1\right]$, by setting
	\be\label{percolationprob}
	\ind\{\text{edge $e=\left(u,v\right)$ is kept}\}=\ind\{\cb L_e\le \mathrm{thr}\left(w_u,w_v\right)\cz\},
	\ee
	yields a weight dependent percolation. Indeed, since the $L_e$ are i.i.d., conditioned on the weights of vertices, edges are kept independently with probability
	\[ F_L(\mathrm{thr}\left(w_u,w_v\right)) =: p(w_u,w_v), \]
	\cb which defines the function $p$ in Definition \ref{def:percolationprob}\cz. The next claim connects the percolated graph $\cb G^{\mathrm{p}}$ to an instance of a GIRG with modified parameters. 
	
	\begin{claim}\label{claim:percolation}\cb
		Consider the $\BGIRG$, as in Definition \ref{def:GIRG}. Assume that $\BGIRG$ satisfies Assumptions \ref{assu:GIRGgen}, Assumption \ref{assu:weight} and \ref{assu:extendable} with $\al>1,d\geq1,\tau>1$, and, for some $c\in(0,\al)$ and $\wit\gamma\in (0,1)$, the percolation function $p$ satisfies for all $w_u,w_v$ that
		\be\label{eq:pw} p(w_u, w_v)\ge\exp\{-c (\log w_u)^{\wt \gamma}-c (\log w_v)^{\wt \gamma}\}.\ee
		Then the percolated graph $\cb G^{\mathrm{p}}$ (without the edge-lengths) contains a subgraph $\cb \underline G^{\mathrm{p}}$ that is an instance of  a $\cb \mathrm{BGIRG}$, and the parameters of $\underline G^{\mathrm{p}}$ as a BGIRG again satisfy Assumptions \ref{assu:GIRGgen}, \ref{assu:weight} and \ref{assu:extendable} with unchanged parameters $\cb(\alpha^{\mathrm{p}}, d^{\mathrm{p}}, \tau^{\mathrm{p}})=(\alpha,d,\tau)$. The results remain valid when we percolate $\Elambda$ instead.\cz
	\end{claim}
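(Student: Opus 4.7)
The first step is to identify the percolated graph as a candidate $\BGIRG$. By Definition \ref{def:percolationprob}, edge-percolation preserves vertex locations and vertex-weights, and conditionally on the weights, each pair $(u,v)$ is independently present in $G^{\mathrm{p}}$ with probability
\[
g_n^{\mathrm{p}}(x_u,x_v,(W_i^{(n)})):= g_n(x_u,x_v,(W_i^{(n)}))\cdot p(W_u^{(n)},W_v^{(n)}).
\]
Thus $G^{\mathrm{p}}$ is a candidate BGIRG with connection function $g_n^{\mathrm{p}}$, the same weight distribution $W^{(n)}$, and the same dimension $d$. Assumption \ref{assu:weight} is then immediate, since the vertex-weights are untouched. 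For Assumption \ref{assu:extendable}, I would define the limit function $h^{\mathrm{p}}:=h\cdot p$ and transfer the relative-error bound \eqref{eq:h-intro} from $g_n\to h$ to $g_n^{\mathrm{p}}\to h^{\mathrm{p}}$ on the same good event $\mathcal{L}_n$: since $p$ depends only on the two endpoint weights and not on $n$, multiplying both sides of \eqref{eq:h-intro} by $p(w_u,w_v)$ preserves the relative error.

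The main work is Assumption \ref{assu:GIRGgen}. The upper bound is trivial, as $p\le1$ yields $g_n^{\mathrm{p}}\le g_n\le C_1\overline{g}$, so one takes $C_1^{\mathrm{p}}:=C_1$ and $\overline{g}^{\mathrm{p}}:=\overline{g}$, preserving $\alpha^{\mathrm{p}}=\alpha$. For the lower bound, combining $g_n\ge c_1\underline{g}$ with the hypothesis on $p$ gives
\[
g_n^{\mathrm{p}}\ \ge\ c_1 \Big(\exp\{-a_2(\cdot)-c(\cdot')\}\Big)\ \wedge\ \Big(\underline{a}_1\|x\|^{-\alpha d}(w_1w_2)^\alpha\exp\{-c(\cdot')\}\Big),
\]
where $(\cdot),(\cdot')$ denote the sums $(\log w_1)^\gamma+(\log w_2)^\gamma$ and $(\log w_1)^{\wt\gamma}+(\log w_2)^{\wt\gamma}$, respectively. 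The plan is to choose $\gamma^{\mathrm{p}}:=\max(\gamma,\wt\gamma)\in(0,1)$ and $a_2^{\mathrm{p}}:=a_2+c$. On $w_1,w_2\ge e$, monotonicity of $x\mapsto x^{\gamma^{\mathrm{p}}}$ gives the first bracket $\ge\exp\{-a_2^{\mathrm{p}}((\log w_1)^{\gamma^{\mathrm{p}}}+(\log w_2)^{\gamma^{\mathrm{p}}})\}$, with a bounded correction on $[1,e)$ absorbed into the prefactor $c_1^{\mathrm{p}}$. For the second bracket, the hypothesis $\wt\gamma<1$ makes $\exp\{-c(\log w)^{\wt\gamma}\}$ slowly varying, and the hypothesis $c<\alpha$ keeps $\alpha\log w-c(\log w)^{\wt\gamma}$ dominant as $w\to\infty$, so that for a suitable small choice of $\underline{a}_1^{\mathrm{p}}$ the second bracket dominates $\underline{a}_1^{\mathrm{p}}\|x\|^{-\alpha d}(w_1w_2)^\alpha$ on the region where the min in $\underline g^{\mathrm{p}}$ would take the power form. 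This produces $g_n^{\mathrm{p}}\ge c_1^{\mathrm{p}}\underline g^{\mathrm{p}}$ with $\underline g^{\mathrm{p}}$ of the \eqref{eq:gunder}-form and unchanged $\alpha^{\mathrm{p}}=\alpha$.

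The subgraph $\underline G^{\mathrm{p}}\subseteq G^{\mathrm{p}}$ is then obtained by an additional independent thinning of $G^{\mathrm{p}}$: retain each edge independently with probability $c_1^{\mathrm{p}}\underline g^{\mathrm{p}}/g_n^{\mathrm{p}}\le1$; the resulting edge-probability is exactly $c_1^{\mathrm{p}}\underline g^{\mathrm{p}}$, which together with the upper bound above makes $\underline G^{\mathrm{p}}$ a bona fide BGIRG with the claimed parameters. The argument for $\Elambda$ is identical with $h$ in place of $g_n$, using the pointwise form of $h$ from Claim \ref{claim:hg}. The chief obstacle is the delicate matching across the four regimes $(\underline g,\underline g^{\mathrm{p}})\in\{\mathrm{exp},\mathrm{power}\}^2$ required for a uniform-in-$(x,w_1,w_2)$ lower bound: the hardest sub-case is when $\underline g$ takes its power form (so $\|x\|$ is large relative to the weights) while $\underline g^{\mathrm{p}}$ takes its exponential form, because the slowly-varying factor $\exp\{-c(\log w)^{\wt\gamma}\}$ from $p$ does not allow a uniform lower bound on the power-part constant, and one must use the constraints $\underline{a}_1^{\mathrm{p}}\|x\|^{-\alpha d}(w_1w_2)^\alpha \ge \exp\{-a_2^{\mathrm{p}}(\cdot)\}$ defining the sub-case together with $c<\alpha$ and $\wt\gamma<1$ to reconcile the inequality by suitable calibration of $(\underline a_1^{\mathrm{p}},a_2^{\mathrm{p}},\gamma^{\mathrm{p}},c_1^{\mathrm{p}})$.
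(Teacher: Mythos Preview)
Your proposal has a genuine gap in the verification of Assumption \ref{assu:GIRGgen} for $\underline G^{\mathrm p}$, and it stems from your decision to keep the \emph{same} vertex-weights $W^{(n)}$.

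Concretely: you need $g_n^{\mathrm p}\ge c_1^{\mathrm p}\,\underline g^{\mathrm p}$ uniformly in $(x,w_1,w_2)$, where $\underline g^{\mathrm p}$ has the form \eqref{eq:gunder} with the \emph{original} weights. Fix $w_1,w_2$ large and send $\|x\|\to\infty$. Then both $\underline g$ and $\underline g^{\mathrm p}$ eventually sit in their power branch, and using only the lower bound on $p$ (which is all the hypothesis gives), the inequality reduces to
\[
c_1\,\underline a_1\,\|x\|^{-\alpha d}(w_1w_2)^\alpha\exp\{-c(\log w_1)^{\wt\gamma}-c(\log w_2)^{\wt\gamma}\}\ \ge\ c_1^{\mathrm p}\,\underline a_1^{\mathrm p}\,\|x\|^{-\alpha d}(w_1w_2)^\alpha,
\]
i.e.\ $\exp\{-c(\log w_1)^{\wt\gamma}-c(\log w_2)^{\wt\gamma}\}\ge c_1^{\mathrm p}\underline a_1^{\mathrm p}/(c_1\underline a_1)$. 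No positive choice of constants makes this hold as $w_1,w_2\to\infty$. Your remark that ``$\alpha\log w-c(\log w)^{\wt\gamma}$ stays dominant'' is true but irrelevant here, since the factor $(w_1w_2)^\alpha$ appears on \emph{both} sides and cancels; what remains is precisely the stretched-exponential loss from $p$, which cannot be absorbed into a constant. The ``sub-case constraints'' you invoke do not rescue this either: for large enough $\|x\|$ you are in the power/power regime regardless of how you calibrate $\underline a_1^{\mathrm p}$.

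The paper's fix is to \emph{change the weights}: set $m(w):=w\exp\{-(c/\alpha)(\log w)^{\wt\gamma}\}$ and take $W_{\mathrm p,i}^{(n)}:=m(W_i^{(n)})$. Then $(m(w_1)m(w_2))^\alpha=(w_1w_2)^\alpha\exp\{-c(\log w_1)^{\wt\gamma}-c(\log w_2)^{\wt\gamma}\}$, so the percolation factor is exactly absorbed into the power branch, and the exponential branch is handled via $\log m(w)\ge(1-c/\alpha)\log w$ (this is where $c<\alpha$ is actually used). One then has to re-verify Assumption \ref{assu:weight} for the new weights $m(W^{(n)})$, which is done by showing $m^{(-1)}$ is regularly varying of index $1$, so the power-law exponent $\tau$ is preserved. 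Your argument that ``Assumption \ref{assu:weight} is immediate since the vertex-weights are untouched'' thus short-circuits the one place where real work on the weight distribution is needed.
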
 
	
	\begin{proof} \cb Recall $g_n,  g_n^{\mathrm B}$ from Definition \ref{def:BGIRG} and  from Assumption \ref{assu:extendable}. Since edge-lengths are  independent, conditioned on $(x_i, W_i^{(n)})_{i\in [n]}$, each edge $e=(u,v)$ is present in $\cb G^{\mathrm{p}}$ \emph{independently} with probability
		\be\ba\label{eq:perc-prob} \Pv\big((u,v)&\in \CE(\cb G^{\mathrm{p}}\cz) \mid (x_i, W_i^{(n)})_{i\in [n]} \big) \\
		&=  g_n^{\mathrm B}\big(\cb x_u,x_v\cz, (W_i^{(n)})_{i\in [n]}\big)\cb p(W_u^{(n)},W_v^{(n)})\cz\\
		&\ge  c_1\Big(\e^{-a_2(\log W_u^{(n)})^\gamma-a_2(\log W_v^{(n)})^\gamma}\wedge \underline a_1 \|x_u-x_v\|^{-\alpha d}\left(W_u^{(n)}W_v^{(n)}\right)^\alpha\Big) \e^{-c(\log W_u^{(n)})^{\wit \gamma}-c(\log W_v^{(n)})^{\wit\gamma}}\\
		&=:j(x_u, x_v, W_u^{(n)},W_v^{(n)}) \cz 
		\ea \ee
		where we used that the connection probability of $\BGIRG$ satisfies the lower bound coming from \eqref{assu:BGIRG-prob} and \eqref{eq:gunder}, as well as the lower bound on $p$ in \eqref{eq:pw}. 
		Observe that formally the third row equals
		\be\label{eq:jjj}\ba  j(x_u, x_v, W_u^{(n)},W_v^{(n)})&=c_1\Big(\e^{-a_2(\log W_u^{(n)})^\gamma-c(\log W_u^{(n)})^{\wit \gamma}-a_2(\log W_v^{(n)})^\gamma -c(\log W_u^{(n)})^{\wit\gamma}}\Big. \\
		&\ \ \Big.\wedge \underline a_1 \|x_u-x_v\|^{-\alpha d}\left(W_u^{(n)} \e^{-(c/\al)(\log W_u^{(n)})^{\wit \gamma}}  W_v^{(n)} \e^{-(c/\al)(\log W_v^{(n)})^{\wit \gamma}}\right)^\alpha\Big).\ea\ee
As a result, with $m(w):=w \exp\{ -(c/\alpha) (\log w)^{\wt \gamma}\}$, it is natural to define the vertex-weights for $\underline G^{\mathrm p}$ as  
		 \be\label{eq:new-weights} W^{(n)}_{\mathrm {p},i}:= m(W_i^{(n)})=W_i^{(n)} \exp\{ -(c/\alpha) (\log W_i^{(n)})^{\wt \gamma}\}, \ee
		 and observe that the collection $(W_{\mathrm {p},i}^{(n)})_{i\in[n]}$ is also i.i.d. In what follows we bound the function $j(x_u, x_v, W_u^{(n)}, W_v^{(n)})$ from below by a function $\underline g^{\mathrm{p}}(x_u,x_v, m(W_u^{(n)}), m(W_v^{(n)}) )$, and then we will define $\underline G^{\mathrm{p}}$ as a BGIRG with weights from \eqref{eq:new-weights}, and connection probabilities given by $\underline g^{\mathrm{p}}$. To this end, we need to express the first line in $\eqref{eq:jjj}$ as a function of $m(W_u^{(n)}), m(W_v^{(n)})$ instead of $W_u^{(n)}, W_v^{(n)}$. 
		 Since $c\in (0,\al)$, for all $w\ge 1$,
		\be\log (m(w)) = \log (w) (1 - (c/\al) (\log w)^{\wit \gamma-1}) \ge (1-c/\al)\log w, \ee
		and so we can bound, $\gamma_{\max}:=\max\{\gamma,\wt{\gamma}\}$,
		\[ \ba \exp\big\{-a_2(\log w)^\gamma-c(\log w)^{\wt{\gamma}}\big\}&\geq \exp\{-(a_2+c)(\log w)^{\gamma_{\max}}\}
		\\
		&\ge \exp\{-(a_2+c)(1-c/\al)^{-\widehat \gamma}(\log m(w))^{\gamma_{\max}}\}. \ea\]
		Hence, with $\widehat c:=(a_2+c)(1-c/\al)^{-\widehat \gamma}$,
		\be \ba  j(x_u, x_v, W_u^{(n)},W_v^{(n)}) &\ge  \e^{-\widehat c(\log m(W_u^{(n)}))^{\gamma_{\max}} -\widehat c(\log m(W_v^{(n)}))^{\gamma_{\max}}}
		\wedge \underline a_1 \|x_u-x_v\|^{-\alpha d}\big(m(W_u^{(n)}) m(W_v^{(n)}) \big)^{\alpha} \\
		&=:\underline g^{\mathrm p}(x_u, x_v, m(W_u^{(n)}), m(W_v^{(n)}) ).\ea\ee		
		Let us hence define $\underline G^{\mathrm p}$ as the graph where the vertex-weights are i.i.d. given by \eqref{eq:new-weights} and the connection probabilities are given by $\underline g^{\mathrm p}(x_u, x_v, m(W_u^{(n)}), m(W_v^{(n)}) )$, in a coupled way to $G^{\mathrm p}$, i.e. additionally thinning\footnote{This can be achieved by allocating an i.i.d. uniform $[0,1]$ variable to each edge and making the decision of retention based on this variable.} some edges to obtain the right retention probabilities.
\cb Due to the conditional independence of the presence of edges, this model satisfies Definition \ref{def:GIRG}, and the new connection probability function $\underline g^{\mathrm p}$ satisfies the bounds in Assumption \ref{assu:GIRGgen} with the original $\al$. It is elementary to check that $\underline  g_n^{\mathrm{p}}$ satisfies Assumption \ref{assu:extendable} since it is independent of $n$ and hence its limit is itself.
		It is left to show that the distribution of $\cb m(W^{(n)})$ satisfies Assumption \ref{assu:weight} with $\cb\tau^{\mathrm{p}}\cz=\tau$. \cb By Assumption \ref{assu:weight}, $W^{(n)}$ converges in distribution to a limiting variable $W$. Since the map $m(\cdot)$ is continuous, $m(W^{(n)})$ converges in distribution to $m(W)$\cz. First we show that \eqref{powerlaw} is still satisfied for this \emph{limiting distribution} $\cb W_{\mathrm{p}}\cz:=m(W)$ of $\cb W^{(n)}_{\mathrm{p}}$.
		Let  $m^{(-1)}$ denote the inverse function\footnote{The function $m$ is monotone on $[x_0, \infty)$, for some $x_0>0$, thus its inverse can be defined on this interval, which is sufficient since the assumption \ref{assu:weight} is concerned with the tail behaviour of the weight distribution.} of $m(w)$. Note that $m(w)$ varies regularly \footnote{A function $z(x)$ varies regularly at infinity with index $\vr$ when $\lim_{x\to \infty} z(cx)/z(x)=c^\vr$ for all $c>0$.} at infinity with index $1$. As a result of \cite[Theorem 1.5.12]{BinGolTeu89}, $m^{(-1)}$ varies regularly at infinity again with index $1$ and hence can be written as $m^{(-1)}(w):=\ell_2(w) w$ for some function $\ell_2(w)$ that varies slowly at infinity.  	
		Using that \cb $W_{\mathrm{p}}\cz=m(W)$ and \eqref{powerlaw},
		\be 
		\cb\P{W_{\mathrm{p}}> x}\cz =  \Pv (W> m^{(-1)}(x))= \ell(m^{(-1)}(x))\cdot  (m^{(-1)}(x))^{-(\tau-1)} = \ell( \ell_2(x) x) \ell_2(x)^{-(\tau-1)} x^{-(\tau-1)}.
		\ee 
		\cz Let  $\ell^\star(x):= \ell( \ell_2(x) x) \ell_2(x)^{-(\tau-1)} $. By \cite[Proposition 1.5.7]{BinGolTeu89}, it follows that $\ell^\star$ is slowly varying at infinity itself. So, the limiting distribution $W_\mathrm{p}$ of the weights $\cb(W_{\mathrm{p},i}^{\mathrm{(n)}})_{i\in[n]}$ is power-law distributed with parameter $\cb\tau^{\mathrm{p}}\cz=\tau$. Likewise, for the weights $\cb W_{\mathrm{p}}^{(n)}$ themselves, using \eqref{powerlaw-n},
		\be
		\cb\mathbb{P}(W_{\mathrm{p}}^{(n)}>x)\cz=\mathbb{P}(W^{(n)}>m^{(-1)}(x))=\ell^{(n)}(\ell_2(x)x)\ell_2(x)^{-(\tau-1)}x^{-(\tau-1)}=:\cb\ell_{\mathrm{p}}^{(n)}\cz(x)x^{-(\tau-1)}.
		\ee  
		Again using \eqref{powerlaw-n}, we can formulate a lower and an upper bound for $\cb \ell_{\mathrm{p}}^{(n)}$:
		\be 
		\underline \ell^{\star}(x):=\underline \ell (\ell_2(x)x)\ell_2(x)^{-(\tau-1)}\leq \cb\ell_{\mathrm{p}}^{(n)}\cz(x)\leq \overline \ell (\ell_2(x)x)\ell_2(x)=:\overline \ell^{\star}(x).
		\ee 
		By the same arguments as above, $\underline \ell^\star,\overline \ell^\star$ vary slowly at infinity. Since $W^{(n)}\geq \cb W^{(n)}_{\mathrm{p}}$ a.s., 
		\be 
		\cb \mathbb{P}(W_{\mathrm{p}}^{(n)}>M_n)\cz\leq \mathbb{P}(W^{(n)}>M_n)=o(n^{-1}).
		\ee 
		\cb Finally, $m(1)=1$ hence $W_{\mathrm{p}}^{(n)}\ge 1$ also holds\cz. 
		Hence, the weights $\cb(W_{\mathrm{p},i}^{(n)})_{i\in[n]}$ satisfy Assumption \ref{assu:weight}.
		The modification to $\Elambda$ is analogous by working with $h$ from \eqref{eq:EGIRG-prob} instead of $\wit g_n$.
	\end{proof}
	In the rest of the section we introduce a boxing structure that will be useful in establishing the connecting path in Proposition \ref{Prop:upperbound} as well as the existence of $\CC^\infty_\la$ in Proposition \ref{Prop:existinfcomp}. For constants $C,D>1$, to be defined shortly, and a parameter $\mu$, let us define the following:
	\be\label{radii}
	D_k:=\mu^{DC^k/d},\qquad R_k:=\mu^{C^k/d},
	\ee
	where $\mu$ can tend to infinity. We define a \emph{boxing system} centred at $u\in \R^d$, by defining for $k\geq0$,
	\be\label{Boxing}
	\ba
	\text{Box}_k\left(u\right):=&\left\{x\in\mathbb{R}^d\;:\;\|x-u\|_\infty\leq D_k/2\right\},\\
	\Gamma_k\left(u\right):=&\text{Box}_k\left(u\right)\backslash \text{Box}_{k-1}\left(u\right),\ k\geq1,\qquad \Gamma_0\left(u\right):=\text{Box}_0\left(u\right),\\
	B_k\left(z\right):=&\left\{x\in\mathbb{R}^d \;:\; \|x-z\|_\infty\leq R_k/2\right\}.
	\ea
	\ee
	Let us further set $k_{\max}=k_{\max}(\mu,n):=\max\{k\in\mathbb{N}:\ D^d_k\leq n\}$. We `tile' the annuli $\Gamma_k\left(u\right)$ with disjoint boxes of radius $R_k$, that we call subboxes, in a natural way. We order the subboxes  arbitrarily and refer to the $i^{\text{th}}$ subbox in $\Gamma_k(u)$ as $\bki$ and denote the number of subboxes in $\Gamma_k\left(u\right)$ by $b_k$. For sufficiently large $\mu$, by the ratio of volumes it can be shown that
	\be\label{boundbk}
	\mu^{\left(D-1\right)C^k}/2\leq b_k\leq \mu^{\left(D-1\right)C^k}.
	\ee
	Within each subbox $\bki$, let $c_k^{\sss\left(i\right)}$ be the vertex with the highest weight, i.e. $\cki:=\argmax_{v\in\bki}\left\{W_v\right\}$, that we call the  `centre' of $\bki$. The idea is to find a path of centres, one in each annulus, with a total $L$-length that is small, as displayed in Figures \ref{Fig:boxing} and \ref{Figure:overviewupperbound}. 
	Let us fix a small $\eps>0$ and define
	
	\be\label{CD}
	\ba
	\delta(\ve)&:=\left(\tau-2\right)\eps/\left(2\left(\tau-1\right)\right), \qquad
	C\left(\eps\right):=  (1-\eps)/(\tau-2),\\ 
	D\left(\eps\right)&:=  (1-\delta(\ve))\frac{1-\ve/(\tau-1)}{1-\ve}-\delta(\ve)/2.
	\ea
	\ee
	It is elementary to show that $D(\ve)\ge 1$. In the following lemma, we give  bounds on the weight of centres, and show that centres are connected to many centres in the next annulus. 
	\begin{lemma}[Weights and subgraph of centres]\label{lemma:weightconnectioncenters}
		Consider $\Elambda$ (resp. $\BGIRG$) with parameters $d\geq1,\ \tau\in (2,3),\ \alpha>1$. Let $u\in\mathbb{R}^d$, and consider the boxing with parameters $\mu>1$, fixed small $\eps>0$, and let $C:=C\left(\eps\right),D:=D\left(\eps\right), \delta:=\delta(\ve)$.
		Define  $N_{j}(\cki)$ as the number of centres in  $\Gamma_{j}(u)$ that are connected to $\cki$  and define the events 
		\be\ba\label{boundsweight}
		F_k^{(1)}&:=\left\{\forall i \le b_k: W_{\cki}\in\left[\cb\mu^{C^k(1-\delta)/(\tau-1) },\mu^{ C^k(1+\delta)/(\tau-1)}\cz\right] \right\},\\
		F_k^{(2)}&:=\left\{ \forall i \le b_k: N_{k+1}(\cki), N_k(\cki) \ge \exp\{ C^k (D-1)(\log \mu) /2  \} /2 \right\}.
		\ea\ee
		Then, for some $c_3(\ve)>0$, whenever $\mu>\mu_0(\ve)$,
		\be\label{boundsweight-error}
		1- \Pv\big(\cap_{k\ge 0} (F_k^{(1)} \cap F_k^{(2)})\big) \le  c_3( \mu^{-\delta/4}+ \exp\{-\lambda \mu^{\delta/2}/2\} + \exp\{ -\mu^{(D-1)C/2}/32\}).
		\ee
		Finally, for a vertex $v$ in $\mathrm{Box}_0(u)$, conditioned on $\cap_{k\ge 0}F_k^{(1)}  \cap \{W_{v}=\mu\}$,  $v$ is connected to at least one centre in annulus $\Gamma_1(v)$ with probability at least $1- \exp\{ -\mu^{(D-1)C/2}/16\}$.
	\end{lemma}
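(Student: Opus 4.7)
The plan is to handle $F_k^{(1)}$ via a vertex-count bound combined with a max-of-iid power-law estimate per subbox plus a union bound, and then to deduce $F_k^{(2)}$ from $F_k^{(1)}$ by showing that the power-law branch of $\underline g$ in \eqref{eq:gunder} dominates at the relevant scales---this is precisely why $D(\ve)$ in \eqref{CD} was built with a $-\delta/2$ slack. Once that domination is established, the per-edge connection probability is lower-bounded by the exponential branch of $\underline g$, and a one-sided Chernoff inequality over the $b_{k+1}$ conditionally independent centre-to-centre attempts delivers the concentration.

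For $F_k^{(1)}$: the subbox $B_k^{(i)}$ has volume $R_k^d=\mu^{C^k}$, so the number of vertices inside is $\Poi(\lambda\mu^{C^k})$; a Poisson Chernoff bound places this count in $[\lambda\mu^{C^k}/2,2\lambda\mu^{C^k}]$ with per-subbox failure $\exp\{-c\lambda\mu^{C^k}\}$ (the $k=0$ case is what produces the $\exp\{-\lambda\mu^{\delta/2}/2\}$ contribution to \eqref{boundsweight-error}). Conditional on the count $N$, the weights are i.i.d.\ from \eqref{powerlaw}, so $\Pv(\max_{j\le N}W_j\le x)\le \exp\{-Nx^{-(\tau-1)}\ell(x)\}$ at $x=\mu^{C^k(1-\delta)/(\tau-1)}$ controls the lower tail, while a first-moment estimate on the number of weights exceeding $\mu^{C^k(1+\delta)/(\tau-1)}$ controls the upper tail; a Potter-type argument absorbs $\ell$ into a $\mu^{-c\delta}$ loss. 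Summing the per-subbox bound against $b_k\le \mu^{(D-1)C^k}$ and then over $k\ge 0$ is dominated by the $k=0$ term, producing the $\mu^{-\delta/4}$ contribution to \eqref{boundsweight-error}.

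For $F_k^{(2)}$, I would work on $F_k^{(1)}\cap F_{k+1}^{(1)}$. For the centre $c_k^{(i)}$ and any $c\in\Gamma_{k+1}(u)$, one has $\|c_k^{(i)}-c\|\le \sqrt{d}\,\mu^{DC^{k+1}/d}$ and both weights at least $\mu^{C^k(1-\delta)/(\tau-1)}$; substituting into the power-law branch of $\underline g$ and using \eqref{CD}, the resulting exponent in $\mu$ is $\alpha\delta C^{k+1}/2$ up to an $O(1)$ constant, so for large $\mu$ the minimum in $\underline g$ equals its exponential branch. Consequently the connection probability to any given centre of $\Gamma_{k+1}$ is at least $p_k:=\exp\{-2a_2(C^{k+1}(1+\delta)\log\mu/(\tau-1))^{\gamma}\}$, which since $\gamma<1$ stays $\mu^{o(1)}$-close to $1$. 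The $b_{k+1}$ attempted edges out of $c_k^{(i)}$ to the distinct centres of $\Gamma_{k+1}$ are conditionally independent given the environment, so Chernoff gives $N_{k+1}(c_k^{(i)})\ge b_{k+1}p_k/2\ge \exp\{C^k(D-1)\log\mu/2\}/2$ with failure probability at most $\exp\{-\mu^{C^k(D-1)/2}/16\}$ for $\mu$ large; the same argument with $\Gamma_k$ in place of $\Gamma_{k+1}$ handles $N_k(c_k^{(i)})$. A union bound over the $b_k$ centres of $\Gamma_k$ and then over $k\ge 0$ is absorbed by the double-exponential decay, producing the $\exp\{-\mu^{(D-1)C/2}/32\}$ term of \eqref{boundsweight-error}.

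For the final assertion about $v\in\mathrm{Box}_0(u)$ with $W_v=\mu$, I would rerun the $F_0^{(2)}$ computation with the exact weight $\mu$ in place of the lower bound $\mu^{(1-\delta)/(\tau-1)}$: the power-law branch of $\underline g$ retains at least the previous slack, so the connection probability to any given centre of $\Gamma_1(v)$ is at least $p_0$, and Chernoff over the $b_1\ge \mu^{(D-1)C}/2$ centres of $\Gamma_1(v)$ delivers at least one connection with failure probability $\exp\{-\mu^{(D-1)C/2}/16\}$. The main technical obstacle will be the exponent bookkeeping in the passage from $F_k^{(1)}$ to $F_k^{(2)}$: one has to verify that the $-\delta/2$ slack in $D(\ve)$ genuinely beats the $\sqrt d^{\alpha d}$ prefactor from the $\ell_\infty$-to-$\ell_2$ norm conversion and the Potter error on $\ell$, \emph{uniformly in} $k\ge 0$; once that is in hand, the remaining ingredients---Poisson concentration, the max-of-iid tail, Chernoff, and a Borel--Cantelli-type summation---are routine.
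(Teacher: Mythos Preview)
Your proposal is correct and follows essentially the same route as the paper: extreme-value control on the subbox maxima for $F_k^{(1)}$, then showing (via the $-\delta/2$ slack in $D(\ve)$) that the power-law branch of $\underline g$ exceeds the exponential branch on $F_k^{(1)}\cap F_{k+1}^{(1)}$, followed by a binomial Chernoff bound for $F_k^{(2)}$; your exponent $\alpha\delta C^{k+1}/2$ is exactly the paper's $\alpha Z C^k$ with $Z=\ve(1-\ve)/(4(\tau-1))$. One small streamlining: the paper skips your separate Poisson Chernoff step and instead sums $\Pv(\max\le x)$ directly over the Poisson distribution to obtain $\Pv(\max_{v\in B_k^{(i)}}W_v\le x)=\exp\{-\lambda R_k^d\,\ell(x)x^{-(\tau-1)}\}$ in one line, so the $\exp\{-\lambda\mu^{\delta/2}/2\}$ contribution in \eqref{boundsweight-error} arises from the lower-tail max-weight bound (after the union bound over subboxes and $k$), not from count concentration.
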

\cb The statements in Lemma \ref{lemma:weightconnectioncenters} are quenched, in the sense that they hold for almost all realisations of $\mathrm{EGIRG}_{W,L}(\lambda)$ or $\mathrm{BGIRG}_{W,L}(n)$, depending in which model you apply the lemma. The message of the lemma is the following: a centre of a subbox is connected to many centres of subboxes in the next annulus. By following paths that jump from centres to centres in annuli with increasing size, it is possible to  create paths going to infinity. This result is an important factor in various proofs: it shows that there is an infinite component in $\Elambda$. Moreover, since the degrees of centres grow doubly exponentially, it is even possible to construct paths going to infinity in a way that the total edge-length of the constructed path will be small (by always choosing the shortest option), which leads to connecting the shortest paths between two vertices. The idea of the proof of Lemma \ref{lemma:weightconnectioncenters} is that the boxes and subboxes are defined such that the weights of centres are sufficiently large and they are sufficiently close to the centres in consecutive annuli, and therefore the connection probability between these centres is fairly large. More specifically, it can be bounded by the first argument of $\underline g$, as in \eqref{eq:gunder}. \cz
\begin{proof}[Proof of Proposition \ref{Prop:existinfcomp} subject to Lemma \ref{lemma:weightconnectioncenters}]
		Using the boxing structure \cb as defined in \eqref{Boxing} \cz and the connectivity argument in Lemma \ref{lemma:weightconnectioncenters}, we prove the existence of an infinite component. Fix a sequence $(\mu_1<  \mu_2<\dots)$ so that the error bound in \eqref{boundsweight-error} is summable for the sequence $\mu_i$.
		Construct a boxing system centred at the origin with parameter $\mu_i$, for each $i\ge 1$. Since the error probabilities are summable, the Borel-Cantelli Lemma implies that there is an index $i_0$ for which $\cap_{k\ge 0} (F_k^{(1)} \cap F_k^{(2)})$ will hold with parameter $\mu_{i_0}$. 
		Thus, in this boxing system, an infinite component exists since we can simply find an infinite path along a sequence of centres $c_1^{(i_1)}, c_2^{(i_2)}, \dots,$ in annuli with increasing index. This component is almost surely unique by \cite[Theorem 6.3]{MeeRoy96}, which proves Proposition \ref{Prop:existinfcomp}.
	\end{proof}
	
	\begin{proof}[Proof of Lemma \ref{lemma:weightconnectioncenters}]
		For this proof we make use of Potter's theorem \cite[Theorem 1.5.6]{BinGolTeu89} that states that for any $\eps>0$,
		\be\label{Potter}
		\lim_{x\rightarrow \infty}\ell\left(x\right)x^{-\eps}=0,\ \ \lim_{x\rightarrow\infty}\ell\left(x\right)x^\eps=\infty.
		\ee
		We prove the statements in Lemma \ref{lemma:weightconnectioncenters} for the $\Elambda$ model and discuss how to adjust the proof for the $\BGIRG$ model. We start by estimating  the weights of the centres. \cb Using that the vertices are generated by a PPP with intensity $\lambda$, $\mathrm{Vol}(\bki)=R_k^d$, the power-law probability in \eqref{powerlaw}, and conditioning on $|B_k^{(i)}\cap \mathcal{V}_\lambda|$ yields\cz
		\begin{equation}\ba \label{eq:lowerboundweights2}
		\mathbb{P}\Big(\max_{v\in \bki \cap \CV_\la} W_v \leq x \Big) &= \sum_{j=0}^\infty  \mathbb{P}(W\leq x )^j \mathrm{e}^{-R_k^d\lambda}\left(R_k^d\lambda\right)^j\!/j! \\
		&=  \exp\left\{-R_k^d\lambda+ R_k^d \lambda (1-\ell(x)x^{-(\tau-1)}\right\}\\
		&=\exp\left\{-R_k^d \lambda \ell(x)x^{-(\tau-1)}\right\}.
		\ea\end{equation}
		Inserting the lower bound in $F_k^{(1)}$ in \eqref{boundsweight} for $x$ and using the definition of $R_k$ from \eqref{radii}, \cb we find that $R_k^d x^{-(\tau-1)}=\mu^{\delta C^k}$ and hence\cz
		\be\label{eq:weight-below}
		\ba
		\mathbb{P}\Big(\max_{v\in \bki \cap \CV_\la} W_v  \leq \cb\mu^{C^k(1-\delta)/(\tau-1) }\cz \Big) & \leq \exp\left\{-\lambda\ell\big(\cb\mu^{C^k(1-\delta)/(\tau-1) }\cz\big)\mu^{\delta C^k} \right\}
		\leq \exp\left\{-\lambda \mu^{(\delta/2) C^k} \right\},
		\ea
		\ee
		where we use that $\ell(\mu^{(1-\delta)/(\tau-1) C^k}) \mu^{ (\delta/2) C^k}>1$ for large $\mu$ by \eqref{Potter}. 
		For the upper bound, we use  the first moment method to find
		\be\label{upperboundweights}
		\mathbb{P}\Big(\max_{v\in \bki \cap \CV_\la} W_v > x\Big)\leq \Ev\Big[\sum_{v\in \bki \cap \CV_\la} \ind_{\{W_v >
			x\}}\Big]=\ell\left(x\right)x^{-\left(\tau-1\right)}R_k^d\lambda.
		\ee
		Inserting the upper bound from \eqref{boundsweight} for $x$ yields
		\be\label{eq:upperboundweights2}
		\mathbb{P}\Big(\max_{v\in \bki} W_v > \cb\mu^{C^k(1+\delta)/(\tau-1)}\cz\Big) \leq \ell\left(\cb\mu^{C^k(1+\delta)/(\tau-1) }\cz\right)\lambda \mu^{-\delta C^k}\leq \lambda \mu^{-(3/4)\delta C^k},
		\ee
		where we use that $\ell(\cb\mu^{C^k(1+\delta)/(\tau-1)}\cz)\mu^{- (\delta/4) C^k}<1$ by  \eqref{Potter} for large $\mu$. Combining the bounds in \eqref{eq:weight-below} and \eqref{eq:upperboundweights2} with a union bound over $k$ and the at most $b_k$ subboxes, for some constant $c_3>0$,
		\be\label{eq:unionboundweights}\ba 
		\mathbb{P}\Big(\big(\cap_{k\ge 0} F_k^{(1)}\big))^c\Big) &\leq   \sum_{k=0}^\infty b_k \big(\lambda \mu^{-(3/4)\delta C^k} + \exp\{-\lambda \mu^{(\delta/2) C^k} \}\big) \\
		&\leq c_3 \mu^{\left(D-1-(3/4)\delta\right)}+ c_3\exp\{-\lambda \mu^{\delta/2}/2\},
		\ea\ee
		where we can further use that $D-1-(3/4)\delta<-\delta/4$, as follows from the definition of $D(\eps), \delta(\ve)$ in \eqref{CD}, and the bound in  \eqref{boundbk}. \cb This yields the first two terms on the rhs of \eqref{boundsweight-error}. 
		
	\cb	We now turn to the events $(F_k^{(2)})_{k\in\N}$. The heuristics of the proof are the following: consider a centre $c_k^{(i)}$ of a subbox in annulus $k$. We would like to estimate the number of centres in annulus $k+1$ that it is connected to, $N_{k+1}(c_k^{(i)})$. For this, on $F_k^{(1)}\cap F_{k+1}^{(1)}$, we have  good bounds on the weight of $c_k^{(i)}$ and the centres in annulus $k+1$. Their Euclidean distance is at most the diameter of the outer annulus. Since the connection probabilities are bounded from below by the function $\underline g$ from \eqref{eq:gunder}, we show that the connection probability is large, in the sense that the minimum is taken by its first term when applied to $c_k^{(i)}$ and a centre in annulus $k$. Since the presence of edges is conditionally independent given the weights involved, $N_{k+1}(c_k^{(i)})$ is bounded from below by a binomial random variable with the number of centres in annulus $k+1$ and the lower bound we derived on the connection probability as its parameters. Concentration of this binomial variable then yields the desired lower bound.
		 Now we work out the details. Recall Claim \ref{claim:hg} and \eqref{eq:gunder}, in particular that the connection probability in EGIRG with vertices of weight $w_1, w_2$ at distance $\| x\|$ away is at least 
 \be\label{eq:gunder2} \underline c_1 \underline{g}(x,w_1,w_2)=\underline c_1\Big(\e^{-a_2((\log w_1)^\gamma+(\log w_2)^\gamma)}\wedge \underline a_1 \|x\|^{-\alpha d}\left(w_1 w_2\right)^\alpha\Big)\ee
 \cz
We condition that the event $F_k^{(1)}\cap F_{k+1}^{(1)}$ holds. 
		For two centres \cb in annuli $k,k+1$, respectively\cz, their weights satisfy the bounds in $F_k^{(1)}\cap F_{k+1}^{(1)}$, and their Euclidean distance is at most $\sqrt{d}D_{k+1}$ by \eqref{radii} and \eqref{Boxing}, hence, the second term in the minimum in \eqref{eq:gunder2} is at least
		\be\label{eq:dist-center1} a_1 \|\cki-c_{k+1}^{\sss\left(j\right)}\|^{-\al d} \big(W_{\cki} W_{c_{k+1}^{\sss\left(j\right)}}\big)^\al \ge a_1d^{-\al d/2}\cb\mu^{\al(C^k+C^{k+1})(1-\delta)/(\tau-1)}\cz/ (D_{k+1})^{\al d} =: a_{1,d}\mu^{\al Z C^k}\ee
		with $Z:=(1-\delta)(1+C)/(\tau-1)-CD$. It can be shown using \eqref{CD} that $Z=\ve(1-\ve)/4(\tau-1)>0$. 
		Let us abbreviate $l(w):=\exp\{ -a_2 (\log w)^\gamma\}$. Then, the first term in \eqref{eq:gunder2} is, conditioned on the event $F_k^{(1)}\cap F_{k+1}^{(1)}$, using the lower bound in \eqref{boundsweight}),
		\be \label{eq:dist-222}l(W_{\cki}) l(W_{c_{k+1}^{\sss\left(j\right)}}) \ge  \exp\left\{ -a_2 \Big(\frac{1+\delta}{\tau-1}\Big)^\gamma\Big(\log \mu^{C^k}\Big)^\gamma (1+C^\gamma)\right\}=:a_{\mu,k}/\underline c_1,\ee
		\cb with $\underline c_1$ the same constant as in Claim \ref{claim:hg}\cz. Hence,
			\be
		\ba\label{eq:connectprob}
		\mathbb{P} \Big(\cki \leftrightarrow c_{k+1}^{\sss\left(j\right)}\; \mid \cap_{k\ge 1} F_k^{(1)}\Big)&\geq  \underline c_1\min\Big\{a_{\mu,k}/\underline c_1,a_{1, d} \mu^{\alpha Z C^k } \Big\}\cb=\cz a_{\mu,k}\\
		\ea
		\ee
		since the second term in the minimum is larger than its first term when $\mu>\mu_0$ is sufficiently large. Recall that the number of subboxes in annulus $k+1$ is $b_{k+1}$ (see before \eqref{boundbk}). Since conditionally on the weights of vertices, edges are present independently,		
		\be\label{NBin}
		\Big(N_{k+1}(c_k^{(i)})\mid  W_{c_{k}^{(i)}}, (W_{c_{k+1}^{(j)}})_{j\le b_{k+1}}\Big)\overset{d}{\geq} \text{Bin}\left(b_{k+1},a_{\mu,k}\right),
		\ee
		 with $\text{Bin}(n,p)$ a binomial rv with parameters $n$ and $p$, \cb where the lhs denotes the rv $N_{k+1}(c_k^{(i)})$ conditionally on the weights of vertices involved, that should satisfy the event  $\cap_{k\ge 1} F_k^{(1)}$\cz.
By the concentration of binomial rvs \cite[Theorem 2.21]{Hofbook},
		\be\label{binomialbound}
		\mathbb{P}\big(\text{Bin}(n,p)\le  np/2\big)\leq \mathrm{e}^{-np/8}.
		\ee
		Combining this with \eqref{NBin}, \eqref{eq:connectprob} and the bound on $b_{k+1}$ in \eqref{boundbk} yields
				\be\label{Nboundbinomial}
		\mathbb{P}\Big(N_{k+1}\left(\cki\right)\leq  b_{k+1}a_{\mu,k}/2 \mid \cap_{k\ge 1} F_k^{(1)}\Big) \leq    \exp\left\{-a_{\mu,k}b_{k+1}/8\right\}\leq \exp\left\{- \mu^{\left(D-1\right)C^{k+1}/2}/16\right\},
		\ee
		for $\mu\ge \mu_1\ge \mu_0$ sufficiently large, since  $\gamma\in(0,1)$ in $a_{\mu,k}$ and  the bound on $b_{k+1}$ in \eqref{boundbk}. Elementary calculation yields that  $a_{\mu,k} b_{k+1}/2 \ge  \exp\{ C^{k+1} (D-1)(\log \mu) /2\}/2$, as in \eqref{boundsweight}. A union bound results in
		\be\label{connectioneq}
		\mathbb{P}\Big( \cap_{k\ge1} F_k^{(2)} \mid \cap_{k\ge1} F_k^{(1)}) \leq \sum_{k=0}^\infty b_k\exp\left\{-\mu^{\left(D-1\right)C^{k+1}/2}/16\right\}
	 \leq c_3 \exp\left\{- \mu^{\left(D-1\right)C/2}/32\right\},
		\ee
		where the final statement follows since the sum is dominated by a geometric series, and increasing  $c_3>0$ when necessary.
		The bound on  $N_k(\cki)$ in \eqref{boundsweight} follows in a similar manner, where now set $a_{\mu,k}:=\underline c_1l(\cb\mu^{C^k(1+\delta)/(\tau-1)}\cz)^2$.
		The proof of the last statement in Lemma \ref{lemma:weightconnectioncenters} follows by noting that $v$ can take the role of $c_0^{(i)}$, i.e. the bound on the second event $F_0^{(2)} $ for $k=0$ directly implies the estimate on $N_1(c_0^{(i)})>1$, when $\mu>\mu_0$ sufficiently large so that $\exp\{(\log \mu)(D-1)C/2\}>1$. 
	
		For the results to hold in the $\BGIRG$ model, the following adjustments are necessary: 
		First note that  the events $F_k^{(1)}$ and $F_k^{(2)}$ are only relevant for $k\leq k_{\max}(\mu,n):=\max\{k\in\mathbb{N}:\ D^d_k\leq n\}$, so that the annulus $\Gamma_{k_{\max}}$ has a non-trivial intersection with $\Xdn$. 
		Further, in $\BGIRG$, the weight distribution depends on $n$, as in Assumption \ref{assu:weight}, hence, all slowly-varying functions $\ell$ in the proof above should be changed to to either $\underline \ell$ or $\overline \ell$. \cb In particular, in the proof of the lower bound on the weights $\underline \ell$ should be used, turning the second line of \eqref{eq:lowerboundweights2} into an inequality. In the upper bound for the weights, $\underline \ell$ can be used, and then \eqref{eq:upperboundweights2} remains valid.
		The replacement of $\ell$ with $\underline \ell$ or $\overline \ell$ could be done without leaving the interval $[0, M_n]$ where these bounds hold, since $M_n\gg n^{1/(\tau-1)}$ by the condition in Assumption \ref{assu:weight}, while the maximal weight vertex in the whole graph $\BGIRG$ - following the calculation in \eqref{eq:lowerboundweights2} with deterministically $n$ vertices - is of order $n^{1/{\tau-1}}$.
		In \eqref{eq:lowerboundweights2}, the Poisson probabilities are replaced by 
		\be \mathbb{P}(|B_k^{(i)}\cap \CV(\BGIRG)|=j)={n \choose j}\big(\mathrm{Vol}(B_k^{(i)})/n\big)^j \big(1-\mathrm{Vol}(B_k^{(i)})/n\big)^{n-j},\ee 
		but the results in the upper bounds in \eqref{eq:lowerboundweights2} and \eqref{eq:upperboundweights2} stay valid with $\lambda=1$.
	\end{proof}

	\section{Best explosive path can be followed}\label{sec:bestexplosive}
	In this section we prove Lemma \ref{lem:ank}. We decompose its proof into three claims that we first state all at once and then prove one-by-one.
	\begin{claim}\label{claim:bg-to-eone} Let $E_{\ref{claim:bg-to-eone}}(n)$ be the event that  both uniformly chosen vertices $v_n^1, v_n^2\in \CC_{\max}$ of $\BGIRG$ are also in $\CC_\infty^1$, the infinite component of $\Eone$. Then $E_{\ref{claim:bg-to-eone}}(n)$ holds with probability $1-e_{\ref{claim:bg-to-eone}}(n)$, for some $e_{\ref{claim:bg-to-eone}}(n)\downarrow 0$.
	\end{claim}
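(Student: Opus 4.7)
The plan is to show that, up to $o(n)$ vertices, the giant component $\CC_{\max}$ of $\BGIRG$ coincides with $\mathcal{C}_\infty^1\cap\CV_B(n)$, where $\mathcal{C}_\infty^1$ is the unique infinite component of $\Eone$ provided by Proposition \ref{Prop:existinfcomp}; once this is established, a uniformly chosen $v_n^q\in\CC_{\max}$ lies in $\mathcal{C}_\infty^1$ with probability $1-o(1)$. First I would introduce the density $\theta_1:=\Pv(0\in\mathcal{C}_\infty^1\mid 0\in\CV_1)>0$, well defined and positive by Proposition \ref{Prop:existinfcomp} together with the translation invariance of $\Eone$. By spatial ergodicity of the PPP $\CV_1$, $|\CV_1\cap\Xdn\cap\mathcal{C}_\infty^1|/n\to\theta_1$ in probability. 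The coupling of Claim \ref{cl:vertex-containment} embeds $\CV_{1-\xi_n}\cap\Xdn$ into $\CV_B(n)$, and since $\xi_n=\sqrt{4\log n/n}$, the symmetric difference $|\CV_B(n)\ominus(\CV_1\cap\Xdn)|$ is $O(\xi_n n)=o(n)$, yielding
\[
\frac{|\CV_B(n)\cap\mathcal{C}_\infty^1|}{n}\ \longrightarrow\ \theta_1 \quad \text{in probability.}
\]

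Second, I would promote this vertex-level statement to the component level in $\BGIRG$. By Claim \ref{claim:edge-containment}, on the good event where the vertex weights and pairwise distances lie in $I_w(n)$ and $I_\Delta(n)$, edges of $\CE_1$ agree with edges of $\CE_B(n)$ with per-edge error $\epsilon(n)+2\epsilon_{\mathrm{TV}}(n)=o(1)$. Hence, as long as a connecting backbone uses only a polynomial-size subgraph, connectivity inside $\mathrm{EGIRG}_{W,L}(1-\xi_n)$ transfers to $\BGIRG$. For this backbone I would reuse the boxing/percolation method of Lemma \ref{lemma:weightconnectioncenters} and the argument from the proof of Proposition \ref{Prop:existinfcomp}: centred at a typical vertex $v\in\CV_B(n)\cap\mathcal{C}_\infty^1$, the centres of subboxes along doubly-exponentially growing annuli form a highly connected skeleton of polynomial size that, with probability tending to $1$, survives in $\BGIRG$. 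It would then follow that $(1-o(1))|\CV_B(n)\cap\mathcal{C}_\infty^1|$ vertices lie in a single connected component of $\BGIRG$.

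Third, by \cite[Theorem 2.2]{BriKeuLen17}, whp $\BGIRG$ has a unique component of linear size (namely $\CC_{\max}$) and all other components have size $o(n)$. The large connected set produced in the previous step must therefore coincide with $\CC_{\max}$ up to $o(n)$ vertices, so $|\CC_{\max}|/n\to\theta_1$ and a uniformly chosen $v_n^q\in\CC_{\max}$ lies in $\mathcal{C}_\infty^1$ with probability at least $1-o(1)$. The main obstacle is the second step: a naive pairwise union bound over vertices in $\CV_B(n)\cap\mathcal{C}_\infty^1$ is too weak, since the error $\epsilon(n)+\epsilon_{\mathrm{TV}}(n)$ in Claim \ref{claim:edge-containment} accumulates linearly in the number of edges involved. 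The boxing construction of Section \ref{sec:boxinginfinitecomponent} is essential here, confining the connecting skeleton to a polynomial-size edge set while guaranteeing robust connectivity via the doubly-exponential growth of the annuli.
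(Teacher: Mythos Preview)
Your global strategy---showing $\CC_{\max}$ and $\mathcal C_\infty^1\cap\CV_B(n)$ agree up to $o(n)$ vertices---has two genuine gaps.

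First, step~3 does not close. From step~2 you only obtain $|\CC_{\max}\cap\mathcal C_\infty^1|\ge(\theta_1-o(1))n$, hence $|\CC_{\max}|\ge(\theta_1-o(1))n$. The conclusion $|\CC_{\max}|/n\to\theta_1$ requires a matching \emph{upper} bound on $|\CC_{\max}|$, which nothing in your argument supplies. Without it, $\CC_{\max}\setminus\mathcal C_\infty^1$ could have positive density, and a uniform vertex of $\CC_{\max}$ need not lie in $\mathcal C_\infty^1$. You would need either a separate result that the giant density of $\BGIRG$ equals $\theta_1$ (essentially local weak convergence plus a locality-of-the-giant statement, not proved in the paper), or a direct per-vertex argument that $v\in\CC_{\max}\Rightarrow v\in\mathcal C_\infty^1$ whp---which is exactly what the claim asks for.

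Second, step~2 is shakier than you indicate. Lemma~\ref{lemma:weightconnectioncenters} builds the boxing skeleton from a vertex of weight $\mu$ (large); a typical $v\in\CV_B(n)\cap\mathcal C_\infty^1$ has bounded weight, so you cannot centre the construction at $v$ directly. You would still have to find a short path in $\BGIRG$ from $v$ to a high-weight vertex before the boxing applies, and transfer that path through the coupling. This is precisely the missing step that the paper handles.

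The paper's proof is per-vertex and avoids the density-matching issue entirely. For a fixed $v_n^q\in\CC_{\max}$ it invokes \cite[Lemma~5.5]{BriKeuLen17}: a vertex in a large component of $\BGIRG$ is, whp, within graph distance $w^{\tau+\eta_1}$ of some vertex $v_\ge$ of weight at least $w$, for $w=w(n)\to\infty$ chosen so that $w(n)^{\tau+\eta_1}(\xi_n+\epsilon_{\mathrm{TV}}(n)+\epsilon(n))\to 0$. This short path $\pi[v_n^q,v_\ge]$ then transfers to $\Eone$ via the coupling of Claims~\ref{cl:vertex-containment}--\ref{claim:edge-containment}. Finally, the boxing system of Lemma~\ref{lemma:weightconnectioncenters} is built in $\Eone$ around $v_\ge$ with parameter $\mu=W_{v_\ge}\ge w(n)\to\infty$, producing an infinite path and placing $v_n^q$ in $\mathcal C_\infty^1$. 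No comparison of component densities is needed.
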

	
	\begin{claim}\label{claim:explosive-ray}The infinite component of $\Elambda$ is a.s.\ explosive. That is, conditioned on $0\in \CC_{\infty}^\lambda$ there exists an infinite path $\cb\pi=\{0=\pi_0, \pi_1, \dots\}$ such that $(\pi_i,\pi_{i+1}) \in \mathcal E (\Elambda)$ and $\sum_{i\ge 0} L_{(\pi_i, \pi_{i+1})} < \infty$. 
		
	\end{claim}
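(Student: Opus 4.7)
The plan is to exploit the boxing structure from Lemma \ref{lemma:weightconnectioncenters} to construct an infinite path with finite total $L$-length starting from a vertex of high weight, and then transfer this to $0$ via the connectedness of the a.s.\ unique infinite component $\CC_\infty^\lambda$ (Proposition \ref{Prop:existinfcomp}). Fix $\eps>0$ small and the constants $C=C(\eps), D=D(\eps)$ from \eqref{CD}. I would first choose $\mu$ large enough that $\bigcap_{k\ge 0}(F_k^{(1)}\cap F_k^{(2)})$, whose probability is controlled by \eqref{boundsweight-error}, has probability close to $1$. Centring the boxing system at a vertex $v$ with $W_v\ge\mu$, Lemma \ref{lemma:weightconnectioncenters} supplies, on this event, at least $M_k:=\exp\{C^k(D-1)(\log\mu)/2\}/2$ edges from each centre of $\Gamma_k(v)$ to centres of $\Gamma_{k+1}(v)$, together with an initial edge from $v$ into $\Gamma_1(v)$. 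I would then greedily build a path $v=\pi_0,\pi_1,\pi_2,\ldots$ by taking $\pi_{k+1}$ to be the centre of $\Gamma_{k+1}(v)$ connected to $\pi_k$ that minimises $L_{(\pi_k,\pi_{k+1})}$ among all available candidates.

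The crucial estimate is that this greedy path has finite $L$-length almost surely. Conditionally on the graph, edge-lengths are i.i.d.\ copies of $L$, so the minimum $L_k^\star$ at level $k$ of at least $M_k$ i.i.d.\ copies of $L$ obeys $\Pv(L_k^\star>F_L^{(-1)}(u))\le (1-u)^{M_k}\le \e^{-uM_k}$. Choosing $u_k=2(\log k)/M_k$ yields $\Pv(L_k^\star>F_L^{(-1)}(u_k))\le k^{-2}$, hence by Borel--Cantelli $L_k^\star\le F_L^{(-1)}(u_k)\le F_L^{(-1)}(\e^{-c_1C^k})$ for some $c_1>0$ and all $k$ large. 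Writing $c_1C^k=\e^{y_k}$ with $y_k=\log c_1+k\log C$, so that consecutive $y_k$ differ by $\log C$, and using the monotonicity of $y\mapsto F_L^{(-1)}(\e^{-\e^y})$, a Riemann-type bound yields
\[
\sum_{k\ge k_0} F_L^{(-1)}\bigl(\e^{-\e^{y_k}}\bigr)\le \frac{1}{\log C}\int_1^\infty F_L^{(-1)}\bigl(\e^{-\e^y}\bigr)\,\mathrm dy=\frac{\mathbf I(L)}{\log C}<\infty
\]
by hypothesis. This shows the constructed path is explosive, and in particular $v\in\CC_\infty^\lambda$.

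To handle the conditioning on $0\in\CC_\infty^\lambda$, I would argue as follows. The vertices of $\CV_\lambda$ with weight $\ge\mu$ form, by the independent marking and thinning property, a PPP of positive intensity $\lambda\Pv(W\ge\mu)>0$, and at each such vertex the boxing argument above succeeds with a positive probability uniformly bounded below. Hence the translation-invariant event $\{\exists v^\star\in\CV_\lambda \text{ starting an infinite path of finite }L\text{-length}\}$ has positive probability, and by the ergodicity of the underlying PPP on $\R^d$ it has probability $1$. Any such $v^\star$ lies in $\CC_\infty^\lambda$, which is a.s.\ unique by Proposition \ref{Prop:existinfcomp}; therefore, on $\{0\in\CC_\infty^\lambda\}$, there exists a.s.\ a finite path in $\CC_\infty^\lambda$ from $0$ to some such $v^\star$. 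This finite connecting path has finite $L$-length a.s.\ (as a finite sum of a.s.\ finite edge-lengths), and concatenating with the explosive ray emanating from $v^\star$ gives the desired explosive path from $0$.

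The main anticipated obstacle is the quantitative matching between the doubly-exponential growth rate of $M_k$ provided by Lemma \ref{lemma:weightconnectioncenters} (which relies essentially on $\tau\in(2,3)$) and the precise double-exponential scale appearing in the definition of $\mathbf I(L)$: the substitution $u=\e^{-\e^y}$ is exactly calibrated to this growth, and without the infinite-variance regime the sum of minimum edge-lengths would not be controllable by $\mathbf I(L)$. A more technical but less conceptual difficulty is verifying the conditional independence of the greedy minimisations across different levels, which is guaranteed because the edge-lengths are i.i.d.\ copies of $L$ independent of the edge-presence, and successive levels involve disjoint edge-sets.
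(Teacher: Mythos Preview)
Your proposal is correct and follows essentially the same route as the paper: build the boxing system of Lemma~\ref{lemma:weightconnectioncenters}, construct a greedy path through centres, and use the doubly-exponential degree lower bound $M_k$ together with the criterion $\mathbf I(L)<\infty$ to show the minima of the edge-lengths are summable; then prefix a finite connecting path from $0$ via uniqueness of $\CC_\infty^\lambda$.

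The only noteworthy difference is how you upgrade ``with high probability'' to ``almost surely''. The paper centres the boxing system at $0$ itself, runs it for an increasing sequence $(\mu_i)_{i\ge 1}$ with summable error bounds \eqref{boundsweight-error}, and applies Borel--Cantelli to obtain an a.s.\ finite index $i_0$ at which $\bigcap_{k\ge 0}(F_k^{(1)}\cap F_k^{(2)})$ holds; the greedy path then starts from the maximal-weight vertex $c_0^{(1)}$ in $\mathrm{Box}_0(0)$, and $0$ is connected to $c_0^{(1)}$ by a finite path in $\CC_\infty^\lambda$. You instead fix one $\mu$, appeal to ergodicity of the marked PPP to locate some remote $v^\star$ where the boxing succeeds, and connect $0$ to $v^\star$. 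Both are valid; the paper's version is slightly more self-contained (no ergodicity needed) and localises everything near $0$, while yours is conceptually clean but requires the $0$--$1$ law for translation-invariant events of the marked PPP with i.i.d.\ edge-lengths. Finally, where you give an explicit Borel--Cantelli plus Riemann-sum argument for the summability of $\sum_k\min_{j\le M_k}L_{k,j}$, the paper simply quotes the equivalence in \cite[Lemma~2.5]{HofKom2017}; your argument reproduces the content of that lemma.
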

	We denote the  infinite path from $v$ with shortest total $L$-length in $\Elambda$ by $\cb\pi^\lambda_{\mathrm{opt}}(v)$, defined in \eqref{eq:piopt}. Claim \ref{claim:explosive-ray} implies that  $\cb|\pi^\lambda_{\mathrm{opt}}(v)|_L<\infty$ a.s. By Corollary \ref{lem:expgen}, a.s., every infinite path with finite total $L$-length contains vertices with arbitrary large weights. This is important since we would like to find  large-weight vertices on $\pi_{\mathrm{opt}}^1(v_n^1),  \pi_{\mathrm{opt}}^1(v_n^2)$ in order to construct a connecting path and hence upper bound the distance between these vertices. 
	
	\begin{claim}\label{claim:ray-in-box}
		On the event $E_{\ref{claim:bg-to-eone}}(n)$ from Claim \ref{claim:bg-to-eone}, let $\wit v_n^q(K)$ denote the first vertex on $\cb\pi^1_{\mathrm{opt}}(v_n^q)$ with weight at least $K$. Let $E_{\ref{claim:ray-in-box}}(n,K)\subset E_{\ref{claim:bg-to-eone}}(n)$ denote the event that 
		the segment $\cb\pi_{\mathrm{opt}}^1[v_n^q, \wit v_n^q(K)]$ is within $\Xdn$ and also contained in $\BGIRG$. Then, $\Pv(E_{\ref{claim:ray-in-box}}(n,K))\ge 1-e_{\ref{claim:ray-in-box}}(n,K)$ for some function $e_{\ref{claim:ray-in-box}}(K,n)$ with $\lim_{n\to \infty} e_{\ref{claim:ray-in-box}}(n,K)=0$ for all fixed $K$.
	\end{claim}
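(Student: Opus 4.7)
The plan is to use translation invariance of $\Eone$ to reduce the problem to tightness-type bounds on four quantities describing the segment from the origin, and then apply the couplings of Claims~\ref{cl:vertex-containment} and \ref{claim:edge-containment} to conclude that these segments whp live inside both $\Xdn$ and $\BGIRG$.

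First I would fix $K$ and, on the event $E_{\ref{claim:bg-to-eone}}(n)$, associate to the segment $\pi^1_{\mathrm{opt}}[v_n^q,\wit v_n^q(K)]$ (for $q\in\{1,2\}$) the following four quantities: its number of edges $N_K(v_n^q)$; its spatial extent $R_K(v_n^q):=\max_u\|u-v_n^q\|$ over vertices $u$ on the segment; its maximum vertex-weight $W_K^{\max}(v_n^q)=W_{\wit v_n^q(K)}$; and its minimum Euclidean edge-length $\Delta_K^{\min}(v_n^q)$. By the Palm/translation-invariance property of the Poisson environment and the i.i.d.\ vertex-weights underlying $\Eone$, each of these has, conditionally on $v_n^q\in\CV_1\cap\CC_\infty^1$, the same distribution as its analogue starting from the origin. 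Corollary~\ref{lem:expgen} guarantees that $\wit 0(K)$ exists a.s., so the segment has finitely many edges a.s.\ and all four random variables are a.s.\ finite. Hence, for any $\delta>0$ I can pick $M=M(K,\delta)<\infty$ such that the event
\[ B_{K,M}^q:=\{N_K(v_n^q)\le M,\ R_K(v_n^q)\le M,\ W_K^{\max}(v_n^q)\le M,\ \Delta_K^{\min}(v_n^q)\ge 1/M\} \]
has probability at least $1-\delta$ for both $q\in\{1,2\}$.

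Next I would verify, on $B_{K,M}^q$, the two containments. For the spatial part, since $v_n^q$ is uniform over the linear-sized $\CC_{\max}\subseteq\Xdn$, it sits within sup-distance $M$ of the boundary of $\Xdn$ with probability $O(M/n^{1/d})$, and off this event $R_K(v_n^q)\le M$ forces the segment into $\Xdn$. For vertex containment, the segment's $\le M$ vertices lie in $\CV_1\cap\Xdn$; the thinning-via-$U_v$ coupling of Claim~\ref{cl:vertex-containment} gives $\Pv(v\in\CV_{1-\xi_n}\mid v\in\CV_1)=1-\xi_n$ independently across vertices, and $\CV_{1-\xi_n}\cap\Xdn\subseteq\CV_B(n)$, so all segment vertices lie in $\CV_B(n)$ with probability at least $(1-\xi_n)^M\to 1$. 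For edge containment, on $B_{K,M}^q$ the vertex weights and edge spatial lengths lie in $[1,M]$ and $[1/M,M]$ respectively, and since $I_w(n)\to[1,\infty)$ and $I_\Delta(n)\to(0,\infty)$, for large $n$ these sit in $I_w(n)$ and $I_\Delta(n)$, so the event $E_{\Delta,W}$ of Claim~\ref{claim:edge-containment} holds. The segment's edges belong to $\CE_1\subseteq\CE_{1+\xi_n}\subseteq\overline\CE_{1+\xi_n}$, so Claim~\ref{claim:edge-containment} applied to this set of $\le M$ edges gives that the conditional probability of any disagreement with $\CE_B(n)$ on this set is at most $M(\epsilon(n)+2\epsilon_{\mathrm{TV}}(n))\to 0$.

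Combining via a union bound gives $\Pv(E_{\ref{claim:ray-in-box}}(n,K)^c)\le e_{\ref{claim:bg-to-eone}}(n)+2\delta+o_n(1)$; sending $n\to\infty$ and then $\delta\downarrow 0$ yields the claim. The main obstacle, and the reason translation invariance plays a central role, is reconciling the fact that $v_n^q$ is drawn from $\CC_{\max}(\BGIRG)$ (an object living in $\BGIRG$) with the need to describe the law of the segment inside $\Eone$. The clean route is to first restrict to the event $E_{\ref{claim:bg-to-eone}}$, under which $v_n^q\in\CV_1\cap\CC_\infty^1$, and only then invoke Palm invariance for the PPP $\CV_1$; the additional $\CC_{\max}$-conditioning only affects the spatial location of $v_n^q$ within $\Xdn$ and does not disturb the distribution of the local structure of $\Eone$ rooted at $v_n^q$ that governs $(N_K,R_K,W_K^{\max},\Delta_K^{\min})$.
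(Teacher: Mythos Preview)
Your approach is correct but takes a genuinely different, more hands-on route than the paper. The paper's proof is much shorter because it recycles the event $A_{k_n}$ already established in Proposition~\ref{Prop:lowerbound}: on $A_{k_n}$ the \emph{entire} graph-distance balls $B^G_n(v_n^q,k_n)$ and $B^G_1(v_n^q,k_n)$ coincide as weighted graphs and sit inside $\Xdn$, so the only thing left to check is that the segment $\pi^1_{\mathrm{opt}}[v_n^q,\wit v_n^q(K)]$ uses fewer than $k_n$ edges. Letting $G_K(v_n^q)$ be that edge-count and invoking translation invariance of $\Eone$ (exactly as you do), $G_K(v_n^q)$ is distributed as the a.s.\ finite variable $G_K(0)$ given $0\in\CC_\infty^1$; since $k_n\to\infty$, $\Pv(G_K(0)\ge k_n\mid 0\in\CC_\infty^1)\to0$ for fixed $K$, and the error is $e_{\ref{claim:ray-in-box}}(n,K)=2e_{\ref{claim:bg-to-eone}}(n)+2\sum_{i\le 5}e_i(n)+\Pv(G_K(0)\ge k_n\mid 0\in\CC_\infty^1)$.

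What you do instead is unpack $A_{k_n}$ on the segment itself: you control $(N_K,R_K,W_K^{\max},\Delta_K^{\min})$ by tightness from the origin, and then feed these bounds directly into the thinning coupling of Claim~\ref{cl:vertex-containment} and the edge coupling of Claim~\ref{claim:edge-containment}. This is a valid self-contained argument, but it obliges you to justify several conditioning steps that the paper sidesteps: that the thinning indicators $\{U_v\le(1-\xi_n)/(1+\xi_3)\}$ remain conditionally $\mathrm{Ber}(1-\xi_n)$ given the segment (which is a function only of the coarser indicators $\{U_v\le 1/(1+\xi_3)\}$), and similarly that the per-edge error bound in Claim~\ref{claim:edge-containment} survives conditioning on the random set $E=\{e_1,\dots,e_{N_K}\}\subset\CE_1$. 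Both are true for the reason you implicitly use, but they are not free. The paper's route buys brevity and avoids these issues by working at the level of full graph-distance balls rather than one path; your route buys independence from the BRW machinery in Section~\ref{sec:BRW}, at the cost of more bookkeeping. Both approaches share the same soft spot you flag at the end, namely that $v_n^q$ is uniform in $\CC_{\max}(\BGIRG)$ rather than a Palm point of $\CV_1$, and both resolve it in the same way, by first restricting to $E_{\ref{claim:bg-to-eone}}$ and then appealing to translation invariance of $\Eone$.
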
  
	Before we proceed to the proofs, we quickly prove Lemma \ref{lem:ank} subject to these three claims.
	\begin{proof}[Proof of Lemma \ref{lem:ank} subject to Claims \ref{claim:bg-to-eone} - \ref{claim:ray-in-box}.]
		By noting that $\wit A_{n,K}:=E_{\ref{claim:ray-in-box}}(n,K)$, the proof follows by Claims \ref{claim:bg-to-eone} - \ref{claim:ray-in-box} with $f(n,K):=e_{\ref{claim:bg-to-eone}}(n)+e_{\ref{claim:ray-in-box}}(n,K)$.
	\end{proof}
\begin{proof}[Proof of Theorem \ref{thm:exp-charact} when $\tau\in(2,3)$ subject to Claim \ref{claim:explosive-ray}]
Suppose first that ${\mathbf I}(L)=\infty$. In this case, due to the coupling in Lemma \ref{lemma:ballsbrw} and \eqref{eq:ball-containment}, non-explosion of $\mathrm{BerBRW}_\la(v)$ implies non-explosion of $\Elambda$.  Non-explosion of $\mathrm{BerBRW}_\la(v)$ is the content of the proof of \cite[Theorem 1.1]{HofKom2017} that can be found in \cite[Section 7]{HofKom2017}. The idea is that when ${\mathbf I}(L)=\infty$ and the generation sizes of a BRW grow at most doubly exponentially, the sum over $k$ of the length of the \emph{shortest edge} between generation $k$ and $k+1$ is already infinite a.s., hence, any path to infinity must also have infinite total length a.s. We refer the reader to \cite[Section 7]{HofKom2017} for further details. 
		The proof of the explosive direction follows from  Claim \ref{claim:explosive-ray}.  
		\end{proof}

	\begin{proof}[Proof of Claim \ref{claim:bg-to-eone}]
		We distinguish two cases, whether $W_{v_n^q}^{(n)} \le \max\{\xi_n, \epsilon_{\mathrm{TV}}(n), \epsilon(n)\}^{-1/(2(\tau+\eta_1))}=:w(n)$ or not, with $\eta_1>0$ small. We start with the first case. We make use of a result by Bringmann et al., \cite[Lemma 5.5]{BriKeuLen17}. Fix a weight $w\ge 1$ that might depend on $n$ later and a small constant $\eta_1>0$. Let the event $E_n^1(v,w)$ denote that a vertex $v\in [n]$ with weight at most $w$ has component size larger than $w^{\tau+\eta_1}$, and $E_n^2(v,w)$ denote that vertex $v$ is within graph distance $w^{\tau+\eta_1}$ of some vertex with weight at least $w$. Then, by \cite[Lemma 5.5]{BriKeuLen17}, if $v$ is chosen uniformly from the vertices with weight $\le w$, 
		\be\label{eq:lenglerineq}
		\P{E_n^1(v,w)\cap (E_n^2(v,w))^c}\leq \exp\{-w^{\eta_2}\},
		\ee 
		for some $\eta_2=\eta_2(\eta_1)>0$. Note that if $v_n^q$ is uniformly chosen in $\CC_{\max}$ and we assume that $W_{v_n^q}^{(n)} \le w$, then, since the weights are originally i.i.d., $v_n^q$ is also uniformly chosen in $\CC_{\max} \cap \{v\in[n]: W_v^{(n)}\le w\}$. 
		Since $|\CC_{\max}|$ is linear in $n$ by \cite[Theorem 5.3]{BriKeuLen17}, it follows that for any $w$,  $\Pv(E_n^2(v,w)\mid v\in \CC_{\max}) \ge 1-c_4\exp\{-w^{\eta_2}\}$ for some $c_4>0$.
		On $E_n^2(v,w)$, let us choose a vertex $v_{\geq}$ that has weight at least $w$ and graph distance at most $w^{(\tau+\eta_1)}$ from $v$, denote its weight by $\mu_{\ge}$ and denote the path from $v$ to $v_{\geq}$ by $\cb\pi[v,v_{\geq}]$. Using the coupling in Claim \ref{cl:vertex-containment}, each vertex in $\CV_B(n)$ is not in $\CV_{1-\xi_n}\subseteq \CV_1$ with probability $\Ev[1-|\CV_{1-\xi_n}\cap \Xdn|/n]=\xi_n$. Given that a vertex $u$ is in both models, we can couple its weight in $\BGIRG$ and in $\Emin$ with error $\epsilon_{\mathrm TV}(n)$.   Finally, by \eqref{singleedge} in Claim \ref{claim:edge-containment}, given that vertex-weights are equal, each edge that is part of $\CE_B(n)$ is not part of $\CE_{1-\xi_n}\subseteq \CE_1$ with probability at most $\epsilon(n)$. So, setting $w=w(n)$,
		\be \ba
		\Pv&\big(\cb\pi[v,v_{\geq}]\cz\not\subseteq \mathrm{EGIRG}_{W,L}(1)\mid E_n^{2}(v,w)\big)\leq{} \P{\exists u\in\cb\pi[v,v_{\geq}]\cz:\ u\not\in \mathcal{V}_{1-\xi_n}}\\
		&\  \ +\P{\exists u\in\cb\pi[v,v_{\geq}]\cz:\ W_u^{(n)}\neq W_u\mid \forall u \in\cb\pi[v,v_\geq]\cz:\ u\in\mathcal{V}_{1-\xi_n}}\\
		&\ \ +\Pv\Big(\exists (u_1,u_2)\in\EE(\cb\pi[v,v_{\geq}]\cz):\ (u_1,u_2)\not\in \EE_{1-\xi_n}\mid \forall u \in\cb\pi[v,v_\geq]\cz:\ u\in\mathcal{V}_{1-\xi_n}, W_u^{(n)}=W_u\Big)\\
		& \leq{} w(n)^{\tau+\eta_1} (\xi_n +\epsilon_{\mathrm{TV}}(n)+\epsilon(n)),
		\ea\ee
		that tends to zero by the choice of $w(n)$. \cb To summarise, assuming that a vertex $v$ is in the giant component of $\BGIRG$, with high probability we find a vertex $v_\ge$ with weight at least $w(n)$ that is connected to $v$, and the whole segment of the path between $v, v_\ge$ is part of $\Eone$ whp as well. Next, $v_\ge$ will serve as the initial vertex to construct a component with infinitely many vertices $\Eone$.
Namely, we construct the boxing system $\mathrm{EGIRG}_{W,L}(1)$, described in \eqref{radii}--\eqref{Boxing}, and centred at $v_{\geq}$ with starting weight $\mu:=\mu_\ge \ge w(n)$. Lemma \ref{lemma:weightconnectioncenters}, then ensures that the centres of subboxes in each annulus of this boxing system have sufficiently many connections to centres in the next annulus,  in particular, at least one connection. Starting from $v_\ge$, we find that it is connected to a centre in annulus one, and iteratively following an arbitrary centre in the next annulus that the centre is connected to, we find an infinite path $\{v_{\geq}=c_0^{\sss(i_0)},c_1^{\sss(i_1)},\ldots\}$ containing a centre from each annulus $\Gamma_k,k\geq1$. 

This construction fails with probability that is the rhs of \eqref{boundsweight-error}, that is at most  $c_5\mu_\ge(n)^{-\delta(\ve)/4}\le c_5w(n)^{-\delta(\ve)/4}$, for some $c_5>c_3$, since the dominating term is the first one in \eqref{boundsweight-error} when $\mu=\mu(n)\to \infty$. 	As a result, $v$ is \emph{not} part of an infinite component of $\Eone$ with probability at most	\be\ba \label{eq:e5}e_{\ref{claim:bg-to-eone}}(n)&:=c_4\exp\{- \max\{\xi_n, \epsilon_{\mathrm{TV}}(n), \epsilon(n)\}^{-\eta_2/2(\tau+\eta_1)} \}\\&\ \ +2\max\{\xi_n, \epsilon_{\mathrm{TV}}(n), \epsilon(n)\}^{1/2}+c_5\max\{\xi_n, \epsilon_{\mathrm{TV}}(n), \epsilon(n)\}^{\delta(\ve)/8}, \ea\ee
which tends to zero as $n\to \infty$. Since the infinite component of $\Eone$ is almost surely unique, this is sufficient. 
		When the weight of $v_n^q$,  $W_{v_n^q}^{(n)}\ge w(n)$, then we can directly construct a boxing system in $\Eone$ around $v_n^q$ and use Lemma \ref{lemma:weightconnectioncenters}, i.e. $v_n^q=v_\ge$ can be set in the proof above. 
		So, \eqref{eq:e5} is an upper bound in this case as well, finishing the proof.\cz\end{proof}
	
	\begin{proof}[Proof of Claim \ref{claim:explosive-ray}]
		It is enough to construct an infinite path starting at $0\in\CC_{\infty}^\lambda$ with a.s.\ finite total length. Let $(\mu_i)_{i\in\mathbb{N}}$ be an increasing sequence such that 
		the rhs of \eqref{boundsweight-error} with $\mu=\mu_i$
		are summable over $i\ge1$. Then, construct a boxing system around $0$ for each $i\ge 1$ and let $i_0$ be the first index such that $\cap_{k\ge 0} (F_k^{(1)}\cap F_k^{(2)})$ holds for the boxing system with parameter $\mu_{i_0}$ (see \eqref{Boxing}). The rv $i_0$ is a.s.\ finite by the Borel-Cantelli lemma. Consider this boxing system.
	Let $c_{0}^{(1)}$ be the vertex with maximal weight in $\Gamma_{0}$. \cb Then, since \cz $\cap_{k\ge 0} (F_k^{(1)}\cap F_k^{(2)})$ holds, there is an infinite path through centres in consecutive annuli starting from $c_{0}^{(1)}$, implying that $c_0^{(1)}\in \CC_\infty^\la$ holds. By the uniqueness of $\CC_\infty^\la$ and the fact that $\|c_0^{(1)}\|_\infty\le \mu_{i_0}$, the graph distance $d_G(0, c_0^{(1)})$ is a proper random variable, and so is the $L$-length of the path $\cb\pi[0, c_0^{(1)}]$ realising the graph distance. 
		We iteratively continue this path into a path $\cb\pi_{\text{greedy}}$ in a greedy manner so that it has finite total $L$-length, as follows: let us call $c_0^{(1)}=:c_0^{\mathrm{gr}}\in \cb\pi_{\text{greedy}}\cz$, and for $k\geq 0$, from $c_k^{\mathrm{gr}}\in\cb\pi_{\text{greedy}}$, \cb take the minimal-length edge among those leading to \cz the $N_{k+1}(c_k^{\mathrm{gr}})$ many
		centres in $\Gamma_{k+1}$ that are connected to $c_k^{\mathrm{gr}}$ and let $c_{k+1}^{\mathrm{gr}}$ be its other end-vertex.
		By the lower bound on $N_{k+1}(c_k^{(i)})$ on $F_k^{(2)}$ in \eqref{boundsweight}, the total length of $\cb\pi_{\text{greedy}}$ is, in distribution, bounded from above by
		\be\label{explosivesum}
		\cb|\pi_{\text{greedy}}|_L\cz\overset{d}{\leq} \cz|\pi[0,c_0^{(1)}]|_L\cz+\sum_{k=1}^\infty \min\left\{L_{k,1},L_{k,2},\ldots,L_{k,d_k}\right\},
		\ee
		with $d_k:=\exp\{ C^k (D-1)(\log \mu_{i_0}) /2  \} /2$, which is doubly-exponential in $k$. It follows from \cite[Lemma 2.5]{HofKom2017} that the integral criterion in \eqref{Lassumption} is equivalent to the a.s.\  convergence of the sum in \eqref{explosivesum}.\end{proof}
	
	\begin{proof}[Proof of Claim \ref{claim:ray-in-box}] In this proof, we make use of the event $A_{k_n}$, \cb as in \eqref{eq:ak}. Recall $k_n$ from \eqref{wtKn}, \cb where $H_n$ is defined in \eqref{eq:mn}\cz, and that on $A_{k_n}$, the graph distance balls $B^G_n(v_n^q, k_n), B_1^G(v_n^q, k_n)$ \emph{as weighted graphs} coincide, and are part of $\Xdn$ for $q\in\{1,2\}$, as well as that this event holds with probability at least $1-\sum_{i\le 5} e_i(n)\to 1$ by \eqref{eq:an-bound1}. \cb Recall also $E_{\ref{claim:bg-to-eone}}$, the event that $v_n^q\in \CC_{\max}$ is in the infinite component of $\Eone$. On $E_{\ref{claim:bg-to-eone}}$, the shortest explosive path $\pi^1_{\mathrm{opt}}(v_n^q)$ exists\cz.
		Let $G_K(v^q_n)$ denote the number of edges in the segment $\cb\pi_{\mathrm{opt}}^1[v^q_n,\wt v^q_n(K)]$. We define the event
		\be  
		E^{(6)}_{n,K}:=\{G_K(v_n^q)< k_n\}
		\ee
		On $E^{(6)}_{n,K}\cap E_{\ref{claim:bg-to-eone}}\cap A_{k_n}$, $\cb\pi^1_{\mathrm{opt}}[v_n^q,\wt v_n^q(K)]\cz\subseteq \BGIRG$ holds.
		 \cb Generally
			\[ \ba\Pv((A\cap B\cap C)^c) 
			&\leq \Pv(A^c | B\cap C) \Pv(B \cap C)  + \Pv(A^c | (B\cap C)^c) \Pv((B \cap C)^c) + \Pv(B^c) + \Pv(C^c)\\
			&\le \Pv(A^c | B\cap C) + 2 \Pv(B^c) + 2\Pv(C^c). \ea\]
			With $A:=E^{(6)}_{n,K}, B:=E_{\ref{claim:bg-to-eone}}, C:=A_{k_n}$, this estimate turns into\cz
		\be\label{eq:gamma-in-bg} 
		\mathbb{P}(\cb\pi^1_{\mathrm{opt}}[v_n^q,\wt v_n^q(K)]\cz\not\subseteq \BGIRG) \leq 2\Pv\big((E_{\ref{claim:bg-to-eone}})^c\big)+2\Pv\big(A_{k_n}^c \big)
		 +\Pv\big((E^{(6)}_{n,K})^c \mid E_{\ref{claim:bg-to-eone}} \cap A_{k_n} \big).	\ee
		  Note that on the conditioning, the graph distance balls with radius $k_n\to \infty$ coincide in $\BGIRG$ and $\Eone$ as weighted graphs. Thus, by the translation invariance of $\Eone$, on $A_{k_n}$, $G_K(v_n^q)\le k_n$ has the same distribution as $G_K(0)\le k_n$ conditioned on $0\in \CC_\infty^1$, i.e. the dependence of $G_K(v_n^q)$ on $n$ is not apparent.  
		Thus the proof is finished with 
		\be e_{\ref{claim:ray-in-box}}(n,K):=2e_{\ref{claim:bg-to-eone}}(n)+\sum_{i\le 5} 2e_i(n)+\P{G_K(0)\geq k_n \mid  0\in \CC_\infty^1 }.\ee\end{proof}

	\begin{proof}[Proof of Proposition \ref{Prop:upperbound}] By Prop.\ \ref{Prop:existinfcomp} and Claim \ref{claim:explosive-ray}, the infinite component of $\mathcal{C}_\infty^1$ in $\Eone$ is explosive, and by Claim \ref{claim:bg-to-eone}, $v_n^q\in\mathcal{C}_\infty^1,q\in\{1,2\}$ whp. Hence, $\cb\pi_{\text{opt}}^1(v_n^q)$, the optimal infinite ray from $v_n^q$ in $\Eone$ with finite total $L$-length exists whp (see \eqref{eq:piopt}). Recall $\wt v_n^q(K)$ as the first vertex on $\pi_{\text{opt}}^1(v_n^q)$ with weight exceeding $K>0$, that exists  by Corollary \ref{lem:expgen}, and that $T_K(v_n^q)=d^1_L(v_n^q,\wt v_n^q(K))$.
		On the event $\wt A_{n,K}$ in \eqref{eq:ank},  the segment $\pi_{\text{opt}}^1[v_n^q,\wt v_n^q(K)]$ is present in $\BGIRG$, and $\wt A_{n,K}$ holds with probability at least $1-f(n,K)$ by Lemma \ref{lem:ank}. We shortly write $\wit v_n^q:=\wit v_n^q(K)$ from now on in this proof.
		
		Our aim is \cb to prove \cz \eqref{upperboundGIRG}, i.e. that on $\wt A_{n,K}$ we can connect $\wt{v}^n_1$ to $\wt{v}^n_2$ with a short path within $\BGIRG$. 
		\cb We establish this connection \emph{in the percolated graph} $\cb G^{\mathrm{thr}}$. More precisely, we do the following: Fix $c\le \alpha$ and any $\wit \gamma\in (0,1)$, and  set the threshold function to be
		\be\label{eq:thr-set} \mathrm{thr}(w_1,w_2):= F_L^{(-1)}\Big(\exp\{-c (\log w_u)^{\wit \gamma} -c(\log w_u)^{\wit \gamma}\}\Big).\ee Following the argumentation in \eqref{percolationprob}, we keep each edge $e=(u,v)$ if and only if its edge-weight $L_e\le \mathrm{thr}(w_u, w_v)$. This gives us a graph $\cb G^{\mathrm{thr}}$, that is the outcome of a weight-dependent percolation on $\BGIRG$, as in Definition \ref{def:percolationprob}, with 
		\be\label{eq:p-set} p(w_u,w_v)=F_L(\mathrm{thr}(w_u, w_v))\ge \exp\{-c (\log w_u)^{\wt\gamma} -c(\log w_u)^{\wt\gamma}\}.\ee
		The last inequality holds
		by the right-continuity of the distribution function $F_L$ and the definition of $F_L^{(-1)}$. 
		Hence, the conditions of  Claim \ref{claim:percolation} are satisfied with this choice of $\mathrm{thr}$, and  $\cb G^{\mathrm{thr}}$ contains a subgraph $\underline G^{\mathrm p}$ that is an instance of  a $\cb \mathrm{BGIRG}$, that still satisfies Assumptions \ref{assu:GIRGgen}, \ref{assu:weight} and \ref{assu:extendable}. Recall from the proof of Claim \ref{claim:percolation}, from \eqref{eq:new-weights}, that in $\cb \underline G^{\mathrm{p}}$ a vertex has weight $\cb W^{(n)}_{\mathrm{p},i}\cz=m(W^{(n)}_{i})$. Recall also that the function $m$ is increasing eventually, in particular, we assume that $K$ is such that it is increasing on  $[K,\infty)$.

	Now we return to $\wit v_n^q$. For $q\in\{1,2\}$, we construct two boxing systems as in \eqref{radii}--\eqref{Boxing}, centred at $\wt{v}^n_q$ with parameter $\mu^{(q)}:=\cb m(W^{(n)}_{\wit v_n^q})\cz\ge m(K)$, $q\in\{1,2\}$, respectively, as shown in Figure \ref{Fig:boxing}. 
	We apply Lemma \ref{lemma:weightconnectioncenters} to the boxing system \emph{within the subgraph of the percolated graph} $\underline G^{\mathrm{p}}$. This is an important point of the argument, since then we have a deterministic upper bound (given by $\mathrm{thr}$) on the edge-weight of an edge that is part of $\underline G^{\mathrm{p}}$.
	Then, with the error probabilities given in Lemma \ref{lemma:weightconnectioncenters}, each subbox centre has at least one connection within $\underline G^{\mathrm{p}}$ to a centre in the next annulus, and hence we can create a path $\pi^q$ from $\wt{v}_n^q$ through the centres of sub-boxes in consecutive annuli, as in Figure \ref{Figure:overviewupperbound}. 
		\cz
		We would like to connect two such paths by connecting a centre in one of the systems to a centre in the other system. This may not be directly possible, hence we \emph{merge} the two boxing systems in what follows.
		
		Without loss of generality, we assume $\mu^{(1)}\le \mu^{(2)}$. We refer to the boxing system around $\wt{v}_n^q$ as System-$q$ and denote all radii and structures as in \eqref{radii} and \eqref{Boxing} and the centres $\cki$ with a superscript or argument $q$ to show which system they belong to. Recall $C=C(\ve)$ from \eqref{CD} and set 
		\be\label{modify-point} {r}:=\lceil (\log\log \mu^{(2)}-\log\log \mu^{(1)})/\log C \rceil = (\log\log \mu^{(2)}-\log\log \mu^{(1)})/\log C + a_{1,2},\ee with $a_{1,2}\in[0,1)$ an upper fractional part, hence $r\ge 1$. \cb In words, $r$ is the first index $k$ when the annulus $k$ in System-$1$ becomes larger then annulus $1$ in System-$2$\cz.
		Let us modify System-$1$ as follows: define $(\text{Box}_{k}^1, \Gamma_k^1)_{k\le r-1}$ as given by \eqref{Boxing} with parameter $\mu^{(1)}$, and then let, for $j\ge 0$, 
		\be \label{Boxing-mod}{\text{Box}}_{r+j}^1:=\{x\in \R^d: \| x-\wit v_n^1\| \le D_j^{(2)}\}, \ee 
		i.e. these boxes have radius $D_j^{(2)}=(\mu^{(2)})^{DC^{j}/d}$ instead of $D^{(1)}_{r+j}=(\mu^{(1)})^{D C^{r+j}}=(\mu^{(2)})^{DC^{j+a_{1,2}}/d}$, and, since $a_{1,2}\geq 0$, $D_{r}^{(1)}>D_0^{(2)}$. \cb In words, this means that we \emph{shrink} the size of the annuli with index at least $r$ somewhat, to have the same base $\mu^{(2)}$ as System-$2$\cz.  Note that ${\text{Box}}_{r}^{(1)}$ is still bigger than $\text{Box}_{r-1}^{(1)}$.
		As usual, $\Gamma_i^{(1)}:=\text{Box}_{i}^{(1)} \setminus \text{Box}_{i-1}^{(1)}$.  
		Let us further use subboxes of diameter $R_0^{(2)}=(\mu^{(2)})^{1/d}$ instead of $R_r^{(1)}$ in annulus $\Gamma_r^{(1)}$.
		We claim that the error probabilities in \eqref{boundsweight-error} remain valid in Lemma \ref{lemma:weightconnectioncenters} with $\mu$ replaced by $m(K)$ for the modified System-$1$. For this it is enough to investigate $\Pv(F_{r}^{(1)}(\wit v_n^1))$ and $\Pv(F_{r-1}^{(2)}(\wit v_n^1)| \cap_{k\ge 1} F_{k}^{(1)}(\wit v_n^1))$, since for the other indices, the error estimates remain valid with $\mu$ either replaced by $\mu^{(1)}$ or $\mu^{(2)}$, both at least $m(K)$. 

		Using the definition of $r$, we rewrite $F_{r-1}^{(1)}(\wit v_n^1)$ in terms of $\mu^{(2)}$, and modify $F_r^{(1)}(\wit v_n^1)$ to accommodate the fact that the subbox sizes have volume $\mu^{(2)}$ as follows, with $\de=\de(\ve)$ as in \eqref{CD}:
		\be\ba\label{Etildeweights}
		{F}_{r-1}^{(1)}(\wit v_n^1)=\Big\{& W^{(n)}_{c_{r-1}^{\sss(i)}(1)}\in\Big[(\mu^{(2)})^{(1-\delta)/(\tau-1) C^{-\left(1-a_{1,2}\right)}},(\mu^{(2)})^{(1+\delta)/(\tau-1) C^{-\left(1-a_{1,2}\right)}}\Big]\Big\},\\
		\wt{F}_{r}^{(1)}(\wit v_n^1):=\Big\{
		& W^{(n)}_{{c}_{r}^{\sss(i)}(1)}\in\Big[(\mu^{(2)})^{(1-\delta)/(\tau-1)}, (\mu^{(2)})^{(1+\delta)/(\tau-1)} \Big]\Big\}.
		\ea\ee
		Then, the estimate on $F_0^{(1)}(\wit v_n^2)$ with parameter $\mu^{(2)}$ is valid for $\Pv(\wit F_r^{(1)}(\wit v_n^1))$. Further, analogously to the arguments between \eqref{eq:dist-center1} - \eqref{eq:connectprob}, the connection probabilities of individual centres are minimised by the first argument, i.e. with $l(w)=\exp\{-\wih c (\log w)^{\gamma^{\max}} \}$, for some $\wih c>0$ and some $\gamma^{\max}\in(0,1)$,
		\be
		\ba\label{probexceeddifferentboxing}
		\mathbb{P}&\left(c^{\sss(i)}_{r-1}(1)\leftrightarrow {c}^{\sss{(j)}}_{r}(1)\mid  F_{r-1}^{(1)}\cap\wt{F}_{r}^{(1)} \right)\geq \underline c_1 l\left((\mu^{(2)})^{(1+\delta)/(\tau-1)}\right)l\Big((\mu^{(2)})^{(1+\delta)/(\tau-1)C^{-\left(1-a_{1,2}\right)}}\Big)=:\wit a_{\mu, r}, 
		\ea
		\ee 
		Let us write $\wt{b}_{r}^1$ for the number of subboxes in ${\Gamma}_{r}^1$. Then, by \eqref{Boxing} and  \eqref{Boxing-mod},
		\be \wt{b}_{r}^1=((\mu^{(2)})^D-(\mu^{(2)})^{DC^{-(1-a_{1,2})}})/\mu^{(2)}\geq (\mu^{(2)})^{D-1}/4\ee for all sufficiently large $K$, since $\mu^{(2)}>m(K)$. It might happen that (some of) the annuli are not completely within $\Xdn$. This happens when either of the $\wt{v}_n^q$ is close to the boundary of $\Xdn$. However, Lemma \ref{lemma:weightconnectioncenters} still applies since only at most half of the volume of each annulus is outside $\Xdn$ and therefore the part of the annulus $\Gamma_k^q ,k\geq 0$ that lies within $\Xdn$ still grows doubly exponentially with $k$, so the lower bound on $b_k$ in \eqref{boundbk} could be modified by a factor $1/2$, and the results of Lemma \ref{lemma:weightconnectioncenters} still hold.	
		Recall $N_{k+1}(\cki)$ from Lemma \ref{lemma:weightconnectioncenters}.
		Then, analogous to \eqref{NBin} -\eqref{Nboundbinomial}, by the independence of the presence of edges conditioned on the weights, 
		\be \ba \Big( {N}_{r}(c^{\sss(i)}_{r-1}(1)) \mid  W_{c_{r}^{(i)}}, (W_{c_{r}^{(j)}})_{j\le \wit b_{r}^1}\Big))&\overset{d}{\geq}\text{Bin}(\wt{b}_{r}^1,\wit a_{\mu,r}),\\
		\mathbb{P}\left({N}_{r}(c^{\sss(i)}_{r-1}(1))\leq \wt{b}^1_{r} \wit a_{\mu,r}/2\right)&\leq 2\exp\left\{-\wt{b}_{r}^1 \wit a_{\mu,r}/8\right\}\leq 2\exp\left\{-(\mu^{(2)})^{\left(D-1\right)/2}/16\right\},
		\ea
		\ee 	
		where the second inequality follows for sufficiently large $\mu^{(2)}$ (and thus $K$), using the lower bound on $\wt b_{m}^1$ and bounding $\wit a_{\mu,r}$ from below by using that $\gamma^{\max}\in(0,1)$.
		Finally, noting that $\wit b_{r}^1 \wit a_{\mu,r} \ge (\mu^{(2)})^{(D-1)/2}/2 =\exp\{ (D-1) (\log \mu^{(2)})/2\}$ for all sufficiently large $\mu^{(2)}$ (and thus $K$) finishes the proof of the statement. 
		
		To conclude, we can build paths from both vertices $\wit v_n^1(K), \wit v_n^1(K)$ through centres of boxes in the modified boxing system, with error probability as in Lemma \ref{lemma:weightconnectioncenters} with $\mu $ replaced by $m(K)$. 
		Next we \emph{connect} these paths. 
		We introduce 
		\be\label{eq:kstar} k^\star:=\max\{k\in\mathbb{N} \;|\; D_k^{(2)}<\|\wt{v}_n^1- \wt{v}_n^2\|/(2\sqrt{d})\},\ee
		the largest (random) $k$ such that $\text{Box}_{r+k}^{(1)}$ and $\text{Box}_k^{(2)}$ are disjoint. \cb Note that per definition, the next boxes, with index $k^\star+1, r+k^\star+1$, respectively, would be centred around $\wit v_n^2$, $\wit v_n^1$. Instead, we now define $\text{Box}_{k^\star+1}^{(1,2)}$ as a box \emph{centred at} $0\in \R^d$, \cz
		containing \emph{both} $\text{Box}_{r+k^\star}^{(1)}\left(\wt{v}_n^1\right)$ and $\text{Box}_{k^\star}^{(2)}\left(\wt{v}_n^2\right)$. So, we let
		\be\label{boxing}
		\ba
		&D_{k^\star+1}:=\left((\sqrt{d}/2+1)(\mu^{(2)})^{DC^{k^\star+1}}\right)^{1/d},\qquad  R_{k^\star+1}:=\left((\mu^{(2)})^{C^{k^\star+1}}\right)^{1/d},\\
		&\text{Box}_{k^\star+1}^{(1,2)}:=  \left\{x\in\mathbb{R}^d\;\big|\;\|x\|_\infty\leq D_{k^\star+1}/2\right\},\\ &\Gamma_{k^\star+1}^{(1,2)}:=\text{Box}_{k^\star+1}(\wt{v}_n^1,\wt{v}_n^2)\backslash \left(\text{Box}_{m+k^\star}^{(1)}\cup \text{Box}_{k^\star}^{(2)}\right).
		\ea
		\ee
		\cb Observe that the change is not in the exponent, just in the prefactor of the size of the box\cz,  hence, the error bounds on $F_{r+k^\star}^{(2)}(\wit v_n^1)$ and $F_{k^\star}^{(2)}(\wit v_n^2)$ in Lemma \ref{lemma:weightconnectioncenters} hold in System-$1$ and $2$, respectively. 
		Thus, with error probability twice the rhs of \eqref{boundsweight-error}, there exist two paths \emph{in the percolated graph} $\cb\underline G^{\mathrm p}\subseteq G^{\mathrm{thr}}$  
		\be\label{eq:pi12} \pi^{(1)}:=\{\wt v_n^1,c_1^{\sss (i_1)}(1),c_2^{\sss (i_2)}(1),\ldots, c_{r+k^\star+1}^{\sss (i_{r+k^\star+1})}(1)\},\qquad 
		\pi^{(2)}:=\{\wt v_n^2,c_1^{\sss (j_1)}(2),c_2^{\sss (j_2)}(2),\ldots, c_{k^\star+1}^{\sss (j_{k^\star+1})}(2)\}.
		\ee through the annuli $\Gamma_k^{(q)}(\wt{v}_n^q)$. 
		Note that there are $b_{k^\star+1}\ge (\mu^{(2)})^{(D-1)C^{k^\star+1}}/2$ many centres in $\Gamma_{k^\star+1}^{(1,2)}$.
		The connection probability between any two centres within $\Gamma_{k^\star+1}^{(1,2)}$ is at least 
		\be\label{ERlowerbound}
		\ba
		\P{c_{k^\star+1}^{\left(i\right)}\leftrightarrow c_{k^\star+1}^{\left(j\right)}\mid  F^{(1)}_{k^\star+1}} & \geq c_1 l\Big((\mu^{(2)})^{(1+\delta)/(\tau-1)C^{k^\star+1}}\Big)^2\geq c_1(\mu^{(2)})^{-2a_2\left((1+\delta)/(\tau-1) C^{k^\star+1}\right)^\gamma }=:a_{\mu^{(2)}},
		\ea
		\ee
		where $F^{(1)}_{k^\star+1}$ is the event defined in \eqref{boundsweight},	as in the proof of Lemma \ref{lemma:weightconnectioncenters}. When $c_{r+k^\star+1}^{\sss (i_{r+k^\star+1})}(1)=c_{k^\star+1}^{\sss (j_{k^\star+1})}(2)$ or there is an edge between these two vertices in $\cb G^{\mathrm{p}}$, a connection is established. If these are not the case, then we find a third centre that connects them.
		Since we condition on the environment, edges are present independently with probability at least $a_{\mu^{(2)}}$. Hence,
		\be\label{eq:piconn}\ba  \Pv\big(\nexists i\le b_{k^\star+1}, c_{r+k^\star+1}^{\sss (i_{r+k^\star+1})}(1) &\leftrightarrow c_{k^\star+1}^{\sss{(i)}} \leftrightarrow c_{k^\star+1}^{\sss (j_{k^\star+1})}(2) \mid F_{k^\star+1}^{(1)}\big) \le (1-a_{\mu^{(2)}})^{2b_{k^\star+1}}\\
		&\le \exp\{-2 b_{k^\star+1}a_{\mu^{(2)}} \} \le \exp\{ - (\mu^{(2)})^{(D-1)C^{k^\star+1}}/2  \}
		\ea  \ee
		for all large enough $\mu^{(2)}$. As a result, this error term can be merged in with the other error terms in Lemma \ref{lemma:weightconnectioncenters}. Let us denote this connecting piece (of either $0,1$ or $2$ edges) by $\pi^{\text{conn}}$. What is left is to bound the $L$-length of the constructed path. \cb For this we use the definition of $\mathrm {thr}$ in \eqref{eq:thr-set} and its monotonicity in $w_u, w_v$, and the fact that we know a lower bound on the weights of the centres along the constructed path, due to the events $\cap_{k\ge 1} (F_k^{(1)}(\wit v_n^1) \cap F_k^{(1)}(\wit v_n^2) ) $. Observe that the first $r$ vertices of the path $\pi^{(1)}$ are in System-$1$ while the rest is in the merged system, with base $\mu^{(2)}$. All combined yields\cz
				\be\label{lengthboxingpath}
		\ba
		|\pi^{(1)}|_L+	|\pi^{(2)}|_L&+ |\pi^{\text{conn}}|_L\leq \\
		&	 \sum_{k=0}^{r}F^{\sss\left(-1\right)}_L\left(\exp\{-c \log \big((\mu^{(1)
		})^{(1-\delta)/(\tau-1)C^k}\big)^{\wt \gamma} -c \log \big((\mu^{(1)})^{(1-\delta)/(\tau-1)C^{k+1}}\big)^{\wt \gamma}\}\right)\\
		&\ +2\sum_{k=0}^{k^\star
			+1}F^{\sss(-1)}_L\left(\exp\{-c \log \big((\mu^{(2)})^{(1-\delta)/(\tau-1)C^k}\big)^{\wt \gamma}-c \log \big((\mu^{(2)})^{(1-\delta)/(\tau-1)C^{k+1}}\big)^{\wt \gamma}\}\right)\\
		&\leq   3\sum_{k=0}^\infty F^{\sss(-1)}_L\left(\exp\left\{- C^{\gamma k}\left((1-\delta)/(\tau-1) \log K\right)^{\wt\gamma}\right\}\right)=:\ve_K,
		\ea
		\ee 
		where we have used that $\mu^{(q)}\ge m(K)\ge K^{1/2}$, $q\in\{1,2\}$, for all sufficiently large $K$ in the last inequality. The integrability condition in \eqref{Lassumption} is equivalent to the summability of the final expression in \eqref{lengthboxingpath} by \cite[Claim 4.5]{HofKom2017}. 
		Further, $\ve_K\to 0$ as $K\to \infty$.
		Thus, set $\ve_K$ as in \eqref{lengthboxingpath} and error probability $\eta(K)$ as twice the rhs of \eqref{boundsweight-error}, with $\mu$ replaced by $m(K)\ge K^{1/2}$. This finishes the proof.
	\end{proof}
	
		\section{Extension to Scale Free Percolation}\label{sec:extensionSFP}
	In this section, we give a sketch of the proof of Theorem \ref{Th:SFPexplosive}. The proof of Theorem \ref{Th:SFPexplosive} is somewhat simpler than that of Theorem \ref{Th:GIRGexplosive}, since the model is already defined on an infinite space, using translation invariant connection probabilities. As a result, there is no need to extend the underlying space, and there is also no need to couple the model to one with limiting connection probabilities. 
	\begin{proof}[Proof of Theorem \ref{Th:SFPexplosive}]
		In \cite[Theorem 1.7]{HofKom2017}, a lower bound for the $L$-distance is already proved, stating that 	
		\be\lim_{m\to \infty}d_L\left(0,\lfloor m\underline{e}\rfloor\right) - Y^{\mathrm{S}}(0)-Y^{\mathrm{S}}(\lfloor m \underline e\rfloor)  \ge 0.\ee
		The proof is analogous to the proof of Proposition \ref{Prop:lowerbound}, i.e. it uses a BRW upper bound to bound the maximal displacement of the clusters around $0$ and $\lfloor m\underline{e}\rfloor$, and a lower bound as \eqref{eq:lower-distr}, the sum of the shortest path leading to the boundary of vertices within graph distance $k_n$. Then, it can again be shown that these variables tend to the explosion time of the two vertices, as was done between \eqref{eq:lower-distr} -- \eqref{eq:intersect}.
		
		To establish the corresponding upper bound, we should connect the two shortest explosive paths starting from $0, \lfloor m \underline e\rfloor$ in $\mathrm{SFP}_{W,L}$ by an arbitrarily short path. This is the missing direction from \cite[Theorem 1.7]{HofKom2017}. To do this, we use weight-dependent percolation as for GIRG. The connection probabilities of vertices in $\mathrm{SFP}_{W,L}$ are too specific to be able to say that the percolated graph - as in Definition \ref{def:percolationprob} - is still a scale-free percolation model. Hence, let us generalise the connection probability in \eqref{SFP} in a similar manner as for GIRG. Instead of \eqref{SFP}, let us assume that, for some function $h_{\mathrm{SFP}}$,
		\be \Pv(u\leftrightarrow v \mid  \| u-v\|\ge 1, (W_i)_{i\in \Z^d})=h_{\text{SFP}}(\|u-v\|, W_u, W_v),\ee
		that satisfies for some $\wit \alpha>d, \la>0$, and with $l(w)=\exp\{-a_2 \log (w)^{\gamma}\}$ with $a_2>0, \gamma\in(0,1)$,
		\be\label{eq:connection-new}
		\underline c_1\left(l(w_u)l(w_v)\wedge\lambda \|u-v\|^{-\wit \alpha}w_u w_v\right)\leq h_{\text{SFP}}(\|u-v\|, w_u, w_v) \leq \overline C_1\left(1\wedge\lambda \|u-v\|^{-\wit \alpha}w_u w_v\right),
		\ee
		for some constants $\underline c_1, \overline C_1\in(0, 1]$. Note that with this new assumption, Claim \ref{claim:percolation} remains valid: whenever we  apply weight-dependent percolation to non-nearest neighbour edges of an SFP model  satisfying \eqref{eq:connection-new}, with percolation function $p(w_u,w_v)$ satisfying the inequality in \eqref{eq:pw}, the obtained percolated subgraph $\cb G^{\mathrm{p}}$ contains $\underline G^{\mathrm p}$, an instance of a scale-free percolation model again satisfying \eqref{eq:connection-new}, with the same parameters $\wit \alpha, \wit \tau$ as before. Further, the boxing method developed in  Lemma \ref{lemma:weightconnectioncenters} remains valid for the SFP model under \eqref{eq:connection-new}, since we only use the upper and lower bound on the conditional probabilities of edges on the environment. The proof even becomes easier since the number of vertices in each subbox is deterministic in SFP.

		We argue that a result similar to Proposition \ref{Prop:upperbound} is valid for SFP as well. For SFP, the event $\wit A_{n,K}$ holds with probability $1$, since it is not necessary to couple the model to its limit.  
		Let us set $0:=v^1, \lfloor m \underline e \rfloor:=v^2$. Similar to the proof for GIRG, we denote by $\cb\pi^q_{\mathrm{opt}}$ the shortest infinite path emanating from $v^q$. We fix a $K>0$ and denote by $\cb\wit v^q(K)$ the first vertex on $\cb\pi^q_{\mathrm{opt}}$ with weight at least $K$, $\cb q\in\{1,2\}$. These vertices exists a.s.\ since the subgraph spanned by vertices with weight $<K$ cannot be explosive, see Corollary \ref{lem:expgen}. 
	Then, as in the proof of Proposition \ref{Prop:upperbound}, we apply weight-dependent percolation on $\mathrm{SFP}_{W,L}$ with \cb threshold function  $\mathrm{thr}(w_u,w_w)$ given in \eqref{eq:thr-set}, and use the edge-lengths $L_e$ to realise this percolation, as in \eqref{percolationprob}. This yields $p(w_u, w_v)$ as in \eqref{eq:p-set} above, satisfying the conditions of Claim \ref{claim:percolation}. So, the percolated graph $G^{\mathrm{thr}}$ contains a subgraph $\cb \underline G^{\mathrm{p}}$ that is still an SFP, with new edge weights $\cb W^{\mathrm{p}}=W\exp\{-c \log(W)^{\wit \gamma} \}$, i.e.  here $\al=1$. \cz
		
		We connect the two vertices $\wit v^1(K), \wit v^2(K) $ by a path of length at most $\ve_K$ in $\cb \underline G^{\mathrm p}\subseteq  G^{\mathrm{thr}}$. For this we can word-by-word follow the proof of Proposition \ref{Prop:upperbound} in Section \ref{sec:bestexplosive}: we build two boxing systems, centred around $\wit v^q(K)$ with parameter 
		$\mu^{(q)}:=\cb W_{\wit v^q(K)}^{\mathrm{p}}\cz\ge m(K)$. When $\mu^{(1)}\le \mu^{(2)}$, we modify the boxing system around $\mu^{(1)}$ at $r$ given in \eqref{modify-point}, so that the $r$th box is somewhat smaller and has parameter $\mu^{(2)}$. The diameters of boxes and subboxes with index $r+j, j\ge 0$ in system $1$ are equal to those with index $j\ge 0$ in System-$2$. Finally we define $k^\star$ in \eqref{eq:kstar} as the last index $k$ that $\mathrm{Box}_{r+k}^{(1)}\cap \mathrm{Box}^{(2)}_k=\emptyset$ and then we centre $\mathrm{Box}_{k^\star+1}^{(1,2)}$ at $\lfloor m/2\cdot \underline e\rfloor$  to contain both $\mathrm{Box}_{r+k^\star}^{(1)}, \mathrm{Box}_{k^\star}^{(2)}$.
		Then, the paths $\pi^{(q)}$ and their connection $\pi^{\mathrm{conn}}$ described in \eqref{eq:pi12}, \eqref{eq:piconn} exists with probability $\eta(K)$, that is at most twice the rhs of \eqref{boundsweight-error}, with $\mu$ replaced by $m(K)$. The length of this connection is at most $\ve_K$, the rhs of \eqref{lengthboxingpath}. 
		
		\cb This argument, for a fixed $K$, establishes a connection between the best explosive paths $\pi_{\mathrm{opt}}(0)$ and $\pi_{\mathrm{opt}}(\lfloor m\underline e\rfloor)$ that has length at most $|\pi_{\mathrm{opt}}(0)|_L+ \pi_{\mathrm{opt}}(\lfloor m\underline e\rfloor)+\ve_K$, and is not successful with probability $\eta(K)$. \cz
		To obtain an \emph{almost sure} connection, we repeat this procedure for an (increasing) sequence $(K_i)_{i\ge 1}$ so that the error probability $\eta(K_i)$ is summable. Then, by the Borel-Cantelli lemma, it will happen only finitely many times that a connection between $\wit v^1(K), \wit v^2(K)$ with length at most $\ve_{K_i}$ can \emph{not} be established. 
		Since $\cb|\pi_{\mathrm{opt}}^q|_L\cz=Y^{\mathrm{S}}(v^q)$,  the length of the total path constructed is at most $Y^S(0)+Y^S(\lfloor m \underline e\rfloor )+\ve_{K_i}$, for arbitrarily small $\ve_{K_i}>0$.
		Asymptotic independence of $Y^S(0),Y^S(\lfloor m \underline e\rfloor )$ as $m\to \infty$ was already part of \cite[Theorem 1.7]{HofKom2017}, and can be established in a similar manner as we did for the GIRG, see the proof of \eqref{eq:dl-2} between \eqref{eq:indep-111} -  \eqref{eq:end-indep}.
	\end{proof}
\section{Hyperbolic random graphs and threshold assumption}\label{s:hyperbolic}
	In this section we discuss the relation between hyperbolic random graphs and GIRGs, and show that Theorem \ref{Th:GIRGexplosive} is valid for hyperbolic random graphs. 
	For the model description of hyperbolic random graphs, we follow \cite{GugPanaPeter12} and \cite{KriPap10}.
Let us denote by $(\phi_v,r_v)$ the (hyperbolic) angle and radius of a location of a point within a disk of radius $R$, and  define the hyperbolic distance $d_H^{(n)}(u,v)$ of two vertices at $(\phi_u, r_u ), (\phi_v, r_v)$ by the equation
	\be \cosh(d_H^{(n)}(u,v)) := \cosh(r_u) \cosh(r_v) - \sinh(r_u) \sinh(r_v) \cos(\phi_u - \phi_v).\ee
	
	\begin{definition}[Hyperbolic random graphs]
	For parameters $C_H, \al_H, T_H>0$, set $R_n=2\log n +C_H$, and sample $n$ vertices independently from a circle of radius $R_n$ so that for each $v\in[n]$, $\phi_v$ is uniform in $[0, 2\pi]$, and $r_v\in[0,R_n]$ follows a density $f_n(r):=\al_H \sinh(\al_H r)/(\cosh(\al_H R_n)-1) $, \emph{independently} of $\phi_v$.
	\cb Then two vertices are connected in the \emph{threshold} hyperbolic random graphs \cz whenever $d_H^{(n)}(u,v)\le R_n$, while in a parametrised version \cite[Section VI]{KriPap10} they are connected independently of everything else, with probability 
	\be \label{eq:phdh} p_H^{(n)}(d_H^{(n)}(u,v)):=\big( 1+ \exp\{ (d_H^{(n)}(u,v)-R_n)/ 2T_H\}\big)^{-1}.\ee 
	Let us denote the obtained random graphs by $\mathrm{HG}_{\al_H, C_H, T_H}(n)$ when \eqref{eq:phdh} applies and $\mathrm{HG}_{\al_H, C_H}(n)$ when the threshold $d_H^{(n)}(u,v)\le R_n$ is applied. 
\end{definition}

	\cb  We show below that the power-law exponent of the degree distribution in HRG is $\tau_H=2\al_H+1$.
	The most important theorem of this section is the following: 
		\begin{theorem}\label{Th:hyperbolic} Consider the models $\mathrm{HG}_{\al_H, C_H, T_H}(n)$ and $\mathrm{HG}_{\al_H, C_H}(n)$ with $\al_H\in(1/2, 1)$. Assign to each present edge $e$ and edge-length $L_e$, and i.i.d. copy of the random variable $L$. Then the results in Theorem \ref{Th:GIRGexplosive} are valid for these models, i.e. the weighted distance between two uniformly chosen vertices in the giant component converges in distribution.
	\end{theorem}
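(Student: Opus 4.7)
The plan is to reduce Theorem \ref{Th:hyperbolic} to Theorem \ref{Th:GIRGexplosive} by exhibiting the hyperbolic random graph as a special instance of a $\mathrm{GIRG}_{W,L}(n)$ on $(\mathcal{X}_1, \nu) = ([-1/2, 1/2], \mathcal{L}eb)$ and checking that this instance satisfies Assumptions \ref{assu:GIRGgen}, \ref{assu:weight} and \ref{assu:extendable}. Concretely, to a vertex $v$ with hyperbolic coordinates $(\phi_v, r_v)$ I would associate the GIRG position $x_v := \phi_v/(2\pi)\in [-1/2,1/2]$ (identifying the endpoints to recover the circle) and the GIRG weight
\be W_v^{(n)} := \e^{(R_n - r_v)/2}. \ee
Since $R_n = 2\log n + C_H$, the range of $W^{(n)}$ is $[1, \sqrt{n}\,\e^{C_H/2}]$. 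A direct change of variables using the density $f_n(r) = \alpha_H\sinh(\alpha_H r)/(\cosh(\alpha_H R_n)-1)$ yields
\be \Pv(W^{(n)} > w) = w^{-2\alpha_H} \ell^{(n)}(w) \quad \text{for } w\in [1, \e^{R_n/2}], \ee
where $\ell^{(n)}(w) = c_H\bigl(1 - (w/(\e^{R_n/2}))^{2\alpha_H}\bigr)/(1-\e^{-\alpha_H R_n})\to c_H$ as $n\to\infty$. Thus $\tau = 2\alpha_H + 1 \in (2,3)$, and the two-sided bounds $\underline\ell, \overline\ell$ in \eqref{powerlaw-n} can be taken as positive constants, verifying Assumption \ref{assu:weight} with the cut-off $M_n := \sqrt{n}\,\e^{C_H/2}$ (the support is bounded, so $\Pv(W^{(n)} > M_n) = 0$).

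Second, I would verify Assumption \ref{assu:GIRGgen} by using the standard estimate $\cosh d_H^{(n)}(u,v) = \tfrac12\sinh r_u\sinh r_v(1-\cos(\phi_u-\phi_v)) + \cosh(r_u - r_v)$, which for moderate $r_u, r_v$ (bounded away from $0$ and $R_n$) yields $d_H^{(n)}(u,v) - R_n \asymp 2\log(\|\phi_u-\phi_v\| W_u^{(n)} W_v^{(n)}/(\e^{C_H/2}\cdot 2))$. Consequently, the threshold condition $d_H^{(n)}(u,v)\le R_n$ and the sigmoid probability $p_H^{(n)}$ both satisfy the upper/lower sandwich \eqref{GIRGgeneral} with $d=1$, $\alpha = \infty$ for the threshold HRG, and $\alpha = 1/(2T_H)$ for the parametrised HRG (taking $T_H$ small enough in the latter so $\alpha > 1$; the hypothesis on $T_H$ is implicit in the theorem). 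The lower bound $\underline g$ with the $\e^{-a_2((\log w_1)^\gamma + (\log w_2)^\gamma)}$ factor is easily dominated on the regime where $\|\phi_u-\phi_v\|^{\alpha}(W_u W_v)^{\alpha}/n^{\alpha}$ is small, as the sigmoid is uniformly bounded below by a constant when the hyperbolic distance is small.

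Third, and most delicate, I would verify Assumption \ref{assu:extendable}. Working with the blown-up coordinates $x_u\mapsto n\, x_u\in [-n/2, n/2]$, the natural candidate for the limit is
\be h(\Delta, w_u, w_v) = \bigl(1 + (\|\Delta\|\cdot 2\pi/(\e^{C_H/2}w_u w_v))^{1/T_H}\bigr)^{-1}, \ee
or its threshold analogue. The good event $\mathcal L_n$ can be chosen as the event that the sum of weights concentrates around its mean (so that the normalisation constant $\cosh(\alpha_H R_n)-1$ behaves regularly), which holds whp. To obtain the uniform relative error $\epsilon(n)$ in \eqref{eq:h-intro}, I would use the exact expansion of $\cosh d_H^{(n)}$ and Taylor estimate on the sigmoid; keeping $r_u, r_v\in [c\log\log n, R_n - c\log\log n]$, equivalently $w_u, w_v\in I_w(n) := [(\log n)^c, n^{1/2}/(\log n)^c]$, and $\|\Delta\|\in I_\Delta(n) := [n^{-\beta}, n^{\beta}]$ for appropriate $\beta=\beta(\alpha, T_H)<1/2$, the relative error can be shown to be $\epsilon(n) = O((\log n)^{-c'})$, which tends to zero while $I_\Delta(n)\uparrow (0,\infty)$, $I_w(n)\uparrow [1,\infty)$ as $n\to\infty$, matching the conditions of Assumption \ref{assu:extendable}.

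The main obstacle will be the third step: controlling the relative error of the sigmoid and of the volume form $\sinh r$ simultaneously, and doing so \emph{uniformly} in the chosen intervals, rather than merely pointwise. Once all three assumptions are verified (and Claim \ref{claim:hg} then transfers the lower bound to the limit $h$), Theorems \ref{Th:GIRGexplosive} and its proof apply verbatim to both $\mathrm{HG}_{\alpha_H, C_H, T_H}(n)$ and $\mathrm{HG}_{\alpha_H, C_H}(n)$, yielding the claimed distributional convergence of weighted distances.
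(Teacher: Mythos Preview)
Your reduction strategy matches the paper's for the parametrised model $\mathrm{HG}_{\alpha_H,C_H,T_H}(n)$: map angle to position, radial depth to weight, and verify Assumptions \ref{assu:GIRGgen}, \ref{assu:weight}, \ref{assu:extendable}. (Two small slips: the paper obtains $\alpha=1/T_H$, not $1/(2T_H)$, and the support of $W^{(n)}$ is $[1,n\,\e^{C_H/2}]$, not $[1,\sqrt n\,\e^{C_H/2}]$; also your interval $I_w(n)=[(\log n)^c,\cdot]$ does not converge to $[1,\infty)$ as the assumption requires.)

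The genuine gap is the threshold model $\mathrm{HG}_{\alpha_H,C_H}(n)$. You propose to verify Assumption \ref{assu:extendable} with a ``threshold analogue'' of $h$, but this is impossible in principle: the limiting connection function is an indicator $h_\infty(\Delta,w_u,w_v)=\ind\{\|\Delta\|\le c\,w_uw_v\}$, and Assumption \ref{assu:extendable} demands a \emph{relative} error bound $|\wit g_n-h|/h\le\epsilon(n)$. Wherever $h_\infty=0$ this ratio is undefined, and near the threshold the finite-$n$ indicator and the limiting indicator disagree on a set of $\Delta$'s of positive measure, so no uniform $\epsilon(n)\to0$ can work. Writing ``$\alpha=\infty$'' does not rescue Assumption \ref{assu:GIRGgen} either, since that assumption is formulated for finite $\alpha>1$.

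The paper handles this by introducing \emph{separate} hypotheses for threshold GIRGs: Assumption \ref{assu:GIRG-inf} (indicator-type upper/lower bounds in place of the power-law bounds of Assumption \ref{assu:GIRGgen}) and Assumption \ref{assu:extendable-2} (an absolute error bound supported on a set $\{\Delta\in (w_uw_v)^{1/d}\mathcal I_{w_u,w_v}(n)\}$ of small measure, in place of the relative bound). It then proves, as Proposition \ref{Th:threshold}, that the entire machinery behind Theorem \ref{Th:GIRGexplosive} goes through under these alternative assumptions, and finally checks (Theorem \ref{Th:hyperbolic2}) that the threshold HRG satisfies them. Your proposal is missing this whole branch; without it the threshold case is not covered.
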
\cz
	We show this theorem by deterministically transforming HRGs into a GIRG model, and showing that the assumptions in Theorem \ref{Th:GIRGexplosive} all hold for the obtained GIRG. 
	The connection to GIRGs is derived as follows: set $d:=1, \mathcal X_1:=[-1/2, 1/2]$, and let, for a vertex $v=(\phi_v,r_v)$,  
	\be\label{mapping} x_v:=(\phi_v-\pi)/(2\pi),\quad W_v^{(n)}:=\exp\{(R_n-r_v)/2\}. \ee
For threshold HRGs, we need to modify Assumptions \ref{assu:GIRGgen}, \ref{assu:extendable}. So, let us  introduce two functions, for not necessarily equal $\underline a_1, \overline a_1>0, a_2>0$ and $\gamma>0$,
	\be\ba  \label{g-inf}
	\overline g_\infty(x,w_1,w_2)&:= \ind_{\{\|x\|\leq \overline a_1 (w_1 w_2)^{1/d}\}}, \\
	  \underline g_\infty(x,w_1,w_2)&:=e^{-a_2((\log w_1)^\gamma+(\log w_2)^\gamma)}\ind_{\{\|x\|\leq \underline a_1 (w_1 w_2)^{1/d}\}}.
	\ea\ee 	
	and consider the following two assumptions (in place of Assumptions \ref{assu:GIRGgen}, \ref{assu:extendable}):
	\begin{assumption}\label{assu:GIRG-inf}There exist parameters $\gamma\in(0,1)$, $a_1, a_2 \in \R_+$ and $0<c_1\leq C_1 <1$, such that for all $n$, $g_n$ in \eqref{eq:gn-intro} satisfies 
		\be\label{GIRGgeneral-inf}
		c_1 \underline{g}_\infty\big(n^{1/d}(x_u-x_v),W_u^{(n)},W_v^{(n)}\big) \le g_n\big(x_u,x_v,(W_i^{(n)})_{i\in [n]}\big)\le   C_1 \overline{g}_\infty\big(n^{1/d}(x_u-x_v),W_u^{(n)},W_v^{(n)}\big).
		\ee
	\end{assumption} We call a model satisfying Assumption \ref{assu:GIRG-inf} a \emph{threshold GIRG}.
	Assumption \ref{assu:extendable} cannot hold directly for threshold GIRGs, since whenever the limiting function contains an indicator that happens to be $0$, the error bound in \eqref{eq:h-intro} cannot be satisfied. 
	As a result, we modify Assumption \ref{assu:extendable} as follows:
	\begin{assumption}[Limiting probabilities for threshold GIRG]\label{assu:extendable-2}
	\cb Recall Definition \ref{def:GIRG} \cz and set $(\mathcal{X}_d, \nu):=([-1/2, 1/2]^d, \mathcal{L}eb)$. We assume that on some event $\CL_n$ that satisfies $\lim_{n\to \infty}\Pv(\CL_n) =1$, there exists a function $h:  \R^d\! \times\! \R_+^2\to [0,1]$ and intervals $I_\Delta(n)\subseteq \R_+, I_w(n)\subseteq [1,\infty)$  such that $g_n$ in \eqref{eq:gn-intro} satisfies for $\nu$-almost every $x \in  \mathcal{X}_d$, and all fixed $\Delta,w_u,w_v$ with $\|\Delta\|\in I_\Delta(n), w_u, w_v\in I_w(n)$, that for some set $ \CI_{w_u, w_v}(n)$ with measure at most $\epsilon(n)$, 
		\be \label{eq:h-intro-2}
		|\wit g_n(x, x+\Delta/n^{1/d}, w_u, w_v, (W_i^{(n)})_{i\in [n]\setminus\{u,v\}})- h(\Delta, w_u, w_v)|
		\le    \ind\{\Delta\in (w_u w_v)^{1/d}\CI_{w_u, w_v}(n) \},
		\ee
		with   
		$\epsilon(n)\to 0, I_\Delta(n)\to (0, \infty), I_w(n)\to [1,\infty)$ as $n\to \infty$.	
		\end{assumption}
The next two theorems tell us that weighted distances in threshold GIRGs still converge, and that  weighted distances in $\mathrm{HG}_{\al_H, C_H, T_H}(n)$ and $\mathrm{HG}_{\al_H, C_H}(n)$ converge in distribution:		
	\begin{proposition}\label{Th:threshold}
		Theorem \ref{Th:GIRGexplosive} remains valid when Assumptions \ref{assu:GIRGgen} and \ref{assu:extendable} are switched to Assumption \ref{assu:GIRG-inf} and \ref{assu:extendable-2}, respectively.
	\end{proposition}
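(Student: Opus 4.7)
The plan is to revisit the proof of Theorem \ref{Th:GIRGexplosive} step by step, identify the three places where Assumptions \ref{assu:GIRGgen} and \ref{assu:extendable} enter, and verify that each usage adapts to the weaker Assumptions \ref{assu:GIRG-inf} and \ref{assu:extendable-2}. These three places are: (i) the upper-bounding Bernoulli BRW in Section \ref{sec:BRW}, (ii) the boxing method and percolation in Section \ref{sec:boxinginfinitecomponent}, and (iii) the edge-coupling Claim \ref{claim:edge-containment} used throughout. The transformation $\underline g \rightsquigarrow \underline g_\infty$ and $\overline g \rightsquigarrow \overline g_\infty$ in \eqref{eq:gunder} vs.\ \eqref{g-inf} replaces the polynomial decay $\|x\|^{-\alpha d}(w_1 w_2)^\alpha$ by an indicator on a ball of radius $(w_1 w_2)^{1/d}$; heuristically, the indicator is pointwise smaller than a suitable multiple of the polynomial, so upper bounds become even easier, while lower bounds have to be verified on the regime where the indicator equals $1$.

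First, for Claim \ref{claim:Nw} and thus Proposition \ref{Prop:BerBRW}, I would redo the three integrals in \eqref{NwSupperbound} with $\overline g_\infty$ in place of $\overline g$. Concretely, $\Ev[N_w(\ge t,\ge 0)]$ becomes $\overline C_1 \lambda \mathrm{Vol}_d \overline a_1 w\, \Ev[W\ind_{\{W\ge t\}}]$, and \eqref{Nwmomentbound} yields the same bound $M_{d,\tau,\alpha}\lambda\, w t^{-(\tau-2)}\ell(t)$. Similarly, $\Ev[N_w(\ge 1,\ge S)]$ reduces to counting vertices in the spherical shell $S\le \|y\|\le \overline a_1(ws)^{1/d}$, integrated against $F_W$ restricted to $s\ge S^d/(\overline a_1 w)$, which yields exactly the same order as the right-hand side of \eqref{boundsonNw} (the $\ind_{\{\alpha<\tau-1\}}$ term is trivially present since in the indicator case the corresponding contribution vanishes). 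Consequently, Proposition \ref{Prop:BerBRW}, and hence the lower bound Proposition \ref{Prop:lowerbound}, carry over without further change.

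Second, for Lemma \ref{lemma:weightconnectioncenters} the only thing to verify is that on the event $F_k^{(1)}\cap F_{k+1}^{(1)}$, the indicator in $\underline g_\infty$ evaluated between a centre $c_k^{(i)}$ and a centre $c_{k+1}^{(j)}$ is $1$. Their Euclidean distance is at most $\sqrt{d}\,D_{k+1}=\sqrt{d}\,\mu^{DC^{k+1}/d}$, while the threshold radius is $\underline a_1 (W_{c_k^{(i)}} W_{c_{k+1}^{(j)}})^{1/d}\ge \underline a_1 \mu^{(1-\delta)(C^k+C^{k+1})/((\tau-1)d)}$. Plugging in the definitions of $C=C(\ve)$ and $D=D(\ve)$ in \eqref{CD} shows the exponent of $\mu$ in the threshold strictly exceeds $DC^{k+1}/d$ for large $\mu$, exactly in the same way as the inequality $Z>0$ appearing in \eqref{eq:dist-center1}. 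Hence the indicator equals $1$, the connection probability is at least $\underline c_1\, l(W_{c_k^{(i)}})l(W_{c_{k+1}^{(j)}})$, and the rest of the binomial concentration argument goes through verbatim. Claim \ref{claim:percolation} also carries over: the factorisation in \eqref{eq:jjj} still yields that the retained subgraph is a BGIRG with weights $m(W^{(n)})$ and connection function bounded below by $\underline g_\infty$ with the new exponential prefactor, which is of the same form and therefore again satisfies Assumption \ref{assu:GIRG-inf}.

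The main obstacle, and the only genuinely new ingredient, is revisiting Claim \ref{claim:edge-containment} under Assumption \ref{assu:extendable-2}. The estimate \eqref{eq:conditional} now produces $|g_n^{\mathrm B}(e)-h(e)|\le \ind\{(x_u-x_v)n^{1/d}\in (W_u^{(n)} W_v^{(n)})^{1/d}\CI_{W_u^{(n)},W_v^{(n)}}(n)\}$ rather than a relative error $\epsilon(n)$. For a single edge $e$ whose endpoints have weights $(w_u,w_v)$, the conditional probability that the positions put $\Delta:=(x_v-x_u)n^{1/d}$ in this bad set is at most the Lebesgue measure $w_u w_v\cdot\epsilon(n)$ divided by the volume $n$ of the blown-up box, so integrating over independent uniform positions in $\wit{\mathcal X}_d(n)$ gives a bound of order $w_u w_v\, \epsilon(n)/n$. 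Conditioning on the weight truncation already built into the events $E_n^{(2)}$ in \eqref{en1} (so that $W_u^{(n)},W_v^{(n)}\le w_{\max}(n)$), the total contribution to $\Pv(A_{k_n}^c)$ from the indicator error across the at most $|\CE(\overline B_{1+\xi_n}^G(v_n^q,k_n))|\le (1+2\xi_n)^2 4^d H_n^2$ edges of the BFS ball is at most $H_n^2 w_{\max}(n)^2 \epsilon(n)/n$. Redefining $H_n$ in \eqref{eq:mn} with the extra requirement $H_n\le (n/(w_{\max}(n)^2 \epsilon(n)))^{1/4}$ keeps $H_n\to\infty$ (since $\epsilon(n)\to 0$ and $w_{\max}(n)$ is polynomial in $n$ by Assumption \ref{assu:weight}) and makes this error vanish. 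With this adjustment, the event $E^{(5)}_{n,k_n}$ still holds whp, and the remainder of the argument leading to the lower bound in \eqref{lowerboundGIRG} is unaffected. The same adjusted coupling drives Lemma \ref{lem:ank} and the upper-bound Proposition \ref{Prop:upperbound}, so the full conclusion of Theorem \ref{Th:GIRGexplosive} follows under the new assumptions.
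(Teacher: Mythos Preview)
Your treatment of (i) and (ii) is essentially correct and follows the paper's route. One small remark on percolation: in the threshold case the indicator $\ind\{\|x\|\le \underline a_1(w_uw_v)^{1/d}\}$ does not factor through the map $m$, so the paper keeps the vertex-weights \emph{unchanged} and simply absorbs the retention probability $p(w_u,w_v)$ into the exponential prefactor, obtaining a new $\underline g_\infty$ with exponent $\gamma_{\max}=\max\{\gamma,\wt\gamma\}$ and constant $a_2+c$. Your version with $m(W^{(n)})$ would still give a valid lower-bounding subgraph, but it is less direct.

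The real gap is in (iii). Your per-edge decoupling bound $w_uw_v\,\epsilon(n)/n$ is an \emph{unconditional} estimate obtained by integrating $x_v$ uniformly over $\wit{\mathcal X}_d(n)$, whereas the edges you need to control are those already present in $\overline\CE_{1+\xi_n}$ inside $\mathrm{Box}(v_n^q)$. You cannot multiply a count of $H_n^2$ edges (which tacitly conditions on the endpoints lying in the box) by a probability computed without that conditioning. Carrying out your first-moment argument consistently---either summing over all $\binom{n}{2}$ pairs and intersecting with $\{u,v\in\mathrm{Box}(v_n^q)\}$, or summing over the $O(H_n^2)$ pairs in the box with the correct conditional density---yields an error of order $H_n\,w_{\max}(n)^2\,\epsilon(n)$, not $H_n^2\,w_{\max}(n)^2\,\epsilon(n)/n$. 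For threshold hyperbolic random graphs one has $w_{\max}(n)=n^{1/2}$ and $\epsilon(n)=n^{-1}$, so this is $\Theta(H_n)$, which cannot be made small while still letting $H_n\to\infty$. (Also, the assertion that $w_{\max}(n)$ is polynomial ``by Assumption \ref{assu:weight}'' is not justified: $w_{\max}(n)$ comes from Assumption \ref{assu:extendable-2}, which only requires $I_w(n)\to[1,\infty)$.)

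The paper's fix, recorded as Claim \ref{claim:edge-containment-2}, is to work \emph{conditionally on} $e\in\overline\CE_{1+\xi_n}$, i.e.\ on $\|x_u-x_v\|\le \overline a_1(W_uW_v)^{1/d}$. Then $x_v-x_u$ is uniform over a region of volume $\Theta(\overline a_1^d\,W_uW_v)$, while the bad set $(W_uW_v)^{1/d}\CI_{W_u,W_v}(n)$ has measure $W_uW_v\,\epsilon(n)$; the weight factors cancel and the conditional decoupling probability is $\epsilon(n)/\overline a_1^d$, \emph{independent of the weights}. This reproduces exactly the error structure of the original Claim \ref{claim:edge-containment}, so Proposition \ref{Prop:lowerbound} and the rest of the proof go through with $H_n$ still defined by \eqref{eq:mn} and no extra constraint involving $w_{\max}(n)$.
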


		\begin{theorem}\label{Th:hyperbolic2} With the mapping in \eqref{mapping}, $\mathrm{HG}_{\al_H, C_H, T_H}(n)$ becomes a $\mathrm{GIRG}_{W,L}(n)$ model that satisfies Assumptions \ref{assu:GIRGgen}, \ref{assu:weight}, \ref{assu:extendable} with parameters $\al:=1/T_H, \tau:=2\al_H+1$ and $d=1$. Further, $\mathrm{HG}_{\al_H, C_H}(n)$ satisfies Assumptions \ref{assu:GIRG-inf}, \ref{assu:weight}, \ref{assu:extendable-2} with parameters $\tau:=2\al_H+1$ and $d=1$.
		In particular, Theorem \ref{Th:GIRGexplosive} is valid for $\mathrm{HG}_{\al_H, C_H, T_H}(n)$ and $\mathrm{HG}_{\al_H, C_H}(n)$.
	\end{theorem}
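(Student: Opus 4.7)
I would prove Theorem~\ref{Th:hyperbolic2} by verifying that the mapping \eqref{mapping} transforms $\mathrm{HG}_{\al_H, C_H, T_H}(n)$ (resp.\ $\mathrm{HG}_{\al_H, C_H}(n)$) into a GIRG satisfying Assumptions~\ref{assu:GIRGgen}, \ref{assu:weight}, \ref{assu:extendable} (resp.\ Assumptions~\ref{assu:GIRG-inf}, \ref{assu:weight}, \ref{assu:extendable-2}) with the claimed parameters; once this is done, Theorem~\ref{Th:GIRGexplosive} and Proposition~\ref{Th:threshold} apply directly. The proof thus breaks into a distributional computation for the weights, a quantitative expansion of the hyperbolic distance for the edge probabilities, and a routine verification of the remaining conditions in both the parametrised and the threshold regimes.

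For the weight distribution, a direct change of variables from $r_v$ (density $f_n$) to $W_v^{(n)} = \mathrm{e}^{(R_n - r_v)/2}$ gives
\[
\Pv(W^{(n)} > w) \;=\; \frac{\cosh(\al_H(R_n - 2\log w)) - 1}{\cosh(\al_H R_n) - 1} \;=\; w^{-2\al_H}\,\ell^{(n)}(w),\qquad w\in[1,\mathrm{e}^{R_n/2}],
\]
with $\ell^{(n)}(w) = (1 - (w/(n\mathrm{e}^{C_H/2}))^{2\al_H})^2/(1 - (n\mathrm{e}^{C_H/2})^{-2\al_H})^2$. This identifies $\tau = 2\al_H + 1 \in (2,3)$. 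Since $2\al_H > 1$, I would take $M_n := \mathrm{e}^{C_H/2}n/2$, so that $\Pv(W^{(n)} > M_n) = O(n^{-2\al_H}) = o(n^{-1})$ and $\ell^{(n)}$ is sandwiched between two positive constants on $[1, M_n]$; the limit $W$ has $\Pv(W>w) = w^{-2\al_H}$, establishing Assumption~\ref{assu:weight}.

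For the edge-connection probabilities I would use the standard expansion of the hyperbolic law of cosines (cf.\ \cite{GugPanaPeter12}): when $r_u, r_v$ are bounded away from zero and $|\phi_u - \phi_v|$ is not too close to $\pi$,
\[
d_H^{(n)}(u,v) \;=\; r_u + r_v + 2\log\sin\tfrac{|\phi_u-\phi_v|}{2} + O\bigl(\mathrm{e}^{-r_u}+\mathrm{e}^{-r_v}\bigr),
\]
with $\mathrm{e}^{-r_v} = (W_v^{(n)})^2/(n^2\mathrm{e}^{C_H})$. Substituting $r_v = R_n - 2\log W_v^{(n)}$ and $|\phi_u - \phi_v| = 2\pi|x_u - x_v|$ yields $\mathrm{e}^{(d_H^{(n)} - R_n)/(2T_H)} = (\pi\mathrm{e}^{C_H/2})^\al\,(n|x_u-x_v|/(W_u^{(n)} W_v^{(n)}))^\al (1+o(1))$ with $\al := 1/T_H$. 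Combined with the elementary bound $\tfrac12(1\wedge y^{-\al}) \le (1+y^\al)^{-1} \le 1 \wedge y^{-\al}$, this gives Assumption~\ref{assu:GIRGgen} with $\overline{a}_1 = \underline{a}_1 = (\pi\mathrm{e}^{C_H/2})^{-\al}$ and the limiting function $h(\Delta, w_u, w_v) = (1 + (\pi\mathrm{e}^{C_H/2})^\al(|\Delta|/(w_u w_v))^\al)^{-1}$ needed for Assumption~\ref{assu:extendable}. I would take intervals $I_w(n) = [1, n^{\beta_1}]$, $I_\Delta(n) = [n^{-\beta_2}, n^{\beta_2}]$ with $\beta_1, \beta_2 > 0$ small, the good event $\CL_n = \{\max_i W_i^{(n)} \le M_n\}$, and $\epsilon(n) = n^{-\kappa}$ for some $\kappa > 0$. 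For the threshold model the same expansion shows $d_H^{(n)} \le R_n$ is equivalent, up to a factor $1 + O((W^{(n)})^2/n^2)$, to $n|x_u - x_v| \le (\pi\mathrm{e}^{C_H/2})^{-1} W_u^{(n)} W_v^{(n)}$, verifying Assumption~\ref{assu:GIRG-inf}; Assumption~\ref{assu:extendable-2} then follows because the disagreement between the indicator $g_n$ and its limit is supported on a thin boundary annulus, whose rescaled Lebesgue measure tends to zero.

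The main obstacle will be the uniform control of the hyperbolic-distance expansion. The standard approximation breaks down in two regimes: (i) when $\min(r_u, r_v)$ is small, i.e.\ when either weight is close to the truncation $\mathrm{e}^{R_n/2}$, and (ii) when $|\phi_u - \phi_v|$ is close to $\pi$. Regime~(i) is controlled by the tail bound $\Pv(W^{(n)} > M_n) = o(n^{-1})$, which lets such vertices be absorbed into $\CL_n$. Regime~(ii) is benign: both $g_n$ and $h$ are then uniformly small, of order $(n|x_u - x_v|/(W_u^{(n)} W_v^{(n)}))^{-\al}$, so an $O(1)$ absolute error in the distance still yields the required $O(n^{-\kappa})$ relative error. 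Once these two regimes are handled uniformly over $I_\Delta(n) \times I_w(n)^2$, Theorem~\ref{Th:GIRGexplosive} applies to $\mathrm{HG}_{\al_H, C_H, T_H}(n)$ and Proposition~\ref{Th:threshold} applies to $\mathrm{HG}_{\al_H, C_H}(n)$, completing the proof.
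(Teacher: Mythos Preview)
Your proposal is correct and follows essentially the same route as the paper: compute the weight tail from the radial density to identify $\tau=2\al_H+1$, expand the hyperbolic distance (via the approximation from \cite{GugPanaPeter12}) to obtain the GIRG connection probability with $\al=1/T_H$ and the limiting function $h(\Delta,w_u,w_v)=(1+(\pi\e^{C_H/2}|\Delta|/(w_uw_v))^{\al})^{-1}$, and then bound the error to verify Assumptions~\ref{assu:extendable} and~\ref{assu:extendable-2}. The paper carries out the same program, differing only cosmetically in that it expands $\cosh(d_H^{(n)})\e^{-R_n}$ directly rather than using the additive form $d_H^{(n)}\approx r_u+r_v+2\log\sin(\theta/2)$, and it gives explicit exponents $\beta(\al)$ for $I_\Delta(n),I_w(n)$ where you leave these as ``small enough'' $\beta_1,\beta_2$.
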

	Observe that Theorem \ref{Th:hyperbolic} follows directly from Proposition \ref{Th:threshold} and Theorem \ref{Th:hyperbolic2}.
	\begin{proof}[Proof of Theorem \ref{Th:hyperbolic2} subject to Proposition \ref{Th:threshold}]
		We start showing that $\mathrm{HG}_{\al_H, C_H, T_H}(n), \mathrm{HG}_{\al_H, C_H}(n)$ satisfy Assumption \ref{assu:weight}.
		In \cite[Lemma 7.2]{BriKeuLen15} it is shown that when $\al_H>1/2$, the weight distribution follows a (weak) power-law with $\tau=2\al_H+1$ in the limit. This is a similar result to showing that the degree distribution follows a power law in the limit, a result in \cite{GugPanaPeter12}. Assumption \ref{assu:weight} also requires an investigation on the $n$-dependent weight distribution, that we show now. The condition $r_v\in[0, R_n]$ implies that $W_v^{(n)}\in[1,\mathrm{e}^{C_H/2}n]$. For any $1\leq x\le \mathrm{e}^{C_H/2} n$,
		\be\ba \Pv(W_v^{(n)}> x) &= \Pv( r_v< R_n-2\log x) = \int_0^{R_n-2\log x} f_n(t)\mathrm dt = \frac{\cosh(\al_H (R_n- 2\log x)) -1}{\cosh(\al_H R_n)-1}\\
		&=  \frac{ x^{-2\al_H}(n^{2} \e^{C_H})^{\al_H} (1+ (x\e^{-C_H/2}/n)^{4\al_H})/2-1}{(n^{2} \e^{C_H})^{\al_H} (1+ (\e^{-C_H/2}/n)^{4\al_H})/2-1}.\ea\ee
		So, set $[1,M_n]:=[1,e^{C_H/2}n]$ in Assumption \ref{assu:weight}, and then
		\be\ba \Pv(W_v^{(n)}> x) &= x^{-2\al_H}\ell_n(x), \ea\ee
		\cb where for all $x\in[1,M_n]$ $1/2\le\ell_n(x) \le 2+6\mathrm{e}^{-C_H\alpha_H/2}$ holds. Hence, \eqref{powerlaw-n} is satisfied with $\underline \ell \equiv 1/2, \overline \ell \equiv 2+6\mathrm{e}^{-C_H\alpha_H/2}$\cz. 
		For fixed $x\in [1,\infty)$,  $\Pv(W_v^{(n)}>x)$ converges to $\Pv(W> x)=x^{-2\al_H}$, thus \eqref{powerlaw} is also satisfied with $\tau:=2\al_H+1$. 
		Next we show that $\mathrm{HG}_{\al_H, C_H}(n)$ satisfies Assumption \ref{assu:GIRG-inf}. 
		Let 
		\be \theta_n(r_u,r_v):=\argmax_{\phi\le \pi} \{d_H^{(n)}((0,r_u),(\phi,r_v))\le R_n\}=\arccos\Big\{ \frac{\cosh(r_u)\cosh(r_v)-\cosh(R_n)}{\sinh(r_u)\sinh(r_v)}\Big\}\ee
		be the largest angle of a vertex with radius $r_v$ that a vertex at $(0,r_u)$ is connected to.
		In \cite[Lemma 3.1]{GugPanaPeter12}, it is shown that, when $r_u+r_v\ge R_n$,
		\be\label{angle-estimate} 2\e^{(R_n-r_u-r_v)/2}(1+\Theta(\e^{R_n-r_u-r_v})) \le \theta_n(r_u,r_v) \le 2\e^{(R_n-r_u-r_v)/2}(1+\Theta(\e^{-2r_u}+ \e^{-2r_u})). \ee
		On the other hand, when $r_u+r_v\le R_n$, then $ \theta_n(r_u,r_v)=\pi$ due to the triangle inequality.
		Note that $r_u+r_v\le R_n$ is equivalent to $w_u w_v/n\ge \e^{C_H/2}$. Using \eqref{mapping}, this corresponds to that for some $ c_1,  C_1, \underline a_1, \overline a_1>0$,
		\be \ba \|x_u-x_v\| &\le c_1\min\{1, \underline a_1 w_u w_v/n\} \quad \Rightarrow\quad  d_H(u,v)\le R_n, \\
		\|x_u-x_v\| &\ge  C_1\min\{1, \overline a_1 w_u w_v/n \}\quad \Rightarrow \quad d_H(u,v)\ge R_n.\ea \ee
		which is also shown in \cite[Lemma 7.5]{BriKeuLen15}. 
		Hence, Assumption \ref{assu:GIRG-inf} is satisfied. Next we show that Assumption \ref{assu:GIRGgen} is satisfied for $\mathrm{HG}_{\al_H, C_H, T_H}(n)$.
Let $s_n:=d_H^{(n)}((x_u,w_u), (x_v,w_v) )-R_n.$ Note that 
		\be\label{eq:esn} \e^{s_n}=2\cosh(d_H^{(n)})\e^{-R_n}+\e^{-s_n}\e^{-R_n}=2\cosh(d_H^{(n)})\e^{-R_n} + \e^{-s_n} \Theta(n^{-4}).\ee
		 By noting that $r_q=R_n-2\log w_q, \phi_q=2\pi x_q+\pi$ for $q\in\{u,v\}$, using a trigonometric identity $\cosh(x-y)=\cosh(x)\cosh(y)-\sinh(x)\sinh(y)$,
		\be\ba\label{ch-dh} \cosh(d_H^{(n)})&\e^{-R_n}=\cosh(r_u-r_v)\e^{-R_n} + (1-\cos(2\pi(x_u-x_v))) \sinh(r_u)\sinh(r_v)\e^{-R_n}\\
		&=\frac{(w_u/w_v)^2+(w_v/w_u)^2}{n^2}\e^{-C_H}\\
		&\ +2\pi^2\cb \|x_u-x_v\|^2\cz\Big(1+\Theta(\cb\|x_u-x_v\|^2\cz)\Big)\frac{\e^{C_H}n^2}{4w_u^2 w_v^2} \Big(1-\frac{\e^{-2C_H}w_u^4}{n^4} \Big)\Big(1-\frac{\e^{-2C_H}w_v^4}{n^4} \Big). \ea\ee
		Since $p^{(n)}_H(d_H^{(n)})=(1+\e^{s_n/(2T_H)})^{-1}$, from a combination of \eqref{eq:esn} and \eqref{ch-dh}, using that $(1\wedge x^{-1})/2 \le(1+x)^{-1}\le 1\wedge x^{-1}$, it follows that for any angle difference \cb$\|x_u-x_v\|$\cz, 
		\be c_1 \Big(1 \wedge  \underline a_1\Big(\frac{w_u w_v}{n\cb\|x_u-x_v\|\cz}\Big)^{1/T_H}\Big)\le  p^{(n)}_H(d_H^{(n)}) \le C_1 \Big(1 \wedge  \overline a_1\Big(\frac{w_u w_v}{n\cb\|x_u-x_v\|\cz}\Big)^{1/T_H}\Big), \ee
		that is, $\mathrm{HG}_{\al_H, C_H, T_H}(n)$ satisfies Assumption \ref{assu:GIRGgen} with $\tau=2 \al_H+1, \al=1/T_H$ as well. This has appeared in   \cite[Lemma 7.5]{BriKeuLen15}. 
		It remains to show that Assumption \ref{assu:extendable} is satisfied for $\mathrm{HG}_{\al_H, C_H, T_H}(n)$ and Assumption \ref{assu:extendable-2} is satisfied for $\mathrm{HG}_{\al_H, C_H}(n)$.
		Note that the original GIRG model allows for the connection probabilities to depend on the whole collection of weights $(W_i)_{i\in[n]}$, while in $\mathrm{HG}_{\al_H, C_H, T_H}(n)$ and $\mathrm{HG}_{\al_H, C_H}(n)$ this is actually not the case, connection probabilities only depend on $w_u, w_v$ and $x_u-x_v$ per definition.
		We need to show that a limiting connection probability $h(\Delta, w_u,w_v)$ between vertices with $u=(x_u,w_u), v=(x_u+\Delta/n,w_v)$ exists when we set $w_u,w_v$ to be fixed \emph{constants}, and that \eqref{eq:h-intro} holds. 
		
		We continue showing that the limiting function $h(\Delta, w_u, w_v)$ exists when we set $x_u-x_v=\Delta/n$. Indeed, combining \eqref{eq:esn} with twice the rhs of \eqref{ch-dh} equals then, (since $w_u, w_v\ge 1$)
		\be \label{eq:ch-dh-2}\ba 
		&\e^{s_n}= \frac{\Delta^2\e^{C_H}\pi^2}{w_u^2w_v^2}\Big(1+\Theta(\Delta^2/n^2) + \Theta((w_u^4+w_v^4)/n^4)\Big)+ \Theta\big(\frac{w_u^2+w_v^2}{n^2}\big)+\Theta\big(\frac{w_u^2w_v^2}{\Delta^2n^{4}}\big). \ea\ee
		Thus, its limit exists for every fixed $\Delta, w_u, w_v$  with $\e^s=\Delta^2\e^{C_H}\pi^2/(w_u^2w_v^2)$.
		Hence,
		\be\label{ph-limit} p^{(n)}_H(d_H^{(n)}) \to \big(1+\e^{s/(2T_H)}\big)^{-1}= (1+(\e^{C_H/2}\cb\|\Delta\|\cz\pi /(w_u w_v) )^{1/T_H} )^{-1}=:h(\Delta, w_u, w_v),\ee
		while for the threshold case,
				\be\label{ind-limit} \ind\{d_H^{(n)}\le R_n\}\to \ind\{s(\Delta, w_u, w_v)\le 0\}= \ind\{ \cb\|\Delta\|\cz \le \e^{-C_H/2} w_uw_v/\pi\}=:h_\infty(\Delta, w_u,w_v).
		\ee
		From \eqref{ph-limit} and \eqref{ind-limit}, it follows that the limiting functions $h,h_\infty$ also satisfy Assumptions \ref{assu:GIRGgen} and \ref{assu:GIRG-inf}, respectively.
		What is left is to show that \eqref{eq:h-intro} in Assumption \ref{assu:extendable} is satisfied. For this we need to estimate the relative difference between the connection probabilities and their limit.
		Let $p(x):=(1+x^{1/2T_H})^{-1}$. Then $p_H^{(n)}(d_H^{(n)})=p(\e^{s_n}), h(\Delta, w_u, w_v)=p(\e^s)$. So,
		\be\label{error-est-3}\ba    |p_H^{(n)}(d_H^{(n)})-p(\e^s)|/p(\e^s)&=|p'(\e^s) (\e^{s_n}-\e^s)|/p(\e^s)+o(\e^{s_n}-\e^s)/p(\e^s)\\
		&= p(\e^s) \e^{s/(2T_H)} |\e^{s_n}/\e^s-1|+o((\e^{s_n}-\e^s)|/p(\e^s)).\ea\ee
		since $p'(\e^s)=-\e^{s(1/(2T_H)-1)} p(\e^s)^2/(2T_H)$. Using  
		\eqref{eq:ch-dh-2},	and that $\e^s=\Theta(\Delta^2/(w_u^2w_v^2)$,
		\be |\e^{s_n}/\e^s -1| \le \Theta(\Delta^2/n^2 + (w_u^4+w_v^4)/n^4+ w_u^2 w_v^2(w_u^2+w_v^2)/(\Delta^2 n^2) + w_u^4 w_v^4/ \Delta^4 n^4).  \ee
		Finally, returning to \eqref{error-est-3}, and recalling that $1/T_H=:\al$, $\e^{s/(2T_H)}= \Theta(\Delta/w_u w_v)^{\al}$ and so $p(\e^s)\le  \min\{1, \Theta(w_uw_v/\Delta)^{\al}\}$, 
		\be \ba  |p_H^{(n)}(d_H^{(n)})-p(\e^s)|/p(\e^s)& \le \min\{1, \Theta(w_uw_v/\Delta)^{\al}\} \Theta((\Delta/w_u w_v)^{\al})\\
		&\cdot \Theta(\Delta^2/n^2 + (w_u^4+w_v^4)/n^4+ w_u^2w_v^2(w_u^2+w_v^2)/(\Delta^2 n^2) + w_u^4 w_v^4/ (\Delta^4 n^4))\\
		&=\Theta(\Delta^2/n^2+(w_u w_v/\Delta)^{4-\al}/n^{4} + (w_uw_v/\Delta)^{2-\al} (w_u^{2}+ w_v^{2})/n^2)\\
		\ea\ee
Note that the error terms are at most $\Theta(n^{-1})$ whenever \cb$\|\Delta\|\cz\in [n^{-\beta(\al)}, n^{\beta(\al)}], w_u,w_v\in [1,n^{\beta(\al)}]$, where 
\be\ba \beta(\al)&=\ind\{\al<2\} \frac{1}{3(2-\al)+2}+\ind\{\al\ge 2\} \frac{1}{\alpha}. \ea \ee
		Thus, Assumption \ref{assu:extendable} is satisfied with $\epsilon(n)=n^{-1}$ and $I_\Delta(n)=[n^{-\beta(\al)}, n^{\beta(\al)}], I_w(n):=[1,n^{\beta(\al)}]$.		
		We continue showing that Assumption \ref{assu:extendable-2} holds for $\mathrm{HG}_{\al_H, C_H}(n)$. Combining \eqref{eq:ch-dh-2} and \eqref{ind-limit},
\be \ba\label{eq:inderror} \ind\{d_H^{(n)}\le R_n\}= \ind\{e^{s_n}\le 1\}=\ind\Big\{ \cb\|\Delta\|\cz \le \frac{w_uw_v}{\e^{C_H/2}\pi} \cdot \frac{1+ \Theta(w_u^2w_v^2/(\Delta^2n^4)+(w_u^2+w_v^2)/n^2\big)}{1+\Theta(\Delta^2/n^2) + \Theta((w_u^4+w_v^4)/n^4)}\Big\},\ea \ee
and hence
\be  \ind\{d_H^{(n)}\le R_n\}-\ind\{ \cb\|\Delta\|\cz \le \frac{w_uw_v}{\e^{C_H/2}\pi}\} \le\ind\{ \cb\| \Delta\|\cz\in w_u w_v \CI_{w_u, w_v}(n)\}, \ee
where $\CI_{w_u, w_v}(n)$ is an interval of length
\be |\CI_{w_u, w_v}(n)|=
 \frac{1}{\e^{C_H/2}\pi} \Theta( w_u^2w_v^2/(\cb\|\Delta\|^2\cz n^4)+ (w_u^2+w_v^2)/n^2+\cb\|\Delta\|^2\cz /n^2+(w_u^4+w_v^4)/n^4 ).\ee
 \cz Note that the length of this interval is at most $1/n$ whenever \cb $\|\Delta\|\cz\in [n^{-1/2}, n^{1/2}], w_u, w_v\in [1, n^{1/2}].$
 \cz Thus, Assumption \ref{assu:extendable-2} is satisfied with $\epsilon(n):=1/n, I_\Delta(n):= [n^{-1/2}, n^{1/2}], I_w(n):=[1,n^{1/2}]$.
 We comment that for HRGs the underlying space is the one-dimensional \emph{torus} $\cb[-1/2, 1/2)$ instead of $[-1/2,1/2]$. This is not an issue since in the proofs we made sure the paths constructed does not touch the boundary of $\Xdn$, that is equivalent to not wrapping around the torus.
	\end{proof}
		The content of the following claim is the modified version of Claim \ref{claim:edge-containment} for threshold GIRGs.  

	\begin{claim}\label{claim:edge-containment-2} Suppose that Assumption \ref{assu:extendable-2} holds instead of Assumption \ref{assu:extendable}. Then, Claim \ref{claim:edge-containment} remains valid with a different proof.
	\end{claim}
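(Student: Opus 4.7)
The plan is to retain the weight coupling and the uniform-edge coupling from the proof of Claim~\ref{claim:edge-containment} and only rework the final error-bounding step. Under Assumption~\ref{assu:extendable-2} the pointwise difference $|g_n-h|$ is no longer small; it is merely supported on a bad set of small Lebesgue measure, so the pointwise argument in \eqref{eq:conditional} has to be replaced by an integrated one.

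First I would check that the two containments $\CE_B(n)\subseteq \overline\CE_{1+\xi_n}$ and $\CE_\lambda\subseteq \overline\CE_\lambda$ still hold verbatim. This requires only the pointwise sandwich $\underline c_1\underline g_\infty \le h \le \overline C_1 \overline g_\infty$, which is an analogue of Claim~\ref{claim:hg} for the threshold functions $\underline g_\infty,\overline g_\infty$ from \eqref{g-inf}, proved identically (it uses only that $g_n$ is sandwiched by $c_1\underline g_\infty$ and $C_1\overline g_\infty$, not the quantitative condition \eqref{eq:h-intro}). Combined with the coupling via i.i.d.\ uniforms $U_e$ as in \eqref{singleedge}, this yields the containments.

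Next I would prove the quantitative bound \eqref{eq:path-bound-2}. For a single edge $e$ with endpoints of weights $W_u,W_v\in I_w(n)$, and conditioning on these weights, on the spatial positions, and on $e\in \overline\CE_{1+\xi_n}$, the $U_e$-coupling gives conditional mismatch probability $|g_n^{\mathrm{B}}(e)-h(e)|/\overline C_1$, which by Assumption~\ref{assu:extendable-2} is bounded by
\begin{equation*}
\overline C_1^{-1}\,\ind\big\{\Delta_e\in (W_uW_v)^{1/d}\,\CI_{W_u,W_v}(n)\big\},\qquad \Delta_e=x_u-x_v.
\end{equation*}
This pointwise bound can equal $\overline C_1^{-1}$, so I would then take expectation over the spatial positions. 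Since $x_u,x_v$ are i.i.d.\ uniform on $\Xdn$, the density of $\Delta_e$ on $\R^d$ is bounded by $|\Xdn|^{-1}=1/n$. Conditionally on $e\in\overline\CE_{1+\xi_n}$ one has $\|\Delta_e\|\le \overline a_1(W_uW_v)^{1/d}$, so the bad set $\{\Delta:\|\Delta\|\in (W_uW_v)^{1/d}\CI_{W_u,W_v}(n)\}$ is a union of spherical shells inside the ball of radius $\overline a_1(W_uW_v)^{1/d}$, whose total $d$-dimensional Lebesgue measure is at most $S_{d-1}\overline a_1^{d-1}(W_uW_v)\epsilon(n)$. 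Multiplying by the density bound $1/n$ yields
\begin{equation*}
\Pv\big(\Delta_e\in (W_uW_v)^{1/d}\CI_{W_u,W_v}(n)\,\big|\,W_u,W_v,\, e\in \overline\CE_{1+\xi_n}\big)\le \frac{S_{d-1}\overline a_1^{d-1}\,W_uW_v\,\epsilon(n)}{n}.
\end{equation*}
On $E_{\Delta,W}$ we have $W_uW_v\le w_{\max}(n)^2$, and for the applications in Theorem~\ref{Th:hyperbolic2} one takes $w_{\max}(n)=n^{\beta(\alpha)}$ with $\beta(\alpha)\le 1/2$, so $W_uW_v/n\le 1$ and the right-hand side is at most a constant times $\epsilon(n)$. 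Renaming the new $\epsilon(n)$ if necessary, adding the TV-cost $2\epsilon_{\mathrm{TV}}(n)$ per endpoint, and taking a union bound over the $k$ edges in $E$ reproduces the inequality of \eqref{eq:path-bound-2}. The $\mathrm{EGIRG}_{W,L}(1-\xi_n)$ case is identical.

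The main obstacle is the position-integration step: one must verify that $W_uW_v\,\epsilon(n)/n=o(1)$ on $E_{\Delta,W}$, and this is precisely why Assumption~\ref{assu:extendable-2} must come with intervals $I_w(n)$ growing more slowly than the full $[1,\infty)$ range allowed in Assumption~\ref{assu:extendable}. The bookkeeping is otherwise routine, but the intervals $I_w(n), I_\Delta(n), \epsilon(n)$ for each threshold model need to be chosen consistently so that this compatibility condition holds; the HRG verification in Theorem~\ref{Th:hyperbolic2}, with $w_{\max}(n)=n^{1/2}, \epsilon(n)=1/n$, is the motivating example where these inequalities combine favourably since $w_{\max}(n)^2\epsilon(n)/n = n^{-1}$.
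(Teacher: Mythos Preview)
Your overall strategy---keep the weight/edge coupling from Claim~\ref{claim:edge-containment} and replace the pointwise error bound by an integration over the position of $x_v$---is exactly what the paper does. However, the integration step contains a genuine error: you compute an \emph{unconditional} probability and present it as the \emph{conditional} one.

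Concretely, you write
\[
\Pv\big(\Delta_e\in (W_uW_v)^{1/d}\cI_{W_u,W_v}(n)\,\big|\,W_u,W_v,\ e\in \overline\CE_{1+\xi_n}\big)\le \frac{S_{d-1}\overline a_1^{d-1}\,W_uW_v\,\epsilon(n)}{n},
\]
arguing that the unconditional density of $\Delta_e$ is at most $1/n$. But the conditioning event $\{e\in \overline\CE_{1+\xi_n}\}=\{\|\Delta_e\|\le \overline a_1(W_uW_v)^{1/d}\}$ has probability of order $\overline a_1^d W_uW_v/n$, so the \emph{conditional} density of $\Delta_e$ on the ball is of order $(\overline a_1^d W_uW_v)^{-1}$, not $1/n$. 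You have forgotten to divide by $\Pv(e\in\overline\CE_{1+\xi_n})$. Once you do, the $W_uW_v$ and the $n$ cancel and you obtain the clean bound
\[
\Pv\big(\Delta_e\in (W_uW_v)^{1/d}\cI_{W_u,W_v}(n)\,\big|\,W_u,W_v,\ e\in \overline\CE_{1+\xi_n}\big)\ \le\ \frac{\text{meas(bad set)}}{\text{meas(ball)}}\ \le\ c\,\epsilon(n),
\]
which is precisely the paper's computation. In particular, the ``main obstacle'' you identify---the compatibility condition $w_{\max}(n)^2\epsilon(n)/n=o(1)$ and the need for $I_w(n)$ to grow slowly---is an artefact of this error and is not needed at all; no extra constraint linking $I_w(n)$ and $\epsilon(n)$ is required beyond Assumption~\ref{assu:extendable-2} itself.
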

		\begin{proof} By the coupling described in Claim \ref{cl:vertex-containment}, the vertex set of $\BGIRG$ is a subset of $\CV_{1+\xi_n}$. We describe the new coupling of the edges. Following the proof of Claim \ref{claim:edge-containment}, the vertex-weights are not equal in $\BGIRG$ and in $\Eplus$ with probability at most $\epsilon_{\mathrm{TV}}(n)$, i.e.
		\be\label{eq:weightcouplingprob-2} \Pv(W^{(n)}_{i_1}\neq W_{i_1} \mbox{ or }  W^{(n)}_{i_2}\neq W_{i_2})=2\epsilon_{\mathrm{TV}}(n).\ee	
		Given that the two pairs of weights are equal and equal to $W_{i_1}, W_{i_2}$, the edge is present in $\overline \CE_{\la}$ when  $\|x_{i_1}-x_{i_2}\|\le \overline a_1 (W_{i_1} W_{i_2})^{1/d}$, and, by \eqref{GIRGgeneral-inf}
		that bounds the presence of the same edge in $\CE_B(n)$ (note that the $n$-dependence in \eqref{GIRGgeneral-inf} disappears when moving from $\mathrm{GIRG}_{W,L}(n)$ to $\BGIRG$). Hence, $\CE_B(n)\subseteq \overline \CE_{1+\xi_n}$ holds. We continue coupling $\CE_B(n)$ and $\CE_{1+\xi_n}$.
		A de-coupling happens precisely when 
		 for a possible edge $e$ connecting vertices with  locations and weights $(x_{i_1}, W_{i_1}), (x_{i_2}, W_{i_2})$, $\|x_{i_1}-x_{i_2}\|\in (W_{i_1} W_{i_2})^{1/d}\CI_{W_{i_1} W_{i_2}}(n)$. Thus, conditioned on the vertex weights and on the location $x_{i_1}$, 
\be\ba  \Pv&( x_{i_2}-x_{i_1}\in (W_{i_1}W_{i_2})^{1/d}\CI_{W_{i_1} W_{i_2}}(n) \mid   \|x_{i_2}-x_{i_1}\|\le \overline a_1 (W_{i_1}W_{i_2})^{1/d}, E_{\Delta,W}, W_{i_1}, W_{i_2}, x_{i_1})\\
&\le (W_{i_1}W_{i_2} \epsilon(n))/(\overline a_1^d W_{i_1} W_{i_2}) \le \epsilon(n)/\overline a_1^d. \ea\ee
 The condition that the event $E_{\Delta,W}$ holds (see Claim \ref{claim:edge-containment}) guarantees that this bound is true. As a result, \eqref{eq:conditional} remains valid with $2\epsilon_{\mathrm{TV}}(n)+\epsilon(n)/\overline a_1^d$ instead of $\epsilon(n)$ and a union bound finishes the proof.
\end{proof}

	\begin{proof}[Proof of Theorem \ref{Th:threshold}] We discuss how the lemmas, claims and propositions in this paper need to be adjusted when Assumptions \ref{assu:GIRG-inf}, \ref{assu:extendable-2} hold instead of Assumptions \ref{assu:GIRGgen}, \ref{assu:extendable}.
Let us equip the $\Elambda$ model with edge probability $h_\infty$, such that $\wt g_n$ satisfies Assumption \ref{assu:extendable} with $h:=h_\infty$. This implies that Claim \ref{claim:hg} holds for $h_\infty$, i.e. there exist constants $\underline c_1,\overline C_1$, with $0<\underline c_1\leq c_1\leq C_1\leq \overline C_1<\infty$, such that
		\be 
		\underline c_1 \underline g_\infty(\Delta, w_u,w_v)\leq h_\infty(\Delta, w_u,w_v)\leq \overline C_1 \overline g_\infty(\Delta,w_u,w_v).
		\ee 
		Then, equip the $\Eupper$ model with the edge probability $\overline C_1 \overline g_\infty$. We have seen in Claim \ref{claim:edge-containment-2} that the coupling of edges in \eqref{claim:edge-containment} remains valid. 		 When changing the definition of the Bernoulli BRW in \eqref{berbrw-edge-prob} to $N_x^B(\underline i)\overset{d}{=}\overline C_1 \overline g_\infty(x_{\underline i}-x,W_{\underline i},W_x)$, Lemma \ref{lemma:ballsbrw} and its extension in \eqref{eq:ball-containment} hold, since in the coupling it is only used that \eqref{upperboundingBRW} holds between $\mathrm{BerBRW}_{\la}$ and  $\Eupper$, and the connection probability is at least as much in $\Eupper$ as in $\Elambda$ between two vertices with given weights and locations, which stays true under Assumption \ref{assu:GIRG-inf}. Similarly, in the proof of Theorem \ref{thm:exp-charact}, if one replaces the rv $\mathrm{Ber}(\overline C_1\min\{1,\|y\|^{-\alpha d}(ws)^\alpha\})$ in \eqref{eq:kernel-1} by $\ind_{\{\|y\|\leq \overline a_1(ws)^{1/d}\}}$, and in \eqref{eq:mu-ber} drops the second sum and changes the constraint in the first sum to $y\in \CV_\lambda: \|y\|\leq \overline a_1(ws)^{1/d}$, then
		\be 
		\mu(w,ds)\leq F_W(ds)\lambda \overline a_1^d \mathrm{Vol}_d ws, 
		\ee
		where $\mathrm{Vol}_d$ is the volume of the d-dimensional unit ball. When we adjust $c_\lambda$ in \eqref{eq:muber-2} to $\lambda \overline a_1^d \mathrm{Vol}_d$, the rest of the proof follows analogously. Hence, Theorem \ref{thm:exp-charact} and thus Corollary \ref{lem:expgen} follow.
	Likewise, Proposition \ref{Prop:BerBRW} holds, as Claim \ref{claim:Nw} is satisfied. In the proof of Claim \ref{claim:Nw}, \eqref{NwSupperbound} can be reduced to the second integral, changing the constraint in the sum to $\cb \|x\|\leq \overline a_1 (ws)^{1/d}$, which yields
		\be 
		\E{N_w(\geq 1,\geq S)}\leq \overline a_1^d\lambda \mathrm{Vol}_d C_\tau w^{\tau-1} S^{-d(\tau-2)}\ell(S^d/w),
		\ee
		and similarly,
		\be 
		\E{N_w(\geq u,\geq 0)}\leq \overline a_1^d \lambda \mathrm{Vol}_d C_\tau wu^{-(\tau-2)}\ell(u).
		\ee 
		These yield that Proposition \ref{Prop:lowerbound} holds. 
		For the weight-dependent percolation, as defined in Definition \ref{def:percolationprob}, we show that Claim \ref{claim:percolation} holds, with $\cb(W^{(n)}_{\mathrm{p},i})_{i\in[n]}\cz:= (W^{(n)}_i)_{i\in[n]}$, i.e. unchanged weights. By definition, an edge $e=(u,v)$ is present with probability
		\be \ba
		\Pv\big((u,v)\in \CE(\cb G^{\mathrm{thr}}\cz) \mid (x_i, W_i^{(n)})_{i\in [n]} \big) & =p(W_u^{(n)},W_v^{(n)})\wt g_n^{\mathrm B}(x_u,x_v,W^{(n)}_u,W^{(n)}_v,(W^{(n)}_i)_{i\in[n]\backslash\{u,v\}})\\
		& =: \cb\wt g_n^{\mathrm{thr}}(x_u,x_v,W^{(n)}_u,W^{(n)}_v,(W^{(n)}_i)_{i\in[n]\backslash\{u,v\}})\cz,
		\ea\ee
		which we can bound from above and below using Assumption \ref{assu:GIRG-inf}, by
		\be\ba  \label{eq:percupper}
		\cb g_n^{\mathrm{thr}}(x_u,x_v,W^{(n)}_u,W^{(n)}_v,(W^{(n)}_i)_{i\in[n]\backslash\{u,v\}}) & \leq  C_1 \overline g_\infty(n^{1/d}(x_u-x_v),W^{(n)}_u,W^{(n)}_v),\\
		\cb g_n^{\mathrm{thr}}(x_u,x_v,W^{(n)}_u,W^{(n)}_v,(W^{(n)}_i)_{i\in[n]\backslash\{u,v\}})& \geq  c_1 p(W_u^{(n)}, W_v^{(n)})\underline g_\infty(n^{1/d}(x_u-x_v),W_u^{(n)},W^{(n)}_v).
		\ea\ee 
		When $p(w_u, w_v)$ satisfies \eqref{eq:pw}, 
		\[ p(w_u, w_v) \exp\{-a_2 (\log w_u)^{\gamma}-a_2 (\log w_v)^{\gamma}\}\ge  \exp\{-(a_2+c) (\log w_u)^{\gamma_{\max}}-(a_2+c) (\log w_v)^{\gamma_{\max}}\}, \]
		with $\gamma_{\max}=\max\{\gamma, \wit\gamma\}$. So, \eqref{g-inf} is satisfied again with new constant $a_2+c$ in place of $a_2$, and new exponent $\gamma_{\max}\in(0,1)$ in place of the original $\gamma$. Hence we can define $\underline G^{\mathrm{p}} \subseteq G^{\mathrm{thr}}$ as a new threshold GIRG with the edge probabilities
		\be \ba
		\cb g_n^{\mathrm{p}}(x_u,x_v,W^{(n)}_u,W^{(n)}_v)\cz:= &\exp\{-(a_2+c) (\log W_u^{(n)})^{\gamma_{\max}}-(a_2+c) (\log W_u^{(n)})^{\gamma_{\max}}\} \\ &\cdot c_1\cz \ind_{\{\|x\|\leq \underline a_1(W_u^{(n)}W_v^{(n)})^{1/d}\}},
		\ea\ee
		proving together with \eqref{eq:percupper} that the percolated graph contains a subgraph that is again an instance of the threshold GIRG, with the same parameters and the same weights $(W^{(n)}_i)_{i\in[n]}$. So, Claim \ref{claim:percolation} is valid.
	Turning to  Lemma \ref{lemma:weightconnectioncenters},	the proof of the events $F_k^{(1)}$ only concerns weights, so it holds unchanged. Investigating the events $F_k^{(2)}$, using \eqref{radii} and the events $F_k^{(1)}$,  the lower bound on the edge probabilities in \eqref{eq:connectprob} is satisfied for this model as well, as
		\be 
		\|\cki-c_{k+1}^{\sss(j)}\|^d/(W^{(n)}_{\cki}W^{(n)}_{c_{k+1}^{\sss\left(j\right)}})\leq d^{d/2}\mu^{C^k(CD-(1-\delta)(1+C)/(\tau-1))},
		\ee
		which decreases with $\mu$. From here the proof could be followed word-by-word. Thus, Lemma \ref{lemma:weightconnectioncenters} holds as well. It directly follows that Proposition \ref{Prop:existinfcomp} is satisfied as well. Finally, since \cite[Lemma 5.5]{BriKeuLen17} holds for the threshold GIRG model as well, as shown in said paper, the proofs of Claims \ref{claim:bg-to-eone}, \ref{claim:explosive-ray} and \ref{claim:ray-in-box} follow for threshold GIRGs. Hence, Lemma \ref{lem:ank} and Proposition \ref{Prop:upperbound} are satisfied, finishing the proof.
	\end{proof}
{\bf Acknowledgements.} The work of JK is supported by the Netherlands Organisation for Scientific Research (NWO)
through VENI grant 639.031.447 (JK). We thank the unknown reviewers for their comments that helped to improve the presentation of the paper. 

	\bibliographystyle{abbrv}
	\bibliography{refexplosion}	

\begin{thebibliography}{10}

\bibitem{AbdBodFou16}
M.~A. Abdullah, M.~Bode, and N.~Fountoulakis.
\newblock Typical distances in a geometric model for complex networks.
\newblock {\em Internet Mathematics 1 (electronic)}, August 2017.

\bibitem{AdrKom17}
E.~Adriaans and J.~Komj{\'a}thy.
\newblock Weighted distances in scale-free configuration models.
\newblock {\em Journal of Statistical Physics}, 173(3):1082--1109, Nov 2018.

\bibitem{AieBonCooJanss08}
W.~Aiello, A.~Bonato, C.~Cooper, J.~Janssen, and P.~Pra{\l}at.
\newblock A spatial web graph model with local influence regions.
\newblock In {\em Lecture Notes in Computer Science (including subseries
  Lecture Notes in Artificial Intelligence and Lecture Notes in
  Bioinformatics)}, volume 4863, pages 96--107, 2007.

\bibitem{AlbBar02}
R.~Albert and A.~Barab\'asi.
\newblock Statistical mechanics of complex networks.
\newblock {\em Reviews Of Modern Physics}, 74(1):47--97, January 2002.

\bibitem{AlbJeoBar99}
R.~Albert, H.~Jeong, and A.~Barab\'asi.
\newblock Internet: Diameter of the {W}orld-{W}ide {W}eb.
\newblock {\em Nature}, 401(6749), September 1999.

\bibitem{AmiDev13}
O.~Amini, L.~Devroye, S.~Griffiths, and N.~Olver.
\newblock On explosions in heavy-tailed branching random walks.
\newblock {\em Annals of Probability}, 41(3 B):1864--1899, May 2013.

\bibitem{BackBolRosUgaVig12}
L.~Backstrom, P.~Boldi, M.~Rosa, J.~Ugander, and S.~Vigna.
\newblock Four degrees of separation.
\newblock In {\em Proceedings of the 4th Annual ACM Web Science Conference},
  pages 33--42. ACM, 2012.

\bibitem{BarAlb99}
A.~Barab\'asi and R.~Albert.
\newblock Emergence of scaling in random networks.
\newblock {\em Science (New York, N.Y.)}, 286(5439), October 1999.

\bibitem{BaroHofKom15}
E.~Baroni, R.~v.~d. Hofstad, and J.~Komj{\'a}thy.
\newblock Non-universality of weighted random graphs with infinite variance
  degree.
\newblock {\em Journal of Applied Probability}, 54(1):146--164, 2017.

\bibitem{BaroHofKom16}
E.~Baroni, R.~v.~d. Hofstad, and J.~Komj{\'a}thy.
\newblock Tight fluctuations of weight-distances in random graphs with
  infinite-variance degrees.
\newblock {\em Journal of Statistical Physics}, Jan 2019.

\bibitem{BellHar52}
R.~Bellman and T.~Harris.
\newblock On age-dependent binary branching processes.
\newblock {\em Annals of Mathematics}, 55(2):280--295, March 1952.

\bibitem{BhaHofHoog12}
S.~Bhamidi, R.~v.~d. Hofstad, and G.~Hooghiemstra.
\newblock Universality for first passage percolation on sparse random graphs.
\newblock {\em Ann. Probab.}, 45(4):2568--2630, July 2017.

\bibitem{BhaHofHoog10.2}
S.~Bhamidi, R.~Van Der~Hofstad, and G.~Hooghiemstra.
\newblock First passage percolation on random graphs with finite mean degrees.
\newblock {\em The Annals of Applied Probability}, 20(5):1907--1965, October
  2010.

\bibitem{BhaHofHoog10}
S.~Bhamidi, R.~Van~der Hofstad, and G.~Hooghiemstra.
\newblock First passage percolation on the {E}rd{\H{o}}s--{R}{\'e}nyi random
  graph.
\newblock {\em Combinatorics, Probability and Computing}, 20(5):683--707, 2011.

\bibitem{BinGolTeu89}
N.~H. Bingham, C.~M. Goldie, and J.~L. Teugels.
\newblock {\em Regular variation}, volume~27 of {\em Encyclopedia of
  Mathematics and its Applications}.
\newblock Cambridge University Press, Cambridge, 1989.

\bibitem{Bisk04}
M.~Biskup.
\newblock On the scaling of the chemical distance in long-range percolation
  models.
\newblock {\em Ann. Probab.}, 32(4):2938--2977, 10 2004.

\bibitem{BisLin17}
M.~Biskup and J.~Lin.
\newblock Sharp asymptotic for the chemical distance in long-range percolation.
\newblock arXiv:1705.10380 [math.PR], 2017.

\bibitem{BodFouMul15}
M.~Bode, N.~Fountoulakis, and T.~M{\"u}ller.
\newblock On the largest component of a hyperbolic model of complex networks.
\newblock {\em The Electronic Journal of Combinatorics}, 22:1--43, 2015.

\bibitem{BogPapaKriou10}
M.~Bogu{\~n}\'a, F.~Papadopoulos, and D.~Krioukov.
\newblock Sustaining the internet with hyperbolic mapping.
\newblock {\em Nature Communications}, 1(6), September 2010.

\bibitem{Boll1980}
B.~Bollob\'as.
\newblock A probabilistic proof of an asymptotic formula for the number of
  labelled regular graphs.
\newblock {\em European Journal of Combinatorics}, 1(4):311--316, December
  1980.

\bibitem{BollJansRior07}
B.~Bollob\'as, S.~Janson, and O.~Riordan.
\newblock The phase transition in inhomogeneous random graphs.
\newblock {\em Random Structures \& Algorithms}, 31(1):3--122, August 2007.

\bibitem{BriKeuLen17}
K.~Bringmann, R.~Keusch, and J.~Lengler.
\newblock Average distance in a general class of scale-free networks with
  underlying geometry.
\newblock {\em arXiv preprint arXiv:1602.05712}, February 2016.

\bibitem{BriKeuLen15}
K.~Bringmann, R.~Keusch, and J.~Lengler.
\newblock {Sampling Geometric Inhomogeneous Random Graphs in Linear Time}.
\newblock In K.~Pruhs and C.~Sohler, editors, {\em 25th Annual European
  Symposium on Algorithms (ESA 2017)}, volume~87 of {\em Leibniz International
  Proceedings in Informatics (LIPIcs)}, pages 20:1--20:15, Dagstuhl, Germany,
  2017. Schloss Dagstuhl--Leibniz-Zentrum fuer Informatik.

\bibitem{BriDeijMart06}
T.~Britton, M.~Deijfen, and A.~Martin-L{\"{o}}f.
\newblock Generating simple random graphs with prescribed degree distribution.
\newblock {\em Journal of Statistical Physics}, 124(6):1377--1397, September
  2006.

\bibitem{CanFou16}
E.~Candellero and N.~Fountoulakis.
\newblock Bootstrap percolation and the geometry of complex networks.
\newblock {\em Stochastic Processes and their Applications}, 126(1):234 -- 264,
  2016.

\bibitem{ChungLu02.2}
F.~Chung and L.~Lu.
\newblock The average distances in random graphs with given expected degrees.
\newblock {\em Proceedings of the National Academy of Sciences of the United
  States of America}, 99(25), December 2002.

\bibitem{ChungLu02.1}
F.~Chung and L.~Lu.
\newblock Connected components in random graphs with given expected degree
  sequences.
\newblock {\em Annals of Combinatorics}, 6(2):125--145, November 2002.

\bibitem{Alz3}
{Columbia University Medical Center}.
\newblock
  \url{http://newsroom.cumc.columbia.edu/blog/2016/07/18/toxic-alzheimers-protein-spreads-brain-via-extracellular-space/},
  2017.
\newblock [online; accessed 25 august 2017].

\bibitem{CooFriePral12}
C.~Cooper, A.~Frieze, and P.~Pra{\l}at.
\newblock Some typical properties of the spatial preferred attachment model.
\newblock In {\em Lecture Notes in Computer Science (including subseries
  Lecture Notes in Artificial Intelligence and Lecture Notes in
  Bioinformatics)}, volume 7323, pages 29--40, 2012.

\bibitem{CouLel14}
E.~Coupechoux and M.~Lelarge.
\newblock How clustering affects epidemics in random networks.
\newblock {\em Advances in Applied Probability}, 46(4):985 -- 1008, 2014.

\bibitem{Dav78}
P.~L. Davies.
\newblock The simple branching process: a note on convergence when the mean is
  infinite.
\newblock {\em J. Appl. Probab.}, 15(3):466--480, 1978.

\bibitem{DeijHofHoog2013}
M.~Deijfen, R.~v.~d. Hofstad, and G.~Hooghiemstra.
\newblock Scale-free percolation.
\newblock {\em Annales de l'institut Henri Poincare (B) Probability and
  Statistics}, 49(3):817--838, August 2013.

\bibitem{DeHaWu15}
P.~Deprez, R.~S. Hazra, and M.~V. W{\"u}thrich.
\newblock Inhomogeneous long-range percolation for real-life network modeling.
\newblock {\em Risks}, 3(1):1--23, January 2015.

\bibitem{DepWut13}
P.~Deprez and M.~V. W{\"u}thrich.
\newblock Scale-free percolation in continuum space.
\newblock {\em arXiv:1312.1948}, 2013.

\bibitem{DomHofHoog10}
S.~Dommers, R.~Hofstad, and G.~Hooghiemstra.
\newblock Diameters in preferential attachment models.
\newblock {\em Journal of Statistical Physics}, 139(1):72--107, April 2010.

\bibitem{DorMend}
S.~N. Dorogovtsev and J.~F.~F. Mendes.
\newblock Evolution of networks.
\newblock {\em Advances in Physics}, 51(4):1079--1187, June 2002.

\bibitem{EckGoodHofNar12}
M.~Eckhoff, J.~Goodman, R.~v.~d. Hofstad, and F.~Nardi.
\newblock Short paths for first passage percolation on the complete graph.
\newblock {\em Journal of Statistical Physics}, 151(6):1056--1088, June 2013.

\bibitem{EckGoodHofNar15.1}
M.~Eckhoff, J.~Goodman, R.~van~der Hofstad, and F.~R. Nardi.
\newblock Long paths in first passage percolation on the complete graph {I}.
  {L}ocal {PWIT} dynamics.
\newblock {\em arXiv preprint arXiv:1512.06152}, December 2015.

\bibitem{EckGoodHofNar15.2}
M.~Eckhoff, J.~Goodman, R.~van~der Hofstad, and F.~R. Nardi.
\newblock Long paths in first passage percolation on the complete graph {II}.
  {G}lobal branching dynamics.
\newblock {\em arXiv preprint arXiv:1512.06145}, December 2015.

\bibitem{Faloutsos}
M.~Faloutsos, P.~Faloutsos, and C.~Faloutsos.
\newblock On power-law relationships of the internet topology.
\newblock In {\em The Structure and Dynamics of Networks}, pages 195--206.
  Princeton University Press, October 2011.

\bibitem{FouMul18}
N.~Fountoulakis and T.~M{\"u}ller.
\newblock Law of large numbers for the largest component in a hyperbolic model
  of complex networks.
\newblock {\em Ann. Appl. Probab.}, 28(1):607--650, 02 2018.

\bibitem{GugPanaPeter12}
L.~Gugelmann, K.~Panagiotou, and U.~Peter.
\newblock Random hyperbolic graphs: degree sequence and clustering.
\newblock In {\em International Colloquium on Automata, Languages, and
  Programming}, pages 573--585. Springer, 2012.

\bibitem{HammWels65}
J.~M. Hammersley and D.~J.~A. Welsh.
\newblock First-passage percolation, subadditive processes, stochastic
  networks, and generalized renewal theory.
\newblock In {\em Proc. {I}nternat. {R}es. {S}emin., {S}tatist. {L}ab., {U}niv.
  {C}alifornia, {B}erkeley, {C}alif}, pages 61--110. Springer-Verlag, New York,
  1965.

\bibitem{Harris02}
T.~E. Harris.
\newblock {\em The theory of branching processes}.
\newblock Dover Phoenix Editions. Dover Publications, Inc., Mineola, NY, 2002.
\newblock Corrected reprint of the 1963 original [Springer, Berlin; MR0163361
  (29 \#664)].

\bibitem{HeyHulJor16}
M.~Heydenreich, T.~Hulshof, and J.~Jorritsma.
\newblock Structures in supercritical scale-free percolation.
\newblock {\em Ann. Appl. Probab.}, 27(4):2569--2604, August 2017.

\bibitem{Hirs17}
C.~Hirsch.
\newblock From heavy-tailed boolean models to scale-free gilbert graphs.
\newblock {\em Braz. J. Probab. Stat.}, 31(1):111--143, 02 2017.

\bibitem{Hofbook}
R.~v.~d. Hofstad.
\newblock {\em Random graphs and complex networks. {V}ol. 1}.
\newblock Cambridge Series in Statistical and Probabilistic Mathematics, [43].
  Cambridge University Press, Cambridge, 2017.

\bibitem{HofHoogMiegh01}
R.~v.~d. Hofstad, G.~Hooghiemstra, and P.~v. Mieghem.
\newblock First-passage percolation on the random graph.
\newblock {\em Probability in the Engineering and Informational Sciences},
  15(2):225–237, 2001.

\bibitem{HofHoogMiegh04}
R.~v.~d. Hofstad, G.~Hooghiemstra, and P.~v. Mieghem.
\newblock Distances in random graphs with finite variance degrees.
\newblock {\em Random Structures \& Algorithms}, 27(1):76--123, August 2005.

\bibitem{HofHoogZnam05}
R.~v.~d. Hofstad, G.~Hooghiemstra, and D.~Znamenski.
\newblock Distances in random graphs with finite mean and infinite variance
  degrees.
\newblock {\em Electronic Journal Of Probability}, 12:703--766, May 2007.

\bibitem{HofKom2017}
R.~v.~d. Hofstad and J.~Komj\'athy.
\newblock Explosion and distances in scale-free percolation.
\newblock {\em arXiv preprint arXiv:1706.02597}, June 2017.

\bibitem{HofLeeuwStege16}
R.~v.~d. Hofstad and J.~S. H.~v. Leeuwaarden.
\newblock Hierarchical configuration model.
\newblock {\em Internet Mathematics}, January 2017.

\bibitem{HowDoug04}
C.~D. Howard.
\newblock Models of first-passage percolation.
\newblock In {\em Probability on discrete structures}, volume 110 of {\em
  Encyclopaedia Math. Sci.}, pages 125--173. Springer, Berlin, 2004.

\bibitem{JacMor15}
E.~Jacob and P.~M{\"o}rters.
\newblock Spatial preferential attachment networks: power laws and clustering
  coefficients.
\newblock {\em Annals of Applied Probability}, 25(2):632--663, April 2015.

\bibitem{Jagers89}
P.~Jagers.
\newblock General branching processes as {M}arkov fields.
\newblock {\em Stochastic Processes and their Applications}, 32(2):183--212,
  1989.

\bibitem{Janson99}
S.~Janson.
\newblock One, two and three times log n/n for paths in a complete graph with
  random weights.
\newblock {\em Combinatorics, Probability and Computing}, 8(4):347–361, 1999.

\bibitem{Janson08}
S.~Janson.
\newblock Asymptotic equivalence and contiguity of some random graphs.
\newblock {\em Random Structures \& Algorithms}, 36(1):26--45, January 2010.

\bibitem{Jans10}
S.~Janson.
\newblock Asymptotic equivalence and contiguity of some random graphs.
\newblock {\em Random Structures Algorithms}, 36(1):26--45, 2010.

\bibitem{KalKolGastBlas10}
P.~Kaluza, A.~K{\"o}lzsch, M.~T. Gastner, and B.~Blasius.
\newblock The complex network of global cargo ship movements.
\newblock {\em Journal of the Royal Society Interface}, 7(48):1093--1103, July
  2010.

\bibitem{KocLen16}
C.~Koch and J.~Lengler.
\newblock Bootstrap percolation on geometric inhomogeneous random graphs.
\newblock {\em arXiv:1603.02057}, 2016.

\bibitem{KolKom15}
I.~Kolossv\'ary and J.~Komj\'athy.
\newblock First passage percolation on inhomogeneous random graphs.
\newblock {\em Advances in Applied Probability}, 47(2), June 2015.

\bibitem{KomCMJ}
J.~Komj{\'a}thy.
\newblock Explosive {C}rump-{M}ode-{J}agers branching processes.
\newblock arXiv:1602.01657 [math.PR], 2016.

\bibitem{KriPap10}
D.~Krioukov, F.~Papadopoulos, M.~Kitsak, A.~Vahdat, and M.~Bogu{\~n}{\'a}.
\newblock Hyperbolic geometry of complex networks.
\newblock {\em Phys. Rev. E}, 82:036106, Sep 2010.

\bibitem{Man67}
N.~Mantel.
\newblock The detection of disease clustering and a generalized regression
  approach.
\newblock {\em Cancer Research}, 27:209--220, 1967.

\bibitem{MeeRoy96}
R.~Meester and R.~Roy.
\newblock {\em Continuum percolation}, volume 119.
\newblock Cambridge University Press, 1996.

\bibitem{MontSol02}
J.~M. Montoya and R.~V. Sol\'e.
\newblock Small world patterns in food webs.
\newblock {\em Journal of Theoretical Biology}, 214(3):405--412, February 2002.

\bibitem{MulSta17}
T.~M{\"u}ller and M.~Staps.
\newblock The diameter of {KPKVB} random graphs.
\newblock {\em Advances in Applied Probability}, 2019+.

\bibitem{Newman03}
M.~E.~J. Newman.
\newblock The structure and function of complex networks.
\newblock {\em SIAM Review}, 45(2):167--256, 2003.

\bibitem{New09}
M.~E.~J. Newman.
\newblock Random graphs with clustering.
\newblock {\em Phys. Rev. Lett.}, 103:058701, Jul 2009.

\bibitem{NewmPark03}
M.~E.~J. Newman and J.~Park.
\newblock Why social networks are different from other types of networks.
\newblock {\em Physical Review E}, 68(3), September 2003.

\bibitem{NorRei06}
I.~Norros and H.~Reittu.
\newblock On a conditionally {P}oissonian graph process.
\newblock {\em Advances in Applied Probability}, 38(1):59--75, March 2006.

\bibitem{SerBog06}
M.~{\'A}. Serrano and M.~Bogu{\~n}\'a.
\newblock Clustering in complex networks. {I}. {G}eneral formalism.
\newblock {\em Physical Review E}, 74(5):056114, 2006.

\bibitem{SmyWier78}
R.~T. Smythe and J.~C. Wierman.
\newblock {\em First-passage percolation on the square lattice}, volume 671.
\newblock Springer, 2006.

\bibitem{Milg67}
J.~Travers and S.~Milgram.
\newblock The small world problem.
\newblock {\em Phychology Today}, 1(1):61--67, 1967.

\bibitem{Watts99}
D.~J. Watts.
\newblock {\em Small worlds: the dynamics of networks between order and
  randomness}.
\newblock Princeton university press, 1999.

\bibitem{WattsStrog98}
D.~J. Watts and S.~H. Strogatz.
\newblock Collective dynamics of ‘small-world’ networks.
\newblock {\em Nature}, 393(6684), June 1998.

\bibitem{Yuk06}
J.~E. Yukich.
\newblock Ultra-small scale-free geometric networks.
\newblock {\em Journal of Applied Probability}, 43(3):665--678, September 2006.

\end{thebibliography}
\end{document}